\let\oldtocsection=\tocsection
\let\oldtocsubsection=\tocsubsection
\renewcommand{\tocsection}[2]{\hspace{0em}\oldtocsection{#1}{#2}}
\renewcommand{\tocsubsection}[2]{\hspace{1em}\oldtocsubsection{#1}{#2}}
\newtheorem{theorem}{Theorem}
\newtheorem{lemma}[theorem]{Lemma}
\newtheorem{con}[theorem]{Conjecture}
\newtheorem{proposition}[theorem]{Proposition}
\newtheorem{prop}[theorem]{Proposition}
\theoremstyle{definition}
\newtheorem{definition}[theorem]{Definition}
\newtheorem{remark}[theorem]{Remark}
\newtheorem{example}[theorem]{Example}
\newtheorem*{remark*}{Remark}
\newcommand{\ssubsection}[1]{%
  \subsubsection*[#1]{\bfseries\itshape #1}}
\def\mc{\mathcal}
\newcommand\A{{\mathcal A}}
\newcommand\B{{\mathcal B}}
\newcommand\del{\partial}
\newcommand\oop{{\mathrm{op}}}
\newcommand\Z{{\mathbb{Z}}}
\newcommand\mcA{\mathcal{A}}
\newcommand\mcB{\mathcal{B}}
\newcommand\mcF{{\mathcal{F}}}
\newcommand{\ov}[1]{\overline{#1}}
\newcommand\wU{\mathbb{U}}  
\newcommand\wUcirc{\wU_{\circ}}  
\newcommand\wB{\mathbb{B}}    
\newcommand{\wA}{\mathbb{A}}
\newcommand{\wBout}[1]{\wB^{\mathrm{out}}_{#1}}
\newcommand{\wUout}[1]{\wU^{\mathrm{out}}_{#1}}
\newcommand\wSU{\mathbb{SU}}
\newcommand\wDSU{\mathbb{DSU}}
\newcommand{\wDU}{\mathbb{DU}}
\newcommand{\wUd}{{}^{\bullet}\wU} 
\def\lra{\longrightarrow}
\def\Hom{\mathrm{Hom}}
\def\HomA{\Hom_{\mcA}}
\def\End{\mathrm{End}}
\def\Ide{\mathrm{Id}}
\def\ide{\mathrm{id}} 
\def\Kar{\mathrm{Kar}} 
\def\Ob{\mathrm{Ob}}
\def\one{\mathds{1}}
\def\ObA{\Ob(\mcA)}
\def\rank{\mathrm{rank}}
\def\Tr{\mathrm{Tf}}
\def\tr{\mathrm{tr}}
\def\arc{\mathrm{arc}}
\newcommand\mcEnd{\mathcal{E}nd}
\def\circles{\mathrm{cir}} 
\def\mcr{\mathrm{cir}} 
\def\formap{\mathrm{for}} 
\def\trees{\mathbb{T}}  
\def\forests{\mathbb{T}^{\ast}}  
\def\mfsl{\mathfrak{sl}}
\newcommand{\CTrip}{\mathrm{CTrip}} 
\def\kk{\mathbf{k}}  
\newcommand{\scr}[1]{{\scriptstyle #1}}
\newcommand{\un}[1]{\underline{#1}}
\def\cF{\mathcal{F}}
\def\cG{\mathcal{G}}
\def\R{\mathbb R}
\def\Q{\mathbb Q}
\def\Z{\mathbb Z}
\def\C{\mathbb C}
\def\SS{\mathbb{S}}    
\let\emptyset\varnothing
\title{Planar diagrammatics of self-adjoint functors and recognizable tree series}
\author{Mikhail Khovanov}
 \address{Department of Mathematics, Columbia University, New York, NY 10027, USA}
 \email{\href{mailto:khovanov@math.columbia.edu}{khovanov@math.columbia.edu}}
\author{Robert Laugwitz}
\address{School of Mathematical Sciences,
University of Nottingham, Nottingham, NG7 2RD, UK}
 \email{\href{mailto:robert.laugwitz@nottingham.ac.uk}{robert.laugwitz@nottingham.ac.uk}}
\date{October 6, 2021}
\begin{document}

\begin{abstract}
A pair of biadjoint functors between two categories produces a collection of elements in the centers of these categories, one for each isotopy class of nested circles in the plane. If the centers are equipped with a trace map into the ground field, then one assigns an element of that field to a diagram of nested circles. We focus on the self-adjoint functor case of this construction and study the reverse problem of recovering such a functor and a category given values associated to diagrams of nested circles. 
\end{abstract}
\subjclass[2020]{
Primary 18M30; 
Secondary 18A40, 
18M10, 
57K16,  
11B85   
}
\keywords{Self-adjoint functor, universal construction, monoidal category, Temperley--Lieb category, recognizable tree series}
\dedicatory{Dedicated to the memory of Vaughan F.~R. Jones}

\maketitle
\tableofcontents

%
%

\section{Introduction}

\smallskip

A biadjoint pair $(F,G)$ of functors consists of two functors $F,G$ and choices of natural transformations making $G$ both left and right adjoint to $F$. 
Biadjoint pairs of functors are common in today's mathematics and mathematical physics. Their popularity is related, in part, to their natural appearance  in extended topological field theories. Such an extended theory may associate 
\begin{itemize}
\item a category $C(K)$ to a closed $(n-2)$-manifold $K$, 
\item a functor $C(N)$ to an $(n-1)$-cobordism $N$ between $(n-2)$-manifolds, 
\item a natural transformation $C(M)$ to an $n$-cobordism $M$ with corners between $(n-1)$-cobordisms. 
\end{itemize}
The functors $C(N)$ and $C(N^{\ast})$ are then  naturally biadjoint, where $N^{\ast}$ is the reverse cobordism of $N$, see~\cite{Kh1} for instance. 

\vspace{0.1in} 

Furthermore, biadjoint functors appear throughout representation theory, algebra, and geometry, including as 
\begin{itemize}
\item projective functors in highest weight categories \cite{BG}, 
\item Zuckerman and Bernstein derived functors in highest weight categories \cite{BFK}, 
\item various functors in modular representation theory~\cite{Kl}, 
\item Fourier-Mukai kernels between Calabi-Yau varieties~\cite{Or},
\item functors of tensor products with matrix factorizations, see e.g. \cite{CM}, 
\item functors of convolution with Lagrangians between Fukaya--Floer categories, 
\item suitable convolution functors in geometric representation theory and in categories of sheaves on manifolds and stratified spaces, 
\item generating functors for categorifications of Hecke algebras~\cite{EK}, quantum groups~\cite{CR,KhL,R,L2}, and Heisenberg algebras \cite{Kh4},
\item functors of tensoring with objects of pivotal categories \cite{S}, 
\item 1-morphisms in Mazorchuk--Miemietz fiat 2-categories~\cite{MM1}.
\end{itemize} 

A biadjoint pair $(F,G)$ between categories $\mcA,\mcB$ gives rise to a planar diagrammatic calculus of collections of arcs and circles in the plane, as reviewed below in Section~\ref{subsec_diag_biadj}. Such diagrams describe natural transformations between compositions of $F$ and $G$ built from the four biadjointness natural transformations. Regions of these diagrams are labelled by categories $\mcA$ and $\mcB$ in checkerboard fashion. Closed diagrams in the plane, that is, collections of nested circles in the plane, give rise to elements of the centers $Z(\mcA), Z(\mcB)$, depending on whether the outside region is labelled  $\mcA$ or $\mcB$. The centers $Z(\mcA),Z(\mcB)$ are commutative monoids, and potentially, there is a lot of freedom in associating elements of these commutative monoids to closed diagrams. 

In this paper we investigate the case when the categories $\mcA,\mcB$ are pre-additive or additive. To further simplify matters, we assume that the categories and functors are $\kk$-linear, over  a field $\kk$, and, in particular, their centers $Z(\mcA),Z(\mcB)$ are commutative $\kk$-algebras. One can further assume that the centers come with suitably non-degenerate trace maps $Z(\mcA)\lra \kk$, $Z(\mcB)\lra\kk$ to the ground field. Applying these trace maps to central elements encoded by nested circle diagrams produces a collection of elements of $\kk$, one for each nested diagram together with a label for an outer region. Given such data, one can turn around and build a ``minimal'' non-degenerate system of such categories, biadjoint functors, and trace maps on centers in a straightforward way. 

We explain these constructions in detail in the slightly different case of a \emph{self-adjoint} endofunctor $F\colon \mcA\lra \mcA$ rather than a biadjoint pair $(F,G)$. In the self-adjoint case there is only one category $\mcA$, each region is labelled by $\mcA$, and that label can be omitted. The evaluation data is given by assigning an element of $\kk$ to each nested circle diagram. 

\vspace{0.1in} 

In the self-adjoint case, the center $Z(\A)$ is a commutative algebra that comes with a $\kk$-linear map $\omega\colon Z(\A)\lra Z(\A)$, corresponding to the operator of wrapping a circle around a diagram representing an element of $Z(\A)$. This wrapping operator is the trace morphism for the self-adjoint functor $F$, see \cite{B} and \eqref{eq_trace_map}.

\vspace{0.1in} 

In Sections~\ref{sec-BZ} and~\ref{sec_pairings} we discuss various monoidal categories one can assign to the data $(\kk,Z,\omega)$ of a commutative $\kk$-algebra $Z$ and a $\kk$-linear map $\omega$, generalizing in some cases from a field $\kk$ to a commutative ring $R$. These categories come from a suitable pairing  between the $\kk$-vector space generated by diagrams of arcs, circles, and elements of $Z$ embedded in a disk and a similar space spanned by such diagrams in an annulus, see Section~\ref{subsec_pairings_neg}. When $Z$ is finite-dimensional over $\kk$, the morphism spaces in the resulting categories are finite-dimensional and the categories and functors between them are recorded in diagram (\ref{overview-diag}) in Section~\ref{sec:commsquare}. 

\vspace{0.1in} 

A related setup emerges when 
$Z$ and $\omega$ are hidden and instead there is a trace map $\varepsilon\colon Z\lra \kk$ into the ground field. 
Then, to a nested diagram $u$ of circles one can associate the element of $\kk$ given by evaluating $u$ to an element of $Z$ via $\omega$ and then applying the trace map $\varepsilon.$ 
A collection of these evaluations can be encoded into an analogue of power series $\alpha$, called \emph{circular series}, where each nested circle diagram carries a coefficient. In Section~\ref{subsec_circ_s} and the latter half of Section~\ref{subsec_arcs} we discuss reconstructing the data of a category and a self-adjoint functor from such circular series and single out \emph{recognizable} series, which yield finite-dimensional morphism spaces between the objects in the resulting categories. 
The situation discussed here is similar to that of universal construction of topological theories, see~\cite{Kh2,KKO,Kh3}, for instance. 
From that viewpoint, the current paper deals with the case when the ambient manifold is   $\R^2$  
or $\SS^2$ together with defect circles (submanifolds or \emph{defects} of codimension one). 

\vspace{0.1in} 

To simplify computations of the pairing between vector spaces of diagrams it is sometimes natural to assume that the coefficients of the formal power series $\alpha$ depend only on the isotopy type of the nested circle diagrams in $\SS^2$ rather than in $\R^2$. Such \emph{spherical} power series are considered in Section~\ref{subsec_arcs}, with examples in Sections~\ref{subset_ss_2d},~\ref{subset_ss_spherical}.

\vspace{0.1in} 

In Section~\ref{sec_trees} we review the well-known correspondence between collections of nested circles in the plane and trees and forests of a suitable type. Section~\ref{subsec_settheory} contains a brief discussion of the set-theoretical version of our construction. 

A general theory and understanding of the monoidal categories defined in Sections~\ref{sec_pairings} and~\ref{sec_trees} seems to be currently absent. Some examples are considered in Section~\ref{sec_adj_ex}. 

In Section~\ref{sec_miscell} several modifications of our constructions are discussed, offering, in particular, a common generalization of tensor envelopes of noncommutative power series as introduced in~\cite{Kh3} and some structures from the present paper.  

\vspace{0.1in} 

Given a recognizable circular series $\alpha$ as above, that  is, an assignment of an  element $\alpha(u)$ of the field $\kk$ to each isotopy class $u$ of planar  diagrams of nested circles, one key construction is the associated category $\wU_{\alpha}$. The monoidal category $\wU_{\alpha}$ has non-negative integers $n$ as objects and morphisms from $n$ to $m$ are given by linear combinations of planar diagrams of arcs and circles with $n$ bottom and $m$ top endpoints. The skein relations in the category $\wU_{\alpha}$ are defined via  the universal construction and  may be difficult to write down for a given $\alpha$. 

The finite-dimensional endomorphism rings $TL_{\alpha,n}:=\End_{\wU_{\alpha}}(n)$ can be viewed as generalizations of the \emph{Temperley--Lieb algebra} \cite{Jo1,Ka}. When $\alpha$ ignores the nesting and evaluates any diagram of $k$ circles to $d^k$, where $d\in \kk$ and $\mathrm{char}(\kk)=0$, then  $TL_{\alpha,n}$ is isomorphic to the Temperley--Lieb algebra $TL_n(d)$ for generic $d$ and to the Jones quotient of $TL_n(d)$  when $d=q+q^{-1}$, where $q$ is a root of unity (the quotient by the ideal of negligible morphisms). 

When $\alpha$ is spherical, the Jones quotient of the algebra $TL_{\alpha,n}$ is, in addition, a Frobenius algebra. 
These generalized Temperley--Lieb algebras may be an interesting topic for further investigation.

The present paper proposes a framework for generalized  Temperley--Lieb  algebras and associated categories but does not try to work out the general theory. An incomplete treatment of some examples can be found in Section~\ref{sec_adj_ex}. 

\vspace{0.1in} 

Vaughan Jones discovered and  developed many remarkable structures in mathematics and mathematical physics intricately  related to the notion of the Temperley--Lieb algebra. These 
structures include the index for subfactors~\cite{Jo1,Jo3}, the Jones polynomial of links~\cite{Jo2}, Hecke algebras~\cite{Jo3,Jo4}, 
models of statistical mechanics~\cite{Jo5}, and planar algebras~\cite{Jo7}, among others. We dedicate this  paper to his memory.

\vspace{0.1in}

\subsection*{Acknowledgments}
M.~K. was partially supported via NSF grant DMS-1807425. R.~L. acknowledges support by a Nottingham Research Fellowship. The Figures have been created using Inkscape\footnote{\url{https://inkscape.org/}}. 

%
%

\section{Self-adjoint functors  and circle diagrams}\label{sec_selfadj}


\subsection{Diagrammatics for biadjoint pairs}\label{subsec_diag_biadj} 

\quad
\smallskip 

Given categories $\A,\B$ and functors
\begin{equation}
    F\colon \A\to \B,\  G\colon \B\to \A,
\end{equation} 
the pair $(F,G)$ is called \emph{biadjoint} if there are isomorphisms
\begin{align}
 \Hom_\A(GN,M)&\cong \Hom_\B(N,FN) \label{adjunction1}\\
 \Hom_\A(M,GM)&\cong \Hom_\B(FM,N) \label{adjunction2}
\end{align}
which are natural in $M\in \Ob\A$ and $N\in \Ob\B$. We consider biadjoint pairs $(F,G)$ together with a choice of natural isomorphisms \eqref{adjunction1}, \eqref{adjunction2}. The natural isomorphism in (\ref{adjunction1}) can be described by the unit and counit natural transformations
\begin{align*}
    \delta_1\colon \Ide_{\B}\Longrightarrow FG,&& \mu_1\colon GF\Longrightarrow\Ide_{\A},
\end{align*}
which satisfy the relations
\begin{align}\label{eq_cond_1}
    (1_F \, \mu_1)\circ (\delta_1 \, 1_F) &= 1_F, & ( \mu_1 \, 1_G)\circ (1_G\,\delta_1)&=1_G,
\end{align}
where $1_F\colon F \Longrightarrow F$ is the identity  natural transformation from $F$ to itself, and analogously  for $1_G$.  
Likewise, the natural isomorphism in (\ref{adjunction2}) can be described by the unit and counit natural transformations
\begin{align*}
     \delta_2\colon \Ide_{\A}\Longrightarrow GF,&& \mu_2\colon FG\Longrightarrow \Ide_{\B}, 
\end{align*}
which satisfy the relations 
\begin{align}\label{eq_cond_2}
    (1_G \, \mu_2)\circ (\delta_2\, 1_G) &= 1_G, & ( \mu_2 \, 1_F)\circ (1_F\, \delta_2)&=1_F.
\end{align}
A pair of functors $(F,G)$ may have more than one collection of natural transformations $(\delta_1,\mu_1,\delta_2,\mu_2)$ satisfying  these conditions.  By a pair of biajoint functors $(F,G)$ we mean a pair of  functors as  above together  with a choice of such four natural transformations. We refer to~\cite{Kh1,Ba,L1} for more details on biadjoint functors and their diagrammatics. 

\vspace{0.1in} 

We use oriented planar diagrams, read from bottom to top, to denote these adjunctions. The identity natural transformation  $1_F$ of the functor $F$ is denoted by a  line oriented up and the identity  transformation $1_G$ by a line oriented down, see Figure~\ref{fig1_1}. 
The unit and counit transformations are denoted by oriented cup and cap morphisms, see Figure~\ref{fig1_2}.

\begin{figure}
     \centering
     \begin{subfigure}[htb]{0.25\textwidth}
\begingroup%
  \makeatletter%
  \providecommand\color[2][]{%
    \errmessage{(Inkscape) Color is used for the text in Inkscape, but the package 'color.sty' is not loaded}%
    \renewcommand\color[2][]{}%
  }%
  \providecommand\transparent[1]{%
    \errmessage{(Inkscape) Transparency is used (non-zero) for the text in Inkscape, but the package 'transparent.sty' is not loaded}%
    \renewcommand\transparent[1]{}%
  }%
  \providecommand\rotatebox[2]{#2}%
  \newcommand*\fsize{\dimexpr\f@size pt\relax}%
  \newcommand*\lineheight[1]{\fontsize{\fsize}{#1\fsize}\selectfont}%
  \ifx\svgwidth\undefined%
    \setlength{\unitlength}{45.00000283bp}%
    \ifx\svgscale\undefined%
      \relax%
    \else%
      \setlength{\unitlength}{\unitlength * \real{\svgscale}}%
    \fi%
  \else%
    \setlength{\unitlength}{\svgwidth}%
  \fi%
  \global\let\svgwidth\undefined%
  \global\let\svgscale\undefined%
  \makeatother%
  \begin{picture}(1,1.55455654)%
    \lineheight{1}%
    \setlength\tabcolsep{0pt}%
    \put(0.66262729,0.68112417){\color[rgb]{0,0,0}\makebox(0,0)[lt]{\smash{\begin{tabular}[t]{l}$\A$\end{tabular}}}}%
    \put(0.41573097,1.3519524){\color[rgb]{0,0,0}\makebox(0,0)[lt]{\smash{\begin{tabular}[t]{l}$F$\end{tabular}}}}%
    \put(0.16560267,0.68596469){\color[rgb]{0,0,0}\makebox(0,0)[lt]{\smash{\begin{tabular}[t]{l}$\B$\end{tabular}}}}%
    \put(0,0){\includegraphics[width=\unitlength,page=1]{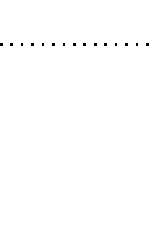}}%
    \put(0.41560265,0.01959635){\color[rgb]{0,0,0}\makebox(0,0)[lt]{\smash{\begin{tabular}[t]{l}$F$\end{tabular}}}}%
    \put(0,0){\includegraphics[width=\unitlength,page=2]{idF.pdf}}%
  \end{picture}%
\endgroup%
 
      \centering
         \caption{$1_F$}
         \label{fig:idF}
     \end{subfigure}
     \begin{subfigure}[htb]{0.25\textwidth}
        \centering
\begingroup%
  \makeatletter%
  \providecommand\color[2][]{%
    \errmessage{(Inkscape) Color is used for the text in Inkscape, but the package 'color.sty' is not loaded}%
    \renewcommand\color[2][]{}%
  }%
  \providecommand\transparent[1]{%
    \errmessage{(Inkscape) Transparency is used (non-zero) for the text in Inkscape, but the package 'transparent.sty' is not loaded}%
    \renewcommand\transparent[1]{}%
  }%
  \providecommand\rotatebox[2]{#2}%
  \newcommand*\fsize{\dimexpr\f@size pt\relax}%
  \newcommand*\lineheight[1]{\fontsize{\fsize}{#1\fsize}\selectfont}%
  \ifx\svgwidth\undefined%
    \setlength{\unitlength}{45.00000283bp}%
    \ifx\svgscale\undefined%
      \relax%
    \else%
      \setlength{\unitlength}{\unitlength * \real{\svgscale}}%
    \fi%
  \else%
    \setlength{\unitlength}{\svgwidth}%
  \fi%
  \global\let\svgwidth\undefined%
  \global\let\svgscale\undefined%
  \makeatother%
  \begin{picture}(1,1.55455654)%
    \lineheight{1}%
    \setlength\tabcolsep{0pt}%
    \put(0.66262729,0.68112417){\color[rgb]{0,0,0}\makebox(0,0)[lt]{\smash{\begin{tabular}[t]{l}$\B$\end{tabular}}}}%
    \put(0.41573097,1.3519524){\color[rgb]{0,0,0}\makebox(0,0)[lt]{\smash{\begin{tabular}[t]{l}$G$\end{tabular}}}}%
    \put(0.16560267,0.68596469){\color[rgb]{0,0,0}\makebox(0,0)[lt]{\smash{\begin{tabular}[t]{l}$\A$\end{tabular}}}}%
    \put(0,0){\includegraphics[width=\unitlength,page=1]{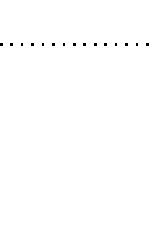}}%
    \put(0.41560265,0.01959635){\color[rgb]{0,0,0}\makebox(0,0)[lt]{\smash{\begin{tabular}[t]{l}$G$\end{tabular}}}}%
    \put(0,0){\includegraphics[width=\unitlength,page=2]{idG.pdf}}%
  \end{picture}%
\endgroup%
 
         \caption{$1_G$}
         \label{fig:idG}
     \end{subfigure}
     \caption{Diagrams  of $1_F$ and  $1_G$. In  this diagrammatics, the planar region between two parallel horizontal dashed lines describes a natural transformation from the composition of functors  read off the bottom dashed line  to  the composition given by boundary points at the top  dashed  line. Regions of the  diagram correspond to categories. }
     \label{fig1_1}
\end{figure}

\begin{figure}
     \centering
     \begin{subfigure}[htb]{0.2\textwidth}
\begingroup%
  \makeatletter%
  \providecommand\color[2][]{%
    \errmessage{(Inkscape) Color is used for the text in Inkscape, but the package 'color.sty' is not loaded}%
    \renewcommand\color[2][]{}%
  }%
  \providecommand\transparent[1]{%
    \errmessage{(Inkscape) Transparency is used (non-zero) for the text in Inkscape, but the package 'transparent.sty' is not loaded}%
    \renewcommand\transparent[1]{}%
  }%
  \providecommand\rotatebox[2]{#2}%
  \newcommand*\fsize{\dimexpr\f@size pt\relax}%
  \newcommand*\lineheight[1]{\fontsize{\fsize}{#1\fsize}\selectfont}%
  \ifx\svgwidth\undefined%
    \setlength{\unitlength}{76.17767154bp}%
    \ifx\svgscale\undefined%
      \relax%
    \else%
      \setlength{\unitlength}{\unitlength * \real{\svgscale}}%
    \fi%
  \else%
    \setlength{\unitlength}{\svgwidth}%
  \fi%
  \global\let\svgwidth\undefined%
  \global\let\svgscale\undefined%
  \makeatother%
  \begin{picture}(1,0.92707171)%
    \lineheight{1}%
    \setlength\tabcolsep{0pt}%
    \put(0,0){\includegraphics[width=\unitlength,page=1]{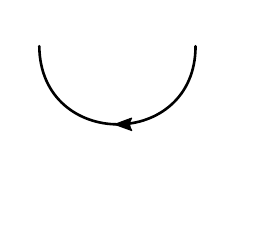}}%
    \put(0.39230927,0.56125302){\color[rgb]{0,0,0}\makebox(0,0)[lt]{\smash{\begin{tabular}[t]{l}$\A$\end{tabular}}}}%
    \put(0.09948418,0.80738851){\color[rgb]{0,0,0}\makebox(0,0)[lt]{\smash{\begin{tabular}[t]{l}$F$\end{tabular}}}}%
    \put(0.69040818,0.80738851){\color[rgb]{0,0,0}\makebox(0,0)[lt]{\smash{\begin{tabular}[t]{l}$G$\end{tabular}}}}%
    \put(0.39582506,0.26629231){\color[rgb]{0,0,0}\makebox(0,0)[lt]{\smash{\begin{tabular}[t]{l}$\B$\end{tabular}}}}%
    \put(0,0){\includegraphics[width=\unitlength,page=2]{delta1.pdf}}%
    \put(0.34308178,0.01857551){\color[rgb]{0,0,0}\makebox(0,0)[lt]{\smash{\begin{tabular}[t]{l}$\Ide_\B$\end{tabular}}}}%
  \end{picture}%
\endgroup%
 
      \centering
         \caption{$\delta_1$}
         \label{fig:delta1}
     \end{subfigure}
     \begin{subfigure}[htb]{0.2\textwidth}
        \centering
\begingroup%
  \makeatletter%
  \providecommand\color[2][]{%
    \errmessage{(Inkscape) Color is used for the text in Inkscape, but the package 'color.sty' is not loaded}%
    \renewcommand\color[2][]{}%
  }%
  \providecommand\transparent[1]{%
    \errmessage{(Inkscape) Transparency is used (non-zero) for the text in Inkscape, but the package 'transparent.sty' is not loaded}%
    \renewcommand\transparent[1]{}%
  }%
  \providecommand\rotatebox[2]{#2}%
  \newcommand*\fsize{\dimexpr\f@size pt\relax}%
  \newcommand*\lineheight[1]{\fontsize{\fsize}{#1\fsize}\selectfont}%
  \ifx\svgwidth\undefined%
    \setlength{\unitlength}{73.89416297bp}%
    \ifx\svgscale\undefined%
      \relax%
    \else%
      \setlength{\unitlength}{\unitlength * \real{\svgscale}}%
    \fi%
  \else%
    \setlength{\unitlength}{\svgwidth}%
  \fi%
  \global\let\svgwidth\undefined%
  \global\let\svgscale\undefined%
  \makeatother%
  \begin{picture}(1,0.90173871)%
    \lineheight{1}%
    \setlength\tabcolsep{0pt}%
    \put(0,0){\includegraphics[width=\unitlength,page=1]{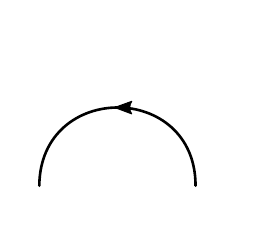}}%
    \put(0.40443258,0.5839098){\color[rgb]{0,0,0}\makebox(0,0)[lt]{\smash{\begin{tabular}[t]{l}$\A$\end{tabular}}}}%
    \put(0.40805701,0.2798341){\color[rgb]{0,0,0}\makebox(0,0)[lt]{\smash{\begin{tabular}[t]{l}$\B$\end{tabular}}}}%
    \put(0,0){\includegraphics[width=\unitlength,page=2]{mu1.pdf}}%
    \put(0.35368384,0.84004787){\color[rgb]{0,0,0}\makebox(0,0)[lt]{\smash{\begin{tabular}[t]{l}$\Ide_\A$\end{tabular}}}}%
    \put(0.10437172,0.02386754){\color[rgb]{0,0,0}\makebox(0,0)[lt]{\smash{\begin{tabular}[t]{l}$G$\end{tabular}}}}%
    \put(0.71319306,0.02567975){\color[rgb]{0,0,0}\makebox(0,0)[lt]{\smash{\begin{tabular}[t]{l}$F$\end{tabular}}}}%
  \end{picture}%
\endgroup%
 
         \caption{$\mu_1$}
         \label{fig:mu1}
     \end{subfigure}
     \begin{subfigure}[htb]{0.2\textwidth}
        \centering
\begingroup%
  \makeatletter%
  \providecommand\color[2][]{%
    \errmessage{(Inkscape) Color is used for the text in Inkscape, but the package 'color.sty' is not loaded}%
    \renewcommand\color[2][]{}%
  }%
  \providecommand\transparent[1]{%
    \errmessage{(Inkscape) Transparency is used (non-zero) for the text in Inkscape, but the package 'transparent.sty' is not loaded}%
    \renewcommand\transparent[1]{}%
  }%
  \providecommand\rotatebox[2]{#2}%
  \newcommand*\fsize{\dimexpr\f@size pt\relax}%
  \newcommand*\lineheight[1]{\fontsize{\fsize}{#1\fsize}\selectfont}%
  \ifx\svgwidth\undefined%
    \setlength{\unitlength}{73.78704627bp}%
    \ifx\svgscale\undefined%
      \relax%
    \else%
      \setlength{\unitlength}{\unitlength * \real{\svgscale}}%
    \fi%
  \else%
    \setlength{\unitlength}{\svgwidth}%
  \fi%
  \global\let\svgwidth\undefined%
  \global\let\svgscale\undefined%
  \makeatother%
  \begin{picture}(1,0.95710789)%
    \lineheight{1}%
    \setlength\tabcolsep{0pt}%
    \put(0,0){\includegraphics[width=\unitlength,page=1]{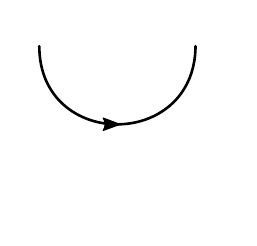}}%
    \put(0.4050197,0.57943705){\color[rgb]{0,0,0}\makebox(0,0)[lt]{\smash{\begin{tabular}[t]{l}$\B$\end{tabular}}}}%
    \put(0.10270737,0.83354708){\color[rgb]{0,0,0}\makebox(0,0)[lt]{\smash{\begin{tabular}[t]{l}$G$\end{tabular}}}}%
    \put(0.71277671,0.83354708){\color[rgb]{0,0,0}\makebox(0,0)[lt]{\smash{\begin{tabular}[t]{l}$F$\end{tabular}}}}%
    \put(0.40864939,0.27491991){\color[rgb]{0,0,0}\makebox(0,0)[lt]{\smash{\begin{tabular}[t]{l}$\A$\end{tabular}}}}%
    \put(0,0){\includegraphics[width=\unitlength,page=2]{delta2.pdf}}%
    \put(0.35238214,0.01917734){\color[rgb]{0,0,0}\makebox(0,0)[lt]{\smash{\begin{tabular}[t]{l}$\Ide_\A$\end{tabular}}}}%
  \end{picture}%
\endgroup%

         \caption{$\delta_2$}
         \label{fig:delta2}
     \end{subfigure}
     \begin{subfigure}[htb]{0.2\textwidth}
        \centering
\begingroup%
  \makeatletter%
  \providecommand\color[2][]{%
    \errmessage{(Inkscape) Color is used for the text in Inkscape, but the package 'color.sty' is not loaded}%
    \renewcommand\color[2][]{}%
  }%
  \providecommand\transparent[1]{%
    \errmessage{(Inkscape) Transparency is used (non-zero) for the text in Inkscape, but the package 'transparent.sty' is not loaded}%
    \renewcommand\transparent[1]{}%
  }%
  \providecommand\rotatebox[2]{#2}%
  \newcommand*\fsize{\dimexpr\f@size pt\relax}%
  \newcommand*\lineheight[1]{\fontsize{\fsize}{#1\fsize}\selectfont}%
  \ifx\svgwidth\undefined%
    \setlength{\unitlength}{76.28478824bp}%
    \ifx\svgscale\undefined%
      \relax%
    \else%
      \setlength{\unitlength}{\unitlength * \real{\svgscale}}%
    \fi%
  \else%
    \setlength{\unitlength}{\svgwidth}%
  \fi%
  \global\let\svgwidth\undefined%
  \global\let\svgscale\undefined%
  \makeatother%
  \begin{picture}(1,0.87347988)%
    \lineheight{1}%
    \setlength\tabcolsep{0pt}%
    \put(0,0){\includegraphics[width=\unitlength,page=1]{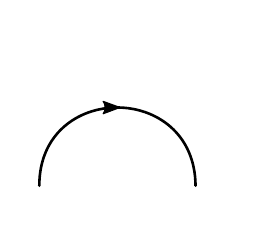}}%
    \put(0.39175841,0.56561114){\color[rgb]{0,0,0}\makebox(0,0)[lt]{\smash{\begin{tabular}[t]{l}$\B$\end{tabular}}}}%
    \put(0.39526925,0.27106461){\color[rgb]{0,0,0}\makebox(0,0)[lt]{\smash{\begin{tabular}[t]{l}$\A$\end{tabular}}}}%
    \put(0,0){\includegraphics[width=\unitlength,page=2]{mu2.pdf}}%
    \put(0.36542337,0.81372231){\color[rgb]{0,0,0}\makebox(0,0)[lt]{\smash{\begin{tabular}[t]{l}$\Ide_\B$\end{tabular}}}}%
    \put(0.10110091,0.02311957){\color[rgb]{0,0,0}\makebox(0,0)[lt]{\smash{\begin{tabular}[t]{l}$F$\end{tabular}}}}%
    \put(0.6908429,0.024875){\color[rgb]{0,0,0}\makebox(0,0)[lt]{\smash{\begin{tabular}[t]{l}$G$\end{tabular}}}}%
  \end{picture}%
\endgroup%
 
         \caption{$\mu_2$}
         \label{fig:mu2}
     \end{subfigure}
     \caption{Diagrams of biadjointness natural  transformations. For  instance, the leftmost diagram is the transformation $\delta_1$ from the identity functor $1_B$ to $FG$. When no  arcs end on a dashed line, then we assign the identity functor, on the category which labels the region, to it. }
     \label{fig1_2}
\end{figure}

The biadjointness  relations (\ref{eq_cond_1}), (\ref{eq_cond_2}) on these four natural transformations are shown in Figure~\ref{fig1_3}. Notice that they  are  just the four isotopy relations on up and  down oriented strands. 

\begin{figure}[htb]
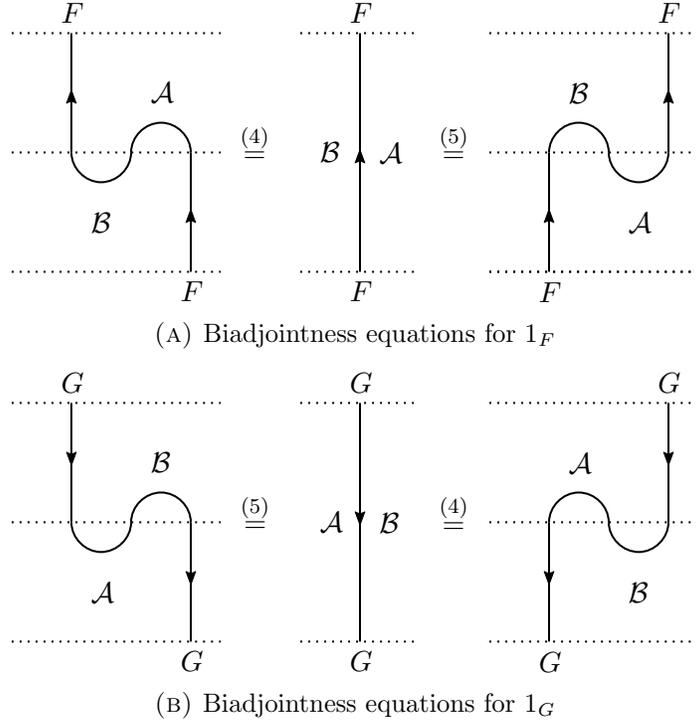

\begin{center}\vspace{5pt}
  \begin{subfigure}[htb]{0.60\textwidth}
        \centering
 \import{Graphics/}{snake1.pdf_tex}
 \caption{Biadjointness equations for $1_F$}
         \label{fig:snake1}
 \end{subfigure}\vspace{10pt}
  \begin{subfigure}[htb]{0.60\textwidth}
        \centering
     \import{Graphics/}{snake2.pdf_tex}
     \caption{Biadjointness equations for $1_G$}
        \label{fig:snake2}
      \end{subfigure}
\caption{Biadjointness relations are the isotopy relations on strands.}
\label{fig1_3}
\end{center}
\end{figure}

\begin{figure}[htb]
\begin{center}
 \import{Graphics/}{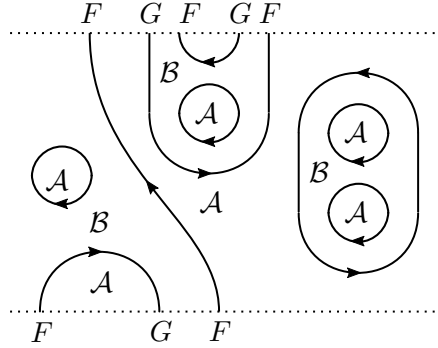}
\caption{A diagram built out of the biadjointness transformations.}
\label{fig1_4}
\end{center}
\end{figure}

Arbitrary compositions of the four diagrams depicted in Figure \ref{fig1_3}, modulo isotopy relations, lead to diagrams of oriented  arcs and circles in the strip  $\R\times [0,1]$ of the plane  with regions labelled in  a checkerboard manner by the categories $\mcA$ and  $\mcB$, see Figure~\ref{fig1_4}. 
Edges  are oriented so that the region labelled $\mcA$ is  always to the right as one travels along a line in the direction of its  orientation.
Note  that in the compositions  of  functors,  at the top and bottom of the diagram, $F$ and $G$ always alternate, so these compositions have the form $FGF\dots$ or $GFG\dots$.  The two empty sequences of functors  correspond  to the  identity functors $\Ide_{\A}$ and $\Ide_{\B}$. 
If the rightmost (and semi-infinite) region of the strip is labelled $\mcA$, then the rightmost functor  in the  compositions at  the top  and  bottom is the functor $F$ (or one or both of these compositions is just the identity functor $\Ide_{\A}$). 
If the rightmost  region is labelled $\mcB$, the  rightmost functor is $G$ (or the identity functor $\Ide_{\B}$). 

In the planar diagrams, the top and bottom compositions have  the same parity of the number of terms (functors) $F,G$ appearing in the composition, with $\Ide_{\A}$ and $\Ide_{\B}$ having zero terms.

\vspace{0.1in} 

A closed diagram of nested circles with the outer region labelled by $\A$, respectively, $\B$, defines an element in the center $Z(\A)$ of $\A$, respectively in $Z(\B)$, see Figure~\ref{fig_circlesAB}. 

\begin{figure}
     \centering
     \begin{subfigure}[htb]{0.45\textwidth}
\begingroup%
  \makeatletter%
  \providecommand\color[2][]{%
    \errmessage{(Inkscape) Color is used for the text in Inkscape, but the package 'color.sty' is not loaded}%
    \renewcommand\color[2][]{}%
  }%
  \providecommand\transparent[1]{%
    \errmessage{(Inkscape) Transparency is used (non-zero) for the text in Inkscape, but the package 'transparent.sty' is not loaded}%
    \renewcommand\transparent[1]{}%
  }%
  \providecommand\rotatebox[2]{#2}%
  \newcommand*\fsize{\dimexpr\f@size pt\relax}%
  \newcommand*\lineheight[1]{\fontsize{\fsize}{#1\fsize}\selectfont}%
  \ifx\svgwidth\undefined%
    \setlength{\unitlength}{104.21505956bp}%
    \ifx\svgscale\undefined%
      \relax%
    \else%
      \setlength{\unitlength}{\unitlength * \real{\svgscale}}%
    \fi%
  \else%
    \setlength{\unitlength}{\svgwidth}%
  \fi%
  \global\let\svgwidth\undefined%
  \global\let\svgscale\undefined%
  \makeatother%
  \begin{picture}(1,0.81904437)%
    \lineheight{1}%
    \setlength\tabcolsep{0pt}%
    \put(0.22268509,0.53150462){\color[rgb]{0,0,0}\makebox(0,0)[lt]{\smash{\begin{tabular}[t]{l}$\A$\end{tabular}}}}%
    \put(0,0){\includegraphics[width=\unitlength,page=1]{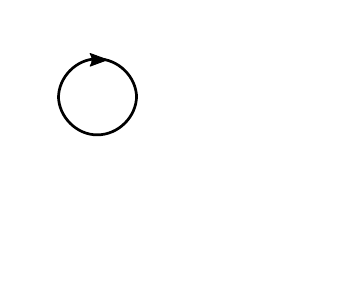}}%
    \put(0.22397171,0.2423527){\color[rgb]{0,0,0}\makebox(0,0)[lt]{\smash{\begin{tabular}[t]{l}$\A$\end{tabular}}}}%
    \put(0,0){\includegraphics[width=\unitlength,page=2]{circlesouterA.pdf}}%
    \put(0.62569783,0.38705711){\color[rgb]{0,0,0}\makebox(0,0)[lt]{\smash{\begin{tabular}[t]{l}$\B$\end{tabular}}}}%
    \put(0,0){\includegraphics[width=\unitlength,page=3]{circlesouterA.pdf}}%
    \put(0.41622397,0.08119948){\color[rgb]{0,0,0}\makebox(0,0)[lt]{\smash{\begin{tabular}[t]{l}$\B$\end{tabular}}}}%
    \put(0,0){\includegraphics[width=\unitlength,page=4]{circlesouterA.pdf}}%
    \put(0.62878343,0.20379957){\color[rgb]{0,0,0}\makebox(0,0)[lt]{\smash{\begin{tabular}[t]{l}$\A$\end{tabular}}}}%
    \put(-0.00477904,0.00846171){\color[rgb]{0,0,0}\makebox(0,0)[lt]{\smash{\begin{tabular}[t]{l}$\A$\end{tabular}}}}%
  \end{picture}%
\endgroup%
 
      \centering
         \caption{An element of $Z(\A)$}
         \label{fig:idF_1}
     \end{subfigure}
     \begin{subfigure}[htb]{0.45\textwidth}
        \centering
\begingroup%
  \makeatletter%
  \providecommand\color[2][]{%
    \errmessage{(Inkscape) Color is used for the text in Inkscape, but the package 'color.sty' is not loaded}%
    \renewcommand\color[2][]{}%
  }%
  \providecommand\transparent[1]{%
    \errmessage{(Inkscape) Transparency is used (non-zero) for the text in Inkscape, but the package 'transparent.sty' is not loaded}%
    \renewcommand\transparent[1]{}%
  }%
  \providecommand\rotatebox[2]{#2}%
  \newcommand*\fsize{\dimexpr\f@size pt\relax}%
  \newcommand*\lineheight[1]{\fontsize{\fsize}{#1\fsize}\selectfont}%
  \ifx\svgwidth\undefined%
    \setlength{\unitlength}{113.72154658bp}%
    \ifx\svgscale\undefined%
      \relax%
    \else%
      \setlength{\unitlength}{\unitlength * \real{\svgscale}}%
    \fi%
  \else%
    \setlength{\unitlength}{\svgwidth}%
  \fi%
  \global\let\svgwidth\undefined%
  \global\let\svgscale\undefined%
  \makeatother%
  \begin{picture}(1,0.41970743)%
    \lineheight{1}%
    \setlength\tabcolsep{0pt}%
    \put(0.16411078,0.19311216){\color[rgb]{0,0,0}\makebox(0,0)[lt]{\smash{\begin{tabular}[t]{l}$\B$\end{tabular}}}}%
    \put(0,0){\includegraphics[width=\unitlength,page=1]{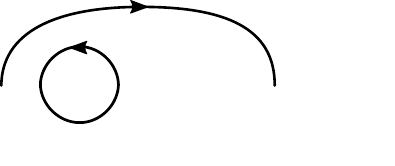}}%
    \put(0.30267698,0.05259536){\color[rgb]{0,0,0}\makebox(0,0)[lt]{\smash{\begin{tabular}[t]{l}$\A$\end{tabular}}}}%
    \put(0.46110853,0.19584835){\color[rgb]{0,0,0}\makebox(0,0)[lt]{\smash{\begin{tabular}[t]{l}$\B$\end{tabular}}}}%
    \put(0,0){\includegraphics[width=\unitlength,page=2]{circlesouterB.pdf}}%
    \put(0.86060279,0.19584949){\color[rgb]{0,0,0}\makebox(0,0)[lt]{\smash{\begin{tabular}[t]{l}$\A$\end{tabular}}}}%
    \put(0,0){\includegraphics[width=\unitlength,page=3]{circlesouterB.pdf}}%
    \put(0.72875897,0.00516956){\color[rgb]{0,0,0}\makebox(0,0)[lt]{\lineheight{1.25}\smash{\begin{tabular}[t]{l}$\B$\end{tabular}}}}%
  \end{picture}%
\endgroup%
 
         \caption{An element of $Z(\B)$}
         \label{fig:delta}
     \end{subfigure}
\caption{Examples of nested circles giving elements in $Z(\A)$ and $Z(\B)$.}
\label{fig_circlesAB}
\end{figure}

More generally, an element $z\in Z(\A)$ can be represented by  a dot labelled by $z$ floating in a region labelled by $\A$. Elements of $Z(\A)$ commute and can float past each other and anywhere in the region labelled $\A$, but generally can not cross the  lines describing the identity maps of $F$  and $G$ and the biadjointness morphisms. 

Wrapping  a clockwise circle around $z\in  Z(\A)$ is the trace map $Z(\A)\lra Z(\B)$ associated to the biadjoint pair $(F,G)$, see  Figure~\ref{fig1_5}, and the discussion in Section \ref{sec_transfer} around \eqref{eq_trace_map}.  Wrapping a counterclockwise circle around  $z'\in Z(\B)$ is the trace map $Z(\B)\lra  Z(\A)$. One reference for trace maps is~\cite{B}. 

\begin{figure}[htb]
\begin{center}
 \import{Graphics/}{traces.pdf_tex}
\caption{The trace maps $Z(\A)\lra Z(\B)$ and  $Z(\B)\lra Z(\A)$.}
\label{fig1_5}
\end{center}
\end{figure}

\begin{remark*}
If the categories $\mcA$ and  $\mcB$ happen to coincide, so that  $F,G$ are endofunctors  of $\mcA$, then all regions are colored by $\mcA$, and this label can be removed. Instead, orientations of  lines are then used to differentiate between $F$ and  $G$  and their corresponding transformations.  This allows diagrams for natural 
transformations  between arbitrary, not just alternating, compositions of functors $F$ and $G$, such as $FFGGGF$, etc. 
\end{remark*}


\subsection{Diagrammatics for a self-adjoint functor and the category \texorpdfstring{$\wU$}{U}}\label{sec-wU}
\quad

\smallskip

Following Do\v{s}en and Petri\'c~\cite{DP1,DP2}, in this section we explain how a  self-adjoint  functor   gives rise to a monoidal category $\wU$ described by collections  of circles and arcs in the plane, up  to rel boundary isotopies. We call such  collections \emph{$\wU$-diagrams} or \emph{arc and circle diagrams}.

\ssubsection{Self-adjoint  functor diagrammatics.}
Suppose that an endofunctor $F\colon  \mcA\lra \mcA$ on a category $\mcA$ is self-adjoint. This means that natural isomorphisms 
\begin{equation}
    \HomA(FM,N) \cong \HomA(M,FN), \ \  M,N\in\ObA
\end{equation}
have been fixed, over all pairs of objects in $\mcA$. Equivalently, one fixes natural transformations  
\begin{equation}
    \delta\colon \Ide_{\A}\Longrightarrow FF , \ \  \mu\colon FF\Longrightarrow\Ide_{\A},
\end{equation}
subject to the conditions
\begin{equation}\label{eq_cond}
    (1_F \, \mu )\circ (\delta \,1_F) = 1_F, \ \  ( \mu \, 1_F)\circ (1_F\, \delta) =1_F.
\end{equation}

\begin{figure}
     \centering
     \begin{subfigure}[htb]{0.2\textwidth}
\begingroup%
  \makeatletter%
  \providecommand\color[2][]{%
    \errmessage{(Inkscape) Color is used for the text in Inkscape, but the package 'color.sty' is not loaded}%
    \renewcommand\color[2][]{}%
  }%
  \providecommand\transparent[1]{%
    \errmessage{(Inkscape) Transparency is used (non-zero) for the text in Inkscape, but the package 'transparent.sty' is not loaded}%
    \renewcommand\transparent[1]{}%
  }%
  \providecommand\rotatebox[2]{#2}%
  \newcommand*\fsize{\dimexpr\f@size pt\relax}%
  \newcommand*\lineheight[1]{\fontsize{\fsize}{#1\fsize}\selectfont}%
  \ifx\svgwidth\undefined%
    \setlength{\unitlength}{45.00000283bp}%
    \ifx\svgscale\undefined%
      \relax%
    \else%
      \setlength{\unitlength}{\unitlength * \real{\svgscale}}%
    \fi%
  \else%
    \setlength{\unitlength}{\svgwidth}%
  \fi%
  \global\let\svgwidth\undefined%
  \global\let\svgscale\undefined%
  \makeatother%
  \begin{picture}(1,1.55455641)%
    \lineheight{1}%
    \setlength\tabcolsep{0pt}%
    \put(0.41573097,1.35195243){\color[rgb]{0,0,0}\makebox(0,0)[lt]{\smash{\begin{tabular}[t]{l}$F$\end{tabular}}}}%
    \put(0,0){\includegraphics[width=\unitlength,page=1]{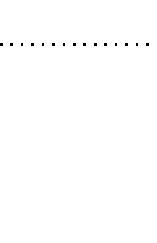}}%
    \put(0.41560265,0.01959638){\color[rgb]{0,0,0}\makebox(0,0)[lt]{\smash{\begin{tabular}[t]{l}$F$\end{tabular}}}}%
    \put(0,0){\includegraphics[width=\unitlength,page=2]{idF2.pdf}}%
  \end{picture}%
\endgroup%
 
      \centering
         \caption{$1_F$}
         \label{fig:idF_2}
     \end{subfigure}
     \begin{subfigure}[htb]{0.2\textwidth}
        \centering
\begingroup%
  \makeatletter%
  \providecommand\color[2][]{%
    \errmessage{(Inkscape) Color is used for the text in Inkscape, but the package 'color.sty' is not loaded}%
    \renewcommand\color[2][]{}%
  }%
  \providecommand\transparent[1]{%
    \errmessage{(Inkscape) Transparency is used (non-zero) for the text in Inkscape, but the package 'transparent.sty' is not loaded}%
    \renewcommand\transparent[1]{}%
  }%
  \providecommand\rotatebox[2]{#2}%
  \newcommand*\fsize{\dimexpr\f@size pt\relax}%
  \newcommand*\lineheight[1]{\fontsize{\fsize}{#1\fsize}\selectfont}%
  \ifx\svgwidth\undefined%
    \setlength{\unitlength}{73.78704627bp}%
    \ifx\svgscale\undefined%
      \relax%
    \else%
      \setlength{\unitlength}{\unitlength * \real{\svgscale}}%
    \fi%
  \else%
    \setlength{\unitlength}{\svgwidth}%
  \fi%
  \global\let\svgwidth\undefined%
  \global\let\svgscale\undefined%
  \makeatother%
  \begin{picture}(1,0.95710789)%
    \lineheight{1}%
    \setlength\tabcolsep{0pt}%
    \put(0,0){\includegraphics[width=\unitlength,page=1]{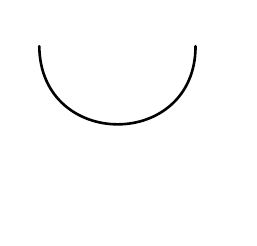}}%
    \put(0.10270737,0.83354708){\color[rgb]{0,0,0}\makebox(0,0)[lt]{\smash{\begin{tabular}[t]{l}$F$\end{tabular}}}}%
    \put(0.71277671,0.83354708){\color[rgb]{0,0,0}\makebox(0,0)[lt]{\smash{\begin{tabular}[t]{l}$F$\end{tabular}}}}%
    \put(0,0){\includegraphics[width=\unitlength,page=2]{delta.pdf}}%
    \put(0.35419728,0.01917734){\color[rgb]{0,0,0}\makebox(0,0)[lt]{\smash{\begin{tabular}[t]{l}$\Ide_\A$\end{tabular}}}}%
  \end{picture}%
\endgroup%
 
         \caption{$\delta$}
         \label{fig:delta3}
     \end{subfigure}
     \begin{subfigure}[htb]{0.2\textwidth}
        \centering
\begingroup%
  \makeatletter%
  \providecommand\color[2][]{%
    \errmessage{(Inkscape) Color is used for the text in Inkscape, but the package 'color.sty' is not loaded}%
    \renewcommand\color[2][]{}%
  }%
  \providecommand\transparent[1]{%
    \errmessage{(Inkscape) Transparency is used (non-zero) for the text in Inkscape, but the package 'transparent.sty' is not loaded}%
    \renewcommand\transparent[1]{}%
  }%
  \providecommand\rotatebox[2]{#2}%
  \newcommand*\fsize{\dimexpr\f@size pt\relax}%
  \newcommand*\lineheight[1]{\fontsize{\fsize}{#1\fsize}\selectfont}%
  \ifx\svgwidth\undefined%
    \setlength{\unitlength}{73.89416297bp}%
    \ifx\svgscale\undefined%
      \relax%
    \else%
      \setlength{\unitlength}{\unitlength * \real{\svgscale}}%
    \fi%
  \else%
    \setlength{\unitlength}{\svgwidth}%
  \fi%
  \global\let\svgwidth\undefined%
  \global\let\svgscale\undefined%
  \makeatother%
  \begin{picture}(1,0.90173871)%
    \lineheight{1}%
    \setlength\tabcolsep{0pt}%
    \put(0,0){\includegraphics[width=\unitlength,page=1]{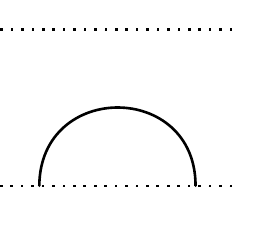}}%
    \put(0.35368384,0.84004787){\color[rgb]{0,0,0}\makebox(0,0)[lt]{\smash{\begin{tabular}[t]{l}$\Ide_\A$\end{tabular}}}}%
    \put(0.10437172,0.02386754){\color[rgb]{0,0,0}\makebox(0,0)[lt]{\smash{\begin{tabular}[t]{l}$F$\end{tabular}}}}%
    \put(0.71319306,0.02567975){\color[rgb]{0,0,0}\makebox(0,0)[lt]{\smash{\begin{tabular}[t]{l}$F$\end{tabular}}}}%
  \end{picture}%
\endgroup%

         \caption{$\mu$}
         \label{fig:mu}
     \end{subfigure}
\caption{Identity  transformation $1_F$ and generating natural  transformations $\delta$ and $\mu$ for a self-adjoint functor. Every region of these diagram is labelled  by the category $\A$.}
\label{fig2_1}
\end{figure}

\begin{figure}[htb]
\begin{center}
 \import{Graphics/}{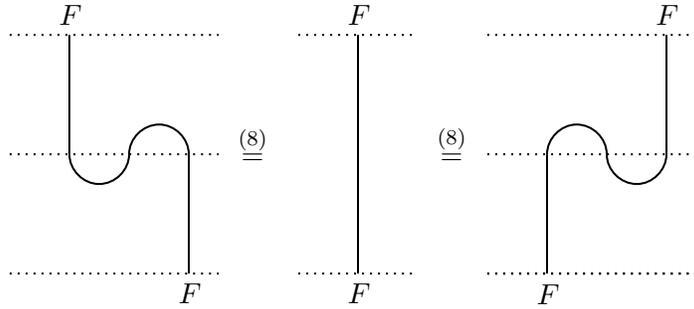}
\caption{Defining relations of self-adjointness are the isotopy relations.}
\label{fig2_2}
\end{center}
\end{figure}

\begin{figure}[htb]
\begin{center}
\begingroup%
  \makeatletter%
  \providecommand\color[2][]{%
    \errmessage{(Inkscape) Color is used for the text in Inkscape, but the package 'color.sty' is not loaded}%
    \renewcommand\color[2][]{}%
  }%
  \providecommand\transparent[1]{%
    \errmessage{(Inkscape) Transparency is used (non-zero) for the text in Inkscape, but the package 'transparent.sty' is not loaded}%
    \renewcommand\transparent[1]{}%
  }%
  \providecommand\rotatebox[2]{#2}%
  \newcommand*\fsize{\dimexpr\f@size pt\relax}%
  \newcommand*\lineheight[1]{\fontsize{\fsize}{#1\fsize}\selectfont}%
  \ifx\svgwidth\undefined%
    \setlength{\unitlength}{198.75001039bp}%
    \ifx\svgscale\undefined%
      \relax%
    \else%
      \setlength{\unitlength}{\unitlength * \real{\svgscale}}%
    \fi%
  \else%
    \setlength{\unitlength}{\svgwidth}%
  \fi%
  \global\let\svgwidth\undefined%
  \global\let\svgscale\undefined%
  \makeatother%
  \begin{picture}(1,0.70633817)%
    \lineheight{1}%
    \setlength\tabcolsep{0pt}%
    \put(0,0){\includegraphics[width=\unitlength,page=1]{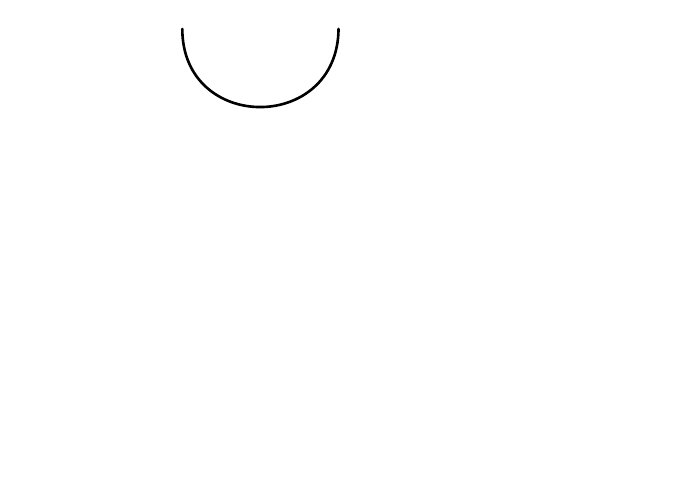}}%
    \put(0.24534085,0.68340185){\color[rgb]{0,0,0}\makebox(0,0)[lt]{\smash{\begin{tabular}[t]{l}$F$\end{tabular}}}}%
    \put(0.47183248,0.68340185){\color[rgb]{0,0,0}\makebox(0,0)[lt]{\smash{\begin{tabular}[t]{l}$F$\end{tabular}}}}%
    \put(0,0){\includegraphics[width=\unitlength,page=2]{expldiag2.pdf}}%
    \put(0.18879866,0.0053899){\color[rgb]{0,0,0}\makebox(0,0)[lt]{\smash{\begin{tabular}[t]{l}$F$\end{tabular}}}}%
    \put(0,0){\includegraphics[width=\unitlength,page=3]{expldiag2.pdf}}%
    \put(0.1885837,0.68340185){\color[rgb]{0,0,0}\makebox(0,0)[lt]{\smash{\begin{tabular}[t]{l}$F$\end{tabular}}}}%
    \put(0.52884267,0.00443691){\color[rgb]{0,0,0}\makebox(0,0)[lt]{\smash{\begin{tabular}[t]{l}$F$\end{tabular}}}}%
    \put(0,0){\includegraphics[width=\unitlength,page=4]{expldiag2.pdf}}%
    \put(0.52935093,0.68248695){\color[rgb]{0,0,0}\makebox(0,0)[lt]{\smash{\begin{tabular}[t]{l}$F$\end{tabular}}}}%
    \put(0,0){\includegraphics[width=\unitlength,page=5]{expldiag2.pdf}}%
  \end{picture}%
\endgroup%

\caption{A natural transformation from $F^2$ to  $F^4$ described by a diagram of 3 arcs and 7 embedded circles. Any such  system can be further transformed via isotopies into a form where each  circle has a unique maximum and minimum for the projection on  the $y$-axis and each arc has at most  one such extremum (none if it connects a top $F$  with a  bottom $F$).}
\label{fig2_3}
\end{center}
\end{figure}

Using the natural transformations $1_F,\delta,\mu$ and their  planar compositions  one can  build various natural transformations $F^n\Longrightarrow F^m$, $n,m\ge 0$ between powers of $F$, including the case $n=0$ or $m=0$, where the  corresponding functor is $F^0=\Ide_{\A}$. 
These natural transformations from $F^n$ to $F^m$  are encoded  by isotopy classes  of collections of  $\frac{n+m}{2}$ properly embedded arcs and finitely many circles in the strip $\R\times[0,1]$ of the plane, see Figure~\ref{fig2_3} for an example  of  a natural transformation from  $F^2$ to $F^4$. 

\vspace{0.1in}

Specializing to $n=m=0$, each diagram of nested circles in the plane defines an element of the center of $\A$. Wrapping a circle around such a diagram is the trace map, as in Figure~\ref{fig1_5}, where now we do not orient the circles.

\ssubsection{The monoidal category $\wU$.}
Denote by $\wU^m_n$ the set of isotopy classes of planar diagrams discussed above, with $\frac{n+m}{2}$ arcs connecting in pairs $n$ points on the  bottom  line and  $m$ points on the top line and some number of circles (possibly none). There is an associative composition
\begin{equation}\label{Ucomp}
    \wU^k_m \times \wU^m_n \lra \wU^k_n 
\end{equation}
given by stacking and concatenating two digrams along their common $m$ boundary points, see Figure~\ref{fig2_4} for an example. 

\begin{figure}[htb]
\begin{center}
 \import{Graphics/}{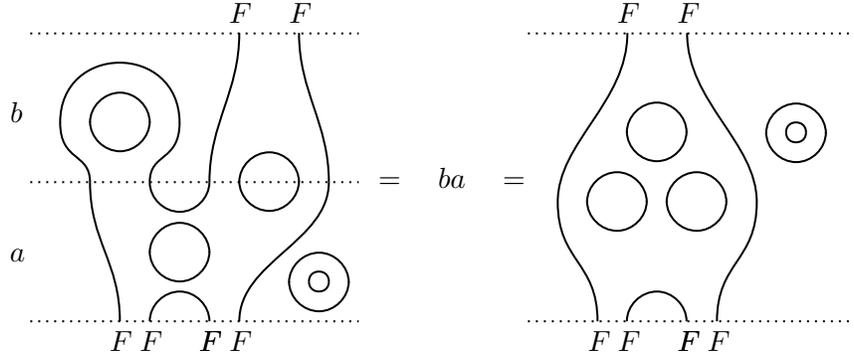}
\caption{The composition $ba$ of diagrams $b$ and $a$.}
\label{fig2_4}
\end{center}
\end{figure}

The composition in \eqref{Ucomp} is associative. We  can form a category $\wU$ having non-negative  integers $n$ as objects and  morphisms from $n$  to $m$ given by elements  of $\wU^m_n$.  The unit  morphism  $1_n$ is given by  the diagram of $n$ vertical arcs.  

Denote by $\wU^m_n(k)$ the set of diagrams  in $\wU^m_n$ with $k$ circles. Then
\begin{equation}
    \wU^m_n = \bigsqcup_{k\ge 0} \wU^m_n(k). 
\end{equation}

The category $\wU$ is strict monoidal, with the tensor product given on  objects  by $n\otimes m = n+m$ and  on morphisms by  stacking them next to each other on the plane. This monoidal structure is  rigid, with  self-dual objects  $n^{\ast}\cong n$. In fact, the category $\wU$  is  free as a strict monoidal category over the self-dual object $1$ \cite[Proposition~9.4]{Del}. We note that the category  $\wU$ is not symmetric.  

\vspace{0.1in}

A self-adjointness datum $(\delta,\mu)$ for the endofunctor $F$ of $\mcA$ as above  gives a monoidal functor 
\begin{equation}\label{eq_mon_fun} 
    \mathcal{F}\colon  \wU  \lra \mcEnd(\mcA)
\end{equation}
from the category $\wU$  to the monoidal category of endofunctors  of $\mcA$ that assigns $F^n$  to the object $n$ of $\wU$ and natural transformations $\delta$ and  $\mu$ to the cup and  cap  diagrams as in Figure~\ref{fig2_1}. Conversely, a monoidal functor as in (\ref{eq_mon_fun}) determines a  functor  $F=\mathcal{F}(1)$ and biadjointness data $(\delta,\mu)$ by applying $\mathcal{F}$ to the cup and cap diagrams in $\wU_0^2$ and $\wU_2^0$, correspondingly.

\ssubsection{The forgetful functor to the category $\wB$ of crossingless  matchings.}
Given a $\wU$-diagram as studied above, forgetting the circles gives us  a crossingless (planar) matching  of $n$ points  on the  bottom  and  $m$  points on top of the strip. Denote by $\wB^m_n$ the set of such matchings (as usual, we choose one representative  diagram from the  corresponding isotopy  class for each matching). We refer to an element of $\wB_n^m$ as a \emph{$\wB$-diagram}, or \emph{arc diagram}.

\vspace{0.1in}

We denote by $\wB$ the category, with objects $n\in\Z_+$, morphisms from
$n$ to $m$  in $\wB$ given by  the set $\wB_n^m$ of crossingless matching diagrams in  the strip, and   composition of morphisms  given by  concatenation of diagrams followed by removal of any circles that  may  appear. The category $\wB$ is rigid monoidal, similarly to $\wU$, with tensor product $n\otimes m=n+m$.

\vspace{0.1in}

Given a diagram $u\in \wU_n^m$ denote by $\mcr(u)$ the number of circles in $u$ and by $\arc(u)\in \wB^m_n$ the diagram obtained by  removing circles from $u$.
For $u\in \wU^m_n(k)$ we have $\mcr(u)=k$. 
The  map $\arc$ that  forgets the circles, 
\begin{equation}
    \arc \colon  \wU_n^m \lra\wB^m_n,
\end{equation}
extends to a functor
\begin{equation}\label{UBforget}
    \arc \colon  \wU \lra \wB.
\end{equation}
In other words, the functor $\arc$ turns an arc-circle diagram into an arc diagram by removing all circles. The functor $\arc$ is monoidal, full, and a bijection on objects $n\in\Z_+$ of both categories.  We refer the reader to \cite[Section~2.2]{Kh1} for details on the category $\wB$ and the functor $\arc$.

\ssubsection{Temperley--Lieb categories}

Let $R$ be a commutative ring. The monoidal category $\wB$ can  be linearized   by  forming arbitrary  linear combinations  of morphisms from $n$ to $m$ with coefficients in $R$. The resulting category  $R\wB$ is equivalent to the \emph{Temperley--Lieb  category} $TL(d)$, where  the value $d$ of the  circle is one, 
\begin{equation}
    R\wB \cong TL(1). 
\end{equation}
More generally, the Temperley--Lieb category $TL(d)$, for $d\in R$, is the pre-additive $R$-linear monoidal  category
with  objects  $n\in\Z_+$  and morphisms from $n$  to $m$ being $R$-linear combinations of crossingless matchings in $\wB^m_n$. Upon  composition (concatenation) of two  matchings each resulting circle  is removed simultaneously with  multiplying the remaining expression by $d\in R$ for each removed circle. 

The endomorphism rings $TL_n(d)=\End_{TL(d)}(n)$ are the \emph{Temperley--Lieb algebras} \cite{Jo1,Jo4,Jo5}. These algebras have interesting connections to knot theory, quantum groups, and statistical mechanics, see \cite{Jo2,Jo5,Jo6,Ka} and references therein. One usually specializes  $R$  to a field $\kk$ (often $\C$ of $\Q(q)$) and sets $d=\pm(q+q^{-1})$ or a similar expression, see \cite{CFS,KaL}.

We also linearize the category $\wU$ to  a category $R\wU$ by keeping the same objects  $n\in\Z_+$ and allowing arbitrary finite $R$-linear  combinations of morphisms. The category $R\wU$ is a pre-additive monoidal $R$-linear category. Picking $d\in R$ gives  a monoidal functor  
\begin{equation}
    \arc_d \ \colon   \ R\wU \lra TL(d)
\end{equation}
that takes a diagram in $\wU$ and evaluates each circle to $d$  while keeping the  collection of  arcs the same. A diagram $u\in \wU^m_n(k)$ with  $k$ circles and underlying diagram $\arc(u)\in\wB_n^m$ of arcs  goes under  $\arc_d$ to $d^k \, \arc(u)$: 
\begin{equation}
    \arc_d (u) =   d^{\,\mcr(u)}\,\arc(u)= d^{\,k}\, \arc(u),
\end{equation}
where we denote by $\mcr(u)$  the  number of circles in $u$. 

The functor $\arc_d$ forgets an enormous amount of information, since to evaluate it on a diagram $u\in \wU^m_n(k)$ one only needs to know the underlying arc diagram and the  number of circles $k$ in $u$. In this paper we explore more subtle ways to linearize $\wU$ and produce functors from it to categories with finite-dimensional morphism spaces.

\ssubsection{Circle diagrams.}
Endomorphisms $\End_{\wU}(0)=\wU^0_0$ of  the unit object $0$ of $\wU$ constitute a commutative monoid. 
Elements of $\wU^0_0$ are isotopy classes of finite collections of circles  in plane (it is convenient to fix a representative  for each isotopy class). 
Do\v{s}en and Petri\'c~\cite{DP1,DP2}   call elements  of $\wU^0_0$ \emph{circular forms}. We will also call them \emph{circle diagrams} or \emph{closed $\wU$-diagrams}.
Reflection in the plane takes any closed diagram to  itself, see Proposition~\ref{prop_refl}.

Notice  that the monoid $\wU^0_0$ is commutative since we can slide one group of circles  past the other next to it. The monoid $\wU^0_0$ is isomorphic to the free commutative  monoid on  the following countable set $\wUcirc$. 

 We call a circle $c$ in $u\in \wU^0_0$ \emph{exterior} (or \emph{outer}) if it borders the infinite region of the diagram $u$. Equivalently, a circle $c$ is exterior if  it can be connected to the boundary of the strip $\R\times[0,1]$ by an arc disjoint from other circles.  
In the diagram in Figure~\ref{fig2_5} there are four exterior circles, each marked by the letter $e$ next to  it.

\begin{figure}[htb]
\begin{center}
\begingroup%
  \makeatletter%
  \providecommand\color[2][]{%
    \errmessage{(Inkscape) Color is used for the text in Inkscape, but the package 'color.sty' is not loaded}%
    \renewcommand\color[2][]{}%
  }%
  \providecommand\transparent[1]{%
    \errmessage{(Inkscape) Transparency is used (non-zero) for the text in Inkscape, but the package 'transparent.sty' is not loaded}%
    \renewcommand\transparent[1]{}%
  }%
  \providecommand\rotatebox[2]{#2}%
  \newcommand*\fsize{\dimexpr\f@size pt\relax}%
  \newcommand*\lineheight[1]{\fontsize{\fsize}{#1\fsize}\selectfont}%
  \ifx\svgwidth\undefined%
    \setlength{\unitlength}{96.09571642bp}%
    \ifx\svgscale\undefined%
      \relax%
    \else%
      \setlength{\unitlength}{\unitlength * \real{\svgscale}}%
    \fi%
  \else%
    \setlength{\unitlength}{\svgwidth}%
  \fi%
  \global\let\svgwidth\undefined%
  \global\let\svgscale\undefined%
  \makeatother%
  \begin{picture}(1,0.82951556)%
    \lineheight{1}%
    \setlength\tabcolsep{0pt}%
    \put(0,0){\includegraphics[width=\unitlength,page=1]{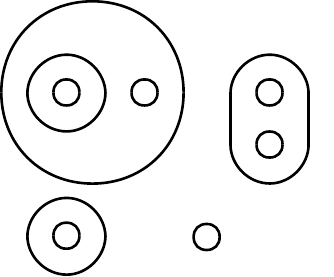}}%
    \put(0.5112089,0.74756593){\color[rgb]{0,0,0}\makebox(0,0)[lt]{\smash{\begin{tabular}[t]{l}$e$\end{tabular}}}}%
    \put(0.92485926,0.63049515){\color[rgb]{0,0,0}\makebox(0,0)[lt]{\smash{\begin{tabular}[t]{l}$e$\end{tabular}}}}%
    \put(0.3160911,0.00611776){\color[rgb]{0,0,0}\makebox(0,0)[lt]{\smash{\begin{tabular}[t]{l}$e$\end{tabular}}}}%
    \put(0.69734845,0.08026958){\color[rgb]{0,0,0}\makebox(0,0)[lt]{\smash{\begin{tabular}[t]{l}$e$\end{tabular}}}}%
  \end{picture}%
\endgroup%
 
\caption{A diagram $u\in \wU^0_0$ with four exterior circles each labelled  by the letter  $e$.}
\label{fig2_5}
\end{center}
\end{figure}

Denote  by  $\wUcirc$ the subset  of $\wU^0_0$ consisting of  diagrams with  only one exterior circle. Elements of  $\wUcirc$  may be called  \emph{$\circ$-diagrams} or \emph{outer diagrams}.
Note  that  the empty  diagram $\emptyset$  with  no circles (which is the  unit element  of the monoid  $\End_{\wU}(0)$) is not in $\wUcirc$.

A diagram $u\in  \wU^0_0$ is in $\wUcirc$  if and only if it has exactly one exterior circle. Let 
\begin{equation}\label{eq_def_omega_1} 
    \omega \ \colon  \ \wU^0_0 \lra \wUcirc
\end{equation}
be a bijection of sets  that  takes a closed  diagram and  wraps a circle around it, making it a diagram in $\wUcirc$ (a  $\circ$-diagram). Starting with the empty diagram $\emptyset$ and iteratively applying $\omega$ and forming unions of diagrams one can generate  any diagram in $\wU^0_0$, see Figure~\ref{fig2_4_2} for  examples. 

\begin{figure}[htb]
\begin{center}
\begingroup%
  \makeatletter%
  \providecommand\color[2][]{%
    \errmessage{(Inkscape) Color is used for the text in Inkscape, but the package 'color.sty' is not loaded}%
    \renewcommand\color[2][]{}%
  }%
  \providecommand\transparent[1]{%
    \errmessage{(Inkscape) Transparency is used (non-zero) for the text in Inkscape, but the package 'transparent.sty' is not loaded}%
    \renewcommand\transparent[1]{}%
  }%
  \providecommand\rotatebox[2]{#2}%
  \newcommand*\fsize{\dimexpr\f@size pt\relax}%
  \newcommand*\lineheight[1]{\fontsize{\fsize}{#1\fsize}\selectfont}%
  \ifx\svgwidth\undefined%
    \setlength{\unitlength}{233.22579539bp}%
    \ifx\svgscale\undefined%
      \relax%
    \else%
      \setlength{\unitlength}{\unitlength * \real{\svgscale}}%
    \fi%
  \else%
    \setlength{\unitlength}{\svgwidth}%
  \fi%
  \global\let\svgwidth\undefined%
  \global\let\svgscale\undefined%
  \makeatother%
  \begin{picture}(1,0.16400529)%
    \lineheight{1}%
    \setlength\tabcolsep{0pt}%
    \put(0,0){\includegraphics[width=\unitlength,page=1]{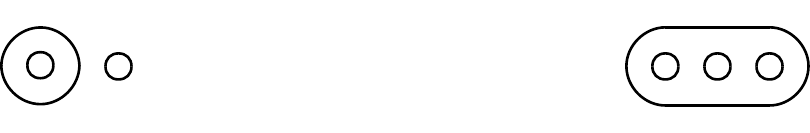}}%
    \put(0.21079856,0.06592381){\color[rgb]{0,0,0}\makebox(0,0)[lt]{\smash{\begin{tabular}[t]{l}$,$\end{tabular}}}}%
    \put(0,0){\includegraphics[width=\unitlength,page=2]{omegadiags.pdf}}%
    \put(0.66230366,0.06592381){\color[rgb]{0,0,0}\makebox(0,0)[lt]{\smash{\begin{tabular}[t]{l}$,$\end{tabular}}}}%
  \end{picture}%
\endgroup%
 
\caption{Diagrams $\omega^2(\emptyset)\omega(\emptyset),   \omega(\omega(\omega(\emptyset)^2)\omega(\emptyset)), $ and $\omega^2(\omega(\emptyset)^3)$ of $\wU^0_0$. The first diagram is not in $\wUcirc$ while the second and third diagrams are.}
\label{fig2_4_2}
\end{center}
\end{figure}

\ssubsection{Reflection involutions on $\wU$.}
The category $\wU$ carries an involution $\rho_v$, referred to as \emph{vertical reflection}, that takes the object $n$ to $n$  and reflects a diagram about a vertical axis. The category $\wU$  also carries a contravariant involution $\rho_h$, called \emph{horizontal reflection}, also denoted  by the  bar symbol, taking  $a$ to $\overline{a}$. The latter takes $n$  to $n$ and reflects a diagram about a horizontal line. 
These  relections give monoidal functors 
\begin{equation}\label{reflections}
    \rho_h \colon (\wU,\otimes)\to (\wU^{\oop},\otimes), \quad \quad     \rho_v \colon (\wU,\otimes)\to (\wU,\otimes^{\oop}),
\end{equation}
where $f\otimes^{\oop} g=g\otimes f$ for morphisms.

Note that the involutions $\rho_v$ and $\rho_h$ commute (in the strict sense). Furthermore, on a  closed diagram (a diagram in  $\wU^0_0$),  horizontal  and vertical reflection have the same effect, since we  consider  diagrams up to isotopies, so that $\rho_v = \rho_h$ on endomorphisms of $0$. In fact, a stronger statement holds. 

\begin{prop} \label{prop_refl}
Horizontal refection (and hence also vertical reflection) is the identity on the set $\wU^0_0$ of (isomorphism classes) of circle diagrams.
\end{prop}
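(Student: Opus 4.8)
The plan is to show that the horizontal reflection $\rho_h$ acts as the identity on $\wU^0_0$ by induction on the number of circles $\mcr(u)$, exploiting the generation of $\wU^0_0$ already established: every closed diagram is obtained from the empty diagram $\emptyset$ by iterating the wrapping map $\omega$ and taking disjoint unions. Since reflection about a horizontal line is a homeomorphism of the strip $\R\times[0,1]$, it carries a disjoint collection of circles to a disjoint collection of circles, so $\rho_h$ is a homomorphism of the commutative monoid $\wU^0_0$, namely $\rho_h(u_1 u_2)=\rho_h(u_1)\,\rho_h(u_2)$, and it visibly fixes the unit $\emptyset$.

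The two facts I would isolate are: (i) $\rho_h$ preserves the circle count, $\mcr(\rho_h(u))=\mcr(u)$, because reflection is a bijection on the set of circles of a diagram; and (ii) $\rho_h$ commutes with $\omega$, that is $\rho_h(\omega(w))=\omega(\rho_h(w))$ for all $w\in\wU^0_0$. For (ii), the diagram $\omega(w)$ is $w$ placed inside a single enclosing circle $C$; reflecting sends this to $\rho_h(w)$ placed inside the simple closed curve $\rho_h(C)$, which still encloses $\rho_h(w)$. As $\omega$ is the bijection \eqref{eq_def_omega_1}, well-defined on isotopy classes and depending only on the isotopy class of the enclosed diagram, any enclosing circle around $\rho_h(w)$ yields the same element $\omega(\rho_h(w))$, which gives (ii).

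With these in hand the induction is immediate. The base case $u=\emptyset$ holds since $\rho_h(\emptyset)=\emptyset$. For $\mcr(u)\ge 1$, write $u=v_1\cdots v_r$ as a product of $\circ$-diagrams $v_i\in\wUcirc$, one for each exterior circle, and set $v_i=\omega(w_i)$ where $w_i$ is $v_i$ with its exterior circle deleted, so $\mcr(w_i)<\mcr(u)$. Then
\begin{equation*}
\rho_h(u)=\prod_{i=1}^r \rho_h(\omega(w_i))=\prod_{i=1}^r \omega(\rho_h(w_i))=\prod_{i=1}^r \omega(w_i)=u,
\end{equation*}
using the monoid homomorphism property, fact (ii), and the inductive hypothesis applied to each $w_i$.

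The only genuinely conceptual point, and the step I expect to demand the most care, is that reflection reverses orientation of the plane, so a priori it could alter the diagram. The resolution is that the isotopy class of a closed $\wU$-diagram is recorded entirely by its unordered nesting forest, equivalently by the free commutative monoid description on $\wUcirc$, and this combinatorial datum carries no orientation information: reflection preserves the inside/outside relation among circles and merely permutes sibling circles, which are unordered in $\wU^0_0$ since they slide past one another. Fact (ii) is the local manifestation of this, and it is precisely there that one must verify that replacing the enclosing circle $C$ by its mirror image $\rho_h(C)$ changes nothing up to isotopy.
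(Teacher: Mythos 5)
Your proof is correct and rests on the same structural induction as the paper's: both arguments use the fact that every diagram in $\wU^0_0$ is generated from $\emptyset$ by disjoint union and the wrapping map $\omega$. The only difference is packaging --- the paper exhibits, for each isotopy class, a representative that is literally invariant under the reflection (each circle meeting the horizontal axis in exactly two points), whereas you verify that $\rho_h$ is a monoid endomorphism fixing $\emptyset$ and commuting with $\omega$, hence the identity on the monoid these operations generate; the two are logically equivalent dressings of the same induction, and your key step (ii) is justified exactly as you say, since any embedded circle enclosing $\rho_h(w)$ represents the same isotopy class $\omega(\rho_h(w))$.
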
 

\begin{proof}
To prove this, we observe that any diagram in $\wU^0_0$ can be represented  by a collection of circles in the  plane such  that each circle has exactly two (generic) intersection points with the chosen horizontal axis, see an example in  Figure~\ref{fig2_4_0}. This is easy to show by induction on the number  of circles  in the  diagram. The property clearly holds for the empty diagram and  a one-circle diagrams. If it  holds for diagrams $u_1,u_2$, it holds  for  their union, since one  can place $u_1,u_2$ along the horizontal axis as required and  away from each other. If the discussed symmetry property holds for a diagram $u$, it holds for the diagram $\omega(u)$, the diagram given  by placing  the circle (which is itself symmetric about the horizontal axis) around $u$.
All diagrams in $\wU_0^0$ can be constructed inductively using disjoint union and the operation $\omega$. Thus, the proposition  follows. 
\end{proof}

\begin{figure}[htb]
\begin{center}
 \import{Graphics/}{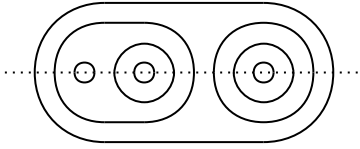}
\caption{A diagram invariant  under the reflection  about the (dotted) horizontal axis.}
\label{fig2_4_0}
\end{center}
\end{figure}

\ssubsection{Diagrams in the disk and annulus}

A diagram $u\in \wU^{n}_m$ can alternatively be described as a diagram in a disk $D^2$ with $n+m$ marked points on the outer circle.
To remember that $x$ comes from a morphism from $n$ to $m$, we can arrange $n$ points to be on the lower half-circle of the disk and $m$ points on the upper half-circle --- see Figure \ref{fig2_4_0a} for an example.  

\begin{figure}[htb]
\begin{center}
\begingroup%
  \makeatletter%
  \providecommand\color[2][]{%
    \errmessage{(Inkscape) Color is used for the text in Inkscape, but the package 'color.sty' is not loaded}%
    \renewcommand\color[2][]{}%
  }%
  \providecommand\transparent[1]{%
    \errmessage{(Inkscape) Transparency is used (non-zero) for the text in Inkscape, but the package 'transparent.sty' is not loaded}%
    \renewcommand\transparent[1]{}%
  }%
  \providecommand\rotatebox[2]{#2}%
  \newcommand*\fsize{\dimexpr\f@size pt\relax}%
  \newcommand*\lineheight[1]{\fontsize{\fsize}{#1\fsize}\selectfont}%
  \ifx\svgwidth\undefined%
    \setlength{\unitlength}{192.75009484bp}%
    \ifx\svgscale\undefined%
      \relax%
    \else%
      \setlength{\unitlength}{\unitlength * \real{\svgscale}}%
    \fi%
  \else%
    \setlength{\unitlength}{\svgwidth}%
  \fi%
  \global\let\svgwidth\undefined%
  \global\let\svgscale\undefined%
  \makeatother%
  \begin{picture}(1,0.3687918)%
    \lineheight{1}%
    \setlength\tabcolsep{0pt}%
    \put(0,0){\includegraphics[width=\unitlength,page=1]{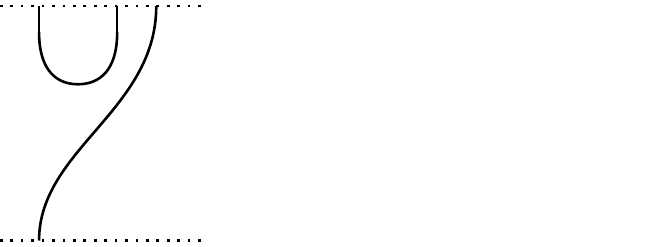}}%
    \put(0.39688636,0.17342152){\color[rgb]{0,0,0}\makebox(0,0)[lt]{\smash{\begin{tabular}[t]{l}$\longrightarrow$\end{tabular}}}}%
    \put(0,0){\includegraphics[width=\unitlength,page=2]{Uzdiagtocirc.pdf}}%
  \end{picture}%
\endgroup%

\caption{An example of representing an element of $\wU_n^m$ as a diagram in the disk with marked circle points.}
\label{fig2_4_0a}
\end{center}
\end{figure}

Take the semi-open annulus $\wA=\R^2\setminus  \mathrm{int}(D^2)$, the complement in $\R^2$ of the  interior of the  disk $D^2$. We refer to $\partial\wA \cong \SS^1\cong \partial D^2$ as the inner circle of $\wA$. 

We introduce the set $\wUout{2n}$ of \emph{outer circle diagrams} in the annulus $\wA$ for $2n$ points on the inner circle of the annulus. 
Outer circle diagrams are isotopy classes of collections of $n$ disjoint arcs in $\wA$ connecting these $2n$  points  in  pairs, together with circular forms (i.e., elements of $\wU_0^0$) that float in the regions of the annulus separated by the arcs. Figure \ref{fig2_4_0b} depicts an example of an outer circle diagram. 
There,  we have placed a mark {\scriptsize$\times$} on the circle between the leftmost top and leftmost bottom point. The mark corresponds to the position  of  the  left  edge of  the box --- it helps to easily separate  top and bottom boundary points, if  needed. From this marking, the arc on the circle corresponding to the position of the right  edge of the box can be  recovered if $n$ is  known.

\begin{figure}[htb]
\begin{center}
 \import{Graphics/}{outcirc.pdf_tex}
\caption{An outer circle diagram in $\wUout{6}$ with marker {\scriptsize$\times$}.}
\label{fig2_4_0b}
\end{center}
\end{figure}


\subsection{Circular triples obtained from wrapping actions  on centers of categories} \label{actionscenters}

\quad
\smallskip

Recall that a self-adjoint endofunctor $F$ (together with a choice of self-adjunction) on a category $\mcA$ gives rise to a monoidal  functor  
\begin{equation}\label{eq_mon_fun_2} 
    \mathcal{F}\colon  \wU  \lra \mcEnd(\mcA),
\end{equation}
see the discussion around formula (\ref{eq_mon_fun}). To a  diagram $a$ in $\wU^m_n$ the functor $\mathcal{F}$ assigns  a natural transformation  $\mcF(a)\colon F^n\rightarrow F^m$.

In the special case when  $a\in  \wU^0_0$, that is, $a$ is a circle diagram (a diagram of circles in the  plane), $\mcF(a)$ is a natural transformation of  the identity functor $\Ide_{\mcA}$, that is, an element of the  center  $Z(\mcA)$ of $\mcA$. 

The center $Z(\mcB)$ of any category $\mcB$ is a commutative monoid  under composition, with the identity  natural transformation  of $\Ide_{\mcB}$ as the  unit element. 
For $\mcA$ and $F$ as above, the commutative monoid $Z(\mcA)$ also carries a map $\omega$ that wraps a circle around an element $z\in  Z(\mcA)$, 
\begin{equation}\label{eqn_wrapping}
    \omega(z) = \mu \circ (1_F z 1_F) \circ \delta,
\end{equation}
as shown in Figure~\ref{fig2_4_1}. 
This map $\omega$ is usually not a monoid homomorphism and does not take the unit $1$ to $1$. 

\begin{figure}[htb]
\begin{center}
$$z\quad \longmapsto \quad \vcenter{\hbox{
\begingroup%
  \makeatletter%
  \providecommand\color[2][]{%
    \errmessage{(Inkscape) Color is used for the text in Inkscape, but the package 'color.sty' is not loaded}%
    \renewcommand\color[2][]{}%
  }%
  \providecommand\transparent[1]{%
    \errmessage{(Inkscape) Transparency is used (non-zero) for the text in Inkscape, but the package 'transparent.sty' is not loaded}%
    \renewcommand\transparent[1]{}%
  }%
  \providecommand\rotatebox[2]{#2}%
  \newcommand*\fsize{\dimexpr\f@size pt\relax}%
  \newcommand*\lineheight[1]{\fontsize{\fsize}{#1\fsize}\selectfont}%
  \ifx\svgwidth\undefined%
    \setlength{\unitlength}{124.51632552bp}%
    \ifx\svgscale\undefined%
      \relax%
    \else%
      \setlength{\unitlength}{\unitlength * \real{\svgscale}}%
    \fi%
  \else%
    \setlength{\unitlength}{\svgwidth}%
  \fi%
  \global\let\svgwidth\undefined%
  \global\let\svgscale\undefined%
  \makeatother%
  \begin{picture}(1,0.6013424)%
    \lineheight{1}%
    \setlength\tabcolsep{0pt}%
    \put(0,0){\includegraphics[width=\unitlength,page=1]{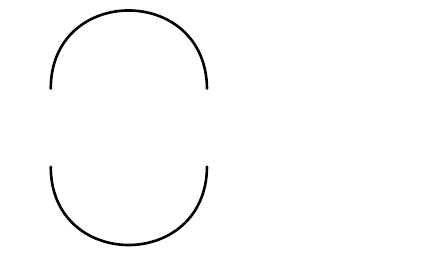}}%
    \put(0.26861879,0.29377855){\color[rgb]{0,0,0}\makebox(0,0)[lt]{\smash{\begin{tabular}[t]{l}$z$\end{tabular}}}}%
    \put(0,0){\includegraphics[width=\unitlength,page=2]{wrap.pdf}}%
    \put(-0.00266657,0.2930793){\color[rgb]{0,0,0}\makebox(0,0)[lt]{\smash{\begin{tabular}[t]{l}$1_F$\end{tabular}}}}%
    \put(0.50785455,0.29383987){\color[rgb]{0,0,0}\makebox(0,0)[lt]{\smash{\begin{tabular}[t]{l}$1_F$\end{tabular}}}}%
    \put(0.41857138,0.57693546){\color[rgb]{0,0,0}\makebox(0,0)[lt]{\smash{\begin{tabular}[t]{l}$\mu$\end{tabular}}}}%
    \put(0.41857138,0.00472139){\color[rgb]{0,0,0}\makebox(0,0)[lt]{\smash{\begin{tabular}[t]{l}$\delta$\end{tabular}}}}%
    \put(0.6655271,0.29383987){\color[rgb]{0,0,0}\makebox(0,0)[lt]{\smash{\begin{tabular}[t]{l}$=$\end{tabular}}}}%
    \put(0.8847328,0.28217761){\color[rgb]{0,0,0}\makebox(0,0)[lt]{\smash{\begin{tabular}[t]{l}$z$\end{tabular}}}}%
    \put(0,0){\includegraphics[width=\unitlength,page=3]{wrap.pdf}}%
  \end{picture}%
\endgroup%
 }}$$
\caption{The ``wrapping" action of $\omega$ on an element  $z\in Z(\mcA)$.}
\label{fig2_4_1}
\end{center}
\end{figure}

The functor $\cF$ restricts to a homorphism of commutative monoids 
$$\cF\colon \wU_0^0\longrightarrow Z(\A).$$
Explicitly, the restriction $\cF$ is constructed as follows. To each  circular form $u\in \wU^0_0$ one assigns an element $\mc{F}(u)$ of $Z(\mcA)$, constructed inductively on the number of  circles in $u$.

\begin{enumerate}
    \item If $u=u_1 u_2 $ is a product of circular forms $u_1,u_2$ (that is, $u$ is given by placing $u_1$ and  $u_2$ next  to  each other), then 
    \begin{equation}
    \mcF(u)=\mcF(u_1)\mcF( u_2).
    \end{equation} 
    \item If  $u$ is given by a diagram $v$  with a  circle wrapped around it  (which we write as $u=\omega(v)$), then
    \begin{equation}
    \mcF(u) = \omega(\mcF(v)).
    \end{equation}
\end{enumerate}

We see  that iterating multiplication in $Z(\mcA)$ with applying the map $\omega$ gives an endomorphism of the identity functor for any circular  diagram, see examples in Figure~\ref{fig2_4_2b}.

\begin{figure}[htb]
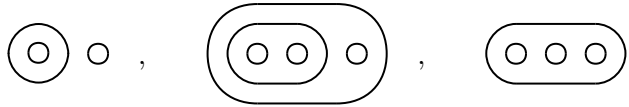

\begin{center}

\caption{The elements $\omega^2(1)\omega(1),$ $\omega(\omega(\omega(1)^2)\omega(1)), $ and $\omega^2(\omega(1)^3)$ of $Z(\mcA)$, cf. Figure \ref{fig2_4_2} where the same diagrams are interpreted in $\wU_0^0$.
}
\label{fig2_4_2b}
\end{center}
\end{figure}

The above diagrammatics  can  be enhanced by choosing  a subset  $S$ of $Z(\mcA)$ (perhaps a set of  generators  or just all elements of $Z(\mcA)$) and adding dots  in the regions of  the plane labelled by elements of $S$. A dot can float in its region but cannot jump into  another  region. Wrapping a circle around a dot $s\in S$ results in the  element  $\omega(s)$, see examples  in Figure~\ref{fig2_4_3}. If  $S$ is multiplicative, we can add the product relation for two dots in  the same region, merging dots labelled by $s_1,s_2$ into a single dot labelled $s_1s_2$, see Figure~\ref{fig2_4_3}.

\begin{figure}
\begin{center}
     \begin{subfigure}[htb]{0.3\textwidth}
      \centering
\begingroup%
  \makeatletter%
  \providecommand\color[2][]{%
    \errmessage{(Inkscape) Color is used for the text in Inkscape, but the package 'color.sty' is not loaded}%
    \renewcommand\color[2][]{}%
  }%
  \providecommand\transparent[1]{%
    \errmessage{(Inkscape) Transparency is used (non-zero) for the text in Inkscape, but the package 'transparent.sty' is not loaded}%
    \renewcommand\transparent[1]{}%
  }%
  \providecommand\rotatebox[2]{#2}%
  \newcommand*\fsize{\dimexpr\f@size pt\relax}%
  \newcommand*\lineheight[1]{\fontsize{\fsize}{#1\fsize}\selectfont}%
  \ifx\svgwidth\undefined%
    \setlength{\unitlength}{23.2872651bp}%
    \ifx\svgscale\undefined%
      \relax%
    \else%
      \setlength{\unitlength}{\unitlength * \real{\svgscale}}%
    \fi%
  \else%
    \setlength{\unitlength}{\svgwidth}%
  \fi%
  \global\let\svgwidth\undefined%
  \global\let\svgscale\undefined%
  \makeatother%
  \begin{picture}(1,0.97353991)%
    \lineheight{1}%
    \setlength\tabcolsep{0pt}%
    \put(0,0){\includegraphics[width=\unitlength,page=1]{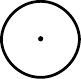}}%
    \put(0.59473074,0.33754141){\color[rgb]{0,0,0}\makebox(0,0)[lt]{\smash{\begin{tabular}[t]{l}$s$\end{tabular}}}}%
    \put(0,0){\includegraphics[width=\unitlength,page=2]{omegas.pdf}}%
  \end{picture}%
\endgroup%

         \caption{$\omega(s)$}
         \label{fig:omegas3}
     \end{subfigure}
     \begin{subfigure}[htb]{0.35\textwidth}
        \centering
\begingroup%
  \makeatletter%
  \providecommand\color[2][]{%
    \errmessage{(Inkscape) Color is used for the text in Inkscape, but the package 'color.sty' is not loaded}%
    \renewcommand\color[2][]{}%
  }%
  \providecommand\transparent[1]{%
    \errmessage{(Inkscape) Transparency is used (non-zero) for the text in Inkscape, but the package 'transparent.sty' is not loaded}%
    \renewcommand\transparent[1]{}%
  }%
  \providecommand\rotatebox[2]{#2}%
  \newcommand*\fsize{\dimexpr\f@size pt\relax}%
  \newcommand*\lineheight[1]{\fontsize{\fsize}{#1\fsize}\selectfont}%
  \ifx\svgwidth\undefined%
    \setlength{\unitlength}{92.43258529bp}%
    \ifx\svgscale\undefined%
      \relax%
    \else%
      \setlength{\unitlength}{\unitlength * \real{\svgscale}}%
    \fi%
  \else%
    \setlength{\unitlength}{\svgwidth}%
  \fi%
  \global\let\svgwidth\undefined%
  \global\let\svgscale\undefined%
  \makeatother%
  \begin{picture}(1,0.73837597)%
    \lineheight{1}%
    \setlength\tabcolsep{0pt}%
    \put(0,0){\includegraphics[width=\unitlength,page=1]{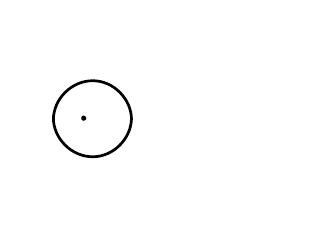}}%
    \put(0.28115694,0.34528775){\color[rgb]{0,0,0}\makebox(0,0)[lt]{\smash{\begin{tabular}[t]{l}$s_1$\end{tabular}}}}%
    \put(0,0){\includegraphics[width=\unitlength,page=2]{omegas2.pdf}}%
    \put(0.51176108,0.34368673){\color[rgb]{0,0,0}\makebox(0,0)[lt]{\smash{\begin{tabular}[t]{l}$s_2$\end{tabular}}}}%
    \put(0,0){\includegraphics[width=\unitlength,page=3]{omegas2.pdf}}%
    \put(0.87674717,0.34165822){\color[rgb]{0,0,0}\makebox(0,0)[lt]{\smash{\begin{tabular}[t]{l}$s_3$\end{tabular}}}}%
    \put(0,0){\includegraphics[width=\unitlength,page=4]{omegas2.pdf}}%
  \end{picture}%
\endgroup%
 
         \caption{$\omega^2(\omega(s_1)s_2)\omega(1)s_3$}
         \label{fig:omegas4}
     \end{subfigure}
     \begin{subfigure}[htb]{0.3\textwidth}
        \centering
\begingroup%
  \makeatletter%
  \providecommand\color[2][]{%
    \errmessage{(Inkscape) Color is used for the text in Inkscape, but the package 'color.sty' is not loaded}%
    \renewcommand\color[2][]{}%
  }%
  \providecommand\transparent[1]{%
    \errmessage{(Inkscape) Transparency is used (non-zero) for the text in Inkscape, but the package 'transparent.sty' is not loaded}%
    \renewcommand\transparent[1]{}%
  }%
  \providecommand\rotatebox[2]{#2}%
  \newcommand*\fsize{\dimexpr\f@size pt\relax}%
  \newcommand*\lineheight[1]{\fontsize{\fsize}{#1\fsize}\selectfont}%
  \ifx\svgwidth\undefined%
    \setlength{\unitlength}{70.481436bp}%
    \ifx\svgscale\undefined%
      \relax%
    \else%
      \setlength{\unitlength}{\unitlength * \real{\svgscale}}%
    \fi%
  \else%
    \setlength{\unitlength}{\svgwidth}%
  \fi%
  \global\let\svgwidth\undefined%
  \global\let\svgscale\undefined%
  \makeatother%
  \begin{picture}(1,0.07807008)%
    \lineheight{1}%
    \setlength\tabcolsep{0pt}%
    \put(0,0){\includegraphics[width=\unitlength,page=1]{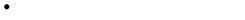}}%
    \put(0.05429194,0.01775495){\color[rgb]{0,0,0}\makebox(0,0)[lt]{\smash{\begin{tabular}[t]{l}$s_1$\end{tabular}}}}%
    \put(0,0){\includegraphics[width=\unitlength,page=2]{s1s2.pdf}}%
    \put(0.26407364,0.01338451){\color[rgb]{0,0,0}\makebox(0,0)[lt]{\smash{\begin{tabular}[t]{l}$s_2$\end{tabular}}}}%
    \put(0.50092476,0.02890012){\color[rgb]{0,0,0}\makebox(0,0)[lt]{\smash{\begin{tabular}[t]{l}$=$\end{tabular}}}}%
    \put(0,0){\includegraphics[width=\unitlength,page=3]{s1s2.pdf}}%
    \put(0.74425336,0.01528466){\color[rgb]{0,0,0}\makebox(0,0)[lt]{\smash{\begin{tabular}[t]{l}$s_1s_2$\end{tabular}}}}%
    \put(0,0){\includegraphics[width=\unitlength,page=4]{s1s2.pdf}}%
  \end{picture}%
\endgroup%

         \caption{Merging dots $s_1s_2$}
         \label{fig:mu3}
     \end{subfigure}
\caption{Examples of enhanced circle diagrams with labels from $S\subseteq Z(\A)$.  If the set $S$ is multiplicative, dots can be merged, as in (c).}
\label{fig2_4_3}
\end{center}
\end{figure}

For a specific category $\mcA$ and a  self-adjoint functor  $F$ these diagrammatics for elements of the center  $Z(\mcA)$ may have additional relations that will depend on the choice $(\mcA,F)$. We will discuss the formal construction of monoidal categories of such diagrams in the next subsection.

\vspace{0.1in} 

Assume now that the category $\mcA$ and  the functor $F$ are  $R$-linear, for a commutative ring  $R$. This means  that morphism spaces in  $\mcA$ are $R$-modules, composition of maps is $R$-bilinear, and $F$ respects the $R$-linear structure of $\mcA$. 

Then the center $Z(\mcA)$ is a commutative $R$-algebra, and the endomorphism  $\omega$ of the center is an $R$-linear map. Usually, $\omega$ does  not commute with the multiplication in $Z=Z(\mcA)$, that is, $\omega(ab)\not= \omega(a)\omega(b), a,b\in Z$. 

\begin{definition}\label{def_circular}
We call  a  triple $(R,Z,\omega)$ of a commutative ring $R$,  a unital commutative $R$-algebra $Z$ and  an $R$-linear map $\omega\colon Z\lra Z$ a \emph{circular triple}. 

A circular triple is \email{$\omega$-generated} if $Z$ is the only $R$-subalgebra of $Z$ that contains $1$ and is closed under $\omega$.
\end{definition}

With this terminology, an $R$-linear category $\A$ with a self-adjunction $(F,\delta,\mu)$ gives a circular triple $(R,Z(\A),\omega)$.

\vspace{0.1in} 

Taking any commutative  algebra $Z$ and  a linear  map $\omega$ on it gives a large number of examples of circular triples. 
The map $\omega$ may have special properties, such as be a derivation, $\omega(ab)=\omega(a)b+a\omega(b)$, or, more generally, a differential operator, or just any linear  map on a commutative algebra (see Section \ref{sec:derdiags}). Another interesting example is the Frobenius endomorphism $\sigma$ of a  commutative ring $A$ over a characteristic $p$ field, see the end of Section~\ref{sec_adj_ex}.

 
 \subsection{The skein category \texorpdfstring{$\wSU_{\omega}$}{SUZ} of a circular triple}\label{sec-BZ}
 
 \quad
 \smallskip

Given a circular triple $(R,Z,\omega)$, we now construct a monoidal category $\wSU_{Z,\omega}$, also denoted $\wSU_\omega$, for short, and a category $\wU_Z$ which does not depend on $\omega$.

\ssubsection{The construction of $\wSU_{Z,\omega}$}
 Earlier, in \eqref{UBforget}, we considered the forgetful functor $\wU\lra \wB$ that removes  circles from diagrams  in $\wU^m_n$  producing diagrams in $\wB^m_n$. Given a circular  triple $(R,Z,\omega)$, instead of removing circles, we can evaluate them to elements in $Z$ to construct the \emph{skein category}  $\wSU_\omega=\wSU_{Z,\omega}$. 
 
 The objects of the category $\wSU_{Z,\omega}$ are  non-negative integers $n\in\Z_+$ and the space of morphism  from $n$ to $m$ is given by all finite $R$-linear combinations of  diagrams  of  crossingless matchings $b\in \wB^m_n$, now enhanced  by allowing elements of $Z$ to float in regions of $\wB^m_n$. Examples of diagrams in $\wSU_{Z,\omega}$ are given in Figure \ref{figBZ1}. Recall that $b$ has $\frac{n+m}{2}+1$ regions into which it separates the strip.  We impose the following rules on these diagrams: 
 \begin{itemize}
     \item Elements $z_1,z_2$  floating in the same region can merge into a single element $z_1z_2.$
     \item An element $rz$, with  $r\in R$, floating  in the  region can be changed by removing $r$ from the  region  and  multiplying   the coefficient of  the diagram by $r.$
     \item A diagram which has  a sum $z_1+z_2$ in a region equals the sum of the corresponding diagrams with $z_1$  and $z_2$ in that region. 
 \end{itemize}
 In this  way, the  space of diagrams for a given crossingless matching $b$ can be identified with  the tensor power $Z^{\otimes_R k}$, for $k=\frac{n+m}{2}+1$, one copy of $Z$ for each  region of $b$. 
 
 \begin{figure}
\begin{center}
     \begin{subfigure}[htb]{0.4\textwidth}
      \centering
\begingroup%
  \makeatletter%
  \providecommand\color[2][]{%
    \errmessage{(Inkscape) Color is used for the text in Inkscape, but the package 'color.sty' is not loaded}%
    \renewcommand\color[2][]{}%
  }%
  \providecommand\transparent[1]{%
    \errmessage{(Inkscape) Transparency is used (non-zero) for the text in Inkscape, but the package 'transparent.sty' is not loaded}%
    \renewcommand\transparent[1]{}%
  }%
  \providecommand\rotatebox[2]{#2}%
  \newcommand*\fsize{\dimexpr\f@size pt\relax}%
  \newcommand*\lineheight[1]{\fontsize{\fsize}{#1\fsize}\selectfont}%
  \ifx\svgwidth\undefined%
    \setlength{\unitlength}{97.49999055bp}%
    \ifx\svgscale\undefined%
      \relax%
    \else%
      \setlength{\unitlength}{\unitlength * \real{\svgscale}}%
    \fi%
  \else%
    \setlength{\unitlength}{\svgwidth}%
  \fi%
  \global\let\svgwidth\undefined%
  \global\let\svgscale\undefined%
  \makeatother%
  \begin{picture}(1,0.39232052)%
    \lineheight{1}%
    \setlength\tabcolsep{0pt}%
    \put(0,0){\includegraphics[width=\unitlength,page=1]{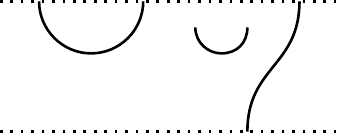}}%
    \put(0.42307682,0.15769867){\color[rgb]{0,0,0}\makebox(0,0)[lt]{\smash{\begin{tabular}[t]{l}$z_4$\end{tabular}}}}%
    \put(0.61401092,0.31154498){\color[rgb]{0,0,0}\makebox(0,0)[lt]{\smash{\begin{tabular}[t]{l}$z_5$\end{tabular}}}}%
    \put(0,0){\includegraphics[width=\unitlength,page=2]{Bzdiag2.pdf}}%
    \put(0.7692307,0.04231404){\color[rgb]{0,0,0}\makebox(0,0)[lt]{\smash{\begin{tabular}[t]{l}$z_6$\end{tabular}}}}%
  \end{picture}%
\endgroup%

         \caption{A morphism $u$ from $1$ to $5$.}
         \label{fig:omegas}
     \end{subfigure}
     \begin{subfigure}[htb]{0.4\textwidth}
        \centering
\begingroup%
  \makeatletter%
  \providecommand\color[2][]{%
    \errmessage{(Inkscape) Color is used for the text in Inkscape, but the package 'color.sty' is not loaded}%
    \renewcommand\color[2][]{}%
  }%
  \providecommand\transparent[1]{%
    \errmessage{(Inkscape) Transparency is used (non-zero) for the text in Inkscape, but the package 'transparent.sty' is not loaded}%
    \renewcommand\transparent[1]{}%
  }%
  \providecommand\rotatebox[2]{#2}%
  \newcommand*\fsize{\dimexpr\f@size pt\relax}%
  \newcommand*\lineheight[1]{\fontsize{\fsize}{#1\fsize}\selectfont}%
  \ifx\svgwidth\undefined%
    \setlength{\unitlength}{97.49999906bp}%
    \ifx\svgscale\undefined%
      \relax%
    \else%
      \setlength{\unitlength}{\unitlength * \real{\svgscale}}%
    \fi%
  \else%
    \setlength{\unitlength}{\svgwidth}%
  \fi%
  \global\let\svgwidth\undefined%
  \global\let\svgscale\undefined%
  \makeatother%
  \begin{picture}(1,0.39232049)%
    \lineheight{1}%
    \setlength\tabcolsep{0pt}%
    \put(0,0){\includegraphics[width=\unitlength,page=1]{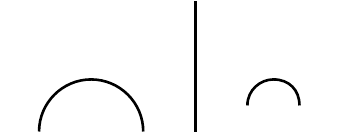}}%
    \put(0.23076893,0.0423138){\color[rgb]{0,0,0}\makebox(0,0)[lt]{\smash{\begin{tabular}[t]{l}$z_1$\end{tabular}}}}%
    \put(0.76923077,0.19526487){\color[rgb]{0,0,0}\makebox(0,0)[lt]{\smash{\begin{tabular}[t]{l}$z_2$\end{tabular}}}}%
    \put(0.76670744,0.04141768){\color[rgb]{0,0,0}\makebox(0,0)[lt]{\smash{\begin{tabular}[t]{l}$z_3$\end{tabular}}}}%
    \put(0,0){\includegraphics[width=\unitlength,page=2]{Bzdiag1.pdf}}%
  \end{picture}%
\endgroup%
 
         \caption{A morphism $v$ from $5$ to $3$.}
         \label{fig:omegas2}
     \end{subfigure}
\caption{Examples of morphisms in $\wSU_\omega$, written as crossingless matchings with the regions labelled by elements $z_i\in Z$. Linear combinations of diagrams are allowed as well.}
\label{figBZ1}
\end{center}
\end{figure}

Now, the space of morphisms from $n$ to  $m$ in $\wSU_{Z,\omega}$ can be identified with the direct sum  of $c_{\frac{n+m}{2}}$ copies of  that  tensor power  $Z^{\otimes_R k}$, one for  each  crossingless matching $b\in  \wB^m_n$. Here  $c_r$ denotes the $r$-th Catalan number.   
 
To define composition in $\wSU_{Z,\omega}$ we compose  diagrams $a$ and  $b$ representing morphisms from  $m$ to $k$ and from $n$  to $m$, respectively. 
We then inductively simplify the composed  diagram $ab$. If there  is  a circle in $ab$ that  wraps around element $z\in Z$, we remove the circle and its interior and place  $\omega(z)$ in that place of the diagram instead.  Starting with the innermost  circles, we can eventually remove  all  circles from $ab$. If the interior of the  circle is empty, we replace it  with $\omega(1)\in Z$. This composition rule is extended to $R$-linear combinations. An example of a composition of diagrams in $\wSU_{Z,\omega}$ can be found in Figure \ref{figBZ2}.
For simplicity, we often write 
\begin{align}
\wSU_{\omega}=\wSU_{Z,\omega}.
\end{align}

 \begin{figure}[htb]
\begin{center}
    \import{Graphics/}{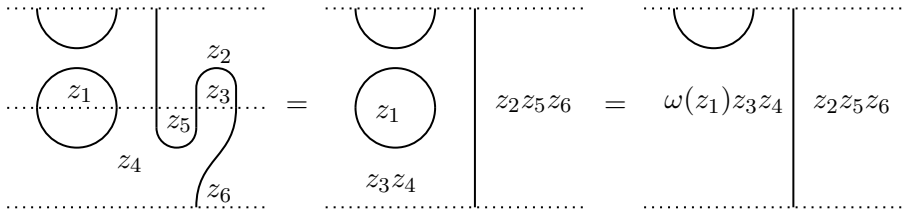} 
\caption{The composition $vu$ of the diagrams from Figure \ref{figBZ1} in $\wSU_\omega$.}
\label{figBZ2}
\end{center}
\end{figure}

\ssubsection{The monoidal structure on $\wSU_\omega$.}
Morphism spaces $\wSU_{\omega,n}^m:=\Hom_{\wSU_\omega}(n,m)$ in $\wSU_\omega$ carry a natural structure of $Z\otimes Z$-modules by placing an element $z\in Z$ into the unique leftmost, respectively, rightmost region. This structure is compatible with composition.

We thus obtain a tensor product 
$$\otimes \colon \wSU_{\omega,n}^m\times \wSU_{\omega,r}^s\longrightarrow \wSU_{\omega,n+r}^{m+s},\quad (u, v)\longmapsto u\otimes_Z v.$$
which is defined using the horizontal multiplication (tensor product) of the underlying elements in $\wB$, which joins the rightmost region of $u$ and the leftmost region of $v$ in $u\otimes v$. In addition, all region labels from $Z$ are kept, and the ones in the middle region are multiplied. An example of a tensor product is given in Figure \ref{figBZ3}. This tensor product turns $\wSU_\omega$ into an $R$-linear monoidal category, which is not, in general, symmetric or braided.

\begin{figure}
\begin{center}
     \begin{subfigure}[htb]{0.28\textwidth}
      \centering
\begingroup%
  \makeatletter%
  \providecommand\color[2][]{%
    \errmessage{(Inkscape) Color is used for the text in Inkscape, but the package 'color.sty' is not loaded}%
    \renewcommand\color[2][]{}%
  }%
  \providecommand\transparent[1]{%
    \errmessage{(Inkscape) Transparency is used (non-zero) for the text in Inkscape, but the package 'transparent.sty' is not loaded}%
    \renewcommand\transparent[1]{}%
  }%
  \providecommand\rotatebox[2]{#2}%
  \newcommand*\fsize{\dimexpr\f@size pt\relax}%
  \newcommand*\lineheight[1]{\fontsize{\fsize}{#1\fsize}\selectfont}%
  \ifx\svgwidth\undefined%
    \setlength{\unitlength}{56.27424795bp}%
    \ifx\svgscale\undefined%
      \relax%
    \else%
      \setlength{\unitlength}{\unitlength * \real{\svgscale}}%
    \fi%
  \else%
    \setlength{\unitlength}{\svgwidth}%
  \fi%
  \global\let\svgwidth\undefined%
  \global\let\svgscale\undefined%
  \makeatother%
  \begin{picture}(1,0.67972916)%
    \lineheight{1}%
    \setlength\tabcolsep{0pt}%
    \put(0,0){\includegraphics[width=\unitlength,page=1]{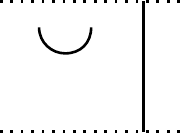}}%
    \put(0.19991382,0.23276816){\color[rgb]{0,0,0}\makebox(0,0)[lt]{\smash{\begin{tabular}[t]{l}$z_2$\end{tabular}}}}%
    \put(0.26417162,0.53977857){\color[rgb]{0,0,0}\makebox(0,0)[lt]{\smash{\begin{tabular}[t]{l}$z_1$\end{tabular}}}}%
    \put(0,0){\includegraphics[width=\unitlength,page=2]{Bzdiag5.pdf}}%
    \put(0.79727489,0.24228747){\color[rgb]{0,0,0}\makebox(0,0)[lt]{\smash{\begin{tabular}[t]{l}$z_3$\end{tabular}}}}%
  \end{picture}%
\endgroup%

         \caption{A morphism $u_1\colon 1\to 3$.}
         \label{fig:BZ5}
     \end{subfigure}
     \begin{subfigure}[htb]{0.28\textwidth}
      \centering
\begingroup%
  \makeatletter%
  \providecommand\color[2][]{%
    \errmessage{(Inkscape) Color is used for the text in Inkscape, but the package 'color.sty' is not loaded}%
    \renewcommand\color[2][]{}%
  }%
  \providecommand\transparent[1]{%
    \errmessage{(Inkscape) Transparency is used (non-zero) for the text in Inkscape, but the package 'transparent.sty' is not loaded}%
    \renewcommand\transparent[1]{}%
  }%
  \providecommand\rotatebox[2]{#2}%
  \newcommand*\fsize{\dimexpr\f@size pt\relax}%
  \newcommand*\lineheight[1]{\fontsize{\fsize}{#1\fsize}\selectfont}%
  \ifx\svgwidth\undefined%
    \setlength{\unitlength}{67.5000085bp}%
    \ifx\svgscale\undefined%
      \relax%
    \else%
      \setlength{\unitlength}{\unitlength * \real{\svgscale}}%
    \fi%
  \else%
    \setlength{\unitlength}{\svgwidth}%
  \fi%
  \global\let\svgwidth\undefined%
  \global\let\svgscale\undefined%
  \makeatother%
  \begin{picture}(1,0.56668507)%
    \lineheight{1}%
    \setlength\tabcolsep{0pt}%
    \put(0,0){\includegraphics[width=\unitlength,page=1]{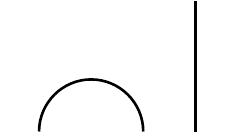}}%
    \put(0.33333285,0.06111992){\color[rgb]{0,0,0}\makebox(0,0)[lt]{\smash{\begin{tabular}[t]{l}$z_4$\end{tabular}}}}%
    \put(0.61111102,0.25228713){\color[rgb]{0,0,0}\makebox(0,0)[lt]{\smash{\begin{tabular}[t]{l}$z_5$\end{tabular}}}}%
    \put(0,0){\includegraphics[width=\unitlength,page=2]{Bzdiag4.pdf}}%
  \end{picture}%
\endgroup%

         \caption{A morphism $u_2\colon 3\to 3$.}
         \label{fig:BZ4}
     \end{subfigure}
          \begin{subfigure}[htb]{0.4\textwidth}
      \centering
\begingroup%
  \makeatletter%
  \providecommand\color[2][]{%
    \errmessage{(Inkscape) Color is used for the text in Inkscape, but the package 'color.sty' is not loaded}%
    \renewcommand\color[2][]{}%
  }%
  \providecommand\transparent[1]{%
    \errmessage{(Inkscape) Transparency is used (non-zero) for the text in Inkscape, but the package 'transparent.sty' is not loaded}%
    \renewcommand\transparent[1]{}%
  }%
  \providecommand\rotatebox[2]{#2}%
  \newcommand*\fsize{\dimexpr\f@size pt\relax}%
  \newcommand*\lineheight[1]{\fontsize{\fsize}{#1\fsize}\selectfont}%
  \ifx\svgwidth\undefined%
    \setlength{\unitlength}{116.25000945bp}%
    \ifx\svgscale\undefined%
      \relax%
    \else%
      \setlength{\unitlength}{\unitlength * \real{\svgscale}}%
    \fi%
  \else%
    \setlength{\unitlength}{\svgwidth}%
  \fi%
  \global\let\svgwidth\undefined%
  \global\let\svgscale\undefined%
  \makeatother%
  \begin{picture}(1,0.32904296)%
    \lineheight{1}%
    \setlength\tabcolsep{0pt}%
    \put(0,0){\includegraphics[width=\unitlength,page=1]{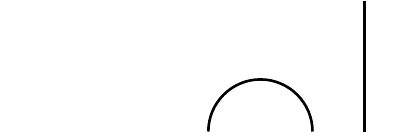}}%
    \put(0.61290293,0.03548899){\color[rgb]{0,0,0}\makebox(0,0)[lt]{\smash{\begin{tabular}[t]{l}$z_4$\end{tabular}}}}%
    \put(0,0){\includegraphics[width=\unitlength,page=2]{Bzdiag6.pdf}}%
    \put(0.09677428,0.11267773){\color[rgb]{0,0,0}\makebox(0,0)[lt]{\smash{\begin{tabular}[t]{l}$z_2$\end{tabular}}}}%
    \put(0.12788007,0.26129517){\color[rgb]{0,0,0}\makebox(0,0)[lt]{\smash{\begin{tabular}[t]{l}$z_1$\end{tabular}}}}%
    \put(0,0){\includegraphics[width=\unitlength,page=3]{Bzdiag6.pdf}}%
    \put(0.38594444,0.1322628){\color[rgb]{0,0,0}\makebox(0,0)[lt]{\smash{\begin{tabular}[t]{l}$z_3z_5$\end{tabular}}}}%
  \end{picture}%
\endgroup%

         \caption{The tensor product $u_1\otimes u_2$.}
         \label{fig:BZ6}
     \end{subfigure}
\caption{The tensor product of two morphisms in $\wSU_\omega$.}
\label{figBZ3}
\end{center}
\end{figure}

Now assume that $Z$  is a free $R$-module and a basis $B$ of $Z$ is chosen. Recall that as an $R$-module, $\Hom_{\wSU_\omega}(n,m)$ is isomorphic to a direct sum of $c_{k-1}$ copies of $Z^{\otimes k}$, for $k=\frac{n+m}{2}+1$. This way, we obtain an $R$-basis for $\Hom_{\wSU_\omega}(n,m)$ consisting of all diagrams $u$ in $\wB_n^m$ with regions labelled by elements of the basis $B$. 

\begin{example}\label{ex:UasBZ}
\begin{enumerate}
\item[(1)]
We may set $Z=R\wU_0^0$, with multiplication given by taking disjoint union of diagrams, and map $\omega$ be the operation of wrapping a circle around a diagram. This results in the monoidal category $R\wU$, which is the $R$-linearization of the monoidal category $\wU$ defined in Section $\ref{sec-wU}$, as a special case of $\wSU_\omega$.
\item[(2)]
For the circular triple $(R,R,\omega=\ide_R)$ we recover $\wSU_\omega=R\wB$, the $R$-linearization of the category $\wB$ of crossingless matchings. 
\end{enumerate}
\end{example}

\begin{example}\label{ex:TL}
The Temperley--Lieb category $TL(d)$ is the skein category $\wSU_\omega$ for  $Z=R$ and $\omega(1)=d\in R$. Note that $TL(1)=R\wB$.
\end{example}

We observe that the category $\wSU_\omega$ is rigid and every object is isomorphic to its left and right dual. The duality morphisms are the same as those in $\wB$, i.e. labelled with the $1\in Z$ in all regions. As a monoidal category, $\wSU_\omega$ is generated by the object $1$. Morphisms are generated over $R$ by the following elementary morphisms: 
\begin{itemize}
    \item The cap and cup morphisms from $2$ to $0$, respectively, $0$ to $2$.
    \item Endomorphisms of $0$ corresponding to $R$-algebra generators of $Z$.
\end{itemize}
These morphisms are subject to the following relations
\begin{itemize}
    \item Identities of the cap and cup morphisms displaying $1$ as a self-dual object.
    \item Compatiblity of composition of  morphisms coming from elements of $Z$ with multiplication in $Z$.
    \item Topological relations on the underlying diagrams of crossingless matchings in $\wB$.
    \item Moving labels $z\in Z$ within the regions.
\end{itemize}

\ssubsection{The monoidal category $\wU_Z$}

Assume given a commutative $R$-algebra $Z$. Consider diagrams in $\wU^0_{0}$ with additional labels from elements of $Z$. This defines a ring $\wU^0_{Z,0}$, similarly to $\wSU_{\omega,0}^0$ where the difference is that circles are \emph{not} evaluated using $\omega$. Define the monoidal category
\begin{equation}\label{wUZ}
\wU_Z:=\wSU_{(\wU_{Z,0}^0,\,\omega)},
\end{equation}
confer Example \ref{ex:UasBZ}. 
Thus, $\wU_Z$ has the same objects as $\wU$ and morphisms are given by morphisms in $\wU$ together with labels from $Z$ in all regions. 

Given a circular triple $(R,Z,\omega)$, any diagram in $\wU^0_{Z,0}$ can be evaluated to an element in $Z$. Here, the circle wrapping around a label $z\in Z$ is mapped to $\omega(z)$. This definition extends inductively, by sending disjoint unions of diagrams to products in $Z$, to a morphism of $R$-algebras
\begin{align}\label{morphismFZ}
    \cF_Z\colon \wU_{Z,0}^0\longrightarrow Z.
\end{align}
More generally, we obtain a diagram of  monoidal $R$-linear functors 
\begin{equation}\wU\hookrightarrow \wU_Z\stackrel{\cF_Z}{\twoheadrightarrow} \wSU_\omega.\label{functorsUZ}
\end{equation}
Here, the second functor $\cF_Z$ is obtained from applying the morphism $\cF_Z$ to the labels of regions, which contain elements of $\wU_{Z,0}^0$. See also the more general discussion in Section \ref{sec:circadj}.


\subsection{Universality of \texorpdfstring{$R\wU_0^0$}{RU} and \texorpdfstring{$\omega$}{w}-evaluation}\label{sec_universal}
\quad
\smallskip

\ssubsection{Universality of $R\wU_0^0$}
The $R$-algebra $R\wU_0^0$ is universal among algebras with a distinguished $R$-linear morphism $\omega$. In fact, given any choice of a circular triple $(R,Z,\omega)$, consider the restriction of the morphism $\cF_Z$ from Equation \eqref{morphismFZ} to the submonoid $\wU_0^0$. This gives a map
\begin{align}
    \mcF_Z \ \colon  \  \wU^0_0 \lra Z 
\end{align}
which takes the  empty diagram to $1$, intertwines  the  action of $\omega$ on $Z$ and  $\wU^0_0$ 
and takes the disjoint union of  diagram to the product  of  corresponding elements, i.e.,
\begin{eqnarray*}
    \mcF_Z(ab) & = &  \mcF_Z(a)\mcF_Z(b), \ \  a,b\in \wU^0_0, \\
    \mcF_Z(\emptyset) &= & 1 , \\
    \mcF_Z(\omega(a)) & =&  \omega(\mcF_Z(a)). 
\end{eqnarray*}
The $R$-linear extension of $\mcF_Z$ is a unital homomorphism  of commutative $R$-algebras
\begin{equation}\label{eq_R_Z} 
\cF_Z\colon R\wU^0_0\lra  Z
\end{equation}  
that intertwines the action of $R$-linear map $\omega$ on both algebras. We refer to this homomorphism as \emph{$\omega$-evaluation}. We say  that $Z$ is \emph{$\omega$-generated} if the
map (\ref{eq_R_Z}) is surjective. Equivalently,    
the smallest $R$-subalgebra of  $Z$ that  contains the unit element $1\in Z$ and is closed under $\omega$ equals $Z$.

\vspace{0.1in}

To summarize, we have the following result.

\begin{proposition}\label{prop_universal}
The commutative monoid ($R$-algebra)  $\wU_0^0$ (respectively, $R\wU_0^0$) is initial among commutative monoids with a distinguished map (respectively, a distingished $R$-linear map).
\end{proposition}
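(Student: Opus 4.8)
The plan is to prove both assertions together, treating the commutative monoid $\wU^0_0$ first and then bootstrapping to the $R$-algebra $R\wU^0_0$ by $R$-linear extension. Recall that for a circular triple $(R,Z,\omega)$ the $\omega$-evaluation map $\mcF_Z\colon\wU^0_0\to Z$ constructed above already supplies a morphism of commutative monoids that is unital and intertwines the two distinguished maps; so the real content is \emph{uniqueness} of such a morphism, together with the observation that the construction makes sense for an arbitrary target $(M,f)$, where $M$ is a commutative monoid and $f\colon M\to M$ is an arbitrary set map (not assumed multiplicative, matching the fact that $\omega$ is not). I would therefore fix such a target and show that there is a unique monoid homomorphism $\phi\colon\wU^0_0\to M$ with $\phi\circ\omega=f\circ\phi$ (unitality $\phi(\emptyset)=1_M$ being automatic for a monoid homomorphism, as $\emptyset$ is the unit of $\wU^0_0$).

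For existence I would define $\phi$ by recursion on the number of circles $\mcr(u)$, using the two structural facts recorded earlier in this section: $\wU^0_0$ is the free commutative monoid on the set $\wUcirc$ of $\circ$-diagrams, and $\omega\colon\wU^0_0\to\wUcirc$ is a bijection. Concretely, a nonempty diagram $u$ factors uniquely as a product $u=c_1\cdots c_r$ of $\circ$-diagrams $c_i\in\wUcirc$, one for each exterior circle of $u$ together with everything nested inside it, and each $c_i=\omega(v_i)$ for a unique $v_i\in\wU^0_0$ with $\mcr(v_i)<\mcr(u)$. I would set $\phi(\emptyset)=1_M$ and $\phi(u)=f(\phi(v_1))\cdots f(\phi(v_r))$. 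The recursion is well founded because each $v_i$ has strictly fewer circles, and it is unambiguous precisely because the $\circ$-factorization is unique (freeness) and $\omega$ is injective. Multiplicativity follows since the $\circ$-factors of a product $uu'$ are exactly those of $u$ together with those of $u'$ (placing diagrams side by side creates no new nesting) and $M$ is commutative, while $\phi\circ\omega=f\circ\phi$ is immediate from applying the defining formula to the single-factor diagram $\omega(v)$.

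Uniqueness I would establish by induction on $\mcr(u)$: any homomorphism $\psi$ satisfying the same constraints has $\psi(\emptyset)=1_M=\phi(\emptyset)$, multiplicativity reduces any $u$ to its factors $\omega(v_i)$, and the intertwining relation together with the inductive hypothesis gives $\psi(\omega(v_i))=f(\psi(v_i))=f(\phi(v_i))$, whence $\psi=\phi$. To pass to $R\wU^0_0$, I would use that it is free as an $R$-module on the basis $\wU^0_0$ and that its distinguished map is the $R$-linear extension of the set map $\omega$. Given an $R$-algebra $Z$ with an $R$-linear map $\omega_Z$, the monoid statement applied to the underlying multiplicative monoid of $Z$ and the set map $\omega_Z$ yields a unique monoid homomorphism $\phi\colon\wU^0_0\to Z$; its $R$-linear extension $\Phi\colon R\wU^0_0\to Z$ is then the unique $R$-algebra homomorphism intertwining the two $R$-linear maps, since $\Phi\circ\omega$ and $\omega_Z\circ\Phi$ are both $R$-linear and agree on the basis. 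The main obstacle is the well-definedness of the recursion in the existence step; everything there rests on the unique $\circ$-factorization (freeness of $\wU^0_0$) and the bijectivity of $\omega$, both already in hand, so no new topological input is required.
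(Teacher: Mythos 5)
Your proof is correct and takes essentially the same approach as the paper: the paper builds the evaluation map $\mcF_Z$ by exactly your recursion (factor a closed diagram into $\circ$-diagrams, each of the form $\omega(v)$ with fewer circles, and evaluate via multiplication and the distinguished map), and then records the proposition as a summary of that construction. You additionally make explicit the well-definedness of the recursion (via the free commutative monoid structure on $\wUcirc$ and the bijectivity of $\omega$) and the uniqueness argument, both of which the paper leaves implicit.
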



\subsection{An adjunction of circular triples and monoidal categories with a self-adjoint endofunctor}\label{sec:circadj}
\quad
\smallskip

Expanding on the previous subsection, we now explain how the constructions from Section~\ref{sec-BZ} are functorial and how $\wSU_\omega$ can be regarded as a free monoidal category with a prescribed circular triple datum recoverable from its endomorphism ring of the tensor unit. 

\vspace{0.1in}

We define $\CTrip_R$ to be the \emph{category of circular triples} over $R$. That is, objects are circular triples $(R,Z,\omega)$, often just displayed as the pair $(Z,\omega)$. A morphism $\phi\colon (Z_1,\omega_1)\to (Z_2,\omega_2)$ in $\CTrip_R$ is given by  a homomorphism of $R$-algebras $\phi \colon Z_1\to Z_2$ that intertwines the maps $\omega_1$, $\omega_2$, i.e.,  such that the diagram
\begin{align}
    \xymatrix{Z_1\ar[d]^{\phi}\ar[rr]^{\omega_1}&&Z_1\ar[d]^{\phi}\\
    Z_2\ar[rr]^{\omega_2}&&Z_2
    }
\end{align}
commutes. 

\vspace{0.1in}

We have already seen examples of morphisms of circular triples. For example, the map $\cF_Z\colon R\wU_0^0\to Z$ from \eqref{eq_R_Z} is a map of circular triples. It extends to a map of circular triples
$$\wU_{Z,0}^0\longrightarrow Z$$
that sends a dot labelled with an element $z\in Z$ to the corresponding element of $Z$. 
\vspace{0.1in}

We observe that a morphism $\phi\colon Z_1\to Z_2$ in $\CTrip_R$ induces a monoidal functor 
\begin{align}\label{eq:SUphi}
\wSU_\phi\colon \wSU_{\omega_1}\longrightarrow \wSU_{\omega_2}
\end{align}
defined by applying $\phi$ to all labels. For example, the morphism $R\wU_0^0\to R$ that sends any closed circle diagram to $1\in R$ induces the forgetful functor $\wU\to \wB$ from earlier.

The assignment 
$$(Z,\omega)\longmapsto \wSU_\omega, \qquad \phi \longmapsto \wSU_\phi$$
obtained this way defines a functor from the category of circular triples (with underlying ring $R$) to $R$-linear monoidal categories (i.e., monoidal categories enriched in $R$-modules). We denote this functor by $\wSU_{(-)}$. 

We also consider the assignment $Z'(-)$ which associates to an $R$-linear monoidal category with a self-adjoint functor the endomorphism ring $Z'(\A):=\End_\A(\one)$ of the tensor unit $\one$.  Note that $Z'(\A)$ embeds into $Z(\A)$ by sending $z$ to $z\Ide_A$. Hence, to the category $\A$ we associate the circular triple $(R,Z'(\A), \omega_F)$, where $\omega_F$ is the restriction of the wrapping operation defined in \eqref{eqn_wrapping}.

Since the category $\wSU_\omega$ has self-dual objects, we observe that the endo-functor $F_1$ of tensoring by the generating object $1$, i.e. 
$$F_1\colon \wSU_\omega\longrightarrow \wSU_\omega, \quad n\mapsto n\otimes 1$$
is a self-adjoint functor. The functor $\wSU_{(-)}$ can now be enhanced to a functor to the category whose objects are pairs of $R$-linear monoidal categories together with a self-adjoint endofunctor. Morphisms in this category are $R$-linear monoidal  functors $G\colon \A\to \B$ that intertwine the corresponding self-adjoint functors in the sense that
$$G \circ F_\A = F_\B\circ G,\qquad G \mu^{F_\A}=\mu^{F_\B}, \qquad G\delta^{F_\A}=\delta^{F_\B},$$
where $(F_\A,\mu^{F_\A},\delta^{F_\A})$ (or, $(F_\B,\mu^{F_\B},\delta^{F_\B})$) is the self-adjoint endofunctor on $\A$ (respectively, $\B$). Note that the self-adjoint functors $F_\A, F_\B$ are not required to be monoidal functors.

The assignment $Z'(\A)$ extends to a functor from $R$-linear monoidal categories with a self-adjoint functor to $\CTrip_R$. The functors required in this category are compatible with the self-adjunctions as $G$ above and come with a choice of isomorphism $G(\one)\to \one$ but do not need to be monoidal functors. Such functors commute with the wrapping action. We show that this functor $Z'$ is right adjoint to the functor $\wSU_{(-)}$.



\begin{proposition}\label{alltriplescenters}
The functor $\wSU_{(-)}$ is left adjoint to the functor $Z'(-)$ that associates to an $R$-linear monoidal category with a self-adjoint functor the circular triple $(R,Z'(\A),\omega)$. The unit natural transformation consists of isomorphisms $(Z,\omega)\to (Z'(\wSU_\omega),\omega_{F_1})$ in $\CTrip_R$.
\end{proposition}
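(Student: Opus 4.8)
The plan is to establish an adjunction $\wSU_{(-)} \dashv Z'(-)$ by constructing the unit and verifying the universal property directly, relying on the generators-and-relations presentation of $\wSU_\omega$ already given in the excerpt. The key observation is that $\wSU_\omega$ is presented as a free monoidal category: it is generated monoidally by the self-dual object $1$ (with the cap/cup duality morphisms) together with endomorphisms of $0$ coming from $R$-algebra generators of $Z$, subject to the four listed families of relations (self-duality identities, compatibility of composition with multiplication in $Z$, topological crossingless-matching relations, and label-moving relations). I would isolate this presentation as the technical backbone of the proof.

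\emph{The unit.} First I would construct the unit morphism $\eta_{(Z,\omega)}\colon (Z,\omega)\to (Z'(\wSU_\omega),\omega_{F_1})$ in $\CTrip_R$. By definition $Z'(\wSU_\omega)=\End_{\wSU_\omega}(\one)=\End_{\wSU_\omega}(0)=\wSU_{\omega,0}^0$, whose elements are $R$-linear combinations of the empty $\wB$-diagram with a single region labelled by an element of $Z$; this identifies $\wSU_{\omega,0}^0$ with $Z$ as an $R$-algebra, giving $\eta$ as the canonical iso $z\mapsto z\cdot \ide_0$. The content to check is that $\eta$ intertwines $\omega$ with $\omega_{F_1}$: by \eqref{eqn_wrapping} the wrapping operator $\omega_{F_1}$ for the self-adjoint functor $F_1$ (tensoring by $1$) is computed by $\mu\circ(1_{F_1}\, z\, 1_{F_1})\circ\delta$, which is precisely the diagram of a circle wrapped around the label $z$, and the composition rule in $\wSU_\omega$ evaluates this to $\omega(z)$. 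Hence $\eta$ is an isomorphism of circular triples, giving the asserted form of the unit.

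\emph{The universal property.} Next I would verify the adjunction bijection: for a circular triple $(Z,\omega)$ and an $R$-linear monoidal category $\A$ with self-adjoint endofunctor $F_\A$, I must show
\[
\Hom_{\mathrm{MonCat}^{sa}}(\wSU_\omega,\A)\;\cong\;\Hom_{\CTrip_R}\big((Z,\omega),(Z'(\A),\omega_{F_\A})\big),
\]
naturally in both arguments. Given a triple morphism $\psi\colon(Z,\omega)\to(Z'(\A),\omega_{F_\A})$, I build a monoidal functor $\widehat\psi\colon\wSU_\omega\to\A$ on the generators: send $1\mapsto$ the self-dual generating object for $F_\A$, send the cap/cup to $\mu^{F_\A},\delta^{F_\A}$, and send an endomorphism of $0$ labelled $z$ to $\psi(z)\in Z'(\A)$. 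I then check that all four families of defining relations of $\wSU_\omega$ are respected: the self-duality identities hold because $(\mu^{F_\A},\delta^{F_\A})$ satisfy \eqref{eq_cond}; compatibility with multiplication holds because $\psi$ is an algebra map; the label-moving relations hold because $Z'(\A)$ is central; and the topological relations are automatic from $F_\A$ being self-adjoint. The key compatibility is that circle-evaluation in $\wSU_\omega$ uses $\omega$ whereas in $\A$ it uses $\omega_{F_\A}$ — these agree precisely because $\psi$ is a morphism of circular triples, i.e.\ $\psi\circ\omega=\omega_{F_\A}\circ\psi$. Conversely, any monoidal functor $\wSU_\omega\to\A$ intertwining self-adjunctions restricts on $\End(0)\cong Z$ to a triple morphism, and the two assignments are mutually inverse. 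Naturality in both variables then follows by unwinding the definitions of $\wSU_\phi$ and $Z'$ on morphisms.

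\emph{The main obstacle.} \textbf{The delicate point} is making the presentation of $\wSU_\omega$ rigorous enough to guarantee that $\widehat\psi$ is \emph{well-defined}: one must confirm that the listed generators and relations genuinely present $\wSU_\omega$, so that defining a functor on generators and checking relations suffices. In particular the interaction between the topological (isotopy) relations and the label-multiplication rules must be shown complete — that every equality of labelled diagrams in $\wSU_\omega$ is a consequence of the four relation families, with no hidden relations arising from nested circle evaluations. I expect this completeness of the presentation, together with the careful bookkeeping that the generating object's self-adjunction in $\A$ matches the cap/cup data under $\widehat\psi$, to be where essentially all the work lies; the remaining verifications are routine naturality diagrams.
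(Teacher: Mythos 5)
There is a genuine gap in the second half of your argument, in the construction of the functor $\widehat\psi$ associated to a triple morphism $\psi\colon(Z,\omega)\to(Z'(\A),\omega_{F_\A})$. You propose to "send $1\mapsto$ the self-dual generating object for $F_\A$" and to produce a \emph{monoidal} functor $\wSU_\omega\to\A$. But in the generality of the proposition, $\A$ is an arbitrary $R$-linear monoidal category equipped with a self-adjoint endofunctor $F_\A$; there need not exist any object $X$ with $F_\A\cong -\otimes X$, so "the generating object for $F_\A$" is undefined. The paper's counit instead sends the object $n$ to $F_\A^n(\one)$, the cap and cup to $\mu_\one$ and $\delta_\one$, and $z\in Z'(\A)$ to the corresponding endomorphism of $\one$; the resulting functor is in general \emph{not} monoidal (since $F_\A^{n+m}(\one)$ need not equal $F_\A^n(\one)\otimes F_\A^m(\one)$), which is precisely why the paper defines the target category so that its morphisms are $R$-linear functors intertwining the self-adjunctions together with a chosen isomorphism $G(\one)\to\one$, rather than monoidal functors. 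With your choice of morphisms the claimed hom-set bijection is false: for a triple morphism $\psi$ into $(Z'(\A),\omega_{F_\A})$ with $F_\A$ not of tensor form, no monoidal functor realizes it. So you must either redefine the morphisms of the target category as the paper does and replace your generator-based recipe by the assignment $n\mapsto F_\A^n(\one)$, or restrict the proposition to a smaller class of pairs $(\A,F_\A)$.

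Aside from this, your overall strategy (unit as the canonical isomorphism $Z\cong\End_{\wSU_\omega}(0)$ intertwining $\omega$ with $\omega_{F_1}$, plus a direct verification of the universal property) is an equivalent reformulation of the paper's unit--counit--triangle-identities argument, and your identification of the unit agrees with the paper's. Your flagged concern about the completeness of the generators-and-relations presentation of $\wSU_\omega$ is fair but is equally present in the paper's proof, which likewise extends the counit inductively over generators and asserts compatibility with the listed relations.
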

\begin{proof}
Let $\A$ be an $R$-linear monoidal category with a self-adjoint endofunctor $F$, and $(Z,\omega)=(R,Z,\omega)$ a circular triple.  
Consider the natural transformation 
$$\eta_{(Z,\omega)}\colon (Z,\omega)\lra (Z'(\wSU_\omega),\omega_F)$$
given by sending $z$ to the empty crossingless matching with only region labelled by $z$. This clearly defines an element of $Z'(\wSU_\omega)=Z$ and commutes with the wrapping maps $\omega,\omega_F$. By  construction of $\wSU_\omega$, it follows that $\eta_{(Z,\omega)}$ defines a natural isomorphism.

We further construct a natural transformation $\epsilon\colon \wSU_{Z'(-)}\to (-).$
That is, for each pair $(\A,F)$, where $\A$ is an $R$-linear monoidal category with a self-adjoint $R$-linear functor $F$, we construct an $R$-linear functor  $$\epsilon_{\A,F} \colon \wSU_{Z'(\A),\omega_F}\to \A.$$ 
This functor sends the object $n$ of $\wSU_{\omega_F}$ to $F^n(\one)$. The cap morphism $2\to 0$ is sent to the morphism $\mu_\one\colon F^2(\one)\to \one$ obtained from self-adjointness of $F$, and the cup morphism $0\to 2$ is sent to $\delta_\one\colon \one\to F^2(\one)$. An endomorphism of $0$ labelled by $z\in Z'(\A)$ is sent to the corresponding element of $z\in \End_\A(\one)$. Since $\wSU_{\omega_F}$ is generated by these morphisms as an $R$-linear monoidal category we can inductively extend this assignment to the entire category $\wSU_{\omega_F}$ following the rules
\begin{itemize}
    \item $\epsilon_{\A,F}(\ide_n)=\ide_{F^n(\one)}$;
    \item $\epsilon_{\A,F}(u\circ v)=\epsilon_{\A,F}(u)\circ\epsilon_{\A,F}(v)$, for compatible $u,v\in\wSU_{\omega_F}$;
    \item $\epsilon_{\A,F}(1_n\otimes x \otimes 1_m)=F^m(x_{F^n(\one)})$, for $x=z\in Z'(\A)$, or $x$ equal to the cap or cup morphism.
\end{itemize}
and taking $R$-linear combinations. For example, the disjoint union of two cup diagrams $0\to 2$ is sent to the morphism $F^2(\delta_{\one})\circ\delta_\one\colon \one\to F^4(\one)$, the disjoint union of two cap diagrams $2\to 0$ is sent to the morphism $\mu_\one\circ \mu_{F^2(\one)}\colon F^4(\one)\to \one$. This assignment respects all the relations on the generators in the category $\wSU_{\omega_F}$ and thus gives an $R$-linear functor as required.

The endofunctor $F_1$ on $\wSU_{\omega_F}$ sends an object $n$ to $n\otimes 1$. Thus, 
$$\epsilon_{\A,F} \circ F_1(n)=F^{n+1}(\one)=F(F^n(\one))=F(\epsilon_{\A,F}).$$
This equality extends to morphisms of $\wSU_{\omega_F}$. E.g., for the generating cap morphism $c$ and $z\in Z'(\A)$ we have
\begin{align*}
        \epsilon_{\A,F}\circ F_1(c)=\epsilon_{\A,F}(c\otimes 1)=F(\mu_\one) =F\circ \epsilon_{\A,F}(c),\\
        \epsilon_{\A,F}\circ F_1(z)=\epsilon_{\A,F}(z\otimes 1)=F(z)=F\circ \epsilon_{\A,F}(z).
    \end{align*}
Thus, $\epsilon_{\A,F}$ is a morphism of $R$-linear categories equipped with a self-adjoint functor. One now verifies the adjunction identities from Equation \ref{eq_cond_1} for $\eta$ and $\varepsilon$. For this, we note that $Z'(\epsilon_{\A,F})$ and $\eta_{(Z'(\A),\omega_F)}$ are both the identity on $Z'(\A)$. 
Further, $\epsilon_{\wSU_\omega,F_1}$ sends $n$ to the object $n$ and is the identity on morphisms under identification of $Z'(\wSU_\omega)$ and $Z$. Similarly, $\wSU_{\eta_{(Z,\omega)}}$ is the functor induced from this identification so the two functors are mutually inverse.  
\end{proof}

 In the case of the category $\wU$, a similar universal property was given in \cite[Section 11]{DP1}. Namely, $\wU$ is the free category with a self-adjoint endofunctor. 
 Mapping the generating object $1$ of $\wU$ to any object in a category $\A$ equipped with a self-adjunction determines a unique functor between these categories respecting the self-adjunctions.

%
%

 \section{Pairings and monoidal envelopes} \label{sec_pairings}


\subsection{Pairings, negligible morphisms, and the gligible quotient category \texorpdfstring{$\wU_\omega$}{Uomega} of \texorpdfstring{$\wSU_\omega$}{SUomega}}\label{subsec_pairings_neg}

\quad 
\smallskip

In this section, we describe the quotient of $\wSU_\omega$ 
by the ideal of negligible morphisms.
To describe the latter  ideal, 
we introduce the set $\wBout{2n}$ of \emph{outer matchings} in an annulus of $2n$ points on  the inner circle of the annulus. Take an annulus $\wA$
and place $2n$ points on the inner circle of $\wA$. Outer matchings  are isotopy  classes of collections of $n$ disjoint arcs in $\wA$ connecting these $2n$  points  in  pairs. 
It is easy to check that $|\wBout{2n}|=\binom{2n}{n}.$ Some examples  of outer matchings are shown in Figure~\ref{fig:outermatch}. The set $\wBout{0}$  consists  of  the empty matching and  $\wBout{2}$ consists  of two matchings, see Figure {\scshape\ref{fig:outmatch1}}. 

 \begin{figure}
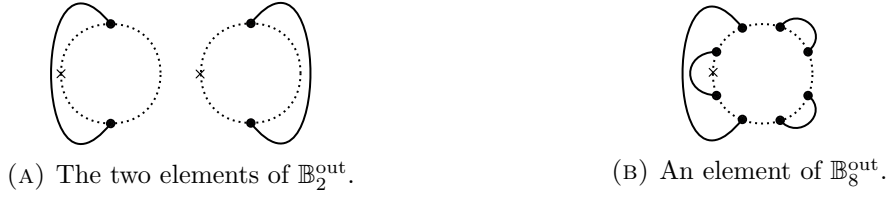

\begin{center}
     \begin{subfigure}[htb]{0.45\textwidth}
      \centering
       \import{Graphics/}{outmatch1.pdf_tex}
         \caption{The two elements of $\wBout{2}$.}
         \label{fig:outmatch1}
     \end{subfigure}
          \begin{subfigure}[htb]{0.45\textwidth}
      \centering
       \import{Graphics/}{outmatch2.pdf_tex}
         \caption{An element of $\wBout{8}$.}
         \label{fig:outmatch2}
     \end{subfigure}
\caption{Examples of outer matchings.}
\label{fig:outermatch}
\end{center}
\end{figure}

We may take a diagram $x$ describing a morphism  in $\Hom_{\wSU_\omega}(n,m)$ and represent it as a box with  $n$ bottom and $m$ top endpoints. Alternatively, we can  visualize  $x$  as a diagram in a  disk with  $n+m$ boundary points. To remember that $x$ comes from a morphism from $n$ to $m$, we can arrange $n$ points to be on the lower half-circle of the disk and $m$ points on the upper half-circle --- see Figure {\scshape\ref{fig:BZdiagtocirc}} for an example.  As in Section \ref{sec-wU} for morphisms in $\wUout{2n}$, we may  place a mark {\scriptsize$\times$} corresponding to the position  of  the  left  edge of  the box. The position of the right  edge of the box can then be  recovered if $n$ is known.
\begin{figure}
    \centering
\begingroup%
  \makeatletter%
  \providecommand\color[2][]{%
    \errmessage{(Inkscape) Color is used for the text in Inkscape, but the package 'color.sty' is not loaded}%
    \renewcommand\color[2][]{}%
  }%
  \providecommand\transparent[1]{%
    \errmessage{(Inkscape) Transparency is used (non-zero) for the text in Inkscape, but the package 'transparent.sty' is not loaded}%
    \renewcommand\transparent[1]{}%
  }%
  \providecommand\rotatebox[2]{#2}%
  \newcommand*\fsize{\dimexpr\f@size pt\relax}%
  \newcommand*\lineheight[1]{\fontsize{\fsize}{#1\fsize}\selectfont}%
  \ifx\svgwidth\undefined%
    \setlength{\unitlength}{139.50061697bp}%
    \ifx\svgscale\undefined%
      \relax%
    \else%
      \setlength{\unitlength}{\unitlength * \real{\svgscale}}%
    \fi%
  \else%
    \setlength{\unitlength}{\svgwidth}%
  \fi%
  \global\let\svgwidth\undefined%
  \global\let\svgscale\undefined%
  \makeatother%
  \begin{picture}(1,0.29451246)%
    \lineheight{1}%
    \setlength\tabcolsep{0pt}%
    \put(0,0){\includegraphics[width=\unitlength,page=1]{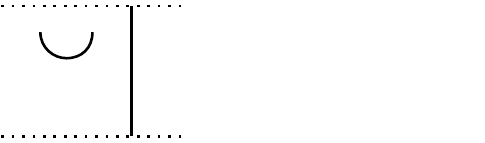}}%
    \put(0.0833372,0.10425186){\color[rgb]{0,0,0}\makebox(0,0)[lt]{\smash{\begin{tabular}[t]{l}$z_2$\end{tabular}}}}%
    \put(0.10925865,0.22809919){\color[rgb]{0,0,0}\makebox(0,0)[lt]{\smash{\begin{tabular}[t]{l}$z_1$\end{tabular}}}}%
    \put(0,0){\includegraphics[width=\unitlength,page=2]{Bzdiagtocirc.pdf}}%
    \put(0.29800668,0.10809193){\color[rgb]{0,0,0}\makebox(0,0)[lt]{\smash{\begin{tabular}[t]{l}$z_3$\end{tabular}}}}%
    \put(0.4973131,0.13209195){\color[rgb]{0,0,0}\makebox(0,0)[lt]{\smash{\begin{tabular}[t]{l}$\longrightarrow$\end{tabular}}}}%
    \put(0,0){\includegraphics[width=\unitlength,page=3]{Bzdiagtocirc.pdf}}%
    \put(0.77726649,0.21225834){\color[rgb]{0,0,0}\makebox(0,0)[lt]{\smash{\begin{tabular}[t]{l}$z_1$\end{tabular}}}}%
    \put(0.78168265,0.10079564){\color[rgb]{0,0,0}\makebox(0,0)[lt]{\smash{\begin{tabular}[t]{l}$z_2$\end{tabular}}}}%
    \put(0.90053774,0.07516223){\color[rgb]{0,0,0}\makebox(0,0)[lt]{\smash{\begin{tabular}[t]{l}$z_3$\end{tabular}}}}%
    \put(0,0){\includegraphics[width=\unitlength,page=4]{Bzdiagtocirc.pdf}}%
  \end{picture}%
\endgroup%

    \caption{An example of representing an element of $\wSU_\omega$ in the disk with marked circle points.}
\label{fig:BZdiagtocirc}
\end{figure}

\vspace{0.1in}

Consider all possible closures of $x$ via diagrams $y$ with $n+m$ boundary points in an annulus. Such a diagram $y$ consists of an outer matching in  $\wBout{n+m}$ together  with elements of  $Z$ sprinkled  over  the $\frac{n+m}{2}+1$ regions  of $y$  into which $\frac{n+m}{2}$ arcs of $y$ split the annulus. We call such a diagram $y$   a \emph{$Z$-decorated outer matching}. The closure $yx$ is  a planar  diagram  of circles and elements of $Z$ written in the regions, and it evaluates to an element of $Z$ that we also denote  $yx$. The evaluation is the same one  that simplifies diagrams in $\wSU^0_{Z,0}$  to elements of  $Z$.

 \begin{figure}
\begin{center}
     \begin{subfigure}[htb]{0.4\textwidth}
      \centering
       $x=\vcenter{\hbox{
\begingroup%
  \makeatletter%
  \providecommand\color[2][]{%
    \errmessage{(Inkscape) Color is used for the text in Inkscape, but the package 'color.sty' is not loaded}%
    \renewcommand\color[2][]{}%
  }%
  \providecommand\transparent[1]{%
    \errmessage{(Inkscape) Transparency is used (non-zero) for the text in Inkscape, but the package 'transparent.sty' is not loaded}%
    \renewcommand\transparent[1]{}%
  }%
  \providecommand\rotatebox[2]{#2}%
  \newcommand*\fsize{\dimexpr\f@size pt\relax}%
  \newcommand*\lineheight[1]{\fontsize{\fsize}{#1\fsize}\selectfont}%
  \ifx\svgwidth\undefined%
    \setlength{\unitlength}{39.42905721bp}%
    \ifx\svgscale\undefined%
      \relax%
    \else%
      \setlength{\unitlength}{\unitlength * \real{\svgscale}}%
    \fi%
  \else%
    \setlength{\unitlength}{\svgwidth}%
  \fi%
  \global\let\svgwidth\undefined%
  \global\let\svgscale\undefined%
  \makeatother%
  \begin{picture}(1,1.04029068)%
    \lineheight{1}%
    \setlength\tabcolsep{0pt}%
    \put(0,0){\includegraphics[width=\unitlength,page=1]{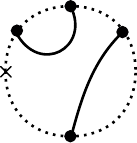}}%
    \put(0.21196539,0.74927434){\color[rgb]{0,0,0}\makebox(0,0)[lt]{\smash{\begin{tabular}[t]{l}$z_1$\end{tabular}}}}%
    \put(0.22758981,0.3549176){\color[rgb]{0,0,0}\makebox(0,0)[lt]{\smash{\begin{tabular}[t]{l}$z_2$\end{tabular}}}}%
    \put(0.64810098,0.26422619){\color[rgb]{0,0,0}\makebox(0,0)[lt]{\smash{\begin{tabular}[t]{l}$z_3$\end{tabular}}}}%
  \end{picture}%
\endgroup%
}}$
         \caption{An element $x$ of $\wSU_\omega$.}
         \label{fig:x}
     \end{subfigure}
 \begin{subfigure}[htb]{0.4\textwidth}
      \centering
       $y=\vcenter{\hbox{\import{Graphics/}{outmatch3.pdf_tex}}}$
         \caption{An outer matching $y$ in $\wBout{4}$.}
         \label{fig:y}
     \end{subfigure}
 \begin{subfigure}[htb]{0.6\textwidth}
      \centering
       $yx=\vcenter{\hbox{
\begingroup%
  \makeatletter%
  \providecommand\color[2][]{%
    \errmessage{(Inkscape) Color is used for the text in Inkscape, but the package 'color.sty' is not loaded}%
    \renewcommand\color[2][]{}%
  }%
  \providecommand\transparent[1]{%
    \errmessage{(Inkscape) Transparency is used (non-zero) for the text in Inkscape, but the package 'transparent.sty' is not loaded}%
    \renewcommand\transparent[1]{}%
  }%
  \providecommand\rotatebox[2]{#2}%
  \newcommand*\fsize{\dimexpr\f@size pt\relax}%
  \newcommand*\lineheight[1]{\fontsize{\fsize}{#1\fsize}\selectfont}%
  \ifx\svgwidth\undefined%
    \setlength{\unitlength}{116.67902423bp}%
    \ifx\svgscale\undefined%
      \relax%
    \else%
      \setlength{\unitlength}{\unitlength * \real{\svgscale}}%
    \fi%
  \else%
    \setlength{\unitlength}{\svgwidth}%
  \fi%
  \global\let\svgwidth\undefined%
  \global\let\svgscale\undefined%
  \makeatother%
  \begin{picture}(1,0.42405391)%
    \lineheight{1}%
    \setlength\tabcolsep{0pt}%
    \put(0,0){\includegraphics[width=\unitlength,page=1]{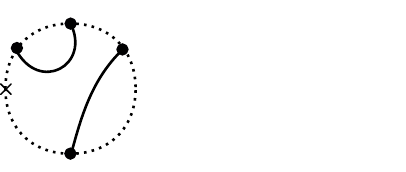}}%
    \put(0.07162895,0.28275918){\color[rgb]{0,0,0}\makebox(0,0)[lt]{\smash{\begin{tabular}[t]{l}$z_1$\end{tabular}}}}%
    \put(0.07690887,0.14949518){\color[rgb]{0,0,0}\makebox(0,0)[lt]{\smash{\begin{tabular}[t]{l}$z_2$\end{tabular}}}}%
    \put(0.21901118,0.11884805){\color[rgb]{0,0,0}\makebox(0,0)[lt]{\smash{\begin{tabular}[t]{l}$z_3$\end{tabular}}}}%
    \put(0,0){\includegraphics[width=\unitlength,page=2]{closure.pdf}}%
    \put(0.46327107,0.19242443){\color[rgb]{0,0,0}\makebox(0,0)[lt]{\smash{\begin{tabular}[t]{l}$=$\end{tabular}}}}%
    \put(0.61111183,0.18599667){\color[rgb]{0,0,0}\makebox(0,0)[lt]{\smash{\begin{tabular}[t]{l}$z_1$\end{tabular}}}}%
    \put(0,0){\includegraphics[width=\unitlength,page=3]{closure.pdf}}%
    \put(0.75183738,0.19449036){\color[rgb]{0,0,0}\makebox(0,0)[lt]{\smash{\begin{tabular}[t]{l}$z_2$\end{tabular}}}}%
    \put(0.90036691,0.18599667){\color[rgb]{0,0,0}\makebox(0,0)[lt]{\smash{\begin{tabular}[t]{l}$z_3$\end{tabular}}}}%
    \put(0,0){\includegraphics[width=\unitlength,page=4]{closure.pdf}}%
  \end{picture}%
\endgroup%
}}=\omega(z_1)z_2\omega(z_3)$
         \caption{The closure $yx$ of $x$ by $y$.}
         \label{fig:yx}
     \end{subfigure}
\caption{Examples of the closure of a diagram in $\wSU_\omega$ by a compatible outer matching.}
\label{fig:closure}
\end{center}
\end{figure}

We say that a finite $R$-linear combination $x=\sum_i \lambda_ix_i$ of  diagrams $x_i$  as above with coefficients in $R$ is \emph{negligible} if and only if
\begin{equation}
\sum_i \lambda_i \, yx_i=0\in Z
\end{equation} 
for  any outer diagram $y$. 
In other  words, a morphism $x$  from $n$  to $m$ in  $\wSU_\omega$ is negligible if  $yx=0$ for any  way  to  close $x$ on the outside via a $Z$-decorated outer matching $y$. Note that $x$ is  a linear combination of diagrams, and  $yx$ is the corresponding linear  combination of  closed diagrams.  

\begin{proposition}  The set of negligible morphisms constitutes a  two-sided monoidal ideal in  $\wSU_\omega$. 
\end{proposition}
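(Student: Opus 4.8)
The plan is to deduce everything from two facts: the $R$-bilinearity of the closure pairing, and one geometric observation, namely that any morphism composed or tensored with a negligible morphism $x$ can be absorbed into the closing outer diagram.

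First I would record the submodule property. For a fixed $Z$-decorated outer matching $y$, the assignment $x\mapsto yx$ is $R$-linear: by the defining rules of $\wSU_\omega$, a sum of labels in a region closes to the corresponding sum of closed diagrams, and a scalar $r\in R$ sitting in a region may be pulled out as a coefficient. Hence if $\sum_i\lambda_i\, y x_i=0$ and $\sum_j\mu_j\, y x'_j=0$ for every $y$, the same vanishing holds for any $R$-linear combination of the two families. Thus the negligible morphisms in each $\Hom_{\wSU_\omega}(n,m)$ form an $R$-submodule.

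The heart of the argument is an absorption principle. Let $x\colon n\to m$ be negligible and let $a\colon m\to k$ be arbitrary; I want $ax$ negligible. Given any $Z$-decorated outer matching $y$ closing $ax$ (so its underlying matching lies in $\wBout{n+k}$), I view the whole configuration in the plane: $x$ sits in its disk, and everything outside that disk --- namely $a$ together with $y$ --- is a diagram in the surrounding annulus whose inner boundary carries the $n+m$ endpoints of $x$. Removing the circles of this annular diagram via $\omega$ and merging the $Z$-labels (exactly the evaluation defining $\cF_Z$) turns it into a genuine $Z$-decorated outer matching $y'$ whose underlying matching lies in $\wBout{n+m}$. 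Because the evaluation of a closed $Z$-decorated diagram to an element of $Z$ is independent of the order in which circles are stripped --- this is the well-definedness of the evaluation underlying $\cF_Z$ and the composition in $\wSU_\omega$ --- computing $y(ax)$ by first simplifying $ax$ and then closing, versus first forming $y'$ and then closing $x$, yields the same element, i.e.\ $y(ax)=y'x$. Since $x$ is negligible, $y'x=0$, and as $y$ was arbitrary, $ax$ is negligible. The mirror-image computation, placing $b\colon p\to n$ on the other side, shows $xb$ is negligible; the side-by-side version, placing an arbitrary $c$ next to $x$ and absorbing it into $y$, shows both $x\otimes c$ and $c\otimes x$ are negligible. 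Combined with the submodule property, these establish that the negligible morphisms form a two-sided monoidal ideal.

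The main obstacle I anticipate is making the absorption step fully rigorous. One must verify that the complement of $x$'s disk, assembled from $a$ (respectively $b$, $c$) and $y$, really is a crossingless matching of the $n+m$ points of $x$ once the circles are removed --- this uses planarity and crossinglessness of all the pieces --- and, crucially, that the resulting value $y'x\in Z$ coincides with $y(ax)$. The latter identification is precisely where one invokes the associativity and well-definedness of the evaluation (nested circles may be removed innermost-first in any order, and $Z$-labels in a region merge commutatively); once this is in hand, all the ideal properties follow formally.
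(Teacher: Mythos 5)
Your strategy (bilinearity of the closure pairing plus absorption of the composed or tensored morphism into the outer closure) is surely the ``straightforward'' argument the authors had in mind --- the paper itself gives no details --- and your submodule step is fine. But the absorption step fails exactly at the point you flagged: it is \emph{not} true that the complement of $x$'s disk, assembled from $a$ and $y$, always reduces to a $Z$-decorated outer matching $y'$ with underlying matching in $\wBout{n+m}$. Arcs of $a$ that join two of its \emph{output} points can concatenate with arcs of $y$ into closed circles that \emph{encircle} $x$'s disk. Simplest instance: take $x\colon 2\to 0$, let $a\colon 0\to 2$ be the cup, and close $ax\colon 2\to 2$ by the outer matching $y\in\wBout{4}$ that joins the two top points of $\partial D_2$ by the arc running the long way around the annulus (and the two bottom points by a short arc); this is a perfectly legal outer matching. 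Then $a$'s cup and $y$'s long arc close up into a circle wrapping around $D_1$. Such a circle is not contractible in $\R^2\setminus\mathrm{int}(D_1)$, so it cannot be ``removed via $\omega$'' into a $Z$-label sitting in a region of the annulus independently of $x$; there is no $Z$-decorated outer matching $y'$ with $y(ax)=y'x$, and the appeal to negligibility of $x$ against $y'$ breaks down. (Indeed it is precisely these wrapping circles that make the pairing see the operator $\omega$ at all.)

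The proposition is still true and your argument is repairable with one further observation. Let $C_1\subset\cdots\subset C_s$ be the circles of the annular diagram $a\cup y$ that encircle $x$'s disk, ordered by nesting. All arcs ending on $\partial D_1$ lie inside $C_1$, and every circle of $a\cup y$ inside $C_1$ is contractible there, so the portion of $a\cup y$ inside $C_1$ does reduce to a genuine $Z$-decorated outer matching $y_1\in\wBout{n+m}$ (decorated); between consecutive $C_j$'s there are only contractible circles and labels, evaluating to fixed elements $z_j\in Z$. Well-definedness of the evaluation then gives $y(ax)=z_s\,\omega\bigl(z_{s-1}\,\omega(\cdots z_1\,\omega(y_1x)\cdots)\bigr)=\Phi(y_1x)$ for an $R$-linear map $\Phi\colon Z\to Z$ depending only on $a$ and $y$. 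Since the same $y_1$ and $\Phi$ serve for every term $x_i$ of a linear combination $x=\sum_i\lambda_ix_i$, negligibility gives $\sum_i\lambda_i\,y_1x_i=0$ and hence $y(ax)=\Phi(0)=0$. The identical amendment handles $xb$, $x\otimes c$, and $c\otimes x$; with it, your proof is complete.
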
  
\begin{proof}
The proof is straightforward.
\end{proof}

Denote  by  $\wU_\omega$ the quotient  of $\wSU_\omega$ by the two-sided  ideal of negligible  morphisms. 
We refer to $\wU_\omega$ as the \emph{state category of $(Z,\omega)$} or the \emph{gligible quotient of $\wSU_\omega$}.

\vspace{0.1in} 

The category $\wU_\omega$  is  an $R$-linear rigid monoidal category. Its objects are non-negative  integers $n$. The  morphism spaces  $\Hom_{\wU_\omega}(n,m)$ are naturally $Z$-bimodules, just like the morphism spaces $\Hom_{\wSU_\omega}(n,m)$. If $Z$ is a finitely-generated $R$-module, the morphism spaces $\Hom_{\wU_\omega}(n,m)$ are  finitely-generated $R$-modules as well.


\subsection{A commutative square of categories} \label{sec:commsquare}
\quad
\smallskip

In this subsection, we consider the Karoubi closures of $\wSU_\omega$ and its gligible quotient $\wU_\omega$. We give an overview diagram of the monoidal categories considered in this section in Equation~\eqref{overview-diag}.

\ssubsection{The Karoubi closure of $\wSU_\omega$.}
Given an $R$-linear category $\A$, one considers $\Kar(\A^{\oplus})$ which is the \emph{Karoubi closure} (or, \emph{idempotent completion}) of the category $\A$. This category is constructed in two steps. First, we formally adjoin finite direct sums of objects in $\A$. That is, we construct a category $\A^{\oplus}$ where objects are direct sums $\bigoplus_{i=1}^nA_i$, together with morphisms 
\begin{align}\iota_i\colon A_i\to \bigoplus_{i=1}^nA_i,\qquad \pi_i\colon \bigoplus_{i=1}^nA_i\to A_i\end{align}
satisfying the relations
\begin{align}\pi_j\iota_i=\delta_{i,j}\ide_{A_i}.\end{align}
The category $\Kar(\A^{\oplus})$ consists of pairs $A_e:=(A,e)$, where $A$ is an object in $\A^{\oplus}$ and $e\colon A\to A$ an idempotent endomorphism, i.e. $e\circ e=e$. Morphisms are given by 
$$\Hom_{\Kar(\A^{\oplus})}\big(A_e,B_f\big)=f\circ \Hom_{\A^{\oplus}}(A,B)\circ e.$$

\ssubsection{Generalized Deligne--Karoubi categories}
Define the  Deligne (or Deligne--Karoubi) category $\wDSU_{\omega}$ associated to $\wSU_\omega$ as  the additive  Karoubi closure  of the latter, i.e.
\begin{equation} 
\wDSU_\omega :=\Kar(\wSU_{\omega}^{\oplus}). 
\end{equation} 
This is an idempotent-complete  $R$-linear monoidal category with duals. 

\begin{example} 
Let us specialize to $Z=R$ and $d=\omega(1)$  an invertible element of $R$, so that $\wSU_d= TL(d)$, the Temperley--Lieb category. Then the  additive Karoubi closure $\wDSU_\omega$ contains the idempotent $e=\tfrac{1}{d}\, \vcenter{\hbox{
\begingroup%
  \makeatletter%
  \providecommand\color[2][]{%
    \errmessage{(Inkscape) Color is used for the text in Inkscape, but the package 'color.sty' is not loaded}%
    \renewcommand\color[2][]{}%
  }%
  \providecommand\transparent[1]{%
    \errmessage{(Inkscape) Transparency is used (non-zero) for the text in Inkscape, but the package 'transparent.sty' is not loaded}%
    \renewcommand\transparent[1]{}%
  }%
  \providecommand\rotatebox[2]{#2}%
  \newcommand*\fsize{\dimexpr\f@size pt\relax}%
  \newcommand*\lineheight[1]{\fontsize{\fsize}{#1\fsize}\selectfont}%
  \ifx\svgwidth\undefined%
    \setlength{\unitlength}{8.24999902bp}%
    \ifx\svgscale\undefined%
      \relax%
    \else%
      \setlength{\unitlength}{\unitlength * \real{\svgscale}}%
    \fi%
  \else%
    \setlength{\unitlength}{\svgwidth}%
  \fi%
  \global\let\svgwidth\undefined%
  \global\let\svgscale\undefined%
  \makeatother%
  \begin{picture}(1,1.36363653)%
    \lineheight{1}%
    \setlength\tabcolsep{0pt}%
    \put(0,0){\includegraphics[width=\unitlength,page=1]{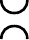}}%
  \end{picture}%
\endgroup%
}}\colon 2\to 2$, an endomorphism of object $2$. The splitting object $(2,e)$ is a proper subobject of $2$, for instance since $\rank_{R}\,\End((2,e))=1\not= 2=\rank_{R}\,\End(2)$. The object $(2,e)$ is also not isomorphic to $1$ since there are no non-zero morphisms $1\to 2$ in $TL(d)$.
\end{example}

\ssubsection{The Karoubi closure of the gligible quotient}

We define the Karoubi closure of the gligible quotient category by
\begin{equation}
    \wDU_{\omega}:= \Kar(\wU_{\omega}^{\oplus})
\end{equation}

If the ground ring $R$ is a  field  $\kk$  and $\dim_{\kk}Z< \infty$, then  the  morphism spaces in categories  $\wDU_\omega$  and $\wU_\omega$ are  finite-dimensional. Consequently, taking the gligible quotient commutes with passing  to the additive Karoubi closure  in this case,  and  there  is a commutative diagram of monoidal categories and monoidal functors 
\begin{align}\label{overview-diag}
 \vcenter{\hbox{ \xymatrix{\wU\ar@{^{(}->}[rr]&&R\wU\ar@{^{(}->}[rr]&&\wU_Z\ar@{>>}[rr]\ar@{>>}[rrd]&&\wSU_{\omega}\ar@{^{(}->}[rr]\ar@{>>}[d]&&\wDSU_{\omega}  \ar@{>>}[d] \\
    &&&&&&\wU_{\omega}\ar@{^{(}->}[rr]&& \wDU_{\omega}.
    }}}
\end{align}
We note that the gligible quotient of $\wU_Z$ is equivalent (even isomorphic) to $\wU_\omega$.  

Under weaker conditions (if $R$ is not a field or $Z$ has infinite rank over $R$), there are potentially two distinct categories in place of $\wDU_{\omega}$: the gligible quotient of $\wDSU_{\omega}$ and the Karoubi envelope of $\wU_{\omega}$. There is a functor from the first category to the second, but it is not clear when this functor is an equivalence. With these weak conditions, it is natural to define $\wDU_{\omega}$ as the Karoubi envelope of the gligible quotient $\wU_{\omega}$, while keeping in mind the above caveat: the square is still commutative, but the gligible quotient of $\wDSU_{\omega}$ may not be equivalent to $\wDU_{\omega}$.  
 
We can  think of the four categories in the right square of diagram \eqref{overview-diag} as various monoidal envelopes of  the circular triple  $(R,Z,\omega)$. Objects of all of these categories have two-sided duals.

\vspace{0.1in} 

\emph{Summary of these categories}
\begin{itemize}
    \item $\wU$ has non-negative integers as objects. Morphisms from $n$ to $m$ in $\wU$   are  isotopy classes of planar diagrams of arcs and circles in the strip $\R\times [0,1]$ with
    $n$ bottom and $m$ top boundary points. 
    \item $R\wU$, for a commutative ring $R$, is a \emph{linearization} or \emph{pre-linearization} of $\wU$. It has non-negative integers as objects. Morphism from $n$ to $m$ in $\wU$ are 
    finite $R$-linear combinations of morphisms in $\wU$. 
    \item $\wU_Z$ is associated to a commutative $R$-algebra $Z$. It has the same objects as $\wU$, that is, non-negative integers. Compared to $R\wU$, morphisms in $\wU_Z$ are enriched by allowing elements of $Z$ to  float in the regions of a diagram.  
    \item $\wSU_{\omega}$ is associated to a circular triple $(R,Z,\omega)$, with $\omega$ an $R$-linear endomorphism of $Z$. It is a quotient of $\wU_Z$ by the relation that a circle wrapping around $z\in Z$ evaluates to $\omega(z)$. 
    \item $\wU_{\omega}$ is the quotient of $\wSU_{\omega}$ by the ideal of negligible morphisms (the \emph{gligible quotient} of $\wSU_{\omega}$). Since our categories are only monoidal and  not symmetric, the definition of a  negligible morphism requires converting its diagram in $\R\times[0,1]$ to a diagram in $D^2$ and then evaluating closings of this diagram via all possible annular diagrams. 
    \item $\wDSU_{\omega}$ is the additive Karoubi closure of $\wSU_{\omega}$. It is the counterpart of the Deligne category of the symmetric group. (Although note that  $\wDSU_{\omega}$ is not symmetric monoidal.)
    \item $\wDU_{\omega}$ is the gligible quotient of $\wDSU_{\omega}$. When $R=\kk$ is a field and $Z$ is finite-dimensional over $\kk$, the category $\wDU_{\omega}$ is also equivalent to the additive Karoubi closure of $\wU_{\omega}$, making the  square in (\ref{overview-diag}) commutative.
\end{itemize}

\vspace{0.1in} 

Consider the functor  from  $R\wU$  to $\wSU_\omega$ and the composite functor to $\wU_\omega$. In the special case when $Z$ is $\omega$-generated over $R$ (see Section \ref{sec_universal}), then the functor from $R\wU$  to $\wSU_\omega$, and hence also the composition to $\wU_\omega$, are full. In this case, we  can use circular forms and have them float in regions of crossingless matching diagrams in place of elements  of $Z$ in order to display morphisms  in $\wSU_\omega$.

\vspace{0.1in} 

Let $Z$ be a commutative $R$-algebra with a set of elements 
$\{s_i\}_{i\in I}, s_i\in Z$ that \emph{$\omega$-generates} $Z$ over $R$. In other words, any element of $Z$ is a $R$-linear combination of iterated products of applications of $\omega$ to the elements $s_i$.  In this case any morphism in $\wSU_\omega$ is a linear combination of  crossingless matchings with diagrams of circles and dots labelled by the $s_i$ floating in regions.

 
 \subsection{Spherical triples, evaluations, state spaces and categories}\label{subsec_arcs}
 \quad
 \smallskip
 
 For the gligible quotient category $\wU_\omega$ from Section~\ref{subsec_pairings_neg} to be monoidal we needed to adopt an asymmetric set-up which pairs diagrams in a disk with diagrams in an annulus to define the correct quotient space  $\wU_{\omega,n}^m$ for the spaces  of diagrams $\wSU_{\omega,n}^m$ in a disk. This pairing is different from similar pairings in~\cite{Kh2,KS,KQR}  where one works with manifolds (sometimes with defects) rather than planar diagrams, and the ambient category is symmetric rather than just monoidal, which is the case with the planar diagrams considered here.

 \ssubsection{Spherical triples.}
 
 One can limit the consideration to the symmetric case and consider pairings of diagrams in a disk when the underlying triple $(R,Z,\omega)$ is \emph{spherical}. This means that the evaluation of a diagram in the plane $\R^2$ depends only on the isotopy class of the corresponding diagram in $\SS^2$. 
 
 \begin{definition}\label{def_Zspherical}
 A triple $(R,Z,\omega)$ is called $Z$-\emph{spherical} if $\omega(z_1)z_2 = z_1 \omega(z_2)$ for any $z_1,z_2\in Z.$
 \end{definition}
 
 This property is equivalent to the condition that any planar diagram, when evaluated to an element of $Z$, depends only on the isotopy class of the diagram in $\SS^2$. Such an isotopy of $\SS^2$ is a composition of  isotopies in $\R^2$ and moving an arc from a circle in the diagram through the infinite point of $\SS^2$. For a move of a latter type, that circle splits the diagram in $\SS^2$ into two disks, and the diagrams there may be evaluated to elements $z_1,z_2\in Z$, respectively. The relation $\omega(z_1)z_2 = z_1 \omega(z_2)$ in the above definition says that moving a circle bounding $z_1,z_2$ on the two sides through the infinite point of $\SS^2$ does not change the evaluation, see Figure~\ref{fig_spher}.

 \begin{figure}[htb]
\begin{center}
    \import{Graphics/}{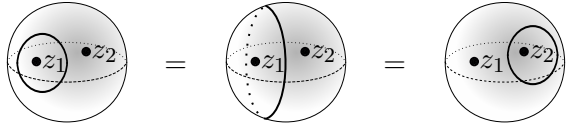}
\caption{The $Z$-sphericality condition for $\omega$. The dashed equatorial circle is shown to emphasize that the $\omega$-circle and $z_1,z_2$-dots are placed on a 2-sphere.}
\label{fig_spher}
\end{center}
\end{figure}
 
 The $Z$-spherical condition is equivalent to the condition that  
 $\omega(z)= z\omega(1)$ for all $z\in Z$. In particular, $\omega$ is determined by $\omega(1)\in Z$, and, vice versa, any element $z_0\in Z$ gives rise to a spherical triple with $\omega(z)=z z_0$. Hence, for $Z$-spherical $\omega$, to evaluate a diagram, count the number $k$ of its circles, remove the circles, and then multiply the evaluation of what remains by $z_0^k$. 

 \vspace{0.1in} 
 
 In the $Z$-spherical case, labels $z\in Z$ in the regions of an arc-circle diagram in $\wU_\omega$ may freely move between the regions. 
 
 \begin{lemma}\label{lem_Zspherical}
 Let $(R,Z,\omega)$ be a $Z$-spherical circular triple. For any $z\in Z$, the relation
\begin{equation}\label{eqn_Zspherical1}
    z\cdot \ide_1= \ide_1\cdot z 
\end{equation}
holds in $\wU_\omega$. More generally, for any morphism $f$ in $\wU_\omega$, we have $z\cdot f=f\cdot z$.
In particular, the set $\{z\cdot u\}$, where $u\in \wB_n^m$ and $z$ runs over a basis of $Z$ constitutes a generating set of $\Hom_{\wU_\omega}(n,m)$.
 \end{lemma}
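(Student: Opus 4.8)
The plan is to first prove the single--strand case $z\cdot\ide_1=\ide_1\cdot z$ straight from the definition of the gligible quotient, and then propagate it to arbitrary morphisms by a local application. By construction $\wU_\omega$ is the quotient of $\wSU_\omega$ by negligible morphisms, so to establish an identity $g_1=g_2$ in $\wU_\omega$ it suffices to show $y g_1=y g_2$ in $Z$ for every $Z$-decorated outer matching $y$. For $g_1=z\cdot\ide_1$ and $g_2=\ide_1\cdot z$ the only difference is whether the single label $z$ sits in the region to the left or to the right of the unique strand, and one closes along an outer matching in $\wBout{2}$, of which there are exactly two.

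First I would observe that, for either closure, the strand of $\ide_1$ together with the closing arc of $y$ forms a single circle $C$, and that the left-of-strand and right-of-strand regions are precisely the two complementary disks of $C$ on $\SS^2$. Letting $a'$ and $b'$ denote the products of the remaining labels (coming from the decorations of $y$ and from evaluating any inner circles) on the two sides of $C$, the closure of $z\cdot\ide_1$ evaluates to $\omega(za')\,b'$ while that of $\ide_1\cdot z$ evaluates to $\omega(a')\,z\,b'$. The $Z$-sphericality condition (Definition~\ref{def_Zspherical}), in the equivalent form $\omega(w)=w\,\omega(1)$, gives $\omega(za')=za'\,\omega(1)=z\,\omega(a')$, so the two values agree; the same computation for the second matching shows that $z\cdot\ide_1-\ide_1\cdot z$ is negligible, hence $z\cdot\ide_1=\ide_1\cdot z$ in $\wU_\omega$.

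Next I would upgrade this to a local move. Since $\wU_\omega$ is monoidal and the identity $z\cdot\ide_1=\ide_1\cdot z$ is topological, a label $z$ lying in a region $P$ adjacent across an arc $\alpha$ to a region $Q$ can be transported from $P$ to $Q$: isotope a neighborhood of a point of $\alpha$ into a vertical strand carrying $z$ on its left, present the diagram as this elementary $1\to1$ piece composed and tensored with the remainder, and replace $z\cdot\ide_1$ by the equal morphism $\ide_1\cdot z$. Because the region-adjacency graph of a crossingless matching is connected, $z$ can be carried from the leftmost region of $f$ to its rightmost region across finitely many arcs, giving $z\cdot f=f\cdot z$ for all $f$. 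Applying the same transport repeatedly, every label of a diagram may be pushed into the leftmost region, where the labels merge into a single element of $Z$ by the multiplication rule; expanding that element in a basis of $Z$ then shows that each morphism of $\Hom_{\wU_\omega}(n,m)$ is an $R$-linear combination of diagrams $z\cdot u$ with $u\in\wB_n^m$ and $z$ a basis element, which is the asserted generating set.

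The main obstacle is the rigorous justification of the local move: one must verify that transporting $z$ across an \emph{arbitrary} arc --- including arcs near a cup or cap, where the arc is not monotone in the vertical direction --- genuinely reduces to the single-strand relation via an isotopy and the monoidal structure, and that composing and tensoring the elementary relation with the ambient diagram is legitimate in the gligible quotient. A secondary point needing care is the bookkeeping in the first step when $y$ carries nontrivial labels: one must check that the decorations distribute across the two sides of $C$ exactly so that sphericality in the stated form equates the two evaluations.
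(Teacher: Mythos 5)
Your proposal is correct and follows essentially the same route as the paper's proof: close the difference $z\cdot\ide_1-\ide_1\cdot z$ by the two decorated outer matchings, observe that the evaluations agree by the $Z$-sphericality identity $\omega(za')=z\,\omega(a')$, and then propagate labels across arcs (legitimate since the negligibles form a two-sided monoidal ideal) to reach the leftmost region. Your version simply spells out the bookkeeping and the local-move justification that the paper leaves implicit; the concerns you flag at the end are handled exactly as you sketch, so there is no gap.
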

 \begin{proof}
 Closing the morphism $z\cdot \ide_1- \ide_1\cdot z$ gives the relation of $Z$-sphericality from Definition \ref{def_Zspherical}. Thus, this relation holds in the gligible quotient $\wU_\omega$. This implies that we can move all $Z$-labels in regions of an arc-circle diagram $u\in \wB_n^m$ to the leftmost region (or the rightmost region). The remaining statements follow.
 \end{proof}

 \begin{figure}[htb]
\begin{center}
    \import{Graphics/}{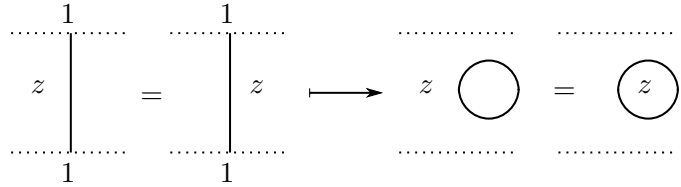}
    \caption{Relation \eqref{eqn_Zspherical1} holds in $\wU_\omega$ in the $Z$-spherical case.}
     \label{fig:my_label}
\end{center}
 \end{figure}

The $Z$-spherical condition is very restrictive, since the combinatorics of nested circles in the plane or on the 2-sphere is lost. One can instead refine it by adding a trace map from $Z$ to a smaller commutative ground ring to evaluate spherical diagrams. Without aiming for full generality, let us define $R$-spherical or simply \emph{spherical} triples. 

\begin{definition}\label{def_Rspherical}
A circular triple $(R,Z,\omega)$ is called \emph{$R$-spherical} (or \emph{spherical} for short)  if it comes equipped with a non-degenerate $R$-linear trace map $\varepsilon\colon Z\lra R$ such that 
  \begin{equation}
      \varepsilon(z_1\omega(z_2)) = \varepsilon(\omega(z_1)z_2), \ \ z_1,z_2\in Z. 
  \end{equation}
\end{definition}


 \vspace{0.1in} 
 
 When pairing a diagram in a disk to a diagram in an annulus, for a $Z$-spherical triple $(R,Z,\omega)$ or an $R$-spherical triple $(R,Z,\omega,\varepsilon)$, the annular diagram may be reduced to one in a disk by moving some of its arcs through the infinite point of $\SS^2$. The two $R$-modules on the two sides of the pairing can be made isomorphic, and the pairing is then symmetric. We will look at some examples in Section~\ref{sec_adj_ex} and now discuss a related setup when only the evaluations of spherical diagrams in $R$ (taken to be a field $\kk$, for simplicity) are given.

\ssubsection{Pairings on circle diagrams}
 
 Recall from Section \ref{sec-wU} that the category $\wU$ has non-negative integers $n$ as objects  and morphisms  from $n$ to  $m$ are isotopy classes of planar diagram with circles  and  arcs, the latter connecting $n+m$ points on the boundary of  the diagram in pairs via  a crossingless  matching.
 Thus, the set of  morphisms 
 $\wU^m_n$ from $n$  to $m$ in $\wU$ is the set of isotopy classes of diagrams of circles and  arcs in the  strip $\R\times [0,1]$ with $m$ top and  $n$ bottom endpoints. 
 
 Elements of $\wU^m_n$ are  in a bijective correspondence  with the following data, see also Section~\ref{sec-wU}. Each $u \in \wU^m_n$ defines a crossingless matching $\arc(u)\in \wB^m_n$ given by erasing the circles of $u$. The diagram $\arc(u)$ partitions the strip into $\frac{n+m}{2}+1$ contractible regions. The intersection of $u$ with the interior of each region is a diagram of circles, thus an element of  $\wU^0_0$. Hence, we see that elements of $\wU^m_n$ are in a bijection with crossingless matching  in $\wB^m_n$ together  with a choice of a diagram in $\wU^0_0$ (a closed diagram) for each of the $\frac{n+m}{2}+1$ regions. 

 \vspace{0.1in} 
 
Given a  diagram $u\in \wU^m_n$ denote by $\overline{u}\in \wU^n_m$ the reflection of $u$ about the horizontal line through the middle of the strip. As we have seen in \eqref{reflections}, this operation extends to a contravariant  involution on the category $\wU$. 

The set $\wU_n^m$  is empty unless $n+m$ is even, and $\wU^n_0$ is empty unless $n$ is even.  
For two elements $a,b\in \wU^{n}_0$ the composition $\overline{b}a$ is a closed diagram in $\wU^0_0$, see  Figure~\ref{fig2_11} for an example.

\begin{figure}
\begin{center}
     \centering
     \begin{subfigure}[htb]{0.25\textwidth}
     \import{Graphics/}{diaga.pdf_tex} 
      \centering
         \caption{$a$}
         \label{fig:ainU}
     \end{subfigure}
     \begin{subfigure}[htb]{0.25\textwidth}
        \centering
     \import{Graphics/}{diagb.pdf_tex} 
         \caption{$b$}
         \label{fig:binU}
     \end{subfigure}
     \begin{subfigure}[htb]{0.25\textwidth}
        \centering
    \import{Graphics/}{diagbbar.pdf_tex}
         \caption{$\ov{b}$}
         \label{fig:bbarb}
     \end{subfigure}
\caption{Diagrams $a$, $b\in \wU^4_0$ and the reflection $\overline{b}\in \wU^0_4$.}
\label{fig2_10}
\end{center}
\end{figure}

\begin{figure}[htb]
\begin{center}
\begingroup%
  \makeatletter%
  \providecommand\color[2][]{%
    \errmessage{(Inkscape) Color is used for the text in Inkscape, but the package 'color.sty' is not loaded}%
    \renewcommand\color[2][]{}%
  }%
  \providecommand\transparent[1]{%
    \errmessage{(Inkscape) Transparency is used (non-zero) for the text in Inkscape, but the package 'transparent.sty' is not loaded}%
    \renewcommand\transparent[1]{}%
  }%
  \providecommand\rotatebox[2]{#2}%
  \newcommand*\fsize{\dimexpr\f@size pt\relax}%
  \newcommand*\lineheight[1]{\fontsize{\fsize}{#1\fsize}\selectfont}%
  \ifx\svgwidth\undefined%
    \setlength{\unitlength}{254.33164281bp}%
    \ifx\svgscale\undefined%
      \relax%
    \else%
      \setlength{\unitlength}{\unitlength * \real{\svgscale}}%
    \fi%
  \else%
    \setlength{\unitlength}{\svgwidth}%
  \fi%
  \global\let\svgwidth\undefined%
  \global\let\svgscale\undefined%
  \makeatother%
  \begin{picture}(1,0.47477832)%
    \lineheight{1}%
    \setlength\tabcolsep{0pt}%
    \put(0,0){\includegraphics[width=\unitlength,page=1]{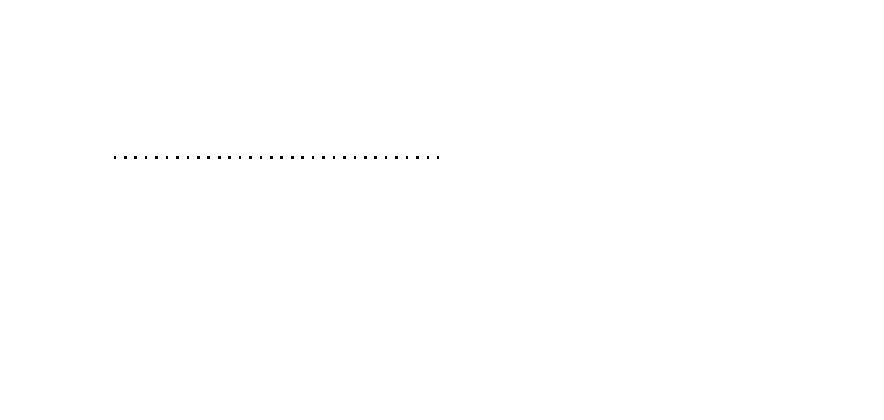}}%
    \put(-0.00130551,0.28683601){\color[rgb]{0,0,0}\makebox(0,0)[lt]{\smash{\begin{tabular}[t]{l}$\ov{b}a \; =$\end{tabular}}}}%
    \put(0.54144283,0.28758073){\color[rgb]{0,0,0}\makebox(0,0)[lt]{\smash{\begin{tabular}[t]{l}$=$\end{tabular}}}}%
    \put(0,0){\includegraphics[width=\unitlength,page=2]{bbara.pdf}}%
  \end{picture}%
\endgroup%

\caption{The composition $\overline{b}a\in \wU^0_0$ of the diagrams $a,b$ from Figure \ref{fig2_10}.}
\label{fig2_11}
\end{center}
\end{figure}

The map of  sets  
\begin{equation}
 \wU^n_0 \times \wU^n_0 \lra \wU^0_0
\end{equation}
taking $a\times b$  to  $\ov{b}a$ is symmetric, $\ov{b}a=\ov{a}b$ since  for  $c\in \wU^0_0$ we have  $\ov{c}=c$ by Proposition \ref{prop_refl}.

\ssubsection{Spherical evaluations}

Let us first discuss a minimalist approach to the construction of state spaces and state categories in the \emph{spherical} framework (Section~\ref{subsec_circ_s} will expand on this approach in the non-spherical case as well). We specialize $R$ to a field $\kk$ and assume given a map 
\begin{equation}\label{eq_alpha_spher2} 
    \alpha\colon  \wU^0_0 \lra \kk , \ \  \alpha(\omega(u_1)u_2)=\alpha(\omega(u_2)u_1), \ \  u_1,u_2 \in \wU^0_0.    
\end{equation}
The condition says that evaluation $\alpha$ of circle diagrams is \emph{spherical}, that is, depends only on the isotopy class of the diagram in $\SS^2$. Alternatively, one can define the  quotient set $\wU^{s,0}_{0}$ of $\wU^0_0$ by identifying two diagrams if they  are isotopic as diagrams in  $\SS^2$ and define $\alpha$ as the composition $\wU^0_0\lra \wU^{s,0}_{0} \lra\kk$ for some evaluation map $\wU^{s,0}_{0} \lra\kk$.  

\vspace{0.1in} 

Consider the bilinear form $(\,,\,)_{\alpha}$ on $\kk \wU^{2n}_0$ defined on the basis of circle diagrams by 
\begin{equation}\label{eq_bil_symm} 
    (a,b)_{\alpha} \ := \ \alpha(\ov{b}a), \ \  a,b\in \wU^{2n}_0
\end{equation}
and extended to the entire vector space $\kk\wU^{2n}_0$ bilinearly.  Define the state space 
\begin{equation}
    \wU^{2n}_{\alpha} \ := \ \kk \wU^{2n}_0 /\mathrm{ker}((\,,\,)_{\alpha})
\end{equation}
as the quotient of the vector space $\kk \wU^{2n}_0$ of diagrams by the kernel of the bilinear form $(\,,\,)_{\alpha}$. The bilinear form is symmetric by  Proposition~\ref{prop_refl}. 

This definition makes sense for any function $\alpha\colon  \wU^0_0\lra \kk$ but gives a better behaved collection of spaces $\wU^{2n}_{\alpha}$ when $\alpha$ is spherical, so that $\alpha$ factors through the quotient set $\wU^{s,0}_0$. 

\begin{definition}
 A spherical evaluation $\alpha$ as in (\ref{eq_alpha_spher2}) is called \emph{recognizable} if the state spaces $\wU^{2n}_{\alpha}$ are finite-dimensional for all $n$. 
\end{definition}

\begin{prop}\label{prop_rational} A spherical evaluation $\alpha$ is recognizable  if and only if $\wU^0_{\alpha}$ is finite-dimensional. 
\end{prop}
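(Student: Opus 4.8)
The plan is to prove the nontrivial implication, that finite-dimensionality of $\wU^0_{\alpha}$ forces all $\wU^{2n}_{\alpha}$ to be finite-dimensional; the converse is immediate by specializing to $n=0$. First I would record that $\wU^0_{\alpha}=\kk\wU^0_0/\rad_0$, where $\rad_0=\{u\in\kk\wU^0_0 : \alpha(uv)=0 \text{ for all } v\in\wU^0_0\}$ is the radical of the symmetric form $(u,v)_{\alpha}=\alpha(\ov{v}u)=\alpha(uv)$ (using $\ov{c}=c$ for closed diagrams, Proposition~\ref{prop_refl}). Set $d:=\dim_{\kk}\wU^0_{\alpha}$, which is assumed finite.

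The strategy is to exhibit an explicit finite-dimensional space receiving $\wU^{2n}_{\alpha}$. Decompose $\kk\wU^{2n}_0=\bigoplus_m S_m$ over the $c_n$ crossingless matchings $m\in\wB^{2n}_0$, where $S_m$ is spanned by diagrams with underlying matching $m$. Since the $n$ arcs of $m$ cut the disk into $n+1$ contractible regions, and a diagram over $m$ is precisely a choice of circle diagram in each region, there is an identification $S_m\cong(\kk\wU^0_0)^{\otimes(n+1)}$. Reducing each regional decoration modulo $\rad_0$ defines $P_m\colon S_m\to(\wU^0_{\alpha})^{\otimes(n+1)}$ and $P:=\bigoplus_m P_m$, whose target has dimension $c_n\,d^{\,n+1}$. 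Everything then reduces to the claim that $\ker P\subseteq\ker(\,,\,)_{\alpha}$. Granting this, $\wU^{2n}_{\alpha}=\kk\wU^{2n}_0/\ker(\,,\,)_{\alpha}$ is a quotient of $\kk\wU^{2n}_0/\ker P$, which $P$ embeds into $\bigoplus_m(\wU^0_{\alpha})^{\otimes(n+1)}$, giving the bound $\dim\wU^{2n}_{\alpha}\le c_n\,d^{\,n+1}<\infty$.

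To prove the claim I would fix $b\in\wU^{2n}_0$ and analyze $a\mapsto\alpha(\ov{b}a)$ as a function of the regional decorations $(z_0,\dots,z_n)$ of $a$, for fixed matching $m$. This function is multilinear, since in the closed diagram $\ov{b}a$ each $z_i$ contributes circles localized in a single region and $\alpha$ is $\kk$-linear. The heart of the matter is that each slot factors through $\wU^0_{\alpha}$, i.e.\ that $\alpha(\ov{b}a)=0$ whenever some $z_i\in\rad_0$. This is where sphericality enters: in $\ov{b}a$ the circles of $z_i$ occupy one region $R$, and an $\SS^2$-isotopy carrying $R$ to the unbounded region makes $\ov{b}a$ isotopic on $\SS^2$ to a disjoint union $z_i\sqcup V$, where $V\in\wU^0_0$ collects all remaining circles (matching circles together with every other regional decoration) and is independent of $z_i$. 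Since $\alpha$ is spherical, $\alpha(\ov{b}a)=\alpha(z_i\cdot V)$, which vanishes for $z_i\in\rad_0$. Multilinearity then shows $\alpha(\ov{b}\,\cdot\,)$ factors through $P$, proving the claim.

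The main obstacle is the geometric step: justifying that pulling $R$ to infinity genuinely exhibits $z_i$ as a disjoint factor. Care is needed when $R$ is bounded by several arcs or circles, so that its complement on $\SS^2$ is a union of several disks; one must verify that the circles of $z_i$ can be isotoped into a small ball inside $R$, disjoint from all of these, so that no spurious nesting between $z_i$ and $V$ is introduced. Once this localization is set up cleanly, the remaining verifications are routine bookkeeping.
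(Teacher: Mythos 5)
Your proposal is correct and takes essentially the same route as the paper's proof: decompose $\kk\wU^{2n}_0$ by the underlying crossingless matching, reduce the closed diagram in each of the $n+1$ regions modulo the kernel of the form on $\kk\wU^0_0$, and conclude $\dim\wU^{2n}_{\alpha}\le c_n\,d^{\,n+1}$. The paper labels the regional-reduction step ``straightforward''; your $\SS^2$-isotopy argument pulling a region to infinity is precisely the sphericality justification that step requires, so you have simply filled in the detail the paper leaves implicit.
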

\begin{proof}
The proof is straightforward: if $\wU^0_{\alpha}$ is finite-dimensional then a closed diagram, when part of any larger diagram to be evaluated, can be reduced to a linear combination of diagrams from a fixed finite subset $S\subset \wU^0_0$. Consequently, any diagram $u$ in $\wU^{2n}_0$ can be reduced, in $\wU^0_{\alpha}$, to a linear combination of diagrams with the $n$ arcs as in $u$ and a diagrams from $S$ in each of $n+1$ regions cut out by the arcs in the lower half-plane. 
\end{proof}

\vspace{0.1in} 

More generally, when $R$ is a commutative ring rather than a field $\kk$, the evaluation $\alpha\colon \wU^0_0\lra R$ is called \emph{recognizable} if $\wU^0_{\alpha}$ is a finitely generated $R$-module. This is equivalent to all state spaces $\wU^{2n}_{\alpha}$ being finitely generated $R$-modules. 

\vspace{0.1in} 

From here on we restrict to spherical $\alpha$ for the rest of this section. The state space $\wU^0_{\alpha}$ is naturally a commutative algebra  with the non-degenerate trace form $\alpha$. This  algebra is finite-dimensional exactly when $\alpha$ is recognizable. Wrapping a circle around a diagram induces an $\kk$-linear map $\omega\colon  \wU^0_{\alpha} \lra \wU^0_{\alpha}$. 

\vspace{0.1in}

Converting from the  lower half-plane to a disk or a plane strip  with $n$ bottom and top $m$ boundary points, we can define the state space $\wU^m_{\alpha,n}$  of the evaluation $\alpha$ for such diagrams. This can be done, for instance, by bending the bottom $n$ points  via $n$ arcs to get  diagrams in $\wU^{n+m}_0$ and using the bilinear form on that space, or by directly gluing together two such diagrams  into a spherical diagram and applying $\alpha$. There is an isomorphism of vector spaces  $\wU^m_{\alpha,n}\cong  \wU^{n+m}_{\alpha}$ as long as  we fix an arc diagram to move the bottom points to the top, see Figure~\ref{fig_bending}.

\begin{figure}[htb]
\begin{center}
\begingroup%
  \makeatletter%
  \providecommand\color[2][]{%
    \errmessage{(Inkscape) Color is used for the text in Inkscape, but the package 'color.sty' is not loaded}%
    \renewcommand\color[2][]{}%
  }%
  \providecommand\transparent[1]{%
    \errmessage{(Inkscape) Transparency is used (non-zero) for the text in Inkscape, but the package 'transparent.sty' is not loaded}%
    \renewcommand\transparent[1]{}%
  }%
  \providecommand\rotatebox[2]{#2}%
  \newcommand*\fsize{\dimexpr\f@size pt\relax}%
  \newcommand*\lineheight[1]{\fontsize{\fsize}{#1\fsize}\selectfont}%
  \ifx\svgwidth\undefined%
    \setlength{\unitlength}{335.79455668bp}%
    \ifx\svgscale\undefined%
      \relax%
    \else%
      \setlength{\unitlength}{\unitlength * \real{\svgscale}}%
    \fi%
  \else%
    \setlength{\unitlength}{\svgwidth}%
  \fi%
  \global\let\svgwidth\undefined%
  \global\let\svgscale\undefined%
  \makeatother%
  \begin{picture}(1,0.35715911)%
    \lineheight{1}%
    \setlength\tabcolsep{0pt}%
    \put(0,0){\includegraphics[width=\unitlength,page=1]{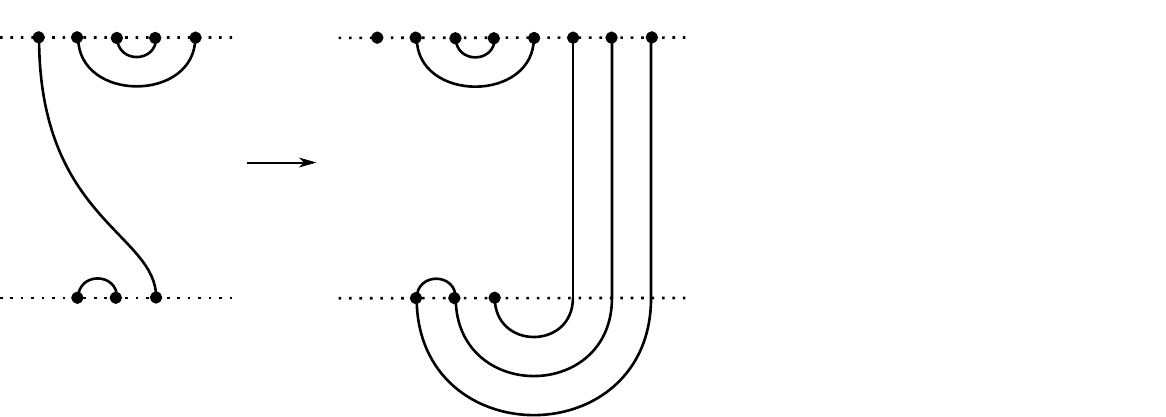}}%
    \put(0.6030473,0.24680322){\color[rgb]{0,0,0}\makebox(0,0)[lt]{\lineheight{1.25}\smash{\begin{tabular}[t]{l}$=$\end{tabular}}}}%
    \put(0,0){\includegraphics[width=\unitlength,page=2]{bending.pdf}}%
    \put(0.03126916,0.34731111){\color[rgb]{0,0,0}\makebox(0,0)[lt]{\lineheight{1.25}\smash{\begin{tabular}[t]{l}$\overbrace{\qquad \qquad\,}^{m}$\end{tabular}}}}%
    \put(0.06477178,0.07338846){\color[rgb]{0,0,0}\makebox(0,0)[lt]{\lineheight{1.25}\smash{\begin{tabular}[t]{l}$\underbrace{\qquad\,}_{n}$\end{tabular}}}}%
    \put(0.3208277,0.34810875){\color[rgb]{0,0,0}\makebox(0,0)[lt]{\lineheight{1.25}\smash{\begin{tabular}[t]{l}$\overbrace{\qquad \qquad\qquad \quad\,}^{n+m}$\end{tabular}}}}%
    \put(0.67818889,0.34810875){\color[rgb]{0,0,0}\makebox(0,0)[lt]{\lineheight{1.25}\smash{\begin{tabular}[t]{l}$\overbrace{\qquad \qquad\qquad \quad\,}^{n+m}$\end{tabular}}}}%
    \put(0,0){\includegraphics[width=\unitlength,page=3]{bending.pdf}}%
  \end{picture}%
\endgroup%

\caption{An isomorphism of $\wU^{n+m}_{\alpha}$ and   $\wU^m_{\alpha,n}$ given by bending the $n$ bottom points to the top. }
\label{fig_bending}
\end{center}
\end{figure}

\vspace{0.1in} 

The benefit of using spherical evaluations with the bilinear form (\ref{eq_bil_symm}) is that the spaces $\wU^m_{\alpha,n}$ can  be arranged into a monoidal category $\wU_{\alpha}$. The objects of that category are non-negative integers $n\geq 0$, and the vector space $\wU^m_{\alpha,n}$ describes the space of morphisms from $n$ to $m$. Composition is given by concatenation of diagrams, and compatibility with the quotient construction is clear.  That $\alpha$ is spherical implies that the category $\wU_{\alpha}$ is (strict) monoidal. On objects, the tensor product is given by $n\otimes  m = n+m.$ On morphisms, the tensor  product is given by placing diagrams in parallel. 

To get a monoidal category in this way from a non-spherical evaluation $\alpha\colon \wU^0_0\lra \kk$ one needs to use the asymmetric setup and couple diagrams in a disk to diagrams in an annulus, similar to that  in Section~\ref{subsec_pairings_neg}. In the later Section \ref{subsec_circ_s} we discuss this construction in more detail. 

\vspace{0.1in} 

Given a spherical  evaluation  $\alpha$ and the associated monoidal category $\wU_{\alpha}$, the most  natural case to consider is  that of recognizable $\alpha$.  The following observation is immediate. 

\begin{prop} A spherical evaluation $\alpha$ is recognizable if and only if the morphism spaces in the  category $\wU_{\alpha}$ are finite-dimensional.
\end{prop}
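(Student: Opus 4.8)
The plan is to reduce the statement to the identification, already established around Figure~\ref{fig_bending}, between the morphism spaces of $\wU_{\alpha}$ and the state spaces $\wU^{2n}_{\alpha}$. Once that identification is in hand, the proposition becomes a matter of unwinding the definition of recognizability, so no substantive new argument is needed.

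First I would recall that, by construction, the morphism spaces of $\wU_{\alpha}$ are precisely the spaces $\wU^m_{\alpha,n}$, and that the bending map of Figure~\ref{fig_bending} furnishes an isomorphism $\wU^m_{\alpha,n}\cong \wU^{n+m}_{\alpha}$ of $\kk$-vector spaces for every pair $(n,m)$. Since $\wU^m_n$, and hence $\wU^m_{\alpha,n}$, is nonzero only when $n+m$ is even, the quantity $n+m$ sweeps out exactly the even non-negative integers as $(n,m)$ ranges over all pairs of objects. Consequently the collection of all morphism spaces of $\wU_{\alpha}$ is, up to isomorphism, exactly the family $\{\wU^{2k}_{\alpha}\}_{k\geq 0}$ of state spaces.

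The two implications then follow at once. If $\alpha$ is recognizable, then by definition every $\wU^{2k}_{\alpha}$ is finite-dimensional, so every morphism space $\wU^m_{\alpha,n}\cong \wU^{n+m}_{\alpha}$ is finite-dimensional. Conversely, if all morphism spaces are finite-dimensional, then in particular $\Hom_{\wU_{\alpha}}(2k,0)=\wU^0_{\alpha,2k}\cong \wU^{2k}_{\alpha}$ is finite-dimensional for every $k$, which is exactly the recognizability of $\alpha$. One could instead route both directions through Proposition~\ref{prop_rational} and reduce everything to the single space $\wU^0_{\alpha}$, but the family identification above already suffices.

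I do not anticipate a genuine obstacle here: the entire content is carried by the isomorphism $\wU^m_{\alpha,n}\cong \wU^{n+m}_{\alpha}$ together with the observation that $n+m$ ranges over all even integers. The only point deserving minor care is to confirm that this bending map is a bona fide isomorphism of $\kk$-vector spaces, independent (up to isomorphism) of the auxiliary arc diagram chosen to carry the bottom points to the top, so that finite-dimensionality transfers faithfully in both directions; this is immediate from the discussion preceding the statement.
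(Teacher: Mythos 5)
Your argument is correct and is exactly the one the paper has in mind: the proposition is stated as an immediate consequence of the identification $\wU^m_{\alpha,n}\cong \wU^{n+m}_{\alpha}$ established via the bending construction of Figure~\ref{fig_bending}, combined with the definition of recognizability as finite-dimensionality of all state spaces $\wU^{2k}_{\alpha}$. Your extra remark that one could instead route through Proposition~\ref{prop_rational} is fine but not needed; the proposal matches the paper's (unwritten) proof.
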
 

Given a recognizable $\alpha$  the commutative $\kk$-algebra  $Z:=\wU^0_{\alpha}$ is finite-dimensional and the the trace form $\alpha$ turns  it into a commutative  Frobenius algebra. It satisfies the sphericality condition with respect to the $\kk$-linear map $\omega\colon Z\lra Z$ given by wrapping a circle around an element of $\wU^0_\alpha$, i.e.
\begin{equation}\label{eq_alpha_spher}
    \alpha(\omega(u_1)u_2)=\alpha(u_1\omega(u_2)), \ \ u_1,u_2\in Z. 
\end{equation}
Furthermore, $Z$ is $\omega$-generated (cf. Definition \ref{def_circular}),  which is  a  stability condition on that data. 

\begin{prop} \label{prop_bij_spherical} There is a natural bijection between recognizable spherical evaluations $\alpha$ and isomorphism classes of commutative  Frobenius algebras $Z$ with a trace form $\varepsilon\colon Z\lra\kk$ and a linear map $\omega\colon Z\lra Z$ such that $Z$ is the only $\omega$-stable subalgebra of $Z$ and the spherical condition (\ref{eq_alpha_spher2}) holds for $\varepsilon$.  
\end{prop}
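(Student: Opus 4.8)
The plan is to exhibit two mutually inverse assignments between the two sides. In one direction, the discussion preceding the proposition already produces, from a recognizable spherical evaluation $\alpha$, the finite-dimensional commutative Frobenius algebra $Z_\alpha:=\wU^0_\alpha$ equipped with the trace form descended from $\alpha$ and the wrapping map $\omega$; by the remarks there, $Z_\alpha$ is $\omega$-generated and satisfies the spherical condition \eqref{eq_alpha_spher}. Thus the forward assignment $\alpha\mapsto(Z_\alpha,\omega,\varepsilon)$ is essentially in hand, and the real content is to construct the inverse assignment and to verify that the two composites are identities.

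For the reverse direction I would start from a spherical triple $(\kk,Z,\omega)$ with non-degenerate trace $\varepsilon$, where $Z$ is a commutative Frobenius algebra that is $\omega$-generated, and set
\[
\alpha:=\varepsilon\circ\cF_Z\colon\wU^0_0\lra\kk,
\]
where $\cF_Z\colon\kk\wU^0_0\to Z$ is the $\omega$-evaluation homomorphism of \eqref{eq_R_Z}. First I would check sphericality: writing $z_i=\cF_Z(u_i)$, the facts that $\cF_Z$ is an algebra map intertwining $\omega$ give $\alpha(\omega(u_1)u_2)=\varepsilon(\omega(z_1)z_2)$, and the spherical condition on $\varepsilon$ together with commutativity of $Z$ rewrites this as $\varepsilon(\omega(z_2)z_1)=\alpha(\omega(u_2)u_1)$, which is exactly \eqref{eq_alpha_spher2}.

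Next I would establish recognizability, which is the crux. On $\kk\wU^0_0$ the form \eqref{eq_bil_symm} reads $(a,b)_\alpha=\alpha(\ov{b}a)=\varepsilon\big(\cF_Z(a)\cF_Z(b)\big)$, using $\ov{b}=b$ (Proposition~\ref{prop_refl}) and that composition of closed diagrams is their product. Hence $(\,,\,)_\alpha$ is the pullback along $\cF_Z$ of the Frobenius pairing $(x,y)\mapsto\varepsilon(xy)$ on $Z$. Because $\varepsilon$ is non-degenerate, the radical of this pullback equals $\ker\cF_Z$ exactly; because $Z$ is $\omega$-generated, $\cF_Z$ is surjective. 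Therefore $\cF_Z$ descends to an isomorphism $\wU^0_\alpha=\kk\wU^0_0/\ker((\,,\,)_\alpha)\xrightarrow{\ \sim\ }Z$, so $\wU^0_\alpha$ is finite-dimensional and $\alpha$ is recognizable by Proposition~\ref{prop_rational}.

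Finally I would check the two composites and well-definedness. Evaluation $\to$ triple $\to$ evaluation returns $\alpha$, since the trace induced on $\wU^0_\alpha$ is by construction the descent of $\alpha$ and $\cF_{Z_\alpha}$ is the quotient map, so $\varepsilon\circ\cF_{Z_\alpha}=\alpha$. Triple $\to$ evaluation $\to$ triple returns the same isomorphism class, because the isomorphism $\wU^0_\alpha\cong Z$ above transports the algebra product, the wrapping map (as $\cF_Z$ intertwines $\omega$), and the trace (by definition of $\alpha$) to those of $Z$, and preserves $\omega$-generation; independence of the chosen representative follows from the universal property of $\kk\wU^0_0$ (Proposition~\ref{prop_universal}), since an isomorphism $\phi\colon(Z,\omega)\to(Z',\omega')$ forces $\cF_{Z'}=\phi\circ\cF_Z$ and $\varepsilon'=\varepsilon\circ\phi^{-1}$, whence $\varepsilon'\circ\cF_{Z'}=\varepsilon\circ\cF_Z$. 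I expect the main obstacle to be the recognizability step, specifically the identification of the radical of the pulled-back bilinear form with $\ker\cF_Z$: this is precisely where non-degeneracy of $\varepsilon$ and surjectivity of $\cF_Z$ (equivalently, $\omega$-generation of $Z$) are both indispensable, and where one must confirm that the form genuinely factors through $Z$. The remaining verifications are routine bookkeeping.
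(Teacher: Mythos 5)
Your proposal is correct and fills in exactly the argument the paper leaves implicit (the paper's proof is simply declared ``immediate''): the forward map is the construction of $\wU^0_{\alpha}$ preceding the proposition, and the inverse is $\alpha=\varepsilon\circ\cF_Z$, with recognizability following from the identification of the radical of $(\,,\,)_{\alpha}$ with $\ker\cF_Z$ via surjectivity of $\cF_Z$ and non-degeneracy of the Frobenius pairing. Your verification of the two composites and of well-definedness on isomorphism classes is the expected routine bookkeeping; no gaps.
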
 
\begin{proof}
The proof is immediate.
\end{proof}

One can repeat the construction of categories and functors as in~\cite{KS,KQR,Kh3} and \eqref{overview-diag} and consider the  following categories and functors.
\begin{equation}\label{diagUalpha}
    \vcenter{\hbox{\xymatrix{
    \wU\ar@{^{(}->}[r]&\kk \wU\ar@{->>}[r]
    &\wSU_\alpha\ar@{->>}[d]\ar@{^{(}->}[r]&\wDSU_\alpha\ar@{->>}[d]\\
   &&\wU_\alpha\ar@{^{(}->}[r] & \wDU_\alpha
    }}}
\end{equation}

We start with the category $\wU$. First, allow any finite $\kk$-linear combinations of morphisms in $\wU$ to form the category $\kk\wU$ with the same objects $n\geq 0$ as $\wU$. Next, we pick a spherical evaluation $\alpha$ and consider the commutative algebra $\wU^0_{\alpha}$  as  described above. Introduce the skein category $\wSU_{\alpha}$ as the quotient of $\kk\wU$ by adding   the relations that an endomorphism $x$ of the $0$ object in $\wSU_{\alpha}$ is zero in $\Hom_{\wSU_{\alpha}}(0,0)$ if its image in  $\wU^0_{\alpha}$ is zero. Such an endomorphism is a $\kk$-linear combination of closed diagrams (diagrams in $\wU^0_0$). Choose a  set $S$ of diagrams in $\wU^0_0$ that project to a basis of the algebra $\wU^0_{\alpha}$. 

Thus, when passing from $\kk\wU$ to $\wSU$ we only add relations on closed diagrams, not on diagrams with arcs. Relations on closed diagrams allow to simplify any diagram in $\wU^m_n$, when viewed as a morphism from $n$ to $m$ in $\wSU_{\alpha}$, into a linear combination of diagrams with a circular form from $S$ in each region. The vector space of morphisms from $n$ to $m$ in $\wSU_{\alpha}$ has a basis given by a choice of a crossingless matching of $n+m$ points on the boundary of a strip together with a choice of an element of $S$ in each of the  $\frac{n+m}{2}$ regions cut out by the arcs of the matching. In particular, morphism spaces in $\wSU_{\alpha}$ are finite-dimensional if and only if $\alpha$ is rational (recognizable). Note that the category $\wSU_\alpha$ is equivalent to the category $\wSU_{\wU_\alpha^0,\,\omega}$ discussed in Section~\ref{sec-BZ}.

We think of $\wSU_{\alpha}$ as a type of
\emph{skein category}, similar to those in~\cite{KS,KQR,Kh3}, where one has control over the size of morphism spaces and can write down a basis in each. 

Assume now that $\alpha$ is a recognizable spherical evaluation. Then morphism spaces in $\wSU_{\alpha}$ are finite-dimensional. 
This allows us to build a commutative square of categories and functors. We can pass from $\wSU_{\alpha}$ to the \emph{gligible quotient} category $\wU_{\alpha}$. The morphism spaces in $\wU_{\alpha}$ are quotients of those in $\kk\wU$ or $\wSU_{\alpha}$ by the bilinear pairings on $\wU^m_n$, gluing two diagrams with identical boundaries into a diagram of circles on the 2-sphere, as already discussed. 

In the category $\wU_{\alpha}$ the morphism space $\Hom(0,2n)$ is naturally isomorphic to the state space $\wU^{2n}_{\alpha}$. Thus, the gligible quotient category is equivalent to the category obtained from the state spaces $\wU_\alpha$.

The square of four categories and functors on the right side of the diagram in \eqref{diagUalpha} is commutative in the strong sense: Taking the additive Karoubi envelope $\wDSU_\alpha$ of $\wSU_{\alpha}$ and then the gligible quotient results in the category $\wDU_\alpha$ equivalent to that of first taking the gligible quotient and then forming the additive Karoubi envelope.

\begin{prop} \label{prop_all_monoidal} All the categories and functors in the above diagram \eqref{diagUalpha} are monoidal. In each of these categories the functor of tensoring  with the object $1$ is self-adjoint, via the canonical natural transformations given by the cup and the cap diagrams. 
\end{prop}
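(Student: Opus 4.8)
The plan is to verify monoidality categorically rather than diagrammatically, leveraging the fact that each category in \eqref{diagUalpha} has already been constructed or identified as a quotient/completion of a monoidal category. First I would recall that $\wU$ is strict monoidal (Section~\ref{sec-wU}) and that $\kk\wU$ inherits this structure by $\kk$-bilinear extension of the tensor product, which places diagrams side by side. The functor $\kk\wU \twoheadrightarrow \wSU_\alpha$ only imposes relations on closed diagrams in $\wU^0_0$ (namely, the kernel of the projection to $\wU^0_\alpha$), so I would check that the tensor product descends: since placing a diagram next to a closed diagram in a disjoint region commutes with the evaluation defining $\wU^0_\alpha$, the tensor ideal generated by these relations is monoidal, and hence $\wSU_\alpha$ is monoidal with the induced structure. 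This is essentially the identification $\wSU_\alpha \simeq \wSU_{\wU^0_\alpha,\omega}$ noted just before the proposition, and $\wSU_\omega$ was already shown to be $R$-linear monoidal in Section~\ref{sec-BZ}.

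Next I would address the gligible quotient $\wU_\alpha$ and the additive Karoubi envelope $\wDSU_\alpha$. For the Karoubi envelope, monoidality is standard: $\Kar((-)^\oplus)$ of a monoidal category is again monoidal, with $(A,e)\otimes(B,f):=(A\otimes B,\, e\otimes f)$, so $\wDSU_\alpha$ and $\wDU_\alpha$ are monoidal provided the categories they complete are. For the gligible quotient, the key point is that the ideal of negligible morphisms is a \emph{two-sided monoidal ideal}; in the spherical case this is exactly the content of Lemma~\ref{lem_Zspherical} together with the symmetry of the pairing $(a,b)_\alpha=\alpha(\ov{b}a)$ guaranteed by Proposition~\ref{prop_refl} and the sphericality condition \eqref{eq_alpha_spher2}. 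Because $\alpha$ is spherical, the pairing defining the quotient does not depend on an asymmetric disk-versus-annulus choice, so tensoring a negligible morphism on either side yields a negligible morphism, and the tensor product descends to $\wU_\alpha$. The commutativity of the square (gligible-then-Karoubi equals Karoubi-then-gligible, in the finite-dimensional spherical setting) was already asserted above, so all four corners and the connecting functors are monoidal.

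For the self-adjointness claim, I would argue uniformly across all the categories. In each category the object $1$ is self-dual with the cup $\delta\colon 0\to 2$ and cap $\mu\colon 2\to 0$ diagrams serving as unit and counit; these satisfy the two zig-zag relations \eqref{eq_cond} because those are precisely the isotopy relations on arcs (Figure~\ref{fig2_2}), which hold already in $\wU$ and are preserved by every functor in \eqref{diagUalpha}. The functor $F_1=(-)\otimes 1$ is therefore self-adjoint, with the required natural transformations $\delta\cdot 1_{(-)}$ and $\mu\cdot 1_{(-)}$ obtained by tensoring the cup/cap with identities; this is exactly the observation made for $\wSU_\omega$ in Section~\ref{sec:circadj}, now transported along the quotient and completion functors, each of which sends cup to cup and cap to cap.

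The main obstacle I anticipate is the descent of the tensor product to the gligible quotient $\wU_\alpha$ in a way that is genuinely well-defined and not merely formal: one must confirm that the set of negligible morphisms is closed under tensoring on both sides, which in the non-spherical setup would fail and force the asymmetric disk/annulus pairing of Section~\ref{subsec_pairings_neg}. Here the restriction to \emph{spherical} $\alpha$ is what rescues monoidality, since Lemma~\ref{lem_Zspherical} lets $Z$-labels move freely between regions and makes the closure pairing symmetric; I would make sure to invoke sphericality precisely at the point where a negligible $x$ tensored with an arbitrary $y$ is shown to remain negligible, reducing the verification to the identity $\alpha(\omega(u_1)u_2)=\alpha(\omega(u_2)u_1)$.
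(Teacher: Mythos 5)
Your proposal is correct and follows essentially the same route as the paper, which states this proposition without a separate proof and relies on exactly the points you spell out: $\wSU_\alpha\simeq\wSU_{\wU_\alpha^0,\omega}$ is monoidal by Section~\ref{sec-BZ}, Karoubi envelopes of monoidal categories are monoidal via $(A,e)\otimes(B,f)=(A\otimes B, e\otimes f)$, sphericality of $\alpha$ is what makes the symmetric disk--disk pairing define a monoidal (two-sided, tensor-closed) negligible ideal, and the zig-zag relations for the cup and cap are isotopies already present in $\wU$ and preserved by every functor in \eqref{diagUalpha}. The only small inaccuracy is the appeal to Lemma~\ref{lem_Zspherical}, which concerns the stronger $Z$-spherical condition of Definition~\ref{def_Zspherical} rather than sphericality of the evaluation $\alpha$; the input you actually need is just the identity \eqref{eq_alpha_spher2}, which you also invoke, so this is a misattribution rather than a gap.
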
 



Recall the skein categories $\wSU_{Z,\omega}=\wSU_\omega$ defined for a circular triple $(R,Z,\omega)$ in Section \ref{sec-BZ}. 
Assume given a $R$-spherical datum $(R,Z,\omega,\varepsilon)$ of a circular triple with $R$-linear map $\varepsilon\colon Z\to R$. We may consider the composition of $R$-linear maps 
$$\alpha \colon R\wU_0^0\xrightarrow{\cF_Z} Z\xrightarrow{\varepsilon} R.$$
The pairing $(\,,\,)_{\alpha}\colon R\wU_0^0\times R\wU_0^0\to R$ factors through $\varepsilon m\colon Z\times Z\to R$, using the $R$-algebra map $\cF_Z$ defined in \eqref{eq_R_Z}.

If $Z$ is $\omega$-generated, $Z\cong R\wU_0^0/\ker \cF_Z$ and, using that $\cF_Z$ is an algebra map on $R\wU_0^0$,  we see that $\wU_\alpha^0$ is isomorphic to a quotient of the commutative $R$-algebra $Z$.
Hence, $Z$ serves as a preliminary reduction of the space $R\wU_0^0$ and can detect whether $\alpha$ is recognizable. Thus, the following lemma is a consequence of Proposition~\ref{prop_rational}.

\begin{lemma}Let $\kk$ be a field and $(\kk,Z,\omega)$ a spherical triple with a $\kk$-linear spherical trace $\varepsilon\colon Z\to \kk$.
If $Z$ is finite-dimensional over $\kk$, then $\alpha$ is recognizable. 
\end{lemma}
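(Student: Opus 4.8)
The plan is to reduce recognizability of $\alpha$ to the finite-dimensionality of the single state space $\wU^0_\alpha$ via Proposition~\ref{prop_rational}, and then to exhibit $\wU^0_\alpha$ as a subquotient of the finite-dimensional algebra $Z$. Before applying that proposition I would first confirm that $\alpha=\varepsilon\circ\cF_Z$ really is a spherical evaluation in the sense of \eqref{eq_alpha_spher2}, so that the spherical machinery applies at all. This is a direct computation: since $\cF_Z\colon \kk\wU^0_0\to Z$ is a unital $R$-algebra homomorphism intertwining the two copies of $\omega$ by \eqref{eq_R_Z}, for $u_1,u_2\in\wU^0_0$ one has $\alpha(\omega(u_1)u_2)=\varepsilon\big(\omega(\cF_Z(u_1))\,\cF_Z(u_2)\big)$, and the spherical-trace identity of Definition~\ref{def_Rspherical} together with commutativity of $Z$ rewrites this as $\varepsilon\big(\omega(\cF_Z(u_2))\,\cF_Z(u_1)\big)=\alpha(\omega(u_2)u_1)$, which is exactly sphericality.

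Granting this, Proposition~\ref{prop_rational} says it is enough to prove $\dim_\kk\wU^0_\alpha<\infty$. Here I would use that on closed diagrams reflection is trivial by Proposition~\ref{prop_refl}, so for $a,b\in\wU^0_0$ the pairing \eqref{eq_bil_symm} reads $(a,b)_\alpha=\alpha(\ov{b}a)=\alpha(ba)=\varepsilon\big(\cF_Z(a)\cF_Z(b)\big)$, where $ba$ is the disjoint union, i.e.\ the product in the commutative algebra $\kk\wU^0_0$. In other words $(\,,\,)_\alpha$ is the pullback along the algebra map $\cF_Z$ of the symmetric form $\langle z_1,z_2\rangle=\varepsilon(z_1z_2)$ on $Z$. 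Consequently $\ker\cF_Z\subseteq\ker(\,,\,)_\alpha$, and $\cF_Z$ descends to an isomorphism
\[
\wU^0_\alpha=\kk\wU^0_0/\ker(\,,\,)_\alpha \;\cong\; \mathrm{im}(\cF_Z)\big/\mathrm{rad}\big(\langle\,,\,\rangle|_{\mathrm{im}(\cF_Z)}\big),
\]
presenting $\wU^0_\alpha$ as a quotient of the subalgebra $\mathrm{im}(\cF_Z)\subseteq Z$. Since $\dim_\kk Z<\infty$, every subspace and every quotient of a subspace of $Z$ is finite-dimensional, so $\wU^0_\alpha$ is finite-dimensional and $\alpha$ is recognizable.

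I do not anticipate a genuine obstacle: the paragraph preceding the lemma already sketches precisely this reduction, and notably neither non-degeneracy of $\varepsilon$ nor the $\omega$-generation hypothesis is required for this direction (the latter would only upgrade $\mathrm{im}(\cF_Z)$ to all of $Z$). The one point deserving mild care is the two-step bookkeeping of the quotients — verifying that quotienting $\kk\wU^0_0$ first by $\ker\cF_Z$ and then by the radical of the induced form agrees with the single quotient by $\ker(\,,\,)_\alpha$ — but this is routine linear algebra once the inclusion $\ker\cF_Z\subseteq\ker(\,,\,)_\alpha$ is noted.
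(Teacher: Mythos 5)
Your proposal is correct and follows the same route the paper takes: the pairing $(\,,\,)_\alpha$ on $\kk\wU^0_0$ factors through the trace form $\varepsilon(z_1z_2)$ on $Z$ via the algebra map $\cF_Z$, so $\wU^0_\alpha$ is a quotient of a subalgebra of the finite-dimensional algebra $Z$, and Proposition~\ref{prop_rational} then gives recognizability. Your explicit check that $\alpha=\varepsilon\circ\cF_Z$ is spherical, and your observation that $\omega$-generation is not needed since $\mathrm{im}(\cF_Z)$ suffices, are sound refinements of the paper's sketch rather than a different argument.
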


Note that in the $\omega$-generated case, the quotient map  
$p\colon Z\twoheadrightarrow \wU_\alpha^0$ gives a morphism of circular triples. Thus, it induces a full monoidal functor  
$$\wSU_p\colon \wSU_\omega\lra \wSU_\alpha,$$
using \eqref{eq:SUphi}.
The maps $\Hom_{\wSU_\omega}(0,2n)\to \Hom_{\wSU_{\alpha}}(0,2n)$ fit into the  commutative diagram
\begin{align*}
    \xymatrix@R=10pt{
    \kk \wU_{0}^{2n}\times \kk \wU_{0}^{2n}\ar[rr]^-{(~ ,~)}\ar[dd]&& \kk \wU_{0}^{0}\ar[dd]^{\cF_Z}\ar[drr]^{\alpha}&& \\
    &&&&\kk.\\
    \kk \wSU_{\omega,0}^{2n}\times \kk \wSU_{\omega,0}^{2n}\ar[rr]^-{(~ ,~)}&&Z\ar[urr]^{\varepsilon}&&
    }
\end{align*}
This implies surjective maps  $\Hom_{\wU_\omega}(n,m)\to \Hom_{\wU_{\alpha}}(n,m)$ for all $n,m$ using duality as in Figure~\ref{fig_bending}. Thus,  in the presence of a trace map $\varepsilon \colon Z\to \kk$, $\wU_\alpha$ is a quotient of $\wU_\omega=\wU_{Z,\omega}$.

The sphericality condition is analogous to the \emph{spherical trace} property of Barrett--Westbury~\cite{BW}. 

%
%

\subsection{Circular series and recognizable series}\label{subsec_circ_s} 

In this section, we generalize the construction of state spaces to non-spherical circular series.

\ssubsection{Circular series} 

We work over a ground field $\kk$, for simplicity,  but the constructions below generalize to a ground commutative ring $R$  and, more generally, to a ground commutative semiring. 

A  \emph{circular series} $\alpha$ is defined to be a map $\alpha\colon \wU^0_0\rightarrow \kk$ that assigns an element $\alpha(u)$ of $\kk$ to each circular form $u$. We  alternatively write 
 \begin{equation}
     \alpha = \{\alpha(u)\}_{u\in \wU^0_0} =  \sum_{u \in \wU^0_0} \alpha(u)\, u,
 \end{equation}
 and can also view $\alpha$ as a linear map $\kk\wU^0_0\lra \kk$. 
 To build state spaces and a category from circular series $\alpha$ we use an approach similar to the one in Section~\ref{subsec_pairings_neg}. 
 
 Throughout the paper we use $\wU^{2n}_0$ to denote the set  of circular diagrams with $2n$ endpoints in both a disk and in the lower half-plane, via the standard identification of these sets (see an example in Figure~\ref{fig2_4_0a} and the discussion in that section). $\wU^{2n}_0$ can be viewed as the set of matchings of $2n$ points on the unit circle by $n$ arcs  in the unit disk, with possibly nested circles floating in the regions of the diagram. 
 
Recall from Section~\ref{sec-wU} that  $\wUout{2n}$ is the set on pairings  of $2n$ points on the unit circle by $n$ arcs in the outside annulus, possibly with additional nested collections of circles, also see Figure~\ref{fig2_4_0b} for an example of such an annular diagram.
   
 The pairing of diagrams 
\begin{equation}
    \wU^{2n}_0 \times \wUout{2n} \lra \wU^0_0, \ \ a, b \longmapsto ab , \ \  a\in \wU_0^{2n}, \ b\in \wUout{2n},
\end{equation}
is given by gluing diagrams $a$ and $b$ from the corresponding sets into a circular diagram $ab$ in the plane. 

 The series (or evaluation) $\alpha$ allows us to define a bilinear form 
 \begin{equation}\label{eq_bilin_form} 
    (\,,\,)_\alpha\colon \kk \wU_0^{2n}\times \kk \wUout{2n}\lra \kk, \qquad (a,b)\longmapsto \alpha(ab), 
 \end{equation}
 on the vector spaces with these sets as bases, by evaluating closed planar diagram $ab$ via $\alpha$. If needed, one can write $(,)_{n,\alpha}$ instead of $(,)_{\alpha}$ to emphasize dependence on $n$. Note that both of these spaces are infinite-dimensional, including when $n=0$, since circles can be nested in infinitely many ways. 
 
 \vspace{0.1in} 
 
 Define the \emph{left kernel} of $(,)_{\alpha}$ as 
 \begin{equation*}
    \ker_{\ell}(\alpha) \ : = \ \{x\in \kk\wU^{2n}_0 | (x,y)_{\alpha}=0 \  \  \forall y \in  \kk\wUout{2n} \}
\end{equation*}

We define the \emph{state  space}  $A_{\alpha}(n)$, or just $A(n)$, as the quotient of $\kk\wU^{2n}_0$ by $\ker_{\ell}(\alpha)$,
\begin{equation}
    A_{\alpha}(n)  \ := \ \kk\wU^{2n}_0/\ker_{\ell}(\alpha), 
\end{equation}
 and call it the state space of $2n$ points (on the boundary of a disk) for the evaluation $\alpha$. 
 
 \vspace{0.1in}

\ssubsection{Recognizable series}

\begin{definition}\label{def_rec_circ}
  A circular series $\alpha$ is called \emph{recognizable} iff the state spaces  $A(n)$ are finite-dimensional for all  $n\ge 0$. 
\end{definition} 

Alternatively, we can say that $\alpha$ is of finite rank. 

\begin{prop}\label{prop:alpha_rec} A circular series $\alpha$ is recognizable if and only if $A(0)=A_{\alpha}(0)$ is a finite-dimensional $\kk$-vector space. 
\end{prop}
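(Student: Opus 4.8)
The plan is to prove the nontrivial implication, that finite-dimensionality of $A(0)$ forces every $A(n)$ to be finite-dimensional; the converse is immediate, since $A(0)$ is one of the spaces $A(n)$. This mirrors the spherical argument of Proposition~\ref{prop_rational}, the new feature being that one must account for the asymmetric disk/annulus pairing. So I would suppose $\dim_\kk A(0) = D < \infty$ and fix circular forms $s_1, \dots, s_D \in \wU^0_0$ whose classes form a basis of $A(0)$, writing $S = \{s_1, \dots, s_D\}$. The goal is to show that the classes of the diagrams $\beta(s_{k_0}, \dots, s_{k_n})$ --- a crossingless matching $\beta \in \wB^{2n}_0$ with an element of $S$ inserted into each of its $n+1$ regions --- span $A(n)$. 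Since there are only finitely many matchings in $\wB^{2n}_0$ and $D^{n+1}$ insertion patterns, this exhibits a finite spanning set and finishes the argument.

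The engine is the following \emph{localization claim}: if $x \in \kk\wU^0_0$ lies in $\ker_{\ell}(\alpha)$ at level $0$ (equivalently, $x = 0$ in $A(0)$), then, placing $x$ into one chosen region of an otherwise fixed diagram $\beta(c_0, \dots, c_n)$ produces an element of $\ker_{\ell}(\alpha)$ at level $n$. To prove it, I would fix an outer matching $b \in \wUout{2n}$ and form the closed diagram obtained by inserting $x$ in the chosen region $\rho$. Since all circles of $x$ lie in $\rho$, a planar isotopy confines them to a small disk $D_\rho \subset \rho$, and the rest of the closed diagram then lives in the complementary annulus $\R^2 \setminus \mathrm{int}(D_\rho)$; that is, it is a single outer diagram $y_{\rho,b} \in \wUout{0}$ independent of $x$ (it absorbs $\beta$, $b$, and all the $c_j$). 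By construction the entire closure equals the level-$0$ gluing $x\, y_{\rho,b}$, so its $\alpha$-value is $(x, y_{\rho,b})_\alpha$, which vanishes because $x \in \ker_{\ell}(\alpha)$. As $b$ was arbitrary, the inserted diagram pairs to zero with every outer matching, proving the claim.

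With the localization claim in hand the reduction is routine: insertion into a fixed region is $\kk$-linear in the inserted circular form, so starting from an arbitrary $a = \beta(c_0, \dots, c_n)$ I would write $c_0 = \sum_k \lambda_k s_k + x_0$ with $x_0 \in \ker_{\ell}(\alpha)$, whereupon the claim gives $[a] = \sum_k \lambda_k\, [\beta(s_k, c_1, \dots, c_n)]$ in $A(n)$; repeating in the remaining regions reduces $[a]$ to a linear combination of the finitely many classes $[\beta(s_{k_0}, \dots, s_{k_n})]$. The step I expect to require the most care is the localization claim itself --- specifically verifying that the complement of $D_\rho$ is genuinely an outer diagram in $\wUout{0}$ and that the resulting closed diagram is \emph{exactly} the level-$0$ gluing $x\, y_{\rho,b}$, so that the level-$n$ pairing really does factor through the level-$0$ pairing. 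Everything else uses only bilinearity of $(\,,\,)_\alpha$ and the finiteness of $\wB^{2n}_0$.
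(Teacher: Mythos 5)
Your proof is correct and follows essentially the same route as the paper's: both reduce an arbitrary diagram in $\wU^{2n}_0$ to one with a basis element of $A(0)$ placed in each of the $n+1$ regions of its underlying crossingless matching, yielding the bound $\dim_{\kk}A(n)\le c_n D^{n+1}$. The paper asserts this region-by-region reduction without further justification, and your localization claim supplies precisely the missing step --- that inserting an element of $\ker_{\ell}((\,,\,)_{0,\alpha})$ into a single region produces an element of $\ker_{\ell}((\,,\,)_{n,\alpha})$, because every closure confines that element to a small disk whose complement is an outer diagram in $\wUout{0}$.
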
 

\begin{proof} Clearly, $A(0)$ needs to be finite-dimensional for  $\alpha$ to be  recognizable.  
If $A(0)$ is finite-dimensional, there are finitely many circular forms $v_1, \dots, v_k$ (where $k=\dim_{\kk} A(0)$) such that any circular form is obtained by their linear combination modulo an element of $\ker((,)_{0,\alpha})$. 

A diagram $w\in\wU^{2n}_0$ is determined by the crossingless matching on its $2n$ endpoints together with choices  of circular forms to place in $n+1$ regions of the disk separated by  the arcs of  the matching. Modulo $\ker((,)_{n,\alpha})$, one can reduce to placing one of $v_1,\dots, v_k$ in each region of the disk. In particular,
\[ \dim_{\kk}A(n) \le c_n k^{n+1} = \frac{1}{n+1} \binom{2n}{n} k^{n+1},
\] 
where $c_n$ is the number of crossingless matchings of $2n$ points on the boundary of a disk. The proposition follows. 
\end{proof}

The state space $A(0)$ is naturally a unital commutative algebra. The multiplication comes from that on $\wU^0_0$ given by placing diagrams (circular forms) next to each other. The unit element $1$ is the empty circular form $\emptyset$. The operator $\omega$  of wrapping a circle around a diagram preserves the left kernel of  the bilinear form $(,)_{0,\alpha}$ and descends to a linear map, also denoted $\omega$, on $A(0)$. The trace form $ \varepsilon\colon A(0)\lra \kk$ comes from the evaluation $\alpha$ of closed diagrams, $\varepsilon(a):= \alpha(a)$, for $a\in \wU^0_0$.

We see that  $A(0)$ is a unital commutative algebra equipped with a $\kk$-linear map $\omega\colon  A(0)\lra A(0)$ and a trace form $\varepsilon$.  

The triple  $(A(0),\omega,\varepsilon)$  is non-degenerate in the following weak sense. For any $x\in A(0), x\not= 0$ there exists $k\ge 0$ and  a  sequence $x_1,\dots, x_k\in A(0)$ such that  
\begin{equation}\label{eq_vareps} 
\varepsilon(x_k\omega (x_{k-1}\dots \omega (x_2 \omega (x_1x)))\dots )\not=0 .
\end{equation} 
We call such a data $(A,\omega,\varepsilon)$  a \emph{commutative weakly Frobenius triple}. In the pictorial language, we start with the diagram $x$ and iterate between placing $x_i$, $i=1,\dots, k$, next to the previous diagram and enveloping the diagram by a circle (application of $\omega$). At the end the trace form $\varepsilon$ is applied. 

\begin{prop} \label{prop_bij_A} There is a bijection between recognizable circular series $\alpha$ and isomorphism classes of commutative finite-dimensional algebras $A$ with the trace form $\varepsilon$ and a linear endomorphism $\omega$ subject to weakly Frobenius property above and to the stability condition that $A$ is the only subalgebra of $A$ that contains $1$ and is closed under $\omega$. 
\end{prop}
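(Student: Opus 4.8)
The plan is to make the bijection explicit and then check both composites. In one direction I send a recognizable circular series $\alpha$ to the triple $(A(0),\omega,\varepsilon)$ built above; this is well defined on the nose, and by the discussion preceding the proposition $(A(0),\omega,\varepsilon)$ is a commutative finite-dimensional weakly Frobenius triple for which $A(0)$ is the only $\omega$-closed unital subalgebra of itself, so the target conditions hold. In the other direction I send a triple $(A,\omega,\varepsilon)$ to the evaluation $\alpha:=\varepsilon\circ\cF_A$, where $\cF_A\colon \wU^0_0\to A$ is the unique monoid map furnished by the universal property of Proposition~\ref{prop_universal} (sending $\emptyset\mapsto 1$, disjoint unions to products, and intertwining the two copies of $\omega$), extended $\kk$-linearly. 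This backward map descends to isomorphism classes: an isomorphism $\phi\colon A\to A'$ of triples satisfies $\cF_{A'}=\phi\circ\cF_A$ by the uniqueness in Proposition~\ref{prop_universal}, whence $\varepsilon'\circ\cF_{A'}=\varepsilon\circ\cF_A=\alpha$.

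Next I would show the backward map produces recognizable series and is a right inverse of the forward map, both of which follow from a single computation: the state space $A_\alpha(0)=\kk\wU^0_0/\ker_\ell(\alpha)$ is isomorphic to $A$ as a triple. The linearly extended $\cF_A$ is an algebra map whose image is a unital $\omega$-closed subalgebra of $A$, hence all of $A$ by the stability hypothesis; so $\cF_A$ is surjective. It therefore suffices to prove $\ker_\ell(\alpha)=\ker(\cF_A)$, after which $A_\alpha(0)\cong A$, the isomorphism visibly intertwining $\omega$ and $\varepsilon$, and recognizability follows from Proposition~\ref{prop:alpha_rec}. Running the forward map on this $\alpha$ then recovers $(A,\omega,\varepsilon)$ up to isomorphism.

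The crux is the kernel identification. For a circular form $x\in\wU^0_0$ and an outer diagram $y\in\wUout{0}$ there are no boundary arcs, so the non-contractible circles of $y$ are nested around the central disk while its contractible circles contribute disjoint factors; reading from the inside out, $\cF_A(xy)$ is obtained from $\cF_A(x)$ by alternately multiplying by images of circular forms and applying $\omega$, giving $\alpha(xy)=\varepsilon\bigl(x_k\,\omega(x_{k-1}\cdots\omega(x_1\,\cF_A(x))\cdots)\bigr)$ with each $x_i\in\cF_A(\wU^0_0)$. Since the form $(\,,\,)_\alpha$ is $\kk$-bilinear, testing against all of $\kk\wUout{0}$ amounts to testing against all functionals $a\mapsto\varepsilon(x_k\omega(\cdots\omega(x_1 a)\cdots))$ with $x_i$ in the $\kk$-span of $\cF_A(\wU^0_0)$, which is all of $A$ by surjectivity; here I would use that such an expression is linear in each $x_i$ separately, so a nonvanishing expression with $x_i\in A$ can be expanded multilinearly into a nonvanishing term whose factors are images of single circular forms. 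Comparing with the weakly Frobenius condition \eqref{eq_vareps} for the element $\cF_A(x)$ then yields $x\in\ker_\ell(\alpha)\iff\cF_A(x)=0$. For the remaining composite, starting from recognizable $\alpha$ the universal map $\cF_{A(0)}$ agrees with the quotient map $u\mapsto[u]$ (both satisfy the defining properties, hence coincide by uniqueness), and $\varepsilon([u])=\alpha(u)$ by definition of the trace form on $A(0)$, so $\varepsilon\circ\cF_{A(0)}=\alpha$.

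I expect the main obstacle to be exactly this kernel identification, and specifically the bookkeeping that closures by outer diagrams in $\wUout{0}$ realize precisely the iterated expressions of \eqref{eq_vareps} with intermediate factors ranging, after linear extension and the multilinearity expansion, over all of $A$; this is what converts the weakly Frobenius non-degeneracy (``for each nonzero $a$ some expression is nonzero'') into the statement that the common kernel of all admissible closures is trivial.
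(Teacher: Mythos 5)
Your proof is correct, and it follows exactly the route the paper intends: the paper's own ``proof'' is just the declaration that the argument is straightforward, with the forward map $\alpha\mapsto(A(0),\omega,\varepsilon)$ and the backward map $(A,\omega,\varepsilon)\mapsto\varepsilon\circ\cF_A$ already set up in the discussion preceding the proposition. Your write-up supplies the bookkeeping the authors leave implicit, in particular the identification of closures by annular diagrams in $\wUout{0}$ (contractible circles giving multiplications, non-contractible ones giving applications of $\omega$) with the iterated expressions of \eqref{eq_vareps}, together with the multilinear expansion needed to pass from images of single circular forms to arbitrary elements of $A$ — which is precisely the crux you correctly single out.
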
 
\begin{proof}
The proof is straightforward.
\end{proof}

This proposition is very similar in spirit to Proposition~\ref{prop_bij_spherical}, except that the spherical condition (\ref{eq_alpha_spher2}) is dropped here and the bilinear pairing needed to define $A(n)$ is asymmetric, with a bigger space on the other side of the pairing (\ref{eq_bilin_form}).  

\vspace{0.1in} 

The dihedral group $D_{2n}$ of symmetries of a regular $2n$-gon acts on $A(n)$, via rotations and reflections. Note that reflection of diagrams respects the left kernel of the form, since any diagram in $\wU^0_0$ is invariant under the plane's reflection, so that reflection descends to an invertible linear map $r\colon A(n)\lra A(n)$.  

Placing diagrams with $2n$ and $2m$ endpoints, respectively, next to each other induces multiplication maps 
\begin{equation}
    A(n)\otimes A(m) \lra A(n+m), 
\end{equation}
that respect the reflection maps $r$ as above, in the sense that $r(xy)=r(y)r(x)$, for $x\in A(n)$ and $y\in A(m)$.

\ssubsection{Circular triples and quadruples}
 For a field $\kk$, 
 a circular triple $(\kk,Z,\omega)$ allows us to  associate an element $\mc{F}_Z(u)$ of a  commutative $\kk$-algebra  $Z$  to each circular form $u\in \wU^0_0$. Assume that $Z$ comes with a $\kk$-linear trace map $\varepsilon\colon Z\lra \kk$. Then composing with $\varepsilon$ allows us to  assign to a circular form $u$  the element $\varepsilon(\mc{F}_Z(u))$ of  $\kk$. 
 
 Assume that $Z$ is finite-dimensional. Then evaluating a circular form $u$ to $\varepsilon(\mc{F}_Z(u))$ gives a recognizable circular series $\alpha$. To understand the  space $A(0)$, first pass to the smallest subalgebra $Z'$ of $Z$ that contains $1$ and is closed under $\omega$. The trace map $\varepsilon$ may be degenerate on $Z'$ (as well as on $Z$), in the interated compositions sense as discussed right before Proposition~\ref{prop_bij_A}. Consider the subspace $K\subset Z'$ that consists of  $x$  such that the evaluations on the left  hand side  of (\ref{eq_vareps}) are zero for any sequence $x_1,\dots, x_k\in Z'$. The space $A(0)$ is naturally isomorphic to the quotient,   $A(0)\cong Z'/K$. 
 
 \vspace{0.1in} 
 
 In particular, we  see that 
 a series $\alpha$ is recognizable if and only if there exists a circular triple $(\kk,Z,\omega)$ together with a $\kk$-linear  map $\varepsilon\colon  Z \lra \kk$
 such that 
 \begin{itemize}
     \item $\dim_{\kk} Z  <   \infty$, that is,  $Z$ is finite-dimensional, 
     \item $\alpha(u) =    \varepsilon(\mcF_Z(u))$ for all circular forms $u$.
 \end{itemize}
 A   data $(\kk,Z,\omega,\varepsilon)$ with the above properties may be called a \emph{circular quadruple}. Furthermore, given recognizable $\alpha$, such circular quadruple can be chosen so that $Z$ is $\omega$-generated and $(Z,\omega,\varepsilon)$ is a commutative weakly Frobenius triple. Given $\alpha$, such a \emph{minimal circular quadruple} is unique up to isomorphism, see  Proposition~\ref{prop_bij_A}. 
 
 \vspace{0.1in} 
 
 One can think of circular quadruples as describing ``inner-to-outer''  evaluations of  circular  diagrams. 
 
 \ssubsection{A diagram of categories and functors associated to $\alpha$} 
 
 To a spherical recognizable series $\alpha$ we have associated a diagram of categories of functors, see (\ref{diagUalpha}). This construction extends immediately to arbitrary circular recognizable series $\alpha$, so essentially the same diagram is reproduced below.  
 
 \begin{equation}\label{diagUalpha2}
    \vcenter{\hbox{\xymatrix{
    \wU\ar@{^{(}->}[r]&\kk \wU\ar@{->>}[r]
    &\wSU_\alpha\ar@{->>}[d]\ar@{^{(}->}[r]&\wDSU_\alpha\ar@{->>}[d]\\
   &&\wU_\alpha\ar@{^{(}->}[r] & \wDU_\alpha
    }}}
\end{equation}

The skein category $\wSU_{\alpha}$ is given by including all relations on closed planar diagrams (relations in $A(0)$). In this category the dimension of the morphism space from $n$ to $m$ is $c_{k}\cdot\dim A(0)^{k+1}$, where $k=(n+m)/2$ and $c_k$ is the $k$-the Catalan number. The category $\wU_{\alpha}$ is the gligible quotient of $\wSU_{\alpha}$, with the same objects $n\ge 0$. The categories on the far right are Karoubi additive closures of the categories $\wSU_{\alpha}$ and $\wU_{\alpha}$, respectively. The square commutes in the strong sense, see the discussion preceeding Proposition~\ref{prop_all_monoidal}. 

\vspace{0.1in} 

Each recognizable circular series $\alpha$ gives rise to a collection of finite-dimensional $\kk$-algebras 
\begin{equation*}
TL_{\alpha,n}:=\End_{\wU_{\alpha}}(n), 
\end{equation*}
the endomorphism rings of objects $n\in \mathbb{N}$ of the category $\wU_{\alpha}$. These algebras generalize the Jones quotients of  Temperley--Lieb algebras~\cite{Jo1,Ka}.

%
%

\section{Trees, forests, their series and  relation to circular forms.}
\label{sec_trees} 

In this section we explain the standard correspondence between trees (forests) and circular forms, allowing one to flip between these two types of combinatorial objects when forming the corresponding series. 

\subsection{Trees, forests, and circular forms} 

By a \emph{tree} we mean a finite connected unoriented graph $\Gamma$ without multiple edges, cycles and with a preferred vertex (called \emph{root}). The  empty  graph is excluded. Denote by $\trees$ the  set of trees, up to isomorphisms; we pick one representative from each isomorphism class. Trees are often depicted by planar  diagrams with the root at the top and vertices  at distance $k$ from the root placed $k$ steps below the root. Examples of trees are shown in Figure~\ref{fig2_6}. 

\begin{figure}[htb]
\begin{center}
\begingroup%
  \makeatletter%
  \providecommand\color[2][]{%
    \errmessage{(Inkscape) Color is used for the text in Inkscape, but the package 'color.sty' is not loaded}%
    \renewcommand\color[2][]{}%
  }%
  \providecommand\transparent[1]{%
    \errmessage{(Inkscape) Transparency is used (non-zero) for the text in Inkscape, but the package 'transparent.sty' is not loaded}%
    \renewcommand\transparent[1]{}%
  }%
  \providecommand\rotatebox[2]{#2}%
  \newcommand*\fsize{\dimexpr\f@size pt\relax}%
  \newcommand*\lineheight[1]{\fontsize{\fsize}{#1\fsize}\selectfont}%
  \ifx\svgwidth\undefined%
    \setlength{\unitlength}{290.48693656bp}%
    \ifx\svgscale\undefined%
      \relax%
    \else%
      \setlength{\unitlength}{\unitlength * \real{\svgscale}}%
    \fi%
  \else%
    \setlength{\unitlength}{\svgwidth}%
  \fi%
  \global\let\svgwidth\undefined%
  \global\let\svgscale\undefined%
  \makeatother%
  \begin{picture}(1,0.17369143)%
    \lineheight{1}%
    \setlength\tabcolsep{0pt}%
    \put(0,0){\includegraphics[width=\unitlength,page=1]{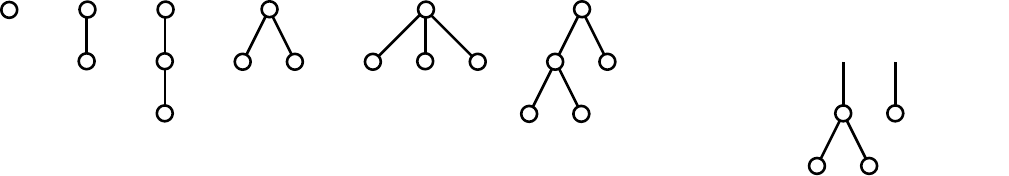}}%
    \put(0.62867796,0.10515952){\color[rgb]{0,0,0}\makebox(0,0)[lt]{\smash{\begin{tabular}[t]{l}$=$\end{tabular}}}}%
    \put(0,0){\includegraphics[width=\unitlength,page=2]{trees2.pdf}}%
  \end{picture}%
\endgroup%
 
\caption{Examples of trees; note that a planar presentation  of  a tree  is rarely unique. For the sixth tree from the left  two different  presentations are depicted. A presentation of a tree  can be made unique by picking a total order on trees  and placing subtrees below each node from left to right in the decreasing order direction.}
\label{fig2_6}
\end{center}
\end{figure}

A forest $w$ is a graph which is a disjoint union of  finitely many trees. The empty graph  is allowed. Each component of  $w$ carries a  preferred vertex (root  of the corresponding tree). The order  of  trees when listing a forest does  not matter. When choosing a set of graphs to represent  forests, we pick one representative for each  isomorphism class of forests. 

Denote  by $\forests$ the set  of  forests and by  $\forests_k$ the set of forests with $k$  components  (trees), so that 
\begin{equation}
    \forests = \bigsqcup_{k\ge 0} \forests_k.
\end{equation}
The set $\forests_1$ is  in a bijection  with $\trees$, the set $\forests_0$ consists of  the empty forest. The set  $\forests_k$ can be  identified  with the $k$-th  symmetric power of $\forests_1$,  
\begin{equation}
    \forests_k \cong S^k(\forests_1). 
\end{equation}

We now describe well-known mutually-inverse bijections, denoted by $\formap$ and $\circles$, between the set $\wU^0_0$ of closed planar diagrams and the set $\forests$  of forests, which restrict  to mutually-inverse bijections between  the set $\wUcirc$ of  $\circ$-diagrams   (diagrams with a single outer circle) and the set $\trees $ of trees. That is, the bijections fit into the commutative diagrams
\begin{align}\label{eq_bij_1}
\vcenter{\hbox{
\xymatrix{
\wUcirc\ar[rr]^{\formap}_{\sim}\ar@{^{(}->}[d]&&\trees\ar@{^{(}->}[d]\\
\wU_0^0\ar[rr]^{\formap}_{\sim}&&\forests,
}}}&&
\vcenter{\hbox{
\xymatrix{
\trees\ar[rr]^{\circles}_{\sim}\ar@{^{(}->}[d]&&\wUcirc\ar@{^{(}->}[d]\\
\forests\ar[rr]^{\circles}_{\sim}&&\wU_0^0.
}}}
\end{align}

First, to a tree $t$ we assign an element  in $\wUcirc$ denoted $\circles(t)$, a collection of circles with one exterior circle. Define this map
\begin{equation}\label{eq_map_circles}  
    \circles \colon  \trees\lra \wUcirc
\end{equation}
by induction on the number of nodes in $t$. To the unique tree with a single node assign the diagram with a single circle, see Figure~\ref{fig2_7} left. Given a tree $t$, denote by  $t_1,\dots ,  t_k$ the trees obtained by removing the root $r(t)$ of $t$ together with all adjacent edges and making the vertices adjacent to $r(t)$ in $t$ the roots $r(t_1),\dots, r(t_k)$ of the trees  $t_1, \dots, t_k$. Now place diagrams  $\circles(t_1),\dots,  \circles(t_k)$, already defined by induction, to float inside a circle, 
see Figure~\ref{fig2_7} right. 

\begin{figure}[htb]
\begin{center}
\import{Graphics/}{cr.pdf_tex} 
\caption{Inductive  construction of  the  map  $\circles$.}
\label{fig2_7}
\end{center}
\end{figure}

This map $\circles$ in (\ref{eq_map_circles})  is clearly a  bijection. Consider the inverse bijection
\begin{equation}
    \formap\colon   \wUcirc \stackrel{\cong}{\lra} \trees, \quad\quad   \formap \circ \circles  = \mathrm{id}_{\trees}, \ \  \circles\circ \formap  = \mathrm{id}_{\wUcirc}.  
\end{equation}
The inverse bijection  takes a $\circ$-diagram $u$   and builds a tree $\formap(u)$ with nodes in bijection with circles of  $u$. The unique exterior circle $c$  of $u$ gives the root  node $\formap(u)$. A circle $c_2$  nested immediately inside a circle $c_1$ gives a child node $\formap(c_2)$  to that of  $\formap(c_1)$.  Examples of circle configurations in $\wUcirc$ and  associated trees are shown  in  Figure~\ref{fig2_8}.

\begin{figure}[htb]
\begin{center}
\begingroup%
  \makeatletter%
  \providecommand\color[2][]{%
    \errmessage{(Inkscape) Color is used for the text in Inkscape, but the package 'color.sty' is not loaded}%
    \renewcommand\color[2][]{}%
  }%
  \providecommand\transparent[1]{%
    \errmessage{(Inkscape) Transparency is used (non-zero) for the text in Inkscape, but the package 'transparent.sty' is not loaded}%
    \renewcommand\transparent[1]{}%
  }%
  \providecommand\rotatebox[2]{#2}%
  \newcommand*\fsize{\dimexpr\f@size pt\relax}%
  \newcommand*\lineheight[1]{\fontsize{\fsize}{#1\fsize}\selectfont}%
  \ifx\svgwidth\undefined%
    \setlength{\unitlength}{456.58436703bp}%
    \ifx\svgscale\undefined%
      \relax%
    \else%
      \setlength{\unitlength}{\unitlength * \real{\svgscale}}%
    \fi%
  \else%
    \setlength{\unitlength}{\svgwidth}%
  \fi%
  \global\let\svgwidth\undefined%
  \global\let\svgscale\undefined%
  \makeatother%
  \begin{picture}(1,0.24466405)%
    \lineheight{1}%
    \setlength\tabcolsep{0pt}%
    \put(0,0){\includegraphics[width=\unitlength,page=1]{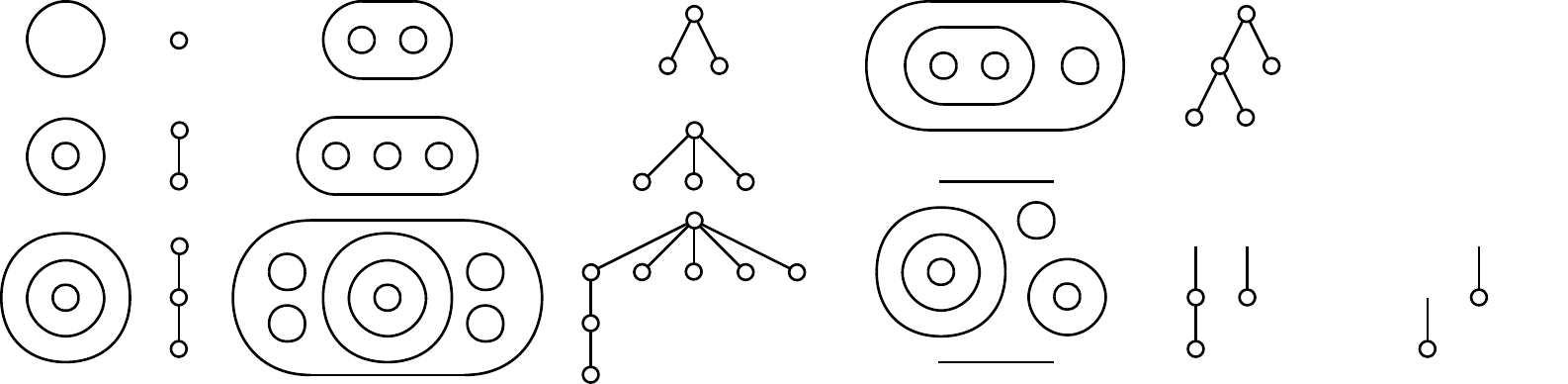}}%
    \put(0.82605709,0.19799437){\color[rgb]{0,0,0}\makebox(0,0)[lt]{\smash{\begin{tabular}[t]{l}$=$\end{tabular}}}}%
    \put(0,0){\includegraphics[width=\unitlength,page=2]{crfor.pdf}}%
    \put(0.84264281,0.07127702){\color[rgb]{0,0,0}\makebox(0,0)[lt]{\smash{\begin{tabular}[t]{l}$=$\end{tabular}}}}%
    \put(0,0){\includegraphics[width=\unitlength,page=3]{crfor.pdf}}%
    \put(0.95792039,0.07215699){\color[rgb]{0,0,0}\makebox(0,0)[lt]{\smash{\begin{tabular}[t]{l}$=\ldots$\end{tabular}}}}%
    \put(0,0){\includegraphics[width=\unitlength,page=4]{crfor.pdf}}%
  \end{picture}%
\endgroup%
 
\caption{Examples of  the  correspondence  between trees and $\circ$-diagrams.}
\label{fig2_8}
\end{center}
\end{figure}

Figure~\ref{fig2_9} shows a more  complicated configuration  in $\wUcirc$ and the associated tree. 

\begin{figure}[htb]
\begin{center}
\import{Graphics/}{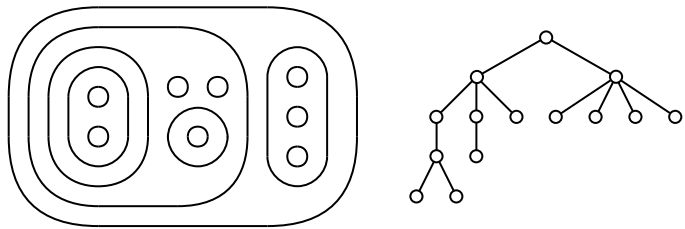} 
\caption{A more  complicated example of a tree (on the right)
associated to a $\circ$-diagram (on the left).}
\label{fig2_9}
\end{center}
\end{figure}

Extending these bijections  to disjoint unions of trees (i.e., forests) on one side and unions of $\circ$-diagrams floating in the plane (elements of $\wU^0_0$) gives the mutually-inverse bijections in (\ref{eq_bij_1}). The empty  forest corresponds to the diagram with  no circles. Under these bijections the number of nodes in a forest equals the number of circles in the corresponding planar diagram. The exterior circles of a diagram correspond to the roots of the trees of the associated forest as in Figure \ref{fig2_12}.

\begin{figure}[htb]
\begin{center}
     \centering
     \begin{subfigure}[htb]{0.4\textwidth}
      
      \centering
         \caption{$u\in \wU^0_0$}
         \label{fig:u}
     \end{subfigure}
     \begin{subfigure}[htb]{0.4\textwidth}
        \centering
\begingroup%
  \makeatletter%
  \providecommand\color[2][]{%
    \errmessage{(Inkscape) Color is used for the text in Inkscape, but the package 'color.sty' is not loaded}%
    \renewcommand\color[2][]{}%
  }%
  \providecommand\transparent[1]{%
    \errmessage{(Inkscape) Transparency is used (non-zero) for the text in Inkscape, but the package 'transparent.sty' is not loaded}%
    \renewcommand\transparent[1]{}%
  }%
  \providecommand\rotatebox[2]{#2}%
  \newcommand*\fsize{\dimexpr\f@size pt\relax}%
  \newcommand*\lineheight[1]{\fontsize{\fsize}{#1\fsize}\selectfont}%
  \ifx\svgwidth\undefined%
    \setlength{\unitlength}{120.21519021bp}%
    \ifx\svgscale\undefined%
      \relax%
    \else%
      \setlength{\unitlength}{\unitlength * \real{\svgscale}}%
    \fi%
  \else%
    \setlength{\unitlength}{\svgwidth}%
  \fi%
  \global\let\svgwidth\undefined%
  \global\let\svgscale\undefined%
  \makeatother%
  \begin{picture}(1,0.29754638)%
    \lineheight{1}%
    \setlength\tabcolsep{0pt}%
    \put(0,0){\includegraphics[width=\unitlength,page=1]{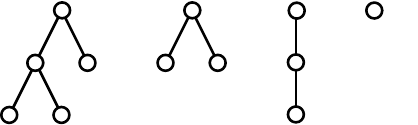}}%
    \put(0.1914067,0.27226619){\color[rgb]{0,0,0}\makebox(0,0)[lt]{\smash{\begin{tabular}[t]{l}$e$\end{tabular}}}}%
    \put(0.50400857,0.27179522){\color[rgb]{0,0,0}\makebox(0,0)[lt]{\smash{\begin{tabular}[t]{l}$e$\end{tabular}}}}%
    \put(0.7530896,0.2714786){\color[rgb]{0,0,0}\makebox(0,0)[lt]{\smash{\begin{tabular}[t]{l}$e$\end{tabular}}}}%
    \put(0.93993519,0.27179522){\color[rgb]{0,0,0}\makebox(0,0)[lt]{\smash{\begin{tabular}[t]{l}$e$\end{tabular}}}}%
  \end{picture}%
\endgroup%
 
         \caption{$\formap(u)$}
         \label{fig:foru}
     \end{subfigure}
\caption{{\scshape (a):} a diagram $u\in \wU^0_0$ with four exterior circles;  the latter are  labelled  by the letter  $e$ next  to them. {\scshape (b):} the forest $\formap(u)$ associated to $u$.}
\label{fig2_12}
\end{center}
\end{figure}

\vspace{0.1in} 

Recall that  $\wU^m_n$ denotes the set of isotopy classes of diagrams of circles and  arcs in the  strip $\R\times [0,1]$ with $m$ top and  $n$ bottom endpoints. Elements of $\wU^m_n$ are  in a bijective correspondence  with the following data. Each $u \in \wU^m_n$ defines a crossingless matching $\arc(u)\in \wB^m_n$ given by erasing the circles of $u$. Diagram $\arc(u)$ partitions the strip into $\frac{n+m}{2}+1$ contractible regions. The intersection of $u$ with the interior of each region is a diagram of circles, thus an element of  $\wU^0_0$. Thus, elements of $\wU^m_n$ are in a bijection with crossingless matchings in $\wB^m_n$ together  with a choice of a diagram in $\wU^0_0$ for each of  $\frac{n+m}{2}+1$ regions. Equivalently, elements  of $\wU^m_n$ are in a bijection with elements  $a\in\wB^m_n$ together with a choice of a forest for each region of $a$.   

\begin{figure}[htb]
\begin{center}
\import{Graphics/}{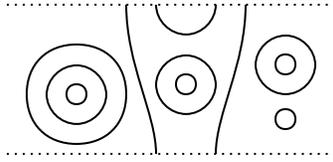} 
\caption{A diagram in $\wU_2^4$ corresponds to an element of $\wB_2^4$ together with circle diagrams in each region.}
\label{fig2_13}
\end{center}
\end{figure}

For example, in Figure~\ref{fig2_13}, $n=2,m=4$, and there are four regions, which carry configurations of $0,2,3,3$ circles, respectively. Two of these configrations correspond to  trees, one is the empty forest, and one  is a 2-component forest. 

\vspace{0.1in}

\subsection{Tree and forest series and tree automata}

 Suppose  given a function
\begin{equation} 
\alpha\colon  \wU^0_0 \lra \kk, 
\end{equation} 
that is, a map from the set of \emph{circular forms} to  $\kk$.  Then
$\alpha$ can be thought of as  formal  series (circular form series) 
\begin{equation}
    Z_{\alpha} =\sum_{u \in \wU^0_0} \alpha(u)\,u ,
\end{equation}
that  is, a formal sum, usually with infinitely many non-zero terms, over  circular forms. 

Composing $\alpha$ with  the bijection  $\circles$ from \eqref{eq_map_circles} gives us a map
\begin{equation}
   \alpha\circ \circles \colon  \forests\lra \kk ,
\end{equation}
which we may also refer to as $\alpha$, when it is unambiguous. Such a function  $\alpha$  can be called a  \emph{tree (or forest) series}, since it can formally be encoded by the generating function over forests 
\begin{equation}
    Z^{\formap}_{\alpha} = \sum_{f \in \forests} \alpha(\circles(f))\, f  
\end{equation}
as the corresponding   formal sum  over forests. 
\vspace{0.1in} 

This bijection between forests and circular forms descends to a bijection between a suitable quotient set of $\forests$ and the set of spherical circular forms. We leave the details to an interested reader. 

\vspace{0.1in} 

Converting from circular forms to forests gives a simple combinatorial encoding of isotopy classes of collections of disjoint curves on the plane. It would also be a starting point to investigate connections between universal theories and categories built from $\alpha$, see equation (\ref{diagUalpha2}) for instance, and weighted tree automata. 
Tree and weighted tree automata generalize finite state automata (FSA) and weighted FSA and are studied in depth in computer science, see~\cite{DV,FV, NP} and references therein. Noncommutative power series generalize to  series for weighted tree automata, and our series associated to an evaluation $\alpha$ of circular forms constitute examples of tree series. Trees and forests associated to circular forms are less general that those that appear in  arbitrary tree automata. Some of this gap can be bridged by adding labels to the circles in circular forms and adding other defects to planar configurations. After reducing the gap, similarities between tree automata and weighted tree automata on one side and universal theories for planar diagrams of labelled circles with various decorations and planar graphs on the other side appear worthy of 
further investigation. 

\vspace{0.1in} 

A commutative weakly Frobeninus algebra $A(0)$ with the trace map $\varepsilon$ and the linear endomorphism $\omega$ can be viewed as a $\kk$-linear \emph{bottom-to-top} tree automaton that evaluates a closed diagram starting with the innermost circles (that evaluate to $\omega(1)$), computing unions of closed diagrams via multiplication in $A(0)$, circle wrapping given by $\omega$, combined with the trace map to $\kk$ to end the computation. 

\vspace{0.1in} 

In this paper we mostly work over a field $\kk$. Extending definitions to an arbitrary commutative ring $R$ is straightforward. It is also direct to extend our constructions to a ground commutative semiring $R$. Then, for instance, state spaces $A(0)$ and morphism spaces in the gligible categories $\wU_{\alpha}$ become semimodules over $R$. 

A cursory examination of semimodules,  over the boolean semiring $\wB=\{0,1\}$ with $1+1=1$, for instance, show that they are harder to deal with than modules over rings. In some cases general semimodules may be hidden, and one can instead work with free semimodules or with just a set of their generators, reducing the structures to set-theoretical ones and substantially simplifying the theory --- this approach seems implicit in some standard textbook material on weighted FSA and tree automata. 

The approach of this paper and related papers~\cite{Kh2,KS,Kh3,KQR,KKO}, if rewritten over a semiring, would combine the theory of  semimodules over commutative semirings with monoidal or symmetric monoidal categories, making it harder to stay within set-theoretical structures. This may be an interesting extension of weighted tree automata and related constructions in the general automata theory to explore.

\subsection{The set-theoretical version} \label{subsec_settheory} 

In a set-theoretical version of the story, the underlying categories are neither additive nor pre-additive. In particular, the center $Z$ of a category $\A$ is only a commutative monoid. One can think of its elements as floating in a region of the plane labelled by $\A$. Multiplication in $Z$ corresponds to placing the elements next to each other. Wrapping a circle around $z\in Z$ is a map of sets $\omega\colon  Z \lra Z$. 

One can now start with this data $(Z,\omega)$: a commutative monoid and a map of sets $\omega$. Given a collection $u$ of nested circles in the plane, one can recursively evaluate $(Z,\omega)$ on $u$ to get an element $\alpha(u)\in Z$. 

To $(Z,\omega)$ one can assign several monoidal categories similarly to \eqref{overview-diag}. The skein category $\wSU_\omega$ has objects $n\ge 0$ and morphisms given by planar diagrams of arcs up to isotopy, with elements of $Z$ floating in the regions of the diagram. 

From $\wSU_{\omega}$ we can pass to the gligible quotient category $\wU_{\omega}$ by identifying morphisms $u_1,u_2$ if closing them by any annular diagram $v$ gives equal elements of $Z$, with $\alpha(vu_1)=\alpha(vu_2)$. One can then form Karoubi closures as well.  

Endomorphisms of the object $n$ in $\wSU_{\omega}$ constitute a monoid which is analogous to or generalizes the Temperley--Lieb monoid. The latter has $(n,n)$-crossingless matchings as its elements, with the product given by concatenation with consequent removal  of closed components (circles). 

\vspace{0.1in} 

A spherical case of this construction would consist of data $(Z,\omega,\varepsilon)$ with $Z$ and $\omega$ as before, and a map of sets $\varepsilon\colon  Z\lra W$, from $Z$ to a set $W$, subject to the sphericality condition
\begin{equation}
    \varepsilon(\omega(z_1)z_2)= \varepsilon(\omega(z_2)z_1), \ \ \ z_1,z_2 \in Z. 
\end{equation}
The pairing now becomes a symmetric pairing on the product of a set with itself, with both diagrams in the disk (rather than one a disk diagram the other an annular diagram). We leave the details to the reader. 

\vspace{0.1in} 

Likewise, the analogue of a series will be a formal sum 
\begin{equation} \label{eq_alpha_s}
  \alpha = \sum_{u\in \wU^0_0} \alpha(u) u, \ \  \alpha(u)\in Z,
\end{equation} 
We define the notion of \emph{recognizable tree (or forest) series} by requiring $A(0)$ to be finitely-generated over $Z$. In the spherical case, one can replace $Z$ in (\ref{eq_alpha_s}) by a set $W$. These structures are related to special cases of bottom-to-top tree automata, see the references in the previous section, and to suitable monoidal envelopes of such automata, analogous to monoidal envelopes of FSA sketched in~\cite{Kh3}. 

\vspace{0.1in} 

Assume given $(Z,\omega)$ as above, with a finite commutative monoid $Z$.  
Given elements $a,b\in \wB^{2n}_0$ (two crosssingless matchings of $2n$ points) their pairing $\overline{b}a$ is a collection of circles in the plane. This collection has a (generally non-unique) minimal presentation, 
$\overline{b}a=\overline{c}c$ for some $c \in \wB^{2m}_0$ with $m\le n$ and $m$ minimal with this property. Figure~\ref{fig_matchingminimal} shows an example of such a minimal presentation.

\begin{figure}[htb]
\begin{center}
\begingroup%
  \makeatletter%
  \providecommand\color[2][]{%
    \errmessage{(Inkscape) Color is used for the text in Inkscape, but the package 'color.sty' is not loaded}%
    \renewcommand\color[2][]{}%
  }%
  \providecommand\transparent[1]{%
    \errmessage{(Inkscape) Transparency is used (non-zero) for the text in Inkscape, but the package 'transparent.sty' is not loaded}%
    \renewcommand\transparent[1]{}%
  }%
  \providecommand\rotatebox[2]{#2}%
  \newcommand*\fsize{\dimexpr\f@size pt\relax}%
  \newcommand*\lineheight[1]{\fontsize{\fsize}{#1\fsize}\selectfont}%
  \ifx\svgwidth\undefined%
    \setlength{\unitlength}{308.25020126bp}%
    \ifx\svgscale\undefined%
      \relax%
    \else%
      \setlength{\unitlength}{\unitlength * \real{\svgscale}}%
    \fi%
  \else%
    \setlength{\unitlength}{\svgwidth}%
  \fi%
  \global\let\svgwidth\undefined%
  \global\let\svgscale\undefined%
  \makeatother%
  \begin{picture}(1,0.35821997)%
    \lineheight{1}%
    \setlength\tabcolsep{0pt}%
    \put(0,0){\includegraphics[width=\unitlength,page=1]{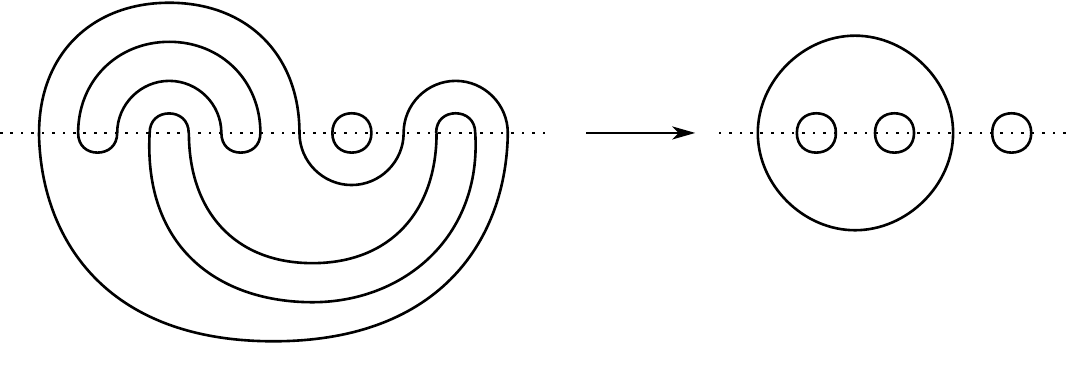}}%
    \put(0.30413604,0.31916195){\color[rgb]{0,0,0}\makebox(0,0)[lt]{\lineheight{1.25}\smash{\begin{tabular}[t]{l}$\overline{b}$\end{tabular}}}}%
    \put(0.74280218,0.34343133){\color[rgb]{0,0,0}\makebox(0,0)[lt]{\lineheight{1.25}\smash{\begin{tabular}[t]{l}$\overline{c}$\end{tabular}}}}%
    \put(0.76642287,0.08801851){\color[rgb]{0,0,0}\makebox(0,0)[lt]{\lineheight{1.25}\smash{\begin{tabular}[t]{l}$c$\end{tabular}}}}%
    \put(0.35279765,0.00286078){\color[rgb]{0,0,0}\makebox(0,0)[lt]{\lineheight{1.25}\smash{\begin{tabular}[t]{l}$a$\end{tabular}}}}%
  \end{picture}%
\endgroup%
 
\caption{A minimal presentation for the diagram on the left involving crossingless matchings of $2n$ points, with $n=7$ and $m=4$.}
\label{fig_matchingminimal}
\end{center}
\end{figure}

The data $(Z,\omega)$ can then be used to evaluate $\overline{b}a$ to an element 
$\alpha(\overline{b}a)\in Z$. In this sense, $\overline{b}a=\overline{c}c$ can be viewed as a toy instance of computation data, with just two operations: commutative multiplication in $Z$ and the endomorphism $\omega$. 
Although an efficient way to record this data is via a  crossingless matching $c$, storing the same data in two separate locations as matchings $a$ and $b$ allows to hide the original program or intended computation, until $a$ and $b$ are brought together into $\overline{b}a$. 
Starting with $c\in \wB^{2m}_0$, one can randomly represent $\overline{c}c$ as $\overline{b}a$ for $a,b\in \wB^{2n}_0$ with $n=\lfloor{\lambda m}\rfloor$ for some $\lambda>1$. 
It is clear that $c$ and the evaluation $\alpha(\overline{b}a)$ will be hard to guess given access to only $a$ or $b$. 

It may  be interesting to find and study similar  factorizations of programs or computations beyond this toy case, including for arbitrary boolean networks.

%
%

\section{Examples and derivation diagrammatics} \label{sec_adj_ex}

In this section, we consider examples of categories $\wSU_\omega$ and $\wU_\omega$ in more detail: 
\begin{itemize}
    \item Section~\ref{sec_TL} treats the case of a one-dimensional $\kk$-algebra $Z$, when the categories we consider are the Temperley--Lieb categories and their quotients by negligible ideals. 
    \item Section~\ref{subset_ss_2d} and~\ref{subset_ss_spherical} consider the class of examples where the evaluation is spherical and $Z$ is a semisimple algebra (product of base fields). 
\end{itemize}
Then, in Section~\ref{sec:derdiags}, we briefly discuss toy examples of diagrammatics for rings of operators on commutative rings.

\subsection{The one-dimensional case: Temperley--Lieb algebras, meander determinants, and quantum \texorpdfstring{$sl(2)$}{sl(2)}}\label{sec_TL}

Let $\kk$ be an algebraically closed field of characteristic zero and consider the case of the Tem\-per\-ley--Lieb category $TL(d)$ from Example \ref{ex:TL}, with endomorphism rings of objects isomorphic the Temperley--Lieb algebras \cite{Jo1, Jo5}. $TL(d)$ is the skein category $\wSU_{d}$ and associated to the circular triple $(\kk,\kk,\omega)$, where $\omega$ is the multiplication by $d\in \kk$. We want to study the gligible quotient category $\wU_{d}$ of $\wSU_d$ and the state spaces for this class of examples. Note that the circular triple $(\kk,\kk,d)$ is $\kk$-spherical, see Section \ref{subsec_arcs}.

The evaluation $\alpha_d$ associated to this circular triple is given by $\alpha_d(u)=d^{\kappa(u)}$, where $\kappa(u)$ is the number of circles in $u$. For brevity, we shorten the associated category $\wU_{\alpha_d}$ to $\wU_d$. 

Consider the pairing on $\wU_0^{2n}\times \wU_0^{2n}$  given by 
\begin{align}
(a,b)=\cF_\kk(\ov{a}b)=\alpha_d(\ov{a}b)=d^{\kappa(\ov{a}b)},
\end{align}
where $\kappa(\ov{b}a)$ is the number of circles in the object $\ov{b}a\in \wU_0^0$. This pairing extends linearly to $\Bbbk\wU_0^{2n}\times \Bbbk\wU_0^{2n}$. 

A spanning set for $\wU_{d}^{2n}$ is given by the elements of $\wB_0^{2n}$, the arc diagrams of $2n$ points. There are $c_n$ such diagrams. We can restrict the pairing $(\,,\,)$ to this spanning set $\wB_0^{2n}$. The associated matrix of the pairing $(\,,\,)$ on $\wB_0^{2n}$ is given by the \emph{Meander matrix} $\cG_{2n}(d)$, where
\begin{equation*}
    \big(\cG_{2n}(d)\big)_{a,b}=d^{\kappa(\ov{b}a)}, \ \ \ \  \text{for $a,b\in \wB_0^{2n}$.}
\end{equation*}
We refer to \cite{DiF,DGG} for introductions to meander matrices and their determinants. The following result appears in \cite{DiF,DGG}.

\begin{theorem}\label{prop_DiF}
The set $\wB_0^{2n}$ is a basis for $\wU_d^{2n}$ if and only if $d\neq q+q^{-1}$ for $q$ a root of unity of order less or equal to $n+1$. In this case, $\wU_d=\wSU_d$.
\end{theorem}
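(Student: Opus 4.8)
The plan is to reduce the statement to the non-vanishing of the Meander determinant $\det\cG_{2n}(d)$ and then import the known factorization of that determinant from \cite{DiF,DGG}. First I would record that $\wB_0^{2n}$ spans $\wU_d^{2n}$: in the Temperley--Lieb setting each closed component evaluates to the scalar $d$, so in $\kk\wU_0^{2n}$ every diagram $u$ with circles becomes, in the quotient $\wU_d^{2n}$, the scalar multiple $d^{\kappa(u)}\arc(u)$ of its underlying crossingless matching $\arc(u)\in\wB_0^{2n}$. Hence the $c_n$ matchings in $\wB_0^{2n}$ span the state space. Next, recall that $\wU_d^{2n}=\kk\wU_0^{2n}/\ker(\,,\,)_{\alpha_d}$ is by construction the space on which the symmetric form $(\,,\,)_{\alpha_d}$ descends to a nondegenerate pairing (the triple $(\kk,\kk,d)$ being $\kk$-spherical). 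Therefore $\dim_\kk\wU_d^{2n}$ equals the rank of the Gram matrix of $(\,,\,)_{\alpha_d}$ on the spanning set $\wB_0^{2n}$, and that Gram matrix is precisely the Meander matrix $\cG_{2n}(d)$ with entries $d^{\kappa(\ov{b}a)}$. A spanning set of size $c_n$ is a basis exactly when it is linearly independent, i.e. when $\dim_\kk\wU_d^{2n}=c_n=|\wB_0^{2n}|$, which holds if and only if $\cG_{2n}(d)$ is invertible, $\det\cG_{2n}(d)\neq 0$.

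At this stage I would invoke the evaluation of the Meander determinant from \cite{DiF,DGG}: up to sign, $\det\cG_{2n}(d)$ factors as a product of strictly positive powers of Chebyshev-type polynomials in $d$ whose combined set of roots is exactly $\{\,q+q^{-1}\mid q\ \text{a root of unity of order}\le n+1\,\}$. Because all exponents in this factorization are positive, $\det\cG_{2n}(d)=0$ if and only if $d$ lies in this excluded set, which gives the stated equivalence for each fixed $n$. Combining with the previous paragraph, $\wB_0^{2n}$ is a basis of $\wU_d^{2n}$ precisely when $d\neq q+q^{-1}$ for all such $q$.

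Finally, for the identification $\wU_d=\wSU_d$: the gligible quotient functor $\wSU_d\twoheadrightarrow\wU_d$ is an isomorphism exactly when no nonzero morphism is negligible, i.e. when the pairing is nondegenerate on every Hom space. Via the bending isomorphism of Figure~\ref{fig_bending}, $\Hom_{\wSU_d}(n,m)$ is identified with $\kk\wU_0^{n+m}$ carrying its Meander pairing, so the forms are simultaneously nondegenerate on all objects precisely when $\det\cG_{2n}(d)\neq 0$ for every $n$, equivalently when $d\neq q+q^{-1}$ for every root of unity $q$. Under this genericity hypothesis the ideal of negligible morphisms vanishes and the quotient collapses, giving $\wU_d=\wSU_d$.

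The substantive input is the Meander determinant formula itself, which is the imported result of \cite{DiF,DGG}; the work on our side is essentially the reduction above. The one delicate point I anticipate is bookkeeping: one must match the range of Chebyshev factors appearing in $\det\cG_{2n}(d)$ to the condition \emph{order $\le n+1$} on $q$, being careful with the normalization $d=q+q^{-1}$ (versus $[k+1]_q$ or the standard $U_k$) and with whether primitive or all bounded-order roots of unity occur. Concretely, I would verify that the union of the root sets of the factors is exactly the claimed set, with neither spurious values nor omissions, so that the equivalence is tight.
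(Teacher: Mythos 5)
Your proposal is correct and follows essentially the same route as the paper: reduce the basis question to the invertibility of the Gram/Meander matrix $\cG_{2n}(d)$ on the spanning set $\wB_0^{2n}$, import the Chebyshev factorization of its determinant from \cite{DiF,DGG}, and deduce $\wU_d=\wSU_d$ from nondegeneracy of the pairing on the Hom spaces via the bending identification. The bookkeeping caveat you raise about matching the Chebyshev root set $2\cos\bigl(k\pi/(m+1)\bigr)$ to the phrase ``root of unity of order $\le n+1$'' is a real one, and the paper's own proof handles it no more carefully than you do.
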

\begin{proof}
An element $v\in \Bbbk \wB_0^{2n}$ is in the kernel of the pairing if and only if it is in the kernel of the matrix $\cG_{2n}(q)$. It was shown in \cite{Ma}, \cite[Theorem~1]{DiF} that the determinant of this matrix is a product of the Chebychev polynomials $U_m(q)$, for $m\leq n$, with certain powers detailed in \cite[Section~5.2]{DGG}. Thus, the pairing is non-degenerate if and only if $q$ is not a root of one of these polynomials. Using \cite[Equation (5.2)]{DGG}, the roots of $U_m(x)$ are given by
$$2\cos\Big( \frac{k}{m+1}\pi\Big),\qquad m=1,\ldots, n,\qquad k=1,\ldots, m,$$
as claimed.

The morphism spaces $\wU_{d,n}^m$ of the gligble quotient category are isomorphic to $\wU_d^{n+m}$, see Section \ref{subsec_arcs}, and thus zero if $n+m$ is odd and given by the state spaces if $n+m$ is even. If $q$ is not a root of unity of order at most $n+1$ then the pairing $(\,,\,)$ on these state is non-degenerate and hence $\wU_{d,n}^m=\wSU_{d,n}^m$.
\end{proof}

In degenerate case, with $d=q+q^{-1}$, for $q^n=1$, the algebra $TL_n(d)$ is
isomorphic to the algebra of endomorphisms of $V_1^{\otimes n}$ for the fundamental representation $V_1$ of the small quantum group $u_q(\mathfrak{sl}_2)$, see \cite[Section~1.3]{FK} for details. The category $TL(d)=\wSU_d$ is a non-semisimple category and $\wU_d$ is its semisimplification. This semisimplification is used in the Witten--Reshetikhin--Turaev topological field theory \cite{Wi,RT} and related to the Jones polynomial of knots \cite{Jo2,Jo6}.


\subsection{The semisimple two-dimensional \texorpdfstring{$\kk$-spherical}{k-spherical} case}\label{subset_ss_2d}
Assume given matrices 
\begin{equation}
    a = \begin{pmatrix}
     a_{11} &  a_{12} \\
     a_{21} & a_{22}
    \end{pmatrix}, \ \  b= \begin{pmatrix}
     b_1 &  0 \\
     0 & b_2
    \end{pmatrix},
\end{equation}
with  entries in $\kk$. Consider a 2-dimensional semisimple commutative Frobenius algebra $Z=\kk e_1\times \kk e_2$, where  $e_1,e_2$ are mutually-orthogonal idempotents and the trace map 
$\varepsilon(e_i)=b_i$, $i=1,2.$ The Frobenius condition is equivalent  to $b_i\not= 0$, $i=1,2$. Assume that the endomorphism $\omega$ of the vector space $Z$ is given by the matrix $a$ in the basis $(e_1,e_2)$, so that 
\[ 
\omega(e_1,e_2) = (e_1,e_2)a = 
(a_{11}e_1 + a_{21}e_2, a_{12}e_1 + a_{22} e_2).
\] 
The condition that this data is $\kk$-spherical is equivalent to symmetricity of the matrix 
\[  ba = \begin{pmatrix}
     b_1 &  0 \\
     0 & b_2
    \end{pmatrix}
    \begin{pmatrix}
     a_{11} &  a_{12} \\
     a_{21} & a_{22}
    \end{pmatrix} = 
    \begin{pmatrix}
     b_1 a_{11} &  b_1 a_{12} \\
     b_2 a_{21} & b_2 a_{22}
    \end{pmatrix}. 
\] 
In turn, this is equivalent to the single equation
\begin{equation} \label{eq_ab} b_1 a_{12} = b_2 a_{21}. 
\end{equation} 

A crossingless matching $u\in \wB_0^{2n}$ together with an assignment $u'$ of numbers $1$ or $2$ to each of the $n+1$ regions of $c$ in the disk gives an element of the morphism space $\Hom(0,2n)$ of the skein category $\wSU_{\alpha}$. Assigning the number $i$ means placing the idempotent $e_i$ in the corresponding region of the diagram. 

The above elements constitute a basis of the space of morphisms from  $0$ to $2n$ in  the skein category for $\alpha$. In particular, 
\begin{equation}\label{dimHomSU}
\dim (\Hom_{\wSU_{\alpha}}(0,2n)) = \frac{2^{n+1}}{n+1}\binom{2n}{n}
\end{equation}
Using duality, the same formula gives dimension of morphism spaces in the skein category from $m$ to $2n-m$, for $0\le m\le 2n$. 
Furthermore, the elements $(u,u')$ as above constitute a spanning set of the morphism space from $0$ to $2n$ in the gligible quotient category $\wU_{\alpha}$ or, equivalently, a spanning set of the state space $A(n)$. 

The inner product $(a,b)$ of two such diagrams is zero unless the idempotents in the regions of  $a$ and $b$ match on each interval of the common boundary circle. There are $2n$ intervals there, and each one carries an induced coloring by an element of  $\{1,2\}$ coming from the labels of the regions. Both $a$ and $b$ induce such a coloring of the $2n$ intervals, and $(a,b)=0$ unless the induced colorings coincide. 

Consequently the Gram matrix for the pairing in this spanning set is block-diagonal, with $2^{2n}$ blocks. Each block is a  square matrix of the size at most the $n$-th Catalan number. This block decomposition simplifies the computation of the determinant and of the state spaces $A(n)$. In particular, $A(n)$ decomposes into a direct sum of $2^{2n}$ subspaces (some may be trivial), one for each $\{1,2\}$ colorings of the $2n$ segments on the circle. Note that, for some sequences or colors, such as $1122$ (which we may also write as $1^22^2$), no matching respects the sequence in the sense that the corresponding element in $\wSU_{0}^{2n}$ is zero. One can either not consider these cases or list them as giving $0\times 0$ blocks each with determinant $1$.

For example, the state space $A(1)$ has a spanning set $v_{11},v_{12},v_{21},v_{22}$ that consists of elements shown in Figure~\ref{fig_cm3}. Each one is a single arc, constituting the unique matching of two points, together with an assignment of $1$ or $2$ to each of the two regions of the  disk or lower half-plane.

\begin{figure}[htb]
\begin{center}
\import{Graphics/}{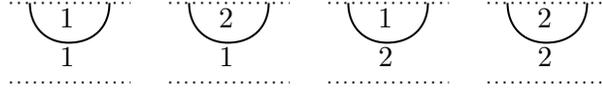}
\caption{The vectors  $v_{11},v_{12},v_{21},v_{22}$ that span $A(1)$. The labels $1,2$ denote idempotents $e_1,e_2$ placed in the corresponding regions. }
\label{fig_cm3}
\end{center}
\end{figure}

The Gram matrix is diagonal in this basis and given by 
\[  
\begin{pmatrix} b_1 a_{11} & 0  &  0 & 0 \\
0 & b_1 a_{12} &  0 & 0 \\
0 & 0 & b_2 a_{21}  & 0 \\
0 & 0 & 0 & b_2 a_{22}
\end{pmatrix}.
\]
Notice that the two middle diagonal entries are equal, due to (\ref{eq_ab}).
We see that the vectors $v_{11},v_{12},v_{21},v_{22}$ constitute a basis of $A(1)$, unless one of $a_{ij}$ is zero.

If one of $a_{12},a_{21}$ is zero, the other one is zero as well, $\omega$ stabilizes each of $\kk e_i$, $i=1,2$, and the system fully decouples and becomes the direct sum of two one-dimensional systems, each described by the Templerley--Lieb category with parameters $a_{00}$ and $a_{11}$.

\begin{figure}[htb]
\begin{center}
\import{Graphics/}{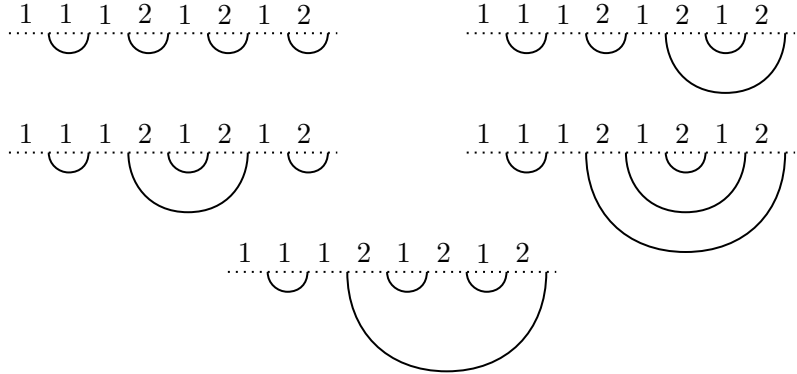}
\caption{The five crossingless matchings compatible with the  sequence $1^321212$.}
\label{fig_cm4}
\end{center}
\end{figure}

\begin{table}[tbh]
\begin{center}
\begin{tabular}{ c| c c }
Labels & \# (non-zero diagrams) &Gram determinant\\
\hline
$1^4$ & $2$ &       $b_1^2a_{11}^2(a_{11}-1)(a_{11}+1)$\\ 
$1^32$ & $1$ &      $b_1a_{11}a_{12}$ \\  
 $1^2 2^2$ & $0$ &  $1$  \\
 $1212$ & $2$ & $b_1a_{12}^2(a_{12}a_{21}-1)$
\end{tabular}
\end{center}
    \caption{Determinants of the blocks of the Gram matrix for $n=2$.}
    \label{tab:gramn=2}
\end{table}


\begin{table}[tbh]
\begin{center}
\begin{tabular}{ c| c c }
Labels & \# (non-zero diagrams) &Gram determinant\\
\hline
$1^6$           & 5 & $b_1^5a_{11}^5(a_{11}-1)^4(a_{11}+1)^4(a_{11}^2-2)$ \\ 
$1^5 2$         & 2 & $b_1^2a_{11}^2a_{12}^2(a_{11}-1)(a_{11}+1)$\\  
$1^4 2^2$       & 0 & $1$ \\
$1^3 2 1 2$     & 2 & $b_1^2a_{11}^2a_{12}^2(a_{11}-1)(a_{11}+1)$  \\
$1^2 2 1^2 2$   & 1 & $b_1a_{11}a_{12}^2$\\
$1^3 2^3    $   & 1 & $b_1a_{11}a_{12}a_{22}$\\
$1^2 2 1 2^2$   & 0 & $1$\\
$121212$        & 5 & $b_1^5a_{12}^5(a_{12}a_{21}-1)^4(a_{12}a_{21}-2)$
\end{tabular}
\end{center}
    \caption{Determinants of the blocks of the Gram matrix for $n=3$.}
    \label{tab:gramn=3}
\end{table}

We have computed the determinants of Gram matrices for $n=2,3,4,5$ (corresponding to diagrams with $4,6,8,10$ endpoints, respectively) and all possible length $2n$ sequences of $1,2$, up to cyclic order and reflection (since these transformations do not change the determinants, nor the state spaces). As an example, the four non-zero diagrams for the sequence $1^321212$ are shown  in Figure~\ref{fig_cm4}. 
Furthermore, the symmetry interchanging $1,2$ in a sequence corresponds to transposing indices $1,2$ in all  $a_{ij}$ and $b_i$ that appear in the Gram determinant for the sequence --- this symmetry is also taken into account to reduce the number of cases in the tables. The determinants of the blocks of the Gram matrices for $n=2,3,4$ are summarized in Tables \ref{tab:gramn=2}, \ref{tab:gramn=3}, \ref{tab:gramn=4}, and a partial list of Gram determinants for $n=5$ in Table \ref{tab:gramn=5}. Relation (\ref{eq_ab}) allows one to rewrite the terms in the product formulas for some determinants in several different ways.


\begin{table}[tbh]
\begin{center}
\begin{tabular}{ c| c c }
Labels & \# (non-zero diagrams) &Gram determinant\\
\hline
$1^8$             & 14&  \begin{tabular}{@{}c@{}}$b_{{1}}^{14}a_{{11}}^{14}\left( a_{{11}}^{2}+a_{{11}}-1 \right)  \left( a_{{11}}^{2}-a_{{11}}-1 \right)$\\$\cdot   \left( a_{{11}}-1 \right) ^{13} \left( a_{{11}}+1 \right) ^{13}\left( a_{{11}}^{2}-2 \right) ^{6}$ \end{tabular}\\
$1^72$            & 5 & $b_{{1}}^{5}a_{{11}}^{5}a_{{12}}^{5} 
\left( a_{{11}}-1 \right) ^{4} \left( a_{{11}}+1 \right) ^{4}\left( a_{{11}}^{2}-2 \right) $ \\
$1^62^2$          & 0 & $1$\\
$1^5 2 1 2$       & 4 & $b_{{1}}^{4}a_{{11}}^{4}a_{{12}}^{4}\left( a_{{11}}-1 \right) ^{2} \left( a_{{11}}+1 \right) ^{2}  \left( a_{{12}}a_{{21}}-1 \right) ^{2}$\\
$1^4 2 1^2 2$     & 2 & $b_{{1}}^{2} a_{{11}}^{2}a_{{12}}^{4} \left( a_{{11}}^{2}-1 \right)$\\
$1^3 2 1^3 2$     & 3 & $b_{{1}}^{3}a_{{11}}^{4}a_{{12}}^{4} \left( a_{{11}}-1 \right) \left( a_{{11}}+1 \right)  \left( a_{{12}}a_{{21}}-1 \right)$\\
$1^52^3$          & 2 & $b_{{1}}^{2}a_{{11}}^{2}a_{{12}}^{2}a_{{22}}^{2}
\left( a_{{11}}-1 \right)\left( a_{{11}}+1 \right) $\\
$1^42^212$        & 0 & $1$\\
$1^321^22^2$      & 0 & $1$\\
$1^321212$        & 5 & $b_{{1}}^{5}a_{{11}}^{5}a_{{12}}^{5}\left( a_{{12}}a_{{21}}-1 \right) ^{4} \left( a_{{12}}a_{{21}}-2 \right) $\\
$1^221^2212$      & 2 & $b_{{1}}^{2}a_{{11}}^{2}a_{{12}}^{4} \left( a_{{12}}a_{{21}}-1 \right) $\\
$1^4 2^4$         & 0 & $1$\\
$1^3 2 1 2^3$     & 2 & $b_{{1}}^{2}a_{{11}}^{2}a_{{12}}^{2}a_{{22}}^{2} \left( a_{{12}}a_{{21}}-1 \right)$\\
$1^2 2 1^2 2^3$   & 1 & $b_{{1}}a_{{11}}a_{{12}}^{2}a_{{22}}$ \\
$1^3 2^2 1 2^2$   & 1 & $b_{{1}}a_{{11}}a_{{12}}a_{{22}}a_{{21}}$ \\
$1^22^2 1^22^2$   & 0 & $1$\\
$1^2 2^2 1212$    & 0 & $1$ \\
$1^2212^212$      & 0 & $1$ \\
$12121212$        & 14 & $b_{{1}}^{14}a_{{12}}^{14}  \left( a_{{12}}a_{{21}}-1 \right) ^{13} \left( a_{{12}}a_{{21}}-2 \right) ^{6}\left(a_{{12}}^{2}a_{{21}}^{2}-3\,a_{{12}}a_{{21}}+1 \right)$ \\
\end{tabular}
\end{center}
    \caption{Determinants of the blocks of the Gram matrix for $n=4$.}
    \label{tab:gramn=4}
\end{table}

\begin{table}[tbh]
\begin{center}
\begin{tabular}{ c| c c }
Labels & \# (non-zero diagrams) &Gram determinant\\
\hline
 $1^{10}$            & $42$ &\begin{tabular}{@{}c@{}}$b_1^{42}a_{{1,1}}^{42}
\left( a_{{1,1}}-1 \right) ^{41} \left( a_{{1,1}}+1 \right) ^{41}\left( a_{{1,1}}^{2}-2 \right) ^{26}\left( a_{{1,1}}^{2}-3 \right)  $\\$\cdot \left( a_{{1,1}}^{2}+a_{{1,1}}-1 \right) ^{8}\left( a_{{1,1}}^{2}-a_{{1,1}}-1 \right) ^{8}
$\end{tabular} \\
$1^92$            & $14$ &\begin{tabular}{@{}c@{}}
$b_1^{14}a_{{1,1}}^{14}a_{{1,2}}^{14}\left( a_{{1,1}}^{2}+a_{{1,1}}-1 \right)  \left( a_{{1,1}}^{2}-a_{{1,1}}-1 \right) $\\$\cdot
\left( a_{{1,1}}^{2}-2 \right) ^{6} \left( a_{{1,1}}-1 \right) ^{13} \left( a_{{1,1}}+1 \right) ^{13}$\end{tabular}\\
$1^521212$            & $10$ &\begin{tabular}{@{}c@{}}
$b_1^{10}a_{{1,1}}^{10}a_{{1,2}}^{10}\left( a_{{1,1}}-1 \right) ^{5} \left( a_{{1,1}}+1 \right) ^{5}\left( a_{{1,2}}a_{{2,1}}-2 \right) ^{2} \left( a_{{1,2}}a_{{2,1}}-1 \right) ^{8}$\end{tabular}\\
$1^321^3212$            & $7$ &\begin{tabular}{@{}c@{}}
$b_1^{7}a_{{1,1}}^{10}a_{{1,2}}^{9} \left( a_{{1,2}}a_{{2,1}}-2 \right) \cdot\left( a_{{1,2}}a_{{2,1}}-1 \right) ^{5} \left( a_{{1,1}}-1 \right) ^{2}\left( a_{{1,1}}+1 \right) ^{2}$\end{tabular}\\$1^32121212$            & $14$ &\begin{tabular}{@{}c@{}}
$b_1^{14}
a_{{1,1}}^{14}a_{{1,2}}^{14}\left( a_{{1,2}}^{2}a_{{2,1}}^{2}-3\,a_{{1,2}}a_{{2,1}}+1 \right)  \left( a_{{1,2}}a_{{2,1}}-2 \right) ^{6}$\\$
\cdot \left( a_{{1,2}}a_{{2,1}}-1 \right) ^{13}$\end{tabular}\\
 $1212121212$            & $42$ &\begin{tabular}{@{}c@{}}$b_{{1}}^{42}a_{{1,2}}^{42}\left( a_{{1,2}}a_{{2,1}}-3 \right)  \left(a_{{1,2}}^{2}a_{{2,1}}^{2}-3\,a_{{1,2}}a_{{2,1}}+1 \right) ^{8}$\\ $\cdot
 \left( a_{{1,2}}a_{{2,1}}-2 \right) ^{26} \left( a_{{1,2}}a_{{2,1}}-1 \right) ^{41}
$\end{tabular} 
\end{tabular}
\end{center}
    \caption{Determinants of some of the blocks of the Gram matrix for $n=5$.}
    \label{tab:gramn=5}
\end{table}

\subsection{The semisimple spherical case} 
\label{subset_ss_spherical} 

Let $Z=\oplus_{i=1}^k \kk e_i$ be a semisimple  $\kk$-algebra of dimension $k$ over  an algebraically closed field $\kk$ of characteristic zero, with minimal idemptotents $e_1,\ldots, e_k$. Assume given a $\kk$-spherical circular triple $(\kk,Z,\omega)$ with $\omega$ described by the $k\times k$ matrix $a=(a_{ij})$, so  that $\omega(e_j)=\sum_i a_{ij}e_i$,  and the trace form $\varepsilon$ given by $b=(b_i)$ in the basis $(e_i)_{i=1}^k$. 

Denote by $\alpha$ the map $\kk\wU_0^0\xrightarrow{\cF_Z} Z\xrightarrow{\varepsilon} \kk$. The $\kk$-sphericality condition from Definition \ref{def_Rspherical} is equivalent to 
\begin{align}\label{aijspherical}
    b_ia_{ij}&=b_ja_{ji}, &\text{for all }i,j.
\end{align}

Consider the ring $R'=\kk[a_{ij},b_i^{\pm 1}]/J$ of polynomials in $a_{ij}$ and Laurent polynomials in $b_i$ modulo the ideal $J$  generated by $b_ia_{ij}-b_ja_{ji}$ for $1\le i<j\le k$. The ring $R'$ is naturally  isomorphic to the ring $\kk[a_{ij}',b_i^{\pm 1}]$ for variables $a_{ij}'$ with $1\le i\le  j\le k$ and $b_i$ for $1\le i\le k$, by denoting $a_{ij}'=b_i a_{ij} = b_j a_{ji}$. In particular, $R'$ is an integral domain. The Gram determinants of the associated pairings in Proposition~\ref{prop_ssTL} can be viewed as an element of $R'$. The ring $R'$ is bigraded, with $\deg(a_{ij}')=(1,0)$ and $\deg(b_i)=(0,1)$. Consequently, $\deg(a_{ij})=(1,-1)$. 

\begin{proposition}\label{prop_ssTL} 
If $a_{ij}\in \kk,b_i\in \kk\setminus\{0\}$ are generic elements of an algebraically closed field $\kk$, the skein category $\wSU_\alpha$ is isomorphic to the gligible quotient category $\wU_\alpha$. Equivalently, for each $k$ the Gram determinant is a non-zero element of the integral domain $R'$ defined above. 
\end{proposition}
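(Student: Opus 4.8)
\emph{Equivalence of the two formulations.} The plan is to first reduce to the determinantal statement. By construction $\wU_\alpha$ is the quotient of $\wSU_\alpha$ by negligible morphisms, and via the duality/bending of Figure~\ref{fig_bending} a morphism is negligible exactly when it lies in the kernel of the pairing $(\,,\,)_\alpha$ on the state space $\Hom_{\wSU_\alpha}(0,2n)$. Fixing the basis of that space consisting of crossingless matchings with idempotent labels in their $n+1$ regions (of size $\cG_n k^{n+1}$, rather $c_n k^{n+1}$), the quotient map is an isomorphism on each morphism space if and only if the associated finite Gram matrix is non-degenerate, i.e.\ iff its determinant is nonzero. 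Since this determinant is an element of the integral domain $R'$, its non-vanishing for generic $(a_{ij},b_i)$ is equivalent to its being a nonzero element of $R'$; so it suffices to exhibit \emph{one} point of $\mathrm{Spec}\,R'$ — that is, scalars with $b_i\neq 0$ obeying \eqref{aijspherical} — at which the full Gram matrix is non-degenerate.

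\emph{A specialization collapsing each block to a meander matrix.} I would take $b_i=1$ for all $i$ and $a_{ij}=\delta$ for all $i,j$, with $\delta\in\kk$ to be chosen; this trivially satisfies \eqref{aijspherical}. Then $\omega(e_j)=\delta\sum_i e_i=\delta\cdot\one$, so wrapping a circle around any region contributes the single scalar $\delta$ and leaves the colour of the ambient region unchanged, while $\varepsilon(e_i)=1$. The key check is that orthogonality of the idempotents keeps the Gram matrix block-diagonal, with one block $\cG_c$ for each colouring $c$ of the $2n$ boundary arcs by $\{1,\dots,k\}$: within a block the region colours are forced by $c$, the glued sphere diagram $\ov{b}a$ is consistently coloured, and the entry is simply $\delta^{\mcr(\ov{b}a)}$, where $\mcr$ counts circles. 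Thus $\cG_c(\delta)$ is precisely the principal submatrix of the meander matrix $\cG_{2n}(\delta)$ of Section~\ref{sec_TL} indexed by the $c$-compatible matchings.

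\emph{Non-vanishing via a leading-term argument.} The doubling $\ov{a}a$ of a single matching has exactly $n$ circles, whereas $\mcr(\ov{b}a)<n$ whenever $a\neq b$ (any loop built from the non-shared arcs uses at least one arc of each of $a,b$, so strictly fewer than $n$ loops result). Hence every diagonal entry of $\cG_c(\delta)$ equals $\delta^{n}$ and every off-diagonal entry is a strictly lower power of $\delta$; writing $r_c$ for the number of $c$-compatible matchings, only the identity permutation attains the top degree, so $\det\cG_c(\delta)=\delta^{r_c n}+(\text{lower order})$ is a nonzero polynomial in $\delta$. (For the constant colouring this is the full meander determinant, nonzero by Theorem~\ref{prop_DiF}.) Therefore $\prod_c \det\cG_c(\delta)$ is a nonzero polynomial, so for all but finitely many $\delta_0\in\kk$ the Gram matrix at $(a_{ij}=\delta_0,\,b_i=1)$ is non-degenerate. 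This produces the required point of $\mathrm{Spec}\,R'$ and shows the Gram determinant is a nonzero element of $R'$, hence nonzero for generic parameters.

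\emph{Main obstacle.} The only genuine work lies in the second paragraph: confirming that under the specialization $a_{ij}=\delta$ the idempotent bookkeeping really does collapse each coloured block to the loop-counting pairing $\delta^{\mcr(\ov{b}a)}$, with no residual interaction between distinct colours (in particular that the region colours are determined by the boundary colouring and that gluing never creates an $e_i e_j=0$ clash within a block). Once this is verified, the leading-term estimate and the integral-domain genericity step are both formal.
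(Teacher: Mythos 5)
Your proof is correct and follows essentially the same route as the paper's: block-diagonalize the Gram matrix by boundary colourings, use the fact that $\overline{v}u$ has $n$ circles if and only if $u=v$ so that the diagonal entries carry the unique top-degree term of the determinant, and specialize to verify nonvanishing. The only cosmetic difference is that you specialize to the one-parameter family $a_{ij}=\delta$, $b_i=1$ before running the leading-term argument, whereas the paper runs that argument in the bigraded ring $R'$ and uses the specialization $a_{ij}=b_i=1$ only to confirm that each diagonal entry is a nonzero polynomial.
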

\begin{proof}
The coefficients $a_{ij}, b_i$ being generic implies that $Z$ is $\omega$-generated.
This is equivalent to being able to write each $e_i$ as a linear combination of closed circular diagrams. (In the non-generic case, we can pass to the subalgebra of $Z$ generated by $1$ and closed under $\omega$ to achieve this.) 

 A diagram $u$ in $\wB^{2n}_0$  (a crossingless matching of $2n$ points) gives rise to an element  of $\wU^{2n}_{\alpha}$, also denoted by $u$. Given $u$ and a region $r$ of $u$, denote by $(u,r(i))$ the diagram $u$ with the idempotent $e_i$ placed in this region $r$. We view $(u,r(i))$ as a vector in $\wU_{\alpha}^{2n}$, so that $u=\sum_{i=1}^k (u,r(i))$, where $k=\dim(Z)$.

Minimal idempotents $e_i$ may be placed in more than one region of $u$. In particular, minimal idempotents may be assigned to all regions of $u$, resulting in a corresponding vector in $\wU^{2n}_{\alpha}$. Since  a  crossingless matching of $2n$  points has $n+1$ regions, the element $u$ can then be written as a sum of $k^{n+1}$ terms, each one carrying an assignment of minimal idempotents to all $n+1$ regions of $u$. 

\medspace

The space $\wU_{\alpha,0}^{2n}$ has a spanning set given by diagrams of crossingless matchings $u\in \wB^{2n}_0$ together with a choice of idempotent $e_i$ for each region of $u$. We denote such a  vector by $(u,c)$, where $c$ is the idempotent assignment, and can write  $u=\sum_c (u,c)$, the sum over all $k^{n+1}$ assignments.  

Each region of a matching $u$ contains one or more segments on the boundary of $u$. Labelling these regions by idempotents $e_i$ induces a labelling of the corresponding segments by the same index $i$ (or by the idempotent $e_i$).

An assignment $c$ of minimal idempotents to all regions of $u$  induces a sequence of indices $(i_1,\dots, i_{2n})$, the labels of the $2n$ segments on the boundary of $u$, going from left to  right. Here we view the boundary as the real line and crossingless matching $u$ as lying in the bottom half-plane, see Figure~\ref{fig_matchingseq} for an example.

\begin{figure}[htb]
\begin{center}
\import{Graphics/}{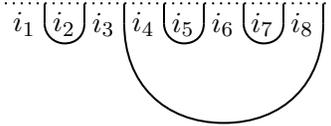}
\caption{Labelling of a crossingless matching of $8$ points by a sequence $(i_1,\ldots,i_8)$ corresponding to idempotents placed in the boundary regions.}
\label{fig_matchingseq}
\end{center}
\end{figure}

Given two such vectors $(u,c)$ and $(v,c')$, with $u,v\in \wB^{2n}_0$, their inner product is $0$ unless the idempotent assignments $c,c'$ give rise to the same sequence $(i_1,\dots, i_{2n})$ of indices on the boundaries of $u$ and $v$. Consider the Gram matrix of the bilinear form in the spanning set $\{(u,c)\}$ of $\wU^{2n}_0$ for all possible matchings $u$ and labelings $c$. This matrix is block-diagonal, where we sort the rows and columns into blocks according to the induced sequences $(i_1,\dots, i_{2n})$ of labels of boundary segments. Here $1\le i_1,\dots, i_{2n}\le k$.  

There may not be any vectors $(u,c)$ inducing a particular sequence ($(1,2,2,1)$ is an example of such a sequence, for $n=2$). These sequences can be ignored, with the corresponding blocks of size $0 \times 0$ of determinant $1$ by convention. 

\smallskip

Suppose that the pairing $((u,c),(v,c'))$ is non-zero, so that, in particular, $c$ and $c'$ induce the same sequence on the boundary. The circular form $\overline{v}u$ has at most $n$ circles, and has $n$ circles if and only if $u=v$. The labeling $c$ induces a labelling of regions of $\overline{v}u$, also denoted $c$. The element $\alpha((\overline{v}u,c))$ can  be evaluated  inductively on the number of circles in $\overline{v}u$, starting with the innermost circles, 
using the matrix $a=(a_{ij})$ to compute the action of $\omega$ and multiplication in the semisimple algebra $Z$ to reduce the result, at each step, to a  linear combination of circular  forms with one less  circle each and a full idempotent assignment to the regions. When no circles are left, we use the vector $b=(b_i)$ to evaluate each of the  resulting diagrams of the empty plane with an idempotent $e_i$ in it.

\smallskip

We see that the evaluation $\alpha((\overline{v}u,c))$ is a polynomial with each  term of degree one in the $b_i$'s and degree  $m$ in the $a_{ij}$'s, where  $m$ is the number of circles of $\overline{v}u$. Notice that $m\le  n$, with equality occurring if and only if $v=u$. Consequently, each of the diagonal terms of the Gram matrix has degree $n$ in the $a_{ij}'s$, while all off-diagonal terms have strictly lower degrees. 

Switching to the bidegrees, as earlier defined, 
 each diagonal term is a homogeneous element of the ring $R'$  of bidegree $(n,1-n)$. Each off-diagonal term is homogeneous of bidegree $(m,1-m)$ for $m<n$.

\smallskip

To show that the determinant of the Gram matrix is non-zero for generic values of the parameters, it suffices to check that each diagonal entry of the matrix is  non-zero for some $a_{ij}$'s and $b_i$'s. This would imply that each diagonal entry is a non-zero polynomial, necessarily of bidegree $(n,1-n)$. Collapsing bidegree $(n_1,n_2)$ into a single degree $n_1-n_2$ would tell us that the determinant is a polynomial of degree $2n-1$, with a nontrivial top homogeneous term, implying the proposition.

\smallskip

For a given diagonal entry $\alpha(\overline{u}u,c)$ set $a_{ij}=1$ for all $i,j$ and $b_i=1$ for all $i$. Then 
$\alpha(\overline{u}u,c)=1\not= 0$. The proposition follows. 
\end{proof}

We formulate the following conjecture, generalizing the statement for Temperley--Lieb categories and Meander determinants from Theorem \ref{prop_DiF}, regarding degeneracies in the Gram determinants of the paring associated to a  $\kk$-spherical triple for a semisimple $\kk$-algebra. This conjecture have been verified computationally using Maple\textsuperscript{\tiny{TM}} for morphism spaces $\Hom(n,m)$ with $n+m\leq 10$ for a two-dimensional algebra (see Tables \ref{tab:gramn=2}--\ref{tab:gramn=5}).

\begin{con} In the setup of Proposition \ref{prop_ssTL},
if all $b_i\neq 0$, then the factors of the Gram determinants are Chebychev polynomials of the second kind in the variables
    $c_{ij}:=\sqrt{b_i/b_j}a_{ij}.$
    In particular, the only degeneracies occur when $c_{ij}=q+q^{-1}$ for $q$ a root of unity.
\end{con}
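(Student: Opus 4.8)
The plan is to reduce every Gram determinant to the one-colour meander determinants of Theorem~\ref{prop_DiF} after a rescaling that exploits sphericality. First I would record that, since every region of a crossingless matching $u\in\wB_0^{2n}$ in the disk meets the boundary circle (there being no closed loops, no region can be completely surrounded by arcs), the idempotent labelling $c$ of a basis vector $(u,c)$ is completely determined by the induced boundary sequence $\sigma=(i_1,\dots,i_{2n})$. Thus, as already observed in the proof of Proposition~\ref{prop_ssTL}, the Gram matrix is block-diagonal with one block $G_\sigma$ for each sequence $\sigma$, the rows and columns of $G_\sigma$ being indexed by the $\sigma$-compatible matchings. Using sphericality in the form $b_ia_{ij}=b_ja_{ji}$, the matrix $c=(c_{ij})$ with $c_{ij}=\sqrt{b_i/b_j}\,a_{ij}$ is \emph{symmetric}, and a circle of $\ov{v}u$ with inner idempotent $e_j$ and outer idempotent $e_i$ contributes the scalar $a_{ij}=\sqrt{b_j/b_i}\,c_{ij}$. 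One checks that the square-root factors telescope along the nesting forest of $\ov{v}u$, so that after rescaling each basis vector $(u,c)$ by a fixed monomial in the $\sqrt{b_i}$ one may replace $G_\sigma$, up to a nonzero monomial in the $b_i$, by a ``pure $c$'' matrix whose entries are honest products $\prod_{\text{circles}}c_{ij}$.

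Second, I would dispose of the two extreme families of sequences. For the constant sequence $\sigma=i^{2n}$ every circle of $\ov{v}u$ separates two $i$-regions, so each entry is $c_{ii}^{\kappa(\ov{v}u)}$ and $G_\sigma$ is exactly the meander matrix $\cG_{2n}(c_{ii})$; for the alternating sequence $ijij\cdots$ every circle separates an $i$-region from a $j$-region and $G_\sigma=\cG_{2n}(c_{ij})$. In both cases Theorem~\ref{prop_DiF} immediately yields a factorization into Chebyshev polynomials of the second kind in the single variable $c_{ii}$, respectively $c_{ij}$, with zeros exactly at $c_{ii}=q+q^{-1}$, respectively $c_{ij}=q+q^{-1}$, for roots of unity $q$. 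These cases already match the extreme rows of Tables~\ref{tab:gramn=2}--\ref{tab:gramn=5}.

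The heart of the argument, and the step I expect to be the main obstacle, is the general mixed sequence. The tables suggest that $G_\sigma$ \emph{decouples} into meander matrices in the individual variables $c_{ij}$: for instance the block for $1^51212$ has determinant proportional to $U_2(c_{11})^2U_2(c_{12})^2$, which is precisely $\det\!\big(\cG_4(c_{11})\otimes\cG_4(c_{12})\big)$. I would therefore aim to prove that, up to congruence, $G_\sigma$ is a tensor product and direct sum of meander matrices $\cG_{2m}(c_{ij})$, with the pairs $ij$ and the sizes $m$ read off from the runs and alternations of $\sigma$. The natural way to establish this is an induction on $n$ using a Gram-determinant recursion: in every $\sigma$-compatible matching one locates an innermost cup joining adjacent compatibly-coloured segments, caps it off, and tracks the effect on the determinant exactly as in the inductive derivation of the meander determinant. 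Equivalently, one may try to realize $G_\sigma$ as the Gram matrix of a standard (cell) module over a colour-decorated (diluted) Temperley--Lieb algebra and invoke a product formula for such Gram determinants. The difficulty is that several colours must be handled simultaneously while showing that no irreducible factor mixes two distinct pairs $c_{ij},c_{i'j'}$ --- that is, that the decoupling into single-pair meander blocks is exact; this is exactly what the recursion or the cell-module identification must deliver, and is presumably why the statement is only a conjecture.

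Finally, granting the decoupling, the conclusion is immediate: every factor of every $G_\sigma$ is a meander determinant in a single variable $c_{ij}$, hence by Theorem~\ref{prop_DiF} a product of Chebyshev polynomials of the second kind in $c_{ij}$, and these vanish precisely when $c_{ij}=2\cos(k\pi/(m+1))=q+q^{-1}$ for a root of unity $q$. Since the variables $c_{ij}$ are exactly those in which the subfactor-type degeneracy pattern is expected to organize, this would also explain conceptually why the $c_{ij}$, rather than the eigenvalues of the full matrix $c$, are the correct coordinates.
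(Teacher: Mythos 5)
The statement you are addressing is labelled as a Conjecture, and the paper contains no proof of it: the authors only report a computer verification (using Maple) for $\Hom(n,m)$ with $n+m\le 10$ and $\dim Z=2$, which is exactly the data in Tables~\ref{tab:gramn=2}--\ref{tab:gramn=5}. So there is no argument of the authors to compare yours against, and the question is whether your outline actually closes the conjecture. It does not, and you concede as much: the entire content of the conjecture sits in your ``decoupling'' step, namely that for a mixed boundary sequence $\sigma$ the block $G_\sigma$ is congruent, after the $\sqrt{b_i}$-rescaling, to a direct sum and tensor product of one-variable meander matrices $\cG_{2m}(c_{ij})$. Your evidence for this is the factorization pattern in the tables --- precisely the evidence the authors already have --- and neither of your two proposed mechanisms is carried out. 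The ``innermost cup'' recursion is not obviously available here: capping off a cup in a $\sigma$-compatible matching deletes two boundary segments and merges two regions, which changes the colour sequence and can interleave contributions from several distinct pairs $(i,j)$ in the smaller Gram matrices, so the single-variable recursion that produces the Chebyshev factorization of $\det\cG_{2n}(d)$ does not transplant verbatim. Likewise the cell-module identification for a colour-decorated Temperley--Lieb algebra is named but not constructed. Your final paragraph therefore derives the conjecture from a statement equivalent to the conjecture.

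The parts you do carry out are sound and are worth separating from the gap. The block decomposition indexed by boundary sequences is already in the proof of Proposition~\ref{prop_ssTL}. Genuinely useful, and consistent with every table, are your observations that the matrix $(c_{ij})$ is symmetric by \eqref{aijspherical}, and that the two extreme blocks --- constant sequence $i^{2n}$ and alternating sequence $(ij)^n$ --- are, up to a monomial in the $b_i$ and the $c_{ij}$, exactly the meander matrices $\cG_{2n}(c_{ii})$ and $\cG_{2n}(c_{ij})$, so Theorem~\ref{prop_DiF} settles those blocks. Even there, one step needs to be proved rather than asserted: for the rescaling to make sense as a congruence of $G_\sigma$, the $\sqrt{b}$-monomial attached to the evaluation of $(\ov{v}u,c)$ must split as a product of a factor depending only on $(u,c)$ and one depending only on $(v,c')$. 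This is plausible because every region of $\ov{v}u$ meets the boundary circle and its label is dictated by $\sigma$, but the bookkeeping over the nesting forest of $\ov{v}u$ (whose circles each cross the boundary line several times) should be written out. In short: a reasonable and well-informed plan of attack, correct where it is concrete, but the conjecture remains open after it.
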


The case of a non-semisimple algebra appears very interesting and we hope to look into it in the future.

%
%

 \subsection{Example: derivation diagrammatics}\label{sec:derdiags}
 \quad
 \smallskip
 
Additional properties of  $\omega$ may lead to rules for manipulation and  simplification of  such  diagrams. For instance, assume that set $S\subset A$ generates $A$, that  is, the algebra homomorphism $R[S]\lra A$ is surjective. Furthermore, assume that $\omega=\partial$ is a  derivation, that is, $\partial(ab)=\partial(a)b+a\partial(b)$, see Figure \ref{fig2_4_4a}. Then any diagram reduces to a linear combination of products  of diagrams representing $\partial^n(s)$, for  various $s\in S$ and $n\in \Z_+$,  see  Figure~\ref{fig2_4_4b} for examples. 

\begin{figure}[htb]
\begin{center}
  \import{Graphics/}{leibnizsvg.pdf_tex}
\caption{The Leibniz rule for derivations, where $a$ and $b$ represent arbitrary diagrams.}
\label{fig2_4_4a}
\end{center}
\end{figure}

\begin{figure}
\begin{center}
     \begin{subfigure}[htb]{0.45\textwidth}
      \centering
\begingroup%
  \makeatletter%
  \providecommand\color[2][]{%
    \errmessage{(Inkscape) Color is used for the text in Inkscape, but the package 'color.sty' is not loaded}%
    \renewcommand\color[2][]{}%
  }%
  \providecommand\transparent[1]{%
    \errmessage{(Inkscape) Transparency is used (non-zero) for the text in Inkscape, but the package 'transparent.sty' is not loaded}%
    \renewcommand\transparent[1]{}%
  }%
  \providecommand\rotatebox[2]{#2}%
  \newcommand*\fsize{\dimexpr\f@size pt\relax}%
  \newcommand*\lineheight[1]{\fontsize{\fsize}{#1\fsize}\selectfont}%
  \ifx\svgwidth\undefined%
    \setlength{\unitlength}{38.25001224bp}%
    \ifx\svgscale\undefined%
      \relax%
    \else%
      \setlength{\unitlength}{\unitlength * \real{\svgscale}}%
    \fi%
  \else%
    \setlength{\unitlength}{\svgwidth}%
  \fi%
  \global\let\svgwidth\undefined%
  \global\let\svgscale\undefined%
  \makeatother%
  \begin{picture}(1,1.00000067)%
    \lineheight{1}%
    \setlength\tabcolsep{0pt}%
    \put(0.55909754,0.40176275){\color[rgb]{0,0,0}\makebox(0,0)[lt]{\smash{\begin{tabular}[t]{l}$s$\end{tabular}}}}%
    \put(0,0){\includegraphics[width=\unitlength,page=1]{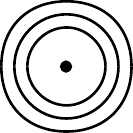}}%
  \end{picture}%
\endgroup%

         \caption{The diagram $\partial^3(s)$}
         \label{fig:del3s}
     \end{subfigure}
          \begin{subfigure}[htb]{0.45\textwidth}
      \centering
\begingroup%
  \makeatletter%
  \providecommand\color[2][]{%
    \errmessage{(Inkscape) Color is used for the text in Inkscape, but the package 'color.sty' is not loaded}%
    \renewcommand\color[2][]{}%
  }%
  \providecommand\transparent[1]{%
    \errmessage{(Inkscape) Transparency is used (non-zero) for the text in Inkscape, but the package 'transparent.sty' is not loaded}%
    \renewcommand\transparent[1]{}%
  }%
  \providecommand\rotatebox[2]{#2}%
  \newcommand*\fsize{\dimexpr\f@size pt\relax}%
  \newcommand*\lineheight[1]{\fontsize{\fsize}{#1\fsize}\selectfont}%
  \ifx\svgwidth\undefined%
    \setlength{\unitlength}{129.52904552bp}%
    \ifx\svgscale\undefined%
      \relax%
    \else%
      \setlength{\unitlength}{\unitlength * \real{\svgscale}}%
    \fi%
  \else%
    \setlength{\unitlength}{\svgwidth}%
  \fi%
  \global\let\svgwidth\undefined%
  \global\let\svgscale\undefined%
  \makeatother%
  \begin{picture}(1,0.23739864)%
    \lineheight{1}%
    \setlength\tabcolsep{0pt}%
    \put(0,0){\includegraphics[width=\unitlength,page=1]{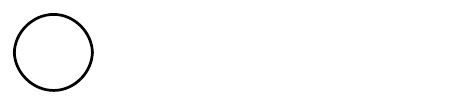}}%
    \put(0.11378178,0.10433215){\color[rgb]{0,0,0}\makebox(0,0)[lt]{\smash{\begin{tabular}[t]{l}$s_1$\end{tabular}}}}%
    \put(0,0){\includegraphics[width=\unitlength,page=2]{derdiags.pdf}}%
    \put(0.37434308,0.1084698){\color[rgb]{0,0,0}\makebox(0,0)[lt]{\smash{\begin{tabular}[t]{l}$s_1$\end{tabular}}}}%
    \put(0,0){\includegraphics[width=\unitlength,page=3]{derdiags.pdf}}%
    \put(0.63495855,0.10422537){\color[rgb]{0,0,0}\makebox(0,0)[lt]{\smash{\begin{tabular}[t]{l}$s_2$\end{tabular}}}}%
    \put(0,0){\includegraphics[width=\unitlength,page=4]{derdiags.pdf}}%
    \put(0.86650859,0.10640809){\color[rgb]{0,0,0}\makebox(0,0)[lt]{\smash{\begin{tabular}[t]{l}$s_3^2$\end{tabular}}}}%
    \put(0,0){\includegraphics[width=\unitlength,page=5]{derdiags.pdf}}%
  \end{picture}%
\endgroup%

         \caption{The diagram $\partial^2(s_1)\partial(s_1)\partial(s_2)s_3^2$}
         \label{fig:derdiags}
     \end{subfigure}
\caption{Examples of derivation diagrams.}
\label{fig2_4_4b}
\end{center}
\end{figure}

\ssubsection{The first Weyl algebra and an $\mathfrak{sl}_2$-action on polynomials}
Consider a basic  example when the center $Z(\mcA)=\kk[x]$ is the  polynomial ring in one variable $x$  over a field $\kk$. For this diagrammatics we do not need a pair $(\mcA,F)$  of a category and a self-adjoint  functor and  can  just work with  a  commutative algebra $Z$ and a linear  operator  on it. For  this  example, $Z=\kk[x]$  and  the operator $\omega$ is the differentiation 
\begin{equation}\label{eq_A1}
    \partial\colon  Z  \lra Z, \ \partial(x)=1, \ \partial(ab)=\partial(a)b + a\partial(b) , \ a,b\in  Z. 
\end{equation}
Then any  diagram  of  nested  circles  and dots reduces to  a polynomial in $Z$ upon  repeated  simplification. 

Let 
\begin{equation}
A_1 = Z\langle \partial \rangle = \kk\langle x,\partial \rangle /(\partial x - x\partial -1)
\end{equation} 
be the first  Weyl algebra, that is, the  algebra of polynomial differential operators on  one  variable  $x$. This algebra acts on $Z=\kk[x]$, with $x$  acting by adding a dot outside of a  diagram and  $\partial$ acting by wrapping a circle around a diagram, with the  relation in Figure~\ref{fig2_4_5}.

Note that to any commutative algebra and a linear operator on it we can associate a monoidal category as in Section~\ref{sec_pairings}. We do not know the defining relations in that category for the pair $(\kk[x],\partial)$. 

\begin{figure}
\begin{center}
      \begin{subfigure}[htb]{0.45\textwidth}
      \centering
\begingroup%
  \makeatletter%
  \providecommand\color[2][]{%
    \errmessage{(Inkscape) Color is used for the text in Inkscape, but the package 'color.sty' is not loaded}%
    \renewcommand\color[2][]{}%
  }%
  \providecommand\transparent[1]{%
    \errmessage{(Inkscape) Transparency is used (non-zero) for the text in Inkscape, but the package 'transparent.sty' is not loaded}%
    \renewcommand\transparent[1]{}%
  }%
  \providecommand\rotatebox[2]{#2}%
  \newcommand*\fsize{\dimexpr\f@size pt\relax}%
  \newcommand*\lineheight[1]{\fontsize{\fsize}{#1\fsize}\selectfont}%
  \ifx\svgwidth\undefined%
    \setlength{\unitlength}{114.76500371bp}%
    \ifx\svgscale\undefined%
      \relax%
    \else%
      \setlength{\unitlength}{\unitlength * \real{\svgscale}}%
    \fi%
  \else%
    \setlength{\unitlength}{\svgwidth}%
  \fi%
  \global\let\svgwidth\undefined%
  \global\let\svgscale\undefined%
  \makeatother%
  \begin{picture}(1,0.33329008)%
    \lineheight{1}%
    \setlength\tabcolsep{0pt}%
    \put(0.16080925,0.14645675){\color[rgb]{0,0,0}\makebox(0,0)[lt]{\smash{\begin{tabular}[t]{l}$f$\end{tabular}}}}%
    \put(0.36339795,0.15124094){\color[rgb]{0,0,0}\makebox(0,0)[lt]{\smash{\begin{tabular}[t]{l}$=$\end{tabular}}}}%
    \put(0.62526851,0.15124131){\color[rgb]{0,0,0}\makebox(0,0)[lt]{\smash{\begin{tabular}[t]{l}$+$\end{tabular}}}}%
    \put(0,0){\includegraphics[width=\unitlength,page=1]{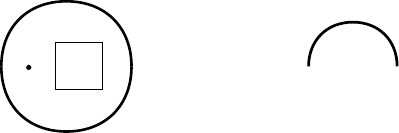}}%
    \put(0.48756389,0.14645675){\color[rgb]{0,0,0}\makebox(0,0)[lt]{\smash{\begin{tabular}[t]{l}$f$\end{tabular}}}}%
    \put(0.84699399,0.14704017){\color[rgb]{0,0,0}\makebox(0,0)[lt]{\smash{\begin{tabular}[t]{l}$f$\end{tabular}}}}%
    \put(0,0){\includegraphics[width=\unitlength,page=2]{delrel1.pdf}}%
  \end{picture}%
\endgroup%

         \caption{The relation $\partial(xf)=f+x\partial(f)$}
         \label{fig:delrel1}
     \end{subfigure}
           \begin{subfigure}[htb]{0.45\textwidth}
      \centering
\begingroup%
  \makeatletter%
  \providecommand\color[2][]{%
    \errmessage{(Inkscape) Color is used for the text in Inkscape, but the package 'color.sty' is not loaded}%
    \renewcommand\color[2][]{}%
  }%
  \providecommand\transparent[1]{%
    \errmessage{(Inkscape) Transparency is used (non-zero) for the text in Inkscape, but the package 'transparent.sty' is not loaded}%
    \renewcommand\transparent[1]{}%
  }%
  \providecommand\rotatebox[2]{#2}%
  \newcommand*\fsize{\dimexpr\f@size pt\relax}%
  \newcommand*\lineheight[1]{\fontsize{\fsize}{#1\fsize}\selectfont}%
  \ifx\svgwidth\undefined%
    \setlength{\unitlength}{75.43097061bp}%
    \ifx\svgscale\undefined%
      \relax%
    \else%
      \setlength{\unitlength}{\unitlength * \real{\svgscale}}%
    \fi%
  \else%
    \setlength{\unitlength}{\svgwidth}%
  \fi%
  \global\let\svgwidth\undefined%
  \global\let\svgscale\undefined%
  \makeatother%
  \begin{picture}(1,0.34839818)%
    \lineheight{1}%
    \setlength\tabcolsep{0pt}%
    \put(0.20527027,0.20402636){\color[rgb]{0,0,0}\makebox(0,0)[lt]{\smash{\begin{tabular}[t]{l}$n$\end{tabular}}}}%
    \put(0.44265639,0.14437054){\color[rgb]{0,0,0}\makebox(0,0)[lt]{\smash{\begin{tabular}[t]{l}$=$\end{tabular}}}}%
    \put(0.65145605,0.14437054){\color[rgb]{0,0,0}\makebox(0,0)[lt]{\smash{\begin{tabular}[t]{l}$n$\end{tabular}}}}%
    \put(0.85031327,0.20367142){\color[rgb]{0,0,0}\makebox(0,0)[lt]{\smash{\begin{tabular}[t]{l}$n-1$\end{tabular}}}}%
    \put(0,0){\includegraphics[width=\unitlength,page=1]{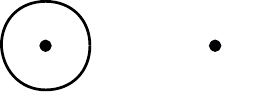}}%
  \end{picture}%
\endgroup%

         \caption{The formula  $\partial(x^n)=n x^{n-1}$}
         \label{fig:delrel2}
     \end{subfigure}
\caption{Derivation diagrammatics, multiplication by $\bullet=x$}
\label{fig2_4_5}
\end{center}
\end{figure}

The Lie algebra  $\mfsl_2$ acts via  polynomial derivations 
\begin{equation}
    E \mapsto x^2 \partial, \ \ H \mapsto 2 x \partial , \ \ F \mapsto -\partial
\end{equation}
on $\mathbb{P}^1$, for $\partial=\frac{\partial}{\partial x}$, and  this  action extends to a homomorphism from $U(\mfsl_2)$ to  $A_1$. The diagrammatic counterpart of the action is shown in Figure~\ref{fig2_4_7}. 

\begin{figure}[htb]
\begin{center}
 \import{Graphics/}{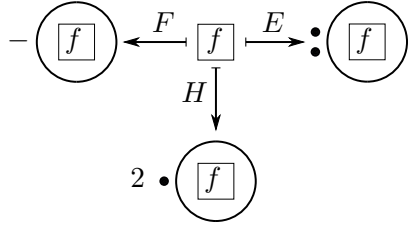}
\caption{The action of $E,H,F$ via annular diagrams.}
\label{fig2_4_7}
\end{center}
\end{figure}

A more sophisticated example of an $\mfsl_2$ action via annular diagrams can be found in~\cite{BHLZ}, which comes from the annular closure of categorified quantum $\mfsl_2$. There the action extends to an action of the current algebra. It may be interesting to compare it with the more elementary example here. 

\vspace{0.1in}

\begin{figure}[htb]
\begin{center}
   \begin{subfigure}[htb]{0.45\textwidth}
      \centering
\begingroup%
  \makeatletter%
  \providecommand\color[2][]{%
    \errmessage{(Inkscape) Color is used for the text in Inkscape, but the package 'color.sty' is not loaded}%
    \renewcommand\color[2][]{}%
  }%
  \providecommand\transparent[1]{%
    \errmessage{(Inkscape) Transparency is used (non-zero) for the text in Inkscape, but the package 'transparent.sty' is not loaded}%
    \renewcommand\transparent[1]{}%
  }%
  \providecommand\rotatebox[2]{#2}%
  \newcommand*\fsize{\dimexpr\f@size pt\relax}%
  \newcommand*\lineheight[1]{\fontsize{\fsize}{#1\fsize}\selectfont}%
  \ifx\svgwidth\undefined%
    \setlength{\unitlength}{57.93423127bp}%
    \ifx\svgscale\undefined%
      \relax%
    \else%
      \setlength{\unitlength}{\unitlength * \real{\svgscale}}%
    \fi%
  \else%
    \setlength{\unitlength}{\svgwidth}%
  \fi%
  \global\let\svgwidth\undefined%
  \global\let\svgscale\undefined%
  \makeatother%
  \begin{picture}(1,0.27107638)%
    \lineheight{1}%
    \setlength\tabcolsep{0pt}%
    \put(0,0){\includegraphics[width=\unitlength,page=1]{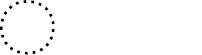}}%
    \put(0.03684507,0.07173839){\color[rgb]{0,0,0}\makebox(0,0)[lt]{\smash{\begin{tabular}[t]{l}$M$\end{tabular}}}}%
    \put(0,0){\includegraphics[width=\unitlength,page=2]{A1mod1.pdf}}%
    \put(0.77582677,0.09713224){\color[rgb]{0,0,0}\makebox(0,0)[lt]{\smash{\begin{tabular}[t]{l}$m$\end{tabular}}}}%
    \put(0.45957319,0.07120159){\color[rgb]{0,0,0}\makebox(0,0)[lt]{\smash{\begin{tabular}[t]{l}$,$\end{tabular}}}}%
  \end{picture}%
\endgroup%

         \caption{Holes in the plane labelled by $M$, $m$}
         \label{fig:A1mod1}
     \end{subfigure}
        \begin{subfigure}[htb]{0.45\textwidth}
      \centering
\begingroup%
  \makeatletter%
  \providecommand\color[2][]{%
    \errmessage{(Inkscape) Color is used for the text in Inkscape, but the package 'color.sty' is not loaded}%
    \renewcommand\color[2][]{}%
  }%
  \providecommand\transparent[1]{%
    \errmessage{(Inkscape) Transparency is used (non-zero) for the text in Inkscape, but the package 'transparent.sty' is not loaded}%
    \renewcommand\transparent[1]{}%
  }%
  \providecommand\rotatebox[2]{#2}%
  \newcommand*\fsize{\dimexpr\f@size pt\relax}%
  \newcommand*\lineheight[1]{\fontsize{\fsize}{#1\fsize}\selectfont}%
  \ifx\svgwidth\undefined%
    \setlength{\unitlength}{174.5360832bp}%
    \ifx\svgscale\undefined%
      \relax%
    \else%
      \setlength{\unitlength}{\unitlength * \real{\svgscale}}%
    \fi%
  \else%
    \setlength{\unitlength}{\svgwidth}%
  \fi%
  \global\let\svgwidth\undefined%
  \global\let\svgscale\undefined%
  \makeatother%
  \begin{picture}(1,0.13521657)%
    \lineheight{1}%
    \setlength\tabcolsep{0pt}%
    \put(0,0){\includegraphics[width=\unitlength,page=1]{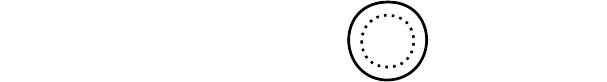}}%
    \put(0.61542337,0.05458139){\color[rgb]{0,0,0}\makebox(0,0)[lt]{\smash{\begin{tabular}[t]{l}$m$\end{tabular}}}}%
    \put(0,0){\includegraphics[width=\unitlength,page=2]{A1mod2.pdf}}%
    \put(0.85052451,0.04563911){\color[rgb]{0,0,0}\makebox(0,0)[lt]{\smash{\begin{tabular}[t]{l}$\del m$\end{tabular}}}}%
    \put(0.7467892,0.05786857){\color[rgb]{0,0,0}\makebox(0,0)[lt]{\smash{\begin{tabular}[t]{l}$=$\end{tabular}}}}%
    \put(0,0){\includegraphics[width=\unitlength,page=3]{A1mod2.pdf}}%
    \put(0.09423414,0.05102129){\color[rgb]{0,0,0}\makebox(0,0)[lt]{\smash{\begin{tabular}[t]{l}$m$\end{tabular}}}}%
    \put(0,0){\includegraphics[width=\unitlength,page=4]{A1mod2.pdf}}%
    \put(0.33474067,0.04993615){\color[rgb]{0,0,0}\makebox(0,0)[lt]{\smash{\begin{tabular}[t]{l}$xm$\end{tabular}}}}%
    \put(0.22683928,0.06216562){\color[rgb]{0,0,0}\makebox(0,0)[lt]{\smash{\begin{tabular}[t]{l}$=$\end{tabular}}}}%
    \put(0.50615099,0.04927435){\color[rgb]{0,0,0}\makebox(0,0)[lt]{\smash{\begin{tabular}[t]{l}$,$\end{tabular}}}}%
  \end{picture}%
\endgroup%

         \caption{The actions of $x$, $\del$}
         \label{fig:A1mod2}
     \end{subfigure}
\caption{Diagrammatics with labels from an $A_1$-module $M$, an element $m$ of $M$, and the action of $x$ and  $\partial$  on $m$. }
\label{fig2_4_6}
\end{center}
\end{figure}

\ssubsection{More general derivation diagrammatics}

Instead of acting on polynomials, we can take any module $M$  over $A_1$, make a hole in the plane to  insert elements $m$ of a module $M$ and have $A_1$ act diagrammatically, via adding a dot (the action of $x$) and circle wrapping (the action of $\partial$), see Figure~\ref{fig2_4_6}.  The relation in Figure~{\scshape \ref{fig:delrel1}}  holds  with an element $m$ of $M$ in place of $f$. 
This  diagrammatics may be useful, for instance,  for analyzing bilinear forms on $M$ with suitable compatibility  conditions on the action of $A_1$. For this application,  one would  glue two planar diagrams  describing the action of $A_1$ on two copies of $M$ into one diagram on the sphere. 

\vspace{0.1in}

More  generally, generalized diagrammatics of this sort may be useful for studying rings of operators acting on commutative algebras. For the Weyl algebra \begin{equation}
A_n := A_1^{\otimes n} = \kk\langle x_1,\dots ,x_n,\partial_1,\dots, \partial_n\rangle / (x_ix_j-x_jx_i,\partial_i \partial_j - \partial_j \partial_i, \partial_i x_j - x_j\partial_i - \delta_{i,j})
\end{equation} 
the diagrammatics will consists of $n$ types of dots colored by $\{1,\dots, n\}$, one for each generator $x_i$, and  $n$ types of wrapping circles, also labelled by numbers from $1$ to $n$, one for each $\partial_i$, with some relations shown in Figure~\ref{fig2_4_8}. For $A_n$ diagrammatics, one can, in addition, allow generic intersections  of  circles of different colors, with circles sliding freely through each other via relations in Figures~\ref{fig2_4_9} and~\ref{fig2_4_10}. Doing this allows for a local diagrammatic interpretation of the  commutativity  of derivations,  $\partial_i\partial_j=\partial_j\partial_i$,  see Figure~\ref{fig2_4_9} on the right. 

A diagram of  overlapping circles and dots on $n$ colors can be evaluated  by splitting it into $n$ diagrams each containing only dots and circles of one color, evaluating each diagram via iterated derivations and taking the product of evaluations.
From this  evaluation rule one can obtain  defining relations for this  toy example. An additional relation  is  that  for   $i\not=j$ an $i$-dot can pass through the boundary of a $j$-circle.

\begin{figure}
\begin{center}
  \begin{subfigure}[htb]{0.4\textwidth}
      \centering
\begingroup%
  \makeatletter%
  \providecommand\color[2][]{%
    \errmessage{(Inkscape) Color is used for the text in Inkscape, but the package 'color.sty' is not loaded}%
    \renewcommand\color[2][]{}%
  }%
  \providecommand\transparent[1]{%
    \errmessage{(Inkscape) Transparency is used (non-zero) for the text in Inkscape, but the package 'transparent.sty' is not loaded}%
    \renewcommand\transparent[1]{}%
  }%
  \providecommand\rotatebox[2]{#2}%
  \newcommand*\fsize{\dimexpr\f@size pt\relax}%
  \newcommand*\lineheight[1]{\fontsize{\fsize}{#1\fsize}\selectfont}%
  \ifx\svgwidth\undefined%
    \setlength{\unitlength}{90.41672609bp}%
    \ifx\svgscale\undefined%
      \relax%
    \else%
      \setlength{\unitlength}{\unitlength * \real{\svgscale}}%
    \fi%
  \else%
    \setlength{\unitlength}{\svgwidth}%
  \fi%
  \global\let\svgwidth\undefined%
  \global\let\svgscale\undefined%
  \makeatother%
  \begin{picture}(1,0.25865053)%
    \lineheight{1}%
    \setlength\tabcolsep{0pt}%
    \put(0.44001049,0.15482763){\color[rgb]{0,0,0}\makebox(0,0)[lt]{\smash{\begin{tabular}[t]{l}$n$\end{tabular}}}}%
    \put(0,0){\includegraphics[width=\unitlength,page=1]{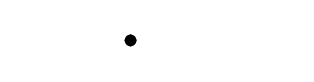}}%
    \put(0.33217648,0.06576057){\color[rgb]{0,0,0}\makebox(0,0)[lt]{\smash{\begin{tabular}[t]{l}$i$\end{tabular}}}}%
    \put(0,0){\includegraphics[width=\unitlength,page=2]{Angen.pdf}}%
    \put(-0.00367223,0.06871395){\color[rgb]{0,0,0}\makebox(0,0)[lt]{\smash{\begin{tabular}[t]{l}$i$\end{tabular}}}}%
    \put(0,0){\includegraphics[width=\unitlength,page=3]{Angen.pdf}}%
    \put(0.70613638,0.00650201){\color[rgb]{0,0,0}\makebox(0,0)[lt]{\smash{\begin{tabular}[t]{l}$i$\end{tabular}}}}%
  \end{picture}%
\endgroup%

         \caption{The elements $x_i,x_i^n$, and $\partial_i(\;\;)$}
         \label{fig:Angen}
     \end{subfigure}
       \begin{subfigure}[htb]{0.5\textwidth}
      \centering
\begingroup%
  \makeatletter%
  \providecommand\color[2][]{%
    \errmessage{(Inkscape) Color is used for the text in Inkscape, but the package 'color.sty' is not loaded}%
    \renewcommand\color[2][]{}%
  }%
  \providecommand\transparent[1]{%
    \errmessage{(Inkscape) Transparency is used (non-zero) for the text in Inkscape, but the package 'transparent.sty' is not loaded}%
    \renewcommand\transparent[1]{}%
  }%
  \providecommand\rotatebox[2]{#2}%
  \newcommand*\fsize{\dimexpr\f@size pt\relax}%
  \newcommand*\lineheight[1]{\fontsize{\fsize}{#1\fsize}\selectfont}%
  \ifx\svgwidth\undefined%
    \setlength{\unitlength}{130.42746753bp}%
    \ifx\svgscale\undefined%
      \relax%
    \else%
      \setlength{\unitlength}{\unitlength * \real{\svgscale}}%
    \fi%
  \else%
    \setlength{\unitlength}{\svgwidth}%
  \fi%
  \global\let\svgwidth\undefined%
  \global\let\svgscale\undefined%
  \makeatother%
  \begin{picture}(1,0.18084022)%
    \lineheight{1}%
    \setlength\tabcolsep{0pt}%
    \put(0,0){\includegraphics[width=\unitlength,page=1]{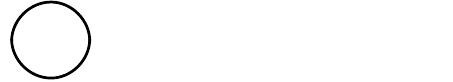}}%
    \put(-0.00254572,0.00450741){\color[rgb]{0,0,0}\makebox(0,0)[lt]{\smash{\begin{tabular}[t]{l}$i$\end{tabular}}}}%
    \put(0,0){\includegraphics[width=\unitlength,page=2]{Anrel1.pdf}}%
    \put(0.06569515,0.05760321){\color[rgb]{0,0,0}\makebox(0,0)[lt]{\smash{\begin{tabular}[t]{l}$j$\end{tabular}}}}%
    \put(0.25574192,0.0730358){\color[rgb]{0,0,0}\makebox(0,0)[lt]{\smash{\begin{tabular}[t]{l}$=\, \delta_{i,j}$\end{tabular}}}}%
    \put(0,0){\includegraphics[width=\unitlength,page=3]{Anrel1.pdf}}%
    \put(0.63010519,0.00604218){\color[rgb]{0,0,0}\makebox(0,0)[lt]{\smash{\begin{tabular}[t]{l}$i$\end{tabular}}}}%
    \put(0.88839282,0.07457057){\color[rgb]{0,0,0}\makebox(0,0)[lt]{\smash{\begin{tabular}[t]{l}$=\, 0$\end{tabular}}}}%
  \end{picture}%
\endgroup%

         \caption{The relations $\partial_i(x_j)=\delta_{i,j}$ and $\partial_i(1)=0$}
         \label{fig:Anrel1}
     \end{subfigure}
\caption{Diagrammatic interpretation of some relations in  the  Weyl algebra $A_n$. The generators $x_i$ and $\partial_i$ are given by a dot and  circle labelled $i$. The circle wraps about the area to be differentiated. If one needs  to keep track of  powers  of a  dot, a possible convention is  to write  the color to the lower left and  the power to the upper right of the dot.}
\label{fig2_4_8}
\end{center}
\end{figure}

\begin{figure}[htb]
\begin{center}
   \begin{subfigure}[htb]{0.4\textwidth}
      \centering
\begingroup%
  \makeatletter%
  \providecommand\color[2][]{%
    \errmessage{(Inkscape) Color is used for the text in Inkscape, but the package 'color.sty' is not loaded}%
    \renewcommand\color[2][]{}%
  }%
  \providecommand\transparent[1]{%
    \errmessage{(Inkscape) Transparency is used (non-zero) for the text in Inkscape, but the package 'transparent.sty' is not loaded}%
    \renewcommand\transparent[1]{}%
  }%
  \providecommand\rotatebox[2]{#2}%
  \newcommand*\fsize{\dimexpr\f@size pt\relax}%
  \newcommand*\lineheight[1]{\fontsize{\fsize}{#1\fsize}\selectfont}%
  \ifx\svgwidth\undefined%
    \setlength{\unitlength}{54.03265419bp}%
    \ifx\svgscale\undefined%
      \relax%
    \else%
      \setlength{\unitlength}{\unitlength * \real{\svgscale}}%
    \fi%
  \else%
    \setlength{\unitlength}{\svgwidth}%
  \fi%
  \global\let\svgwidth\undefined%
  \global\let\svgscale\undefined%
  \makeatother%
  \begin{picture}(1,0.54936015)%
    \lineheight{1}%
    \setlength\tabcolsep{0pt}%
    \put(0,0){\includegraphics[width=\unitlength,page=1]{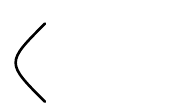}}%
    \put(0.19735306,0.49311523){\color[rgb]{0,0,0}\makebox(0,0)[lt]{\smash{\begin{tabular}[t]{l}$j$\end{tabular}}}}%
    \put(0,0){\includegraphics[width=\unitlength,page=2]{braidrel2.pdf}}%
    \put(-0.00614501,0.49237156){\color[rgb]{0,0,0}\makebox(0,0)[lt]{\smash{\begin{tabular}[t]{l}$i$\end{tabular}}}}%
    \put(0.41993311,0.18749708){\color[rgb]{0,0,0}\makebox(0,0)[lt]{\smash{\begin{tabular}[t]{l}$=$\end{tabular}}}}%
    \put(0.89137789,0.49311523){\color[rgb]{0,0,0}\makebox(0,0)[lt]{\smash{\begin{tabular}[t]{l}$j$\end{tabular}}}}%
    \put(0.68787982,0.49237156){\color[rgb]{0,0,0}\makebox(0,0)[lt]{\smash{\begin{tabular}[t]{l}$i$\end{tabular}}}}%
    \put(0,0){\includegraphics[width=\unitlength,page=3]{braidrel2.pdf}}%
  \end{picture}%
\endgroup%

         \caption{Crossing relation of strands for $i\neq j$}
         \label{fig:braidrel2}
     \end{subfigure}
       \begin{subfigure}[htb]{0.55\textwidth}
      \centering
    \import{Graphics/}{circlepass.pdf_tex}
         \caption{Commutativity of $\del_i,\del_j$}
         \label{fig:circlepass}
     \end{subfigure}
\caption{The relation (A) allows differently colored circles to cross. It can be viewed as the simplest ``virtual crossings" simplification relation. (B) is a local derivation of commutativity of the operators   $\partial_i,\partial_j$. Here, the  middle equality  is an isotopy of diagrams.}
\label{fig2_4_9}
\end{center}
\end{figure}

\begin{figure}[htb]
\begin{center}
\begingroup%
  \makeatletter%
  \providecommand\color[2][]{%
    \errmessage{(Inkscape) Color is used for the text in Inkscape, but the package 'color.sty' is not loaded}%
    \renewcommand\color[2][]{}%
  }%
  \providecommand\transparent[1]{%
    \errmessage{(Inkscape) Transparency is used (non-zero) for the text in Inkscape, but the package 'transparent.sty' is not loaded}%
    \renewcommand\transparent[1]{}%
  }%
  \providecommand\rotatebox[2]{#2}%
  \newcommand*\fsize{\dimexpr\f@size pt\relax}%
  \newcommand*\lineheight[1]{\fontsize{\fsize}{#1\fsize}\selectfont}%
  \ifx\svgwidth\undefined%
    \setlength{\unitlength}{70.52495745bp}%
    \ifx\svgscale\undefined%
      \relax%
    \else%
      \setlength{\unitlength}{\unitlength * \real{\svgscale}}%
    \fi%
  \else%
    \setlength{\unitlength}{\svgwidth}%
  \fi%
  \global\let\svgwidth\undefined%
  \global\let\svgscale\undefined%
  \makeatother%
  \begin{picture}(1,0.42124938)%
    \lineheight{1}%
    \setlength\tabcolsep{0pt}%
    \put(0,0){\includegraphics[width=\unitlength,page=1]{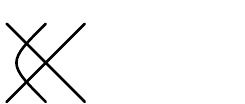}}%
    \put(-0.004708,0.3775881){\color[rgb]{0,0,0}\makebox(0,0)[lt]{\smash{\begin{tabular}[t]{l}$i$\end{tabular}}}}%
    \put(0.15443251,0.37777808){\color[rgb]{0,0,0}\makebox(0,0)[lt]{\smash{\begin{tabular}[t]{l}$j$\end{tabular}}}}%
    \put(0.31471009,0.37815737){\color[rgb]{0,0,0}\makebox(0,0)[lt]{\smash{\begin{tabular}[t]{l}$k$\end{tabular}}}}%
    \put(0,0){\includegraphics[width=\unitlength,page=2]{braidrel.pdf}}%
    \put(0.58019174,0.37758803){\color[rgb]{0,0,0}\makebox(0,0)[lt]{\smash{\begin{tabular}[t]{l}$i$\end{tabular}}}}%
    \put(0.73933194,0.37777804){\color[rgb]{0,0,0}\makebox(0,0)[lt]{\smash{\begin{tabular}[t]{l}$j$\end{tabular}}}}%
    \put(0.8996089,0.37815729){\color[rgb]{0,0,0}\makebox(0,0)[lt]{\smash{\begin{tabular}[t]{l}$k$\end{tabular}}}}%
    \put(0.42067313,0.13299384){\color[rgb]{0,0,0}\makebox(0,0)[lt]{\smash{\begin{tabular}[t]{l}$=$\end{tabular}}}}%
  \end{picture}%
\endgroup%

\caption{Another  standard relation on virtual crossings, with $i,j,k$ pairwise distinct.}
\label{fig2_4_10}
\end{center}
\end{figure}

In a more subtle example in~\cite{BHLZ} (also see~\cite{BHLW} for the multi-color generalization) one allows intersections of circles of the  same color (the $\mfsl_2$-case corresponds to one color) as well as self-intersections of a circle. Circles, in addition, carry dots on them and the ring of operators given by annular diagrams acts on a ring 
isomorphic to the ring of symmetric functions in infinitely many variables.

\ssubsection{Divided power differentials}
Let us go back  to our original example of  one variable $x$ represented by a dot and derivation $\partial$ represented by a circle around  the polynomial which is being differentiated, see the discussion around \eqref{eq_A1}. If we change the ground  ring from $\kk$ to $\Z$, the corresponding ring of polynomials $\Z[x]$ admits divided powers differentiations 
\begin{equation*}
    \partial^{(n)} \ := \ \frac{\partial^n}{n!}, \qquad  
    \partial^{(m)}\partial^{(n)}= \binom{n+m}{n} \partial^{(n+m)}.
\end{equation*} 
To describe them, we can enhance the diagrammatics by letting a circle of thickness $n$ denote  $\partial^{(n)}$, see Figure~\ref{fig2_4_11}. 

\begin{figure}[htb]
\begin{center}
   \begin{subfigure}[htb]{0.4\textwidth}
      \centering
    $\vcenter{\hbox{
\begingroup%
  \makeatletter%
  \providecommand\color[2][]{%
    \errmessage{(Inkscape) Color is used for the text in Inkscape, but the package 'color.sty' is not loaded}%
    \renewcommand\color[2][]{}%
  }%
  \providecommand\transparent[1]{%
    \errmessage{(Inkscape) Transparency is used (non-zero) for the text in Inkscape, but the package 'transparent.sty' is not loaded}%
    \renewcommand\transparent[1]{}%
  }%
  \providecommand\rotatebox[2]{#2}%
  \newcommand*\fsize{\dimexpr\f@size pt\relax}%
  \newcommand*\lineheight[1]{\fontsize{\fsize}{#1\fsize}\selectfont}%
  \ifx\svgwidth\undefined%
    \setlength{\unitlength}{50.0890468bp}%
    \ifx\svgscale\undefined%
      \relax%
    \else%
      \setlength{\unitlength}{\unitlength * \real{\svgscale}}%
    \fi%
  \else%
    \setlength{\unitlength}{\svgwidth}%
  \fi%
  \global\let\svgwidth\undefined%
  \global\let\svgscale\undefined%
  \makeatother%
  \begin{picture}(1,0.63529877)%
    \lineheight{1}%
    \setlength\tabcolsep{0pt}%
    \put(0.22622548,0.26820725){\color[rgb]{0,0,0}\makebox(0,0)[lt]{\smash{\begin{tabular}[t]{l}$f$\end{tabular}}}}%
    \put(0,0){\includegraphics[width=\unitlength,page=1]{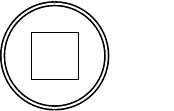}}%
    \put(0.54467808,0.57462557){\color[rgb]{0,0,0}\makebox(0,0)[lt]{\smash{\begin{tabular}[t]{l}$\scr{(n)}$\end{tabular}}}}%
  \end{picture}%
\endgroup%
}}\hspace{-10pt}=\;\del^{(n)}f={\displaystyle\text{``$\left(\frac{\del^nf}{n!}\right)$"}}$
         \caption{Diagrammatics for $\del^{(n)}f$}
         \label{fig:deldivn}
     \end{subfigure}
       \begin{subfigure}[htb]{0.5\textwidth}
      \centering
\begingroup%
  \makeatletter%
  \providecommand\color[2][]{%
    \errmessage{(Inkscape) Color is used for the text in Inkscape, but the package 'color.sty' is not loaded}%
    \renewcommand\color[2][]{}%
  }%
  \providecommand\transparent[1]{%
    \errmessage{(Inkscape) Transparency is used (non-zero) for the text in Inkscape, but the package 'transparent.sty' is not loaded}%
    \renewcommand\transparent[1]{}%
  }%
  \providecommand\rotatebox[2]{#2}%
  \newcommand*\fsize{\dimexpr\f@size pt\relax}%
  \newcommand*\lineheight[1]{\fontsize{\fsize}{#1\fsize}\selectfont}%
  \ifx\svgwidth\undefined%
    \setlength{\unitlength}{160.55779026bp}%
    \ifx\svgscale\undefined%
      \relax%
    \else%
      \setlength{\unitlength}{\unitlength * \real{\svgscale}}%
    \fi%
  \else%
    \setlength{\unitlength}{\svgwidth}%
  \fi%
  \global\let\svgwidth\undefined%
  \global\let\svgscale\undefined%
  \makeatother%
  \begin{picture}(1,0.28963006)%
    \lineheight{1}%
    \setlength\tabcolsep{0pt}%
    \put(0.25764766,0.25239307){\color[rgb]{0,0,0}\makebox(0,0)[lt]{\smash{\begin{tabular}[t]{l}$\scr{(m)}$\end{tabular}}}}%
    \put(0.3098276,0.13724234){\color[rgb]{0,0,0}\makebox(0,0)[lt]{\smash{\begin{tabular}[t]{l}$=$\end{tabular}}}}%
    \put(0.40497504,0.13783258){\color[rgb]{0,0,0}\makebox(0,0)[lt]{\smash{\begin{tabular}[t]{l}$\displaystyle\binom{n+m}{n}$\end{tabular}}}}%
    \put(0,0){\includegraphics[width=\unitlength,page=1]{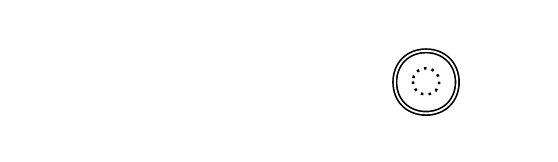}}%
    \put(0.81279861,0.19152706){\color[rgb]{0,0,0}\makebox(0,0)[lt]{\smash{\begin{tabular}[t]{l}$\scr{(n+m)}$\end{tabular}}}}%
    \put(0,0){\includegraphics[width=\unitlength,page=2]{deldivprod.pdf}}%
    \put(0.19149641,0.19157043){\color[rgb]{0,0,0}\makebox(0,0)[lt]{\smash{\begin{tabular}[t]{l}$\scr{(n)}$\end{tabular}}}}%
    \put(0,0){\includegraphics[width=\unitlength,page=3]{deldivprod.pdf}}%
  \end{picture}%
\endgroup%

         \caption{The composition $\del^{(m)}\del^{(n)}$}
         \label{fig:deldivprod}
     \end{subfigure}
\caption{A circle of thickness $n$ is denoted by a double circle with label $n$ on the top right and symbolizes the $n$-th divided power differentiation $\del^{(n)}$ applied to the label (function) inside the circle.}
\label{fig2_4_11}
\end{center}
\end{figure}

The Leibniz rule 
\begin{equation}
    \partial^{(n)}(fg) = \sum_{k=0}^n \partial^{(k)}(f)\partial^{(n-k)}(g)
\end{equation}
translates into the diagrammatics in Figure~\ref{fig2_4_12}.

\begin{figure}[htb]
\begin{center}
    \import{Graphics/}{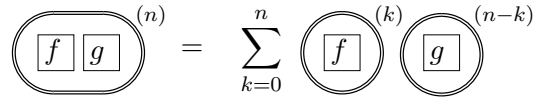}
\caption{The Leibniz rule.}
\label{fig2_4_12}
\end{center}
\end{figure}

Two-dimensional diagrammatics for manipulation of these  divided powers may appear contrived, but it may  potentially lead to new extensions of this or related  algebraic constructions. We may try, for  instance, to  allow the lines for different divided  powers of  $\partial$ to split and merge. Figure~{\scshape\ref{fig:circlemerge}} depicts  an example where a double line representing  $\partial^{(2)}$ splits into two single  lines representing $\partial$, which then merge back into the double line.

\begin{figure}[htb]
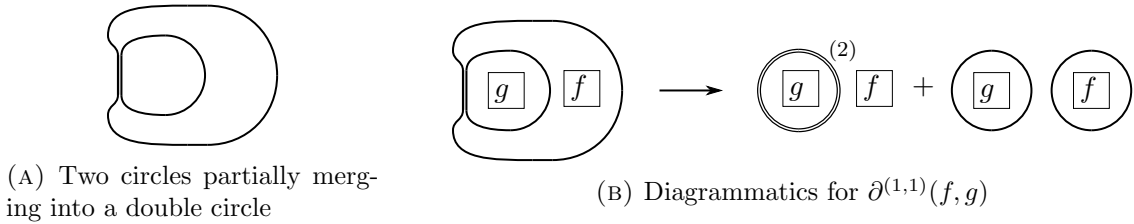

\begin{center}
  \begin{subfigure}[htb]{0.3\textwidth}
      \centering
     \import{Graphics/}{circlemerge.pdf_tex}
         \caption{Two circles partially merging into a double circle}
         \label{fig:circlemerge}
     \end{subfigure}
 \begin{subfigure}[htb]{0.65\textwidth}
      \centering
     \import{Graphics/}{del11.pdf_tex}
         \caption{Diagrammatics for $\del^{(1,1)}(f,g)$}
         \label{fig:del11}
     \end{subfigure}
\caption{The operator $\del^{(1,1)}$, given by formula (\ref{eq_part_3}), applied to a pair of functions $f,g$.}
\label{fig2_4_13}
\end{center}
\end{figure}


We can place polynomials $f$ and  $g$ into two bounded regions separated by the lines  of derivatives in the plane. In the left diagram of Figure~{\scshape\ref{fig:del11}}, $g$ is nested deeper  than $f$, and we  apply $\partial^{(2)}$ to it, while only applying $\partial$ to $f$. We then  modify the  Leibniz rule for $\partial^{(2)}$ applied to the product
\begin{equation}\label{eq_part_2}
    \partial^{(2)}(fg) = f\,\partial^{(2)}(g) + \partial(f)\,\partial(g) + \partial^{(2)}(f)\, g
\end{equation}
to the rule that the left diagram in Figure~{\scshape \ref{fig:del11}} evaluates to 
\begin{equation}\label{eq_part_3}
    \partial^{(1,1)}(f,g)\ := \ f\partial^{(2)}(g) + \partial(f)\partial(g). 
\end{equation}
The  expression is similar to that in (\ref{eq_part_2}) for $\partial^{(2)}$, but the last term is dropped. The  new operator $\partial^{(1,1)}$ applied to a pair $(f,g)$ differs from $\partial^{(2)}$ applied to $fg$ in that the term $\partial^{(2)}$ is not applied to $f$, for it is not nested inside both circles into which we can  split the graph in Figure~{\scshape\ref{fig:circlemerge}}. 
The arrow in Figure~{\scshape\ref{fig:del11}} 
depicts this modification of the Leibniz rule. 

\vspace{0.1in} 

Generalizing, consider a line of thickness $n$ that splits into $r$ lines of thickness $n_1,\dots, n_r$ that run  in parallel, with $n=n_1+\dots +n_r$. The lines then merge back into the original line, see Figure~\ref{fig2_4_14} on the left.  Denote the sequence by 
$\underline{n}=(n_1,\dots, n_r)$. Diagrammatically, the corresponding operation $\del^{\underline{n}}(f_1,\ldots, f_r)$ 
is given by the diagram that envelops $r$ bounded regions with polynomials $f_1,\dots, f_r$ in them as in  Figure~\ref{fig2_4_14} on the left.

\begin{figure}[htb]
\begin{center}
 \import{Graphics/}{ndiff.pdf_tex}
\caption{The $\un{n}$-differentiation operator $\del^{\un{n}}$ acting on $(f_1,\dots, f_r)$.}
\label{fig2_4_14}
\end{center}
\end{figure}

We can then modify the Leibniz  rule for the diagram in Figure~\ref{fig2_4_14} and define $\underline{n}$-differentiation, for $\underline{n}=(n_1,\dots, n_r)$, by  
\begin{equation}
    \partial^{\underline{n}}(f_1,\dots, f_r) := \sum_{\underline{k}} 
    \partial^{(k_1)}(f_1)\dots \partial^{(k_r)}(f_r),
\end{equation}
where  the sum is taken over all sequences $\underline{k}=(k_1,\dots, k_r)$ with $k_i\in \Z_+$, such that
\begin{equation*}
    k_1+k_2+\dots+k_r =n, \ k_1 \le n_1, \ k_1+k_2\le n_1+n_2 , \ \dots , k_1+\dots + k_{r-1}\le n_1+\dots + n_{r-1}.
\end{equation*}
In this situation, we also write $\underline{k}\leq \underline{n}$.
By a \emph{region of depth}  $m$ we mean a region separated by lines of  total thickness $m$ from the outer region. When  distributing divided powers of $\partial$ to act on polynomials in various regions of the diagram, on a polynomial located in  a region of depth $m$ the divided power of degree at most  $m$ may act. 

It may be interesting to see whether this or related  diagrammatics can be developed further  to  justify such two-dimensional manipulation rules for various systems of operators acting on commutative rings, beyond the examples  coming from trace reductions of categorified quantum groups and categorifications of the Heisenberg  algebra, where commutative rings on which the operators act are rings of symmetric functions in finitely many  variables and their tensor products. 


\ssubsection{Frobenius endomorphism}
Given a commutative ring $A$ of characteristic $p$, the Frobenius endomorphism $\sigma\colon A\lra A$ acts by $\sigma(a)=a^p$, for $a\in A$. Elements of $A$ may be depicted by labelled dots floating in the plane, and the action of $\sigma$ by a circle enveloping a region of a plane, with the relations shown in Figure~\ref{fig_frob}.

\begin{figure}[htb]
\begin{center}
 \import{Graphics/}{frob1.pdf_tex}
\caption{Graphical calculus for the Frobenius endomorphism of $A$.}
\label{fig_frob}
\end{center}
\end{figure}

By itself, this diagrammatics is very simple; one can try combining it with deeper structures in number theory and algebraic geometry in characteristic $p$ or to convert Frobenius endomorphisms into defect lines on suitable foams.

%
%

\section{Generalizations and transfer maps}\label{sec_miscell}


\subsection{Generalizing to a biadjoint pair \texorpdfstring{$(F,G)$}{(F,G)}}

In this paper we have discussed connections between self-adjoint functors, circular series and forest series evaluations,  and triples $(R,Z,\omega)$ and quadruples $(R,Z,\omega,\varepsilon)$. These connections have a straightforward modification for biadjoint pairs $(F,G)$ between categories $\mcA_0$ and $\mcA_1$,
\begin{equation}\label{eq_biadj_FG}
    F\colon \mcA_0\to \mcA_1,\  G\colon \mcA_1\to \mcA_0.
\end{equation} 
Planar diagrams are now checkerboard colored by $0$ and $1$, the labelling denoting categories $\mcA_0$ and $\mcA_1$. Lines and circles of diagrams obtain induced orientations so that as one travels along an arc in the orientation direction, the region labelled $0$ appears on the right. The resulting systems of arcs and circles are then compatibly oriented. Section \ref{subsec_diag_biadj} explains these diagrammatics, with the categories denoted by $\mcA,\mcB$ there rather than $\mcA_0,\mcA_1$. 

\vspace{0.1in}

\begin{figure}[htb]
    \centering
\begingroup%
  \makeatletter%
  \providecommand\color[2][]{%
    \errmessage{(Inkscape) Color is used for the text in Inkscape, but the package 'color.sty' is not loaded}%
    \renewcommand\color[2][]{}%
  }%
  \providecommand\transparent[1]{%
    \errmessage{(Inkscape) Transparency is used (non-zero) for the text in Inkscape, but the package 'transparent.sty' is not loaded}%
    \renewcommand\transparent[1]{}%
  }%
  \providecommand\rotatebox[2]{#2}%
  \newcommand*\fsize{\dimexpr\f@size pt\relax}%
  \newcommand*\lineheight[1]{\fontsize{\fsize}{#1\fsize}\selectfont}%
  \ifx\svgwidth\undefined%
    \setlength{\unitlength}{138.78407052bp}%
    \ifx\svgscale\undefined%
      \relax%
    \else%
      \setlength{\unitlength}{\unitlength * \real{\svgscale}}%
    \fi%
  \else%
    \setlength{\unitlength}{\svgwidth}%
  \fi%
  \global\let\svgwidth\undefined%
  \global\let\svgscale\undefined%
  \makeatother%
  \begin{picture}(1,0.61166296)%
    \lineheight{1}%
    \setlength\tabcolsep{0pt}%
    \put(0,0){\includegraphics[width=\unitlength,page=1]{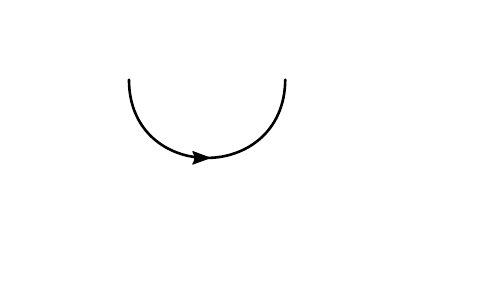}}%
    \put(0.28419686,0.41387312){\color[rgb]{0,0,0}\makebox(0,0)[lt]{\smash{\begin{tabular}[t]{l}$1$\end{tabular}}}}%
    \put(0.40338417,0.17926426){\color[rgb]{0,0,0}\makebox(0,0)[lt]{\smash{\begin{tabular}[t]{l}$0$\end{tabular}}}}%
    \put(0,0){\includegraphics[width=\unitlength,page=2]{biadcircles.pdf}}%
    \put(0.70175685,0.25996734){\color[rgb]{0,0,0}\makebox(0,0)[lt]{\smash{\begin{tabular}[t]{l}$1$\end{tabular}}}}%
    \put(0,0){\includegraphics[width=\unitlength,page=3]{biadcircles.pdf}}%
    \put(0.39135275,0.42112697){\color[rgb]{0,0,0}\makebox(0,0)[lt]{\smash{\begin{tabular}[t]{l}$0$\end{tabular}}}}%
    \put(0.79728163,0.15133714){\color[rgb]{0,0,0}\makebox(0,0)[lt]{\smash{\begin{tabular}[t]{l}$0$\end{tabular}}}}%
    \put(0.79646556,0.36832327){\color[rgb]{0,0,0}\makebox(0,0)[lt]{\smash{\begin{tabular}[t]{l}$0$\end{tabular}}}}%
    \put(0.04000527,0.23992394){\color[rgb]{0,0,0}\makebox(0,0)[lt]{\smash{\begin{tabular}[t]{l}$1$\end{tabular}}}}%
    \put(0,0){\includegraphics[width=\unitlength,page=4]{biadcircles.pdf}}%
  \end{picture}%
\endgroup%

    \caption{An example of a closed diagram for the biadjoint pair $(F,G)$ which defines an element of the center of $\mcA_0$.}
    \label{fig:7.1.1}
\end{figure}

Closed diagrams of this form, see Figure~\ref{fig:7.1.1} for an example, are determined by the underlying circular form (without orientations of circles) and the label ($0$ or $1$) of the outer region. That data determines orientations of all circles and labels for all regions. 

Consequently, an evaluation function $\alpha$ on such oriented circular foams can be described by a pair of circular evaluation functions $(\alpha_0,\alpha_1)$, one for each label of the outside region. We call such series $\alpha=(\alpha_0,\alpha_1)$ an \emph{oriented circular series}. 

The construction of state spaces $A(\mu)$ goes through as earlier, with objects $\mu$ now being alternating sequences of pluses and minuses, describing orientations at boundary points. For the empty sequence, one should additionally specify $0$ or $1$, that is, the label of the region that contains the boundary circle along which the diagrams are paired. One can denote these empty sequences by $\emptyset_0$ and $\emptyset_1$.

To define state spaces, 
one again needs an asymmetric setup, so that diagrams in a disk are paired to similar diagrams in an annulus to produce a planar diagram and evaluate it. 

The diagram of categories and functors (\ref{diagUalpha2}) extends to this ``checkerboard'' case, with the caveat that the various monoidal categories in the diagram become 2-categories with two objects, $0$ and $1$, corresponding to the possible labels of the regions.  

An oriented circular series $\alpha$ (over a field $\kk$) as above is called \emph{recognizable} if all state spaces $A(\mu)$ are finite-dimensional. 

\begin{prop} An oriented circular series $\alpha=(\alpha_0,\alpha_1)$ is recognizable if and only if the state spaces $A(\emptyset_0)$ and $A(\emptyset_1)$ are finite-dimensional.
\end{prop}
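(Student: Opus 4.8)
The plan is to follow the proof of Proposition~\ref{prop:alpha_rec} almost verbatim, the only genuinely new ingredient being that regions and strands now carry labels ($0$ or $1$) and orientations. Once the boundary datum $\mu$ and an underlying crossingless matching are fixed, both the orientations of all arcs and the checkerboard labels of all regions are forced, so these data contribute no extra combinatorial freedom; the effect is simply that one must keep track of two ``colors'' of closed diagrams rather than one.

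Necessity is immediate: if $\alpha$ is recognizable then every state space $A(\mu)$ is finite-dimensional, and $A(\emptyset_0)$, $A(\emptyset_1)$ are among them. For sufficiency, assume $\dim_\kk A(\emptyset_0)=k_0<\infty$ and $\dim_\kk A(\emptyset_1)=k_1<\infty$. For each $j\in\{0,1\}$ I would fix closed oriented diagrams $v^{(j)}_1,\dots,v^{(j)}_{k_j}$ whose classes form a basis of $A(\emptyset_j)$; by definition every closed diagram with outer region labelled $j$ is, modulo the left kernel of the pairing $(\,,\,)_{\emptyset_j,\alpha}$, a $\kk$-linear combination of the $v^{(j)}_i$. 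Next, recall that a diagram representing an element of $A(\mu)$ on $2n$ boundary points is specified by an oriented crossingless matching of those points --- of which there are finitely many, at most $c_n$ --- together with a choice of circular form in each of the $n+1$ regions; the checkerboard rule then assigns to each region $r$ a label $\ell(r)\in\{0,1\}$.

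The heart of the argument is a locality statement: replacing the circular form in one region $r$ by another that differs from it by an element of $\ker\big((\,,\,)_{\emptyset_{\ell(r)},\alpha}\big)$ leaves the class in $A(\mu)$ unchanged. To see this, enclose the form in $r$ by a small circle $C\subset r$ and, for any annular closing $y$ of the whole diagram, evaluate $\alpha$ by collapsing everything outside $C$ first. Because the collar between $C$ and the boundary of $r$ is empty, the rest of the picture can affect the form inside $C$ only by wrapping it (applications of $\omega$) and multiplying by fixed scalars --- precisely the operations realized by a type-$\ell(r)$ annular closing in the sense of $A(\emptyset_{\ell(r)})$. Hence the induced functional factors through $A(\emptyset_{\ell(r)})$, and the difference of the two $\alpha$-evaluations vanishes. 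Applying this region by region reduces an arbitrary diagram, modulo $\ker_\ell(\alpha)$, to a $\kk$-linear combination of diagrams in which the form in each region $r$ is one of the $k_{\ell(r)}$ chosen forms $v^{(\ell(r))}_i$. This yields
\[
\dim_\kk A(\mu)\ \le\ \sum_{b}\ \prod_{r} k_{\ell(r)}\ \le\ c_n\,\bigl(\max(k_0,k_1)\bigr)^{\,n+1},
\]
the outer sum over oriented matchings $b$ compatible with $\mu$ and the product over the $n+1$ regions of $b$; in particular $A(\mu)$ is finite-dimensional, proving recognizability.

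The step I expect to require the most care is the locality claim, specifically the verification that collapsing the colored environment outside $C$ genuinely presents a type-$\ell(r)$ annular closing in the sense of the pairing defining $A(\emptyset_{\ell(r)})$. In the self-adjoint setting of Proposition~\ref{prop:alpha_rec} this is transparent because there is a single color; here one must check that the orientation reversals and color flips incurred when an enclosing circle crosses the arcs bounding $r$ are exactly absorbed by the wrapping map $\omega$, so that the functional induced on the form in $r$ still factors through $A(\emptyset_{\ell(r)})$ rather than through some finer invariant. All remaining steps are a direct transcription of the self-adjoint proof.
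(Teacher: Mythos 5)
Your proof is correct and follows exactly the route the paper intends: the authors leave this proposition to the reader as "analogous to Proposition~\ref{prop:alpha_rec}," and your argument is precisely that analogy carried out, including the locality step (that the environment of a region $r$ presents itself as an annular closing with inner label $\ell(r)$, so the reduction can be done region by region) which the original proof also uses implicitly. Nothing further is needed.
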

\begin{proof}
Proof is  straightforward and left to the reader. This proposition is analogous to Proposition~\ref{prop:alpha_rec}.
\end{proof}

The state spaces $A(\emptyset_0)$ and $A(\emptyset_1)$ are commutative algebras, with trace maps $\varepsilon_0,\varepsilon_1$, respectively. 
Taking a closed diagram with an outer region $0$, respectively $1$, and wrapping a circle around it gives a diagram  with  the  opposite label for the outside region. Consequently, these operations  give rise to linear maps 
\begin{equation}
\omega_0\colon  A_0\lra A_1,\ \  \omega_1\colon  A_1\lra A_0. 
\end{equation}
The commutative algebras $A(\emptyset_0),A(\emptyset_1)$ are Frobenius in the  weak sense, where in analogue with equation (\ref{eq_vareps}) indices of $\omega$ must alternate and $x_i$'s must alternate between taking values in the two algebras. For instance, for $x\in A(\emptyset_0)$ there must exist $k\geq 0$ and length $k$ sequences $\{x_i\},\{y_i\}$ such that either
\begin{equation}\label{eq_vareps_3} 
\varepsilon_0 (x_k\omega_1 (y_{k-1}\dots (\omega_0(x_2 \omega_1 (y_1 \omega_0 (x_1x))))\dots )\not=0 .
\end{equation} 
or
\begin{equation}\label{eq_vareps_4} 
\varepsilon_1 (y_k\omega_0 (x_k\dots (\omega_0(x_2 \omega_1 (y_1 \omega_0 (x_1x))))\dots )\not=0 .
\end{equation} 

\begin{prop} \label{prop_rec_pair} Recognizable oriented circular series $\alpha=(\alpha_0,\alpha_1)$ are in a bijection with isomorphism classes of pairs $(A_0,A_1)$ of commutative algebras over $\kk$, with traces $\varepsilon_i\colon  A_i\lra \kk$, $i=0,1$ and linear maps $\omega_0\colon  A_0\lra A_1,$ $\omega_1\colon  A_1\lra A_0$ subject to the above nondegeneracy condition (weak Frobenius property for $(A_i,\omega_i,\varepsilon_i)$, $i=0,1$), and the  stability condition that $(A_0,A_1)$ is the only pair of subalgebras in $A_0,A_1$ that contain unit elements and are closed under $\omega_0,\omega_1$. 
\end{prop}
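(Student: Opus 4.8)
The plan is to mirror the proof of Proposition~\ref{prop_bij_A} in the self-adjoint case, carrying the single algebra there over to the pair $(A(\emptyset_0), A(\emptyset_1))$ while keeping track of the checkerboard labels and induced orientations. I would establish the bijection by constructing the two directions explicitly and then checking they are mutually inverse up to isomorphism.

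First I would build the forward map. Given a recognizable oriented circular series $\alpha = (\alpha_0, \alpha_1)$, set $A_i := A(\emptyset_i)$ for $i \in \{0,1\}$; these are finite-dimensional by the preceding proposition. Each $A_i$ is a commutative algebra under disjoint union of circular forms whose outer region carries label $i$, with unit the empty diagram. The trace $\varepsilon_i$ is induced by $\alpha_i$, and wrapping a single circle — which flips the outer label — descends to linear maps $\omega_0\colon A_0 \to A_1$ and $\omega_1\colon A_1 \to A_0$. I would check these are well-defined on the quotients by verifying that each operation preserves the relevant left kernel of the pairing, exactly as in the self-adjoint argument. The weak Frobenius property (\ref{eq_vareps_3})--(\ref{eq_vareps_4}) is then immediate from the definition of the state space as a quotient by the kernel: a nonzero class must pair nontrivially with some annular closure, and reading that closure from the inside out produces an alternating word in $\omega_0, \omega_1$ terminating in $\varepsilon_i$. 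Stability holds because every circular form is generated from the empty diagram by disjoint union and by the two wrapping operations, so this generation process exhausts $(A_0, A_1)$.

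Next I would build the backward map. Given algebraic data $(A_0, A_1, \varepsilon_0, \varepsilon_1, \omega_0, \omega_1)$ satisfying the two conditions, I define $\alpha_i(u)$ for a circular form $u$ with outer label $i$ by an inside-out evaluation: innermost circles evaluate using the unit, disjoint circles in a common region multiply in the corresponding $A_j$, each enveloping circle applies $\omega_j\colon A_j \to A_{1-j}$ according to the parity of its nesting depth, and $\varepsilon_i$ is applied last. The resulting pair is an oriented circular series, and it is recognizable because each state space $A(\emptyset_i)$ is then a quotient of the finite-dimensional $A_i$ (after passing to the $\omega$-closed subalgebra, which the stability condition identifies with all of $A_i$).

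The final step is to verify the two constructions are mutually inverse. Starting from $\alpha$ and returning, the state spaces reproduce $\alpha$ by the very definition of the evaluation; starting from the algebraic data, the stability condition guarantees the state spaces recover exactly $(A_0, A_1)$ rather than a proper quotient, while the weak Frobenius condition ensures the trace is not further degenerate, so the recovered data is isomorphic to the original. The main obstacle is bookkeeping rather than mathematical depth: one must keep the checkerboard labels, the induced circle orientations, and the domains and codomains of $\omega_0, \omega_1$ consistent throughout, and confirm that the alternating-index structure of (\ref{eq_vareps_3})--(\ref{eq_vareps_4}) matches the depth-parity of circles in every diagram. Once this indexing is fixed carefully, each verification reduces to the corresponding one-algebra statement already handled in Proposition~\ref{prop_bij_A}.
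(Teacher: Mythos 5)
Your proposal is correct and takes essentially the approach the paper intends: the paper omits the details, stating only that the argument is analogous to Proposition~\ref{prop_bij_A}, and your write-up is a faithful elaboration of exactly that analogy (state spaces $A(\emptyset_0)$, $A(\emptyset_1)$ with disjoint-union multiplication, wrapping maps $\omega_0\colon A_0\to A_1$ and $\omega_1\colon A_1\to A_0$ flipping the outer label, inside-out evaluation for the inverse, and stability plus weak Frobenius to see the two constructions are mutually inverse). The bookkeeping you flag — matching the depth-parity of circles to the alternating indices in (\ref{eq_vareps_3})--(\ref{eq_vareps_4}) — is indeed the only point requiring care, and you handle it correctly.
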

\begin{proof}
We leave the details of the proof, which is analogous to that of  Proposition~\ref{prop_bij_A},  to the reader. 
\end{proof}

When extending to the oriented checkerboard case, with all categories in 
(\ref{diagUalpha2}) becoming 2-categories with objects $0,1$, the square of what are now 2-functors continues to be commutative in the strong sense, as long as $\alpha$ is recognizable, see an earlier discussion. 

\vspace{0.1in} 

The analogue of a circular triple $(R,Z,\omega)$ in this setup is the data of 
\begin{itemize}
    \item A pair of commutative $R$-algebras $Z_0,Z_1$. 
    \item $R$-linear maps $\omega_{0}\colon  Z_0\lra Z_1$ and $\omega_{1}\colon  Z_1\lra Z_0$. 
    \item  (For the spherical case:) the condition
    \begin{equation}\label{eq_spher}
        \omega_{0}(z_0) z_1 = z_0 \, \omega_{1}(z_1),  \ \  z_0\in Z_0, \ z_1 \in Z_1. 
    \end{equation}
\end{itemize}

It is not difficult to write down the corresponding condition on oriented circular series $\alpha$ to make it spherical.  For spherical series, the bilinear pairing can  be made symmetric: arc and circle diagrams on an annulus now reduce to such diagrams in a disk, and the pairing is applied to two disk diagrams rather than coupling a disk diagram to an annulus diagram. As in the self-adjoint case, discussed at length earlier, this should simplify computations and understanding of the corresponding state spaces and categories.

The generating one-morphisms $+$ and $-$ in each of the corresponding 2-categories with two objects are biadjoint, with the biadjointness 2-morphisms given by oriented cup and cap diagrams (four diagrams in total). Vice versa, from a suitable biadjoint pair of functors $(F,G)$ between pre-additive categories $\mcA_0,\mcA_1$, see (\ref{eq_biadj_FG}), versions of oriented circular series, state spaces, oriented circular triples, etc. can be recovered. Trace maps  $\varepsilon_i\colon  Z(\mcA_i)\lra R$, $i=0,1$ from centers of these categories to the ground ring need to be fixed to write down oriented circular series associated to a biadjoint pair. This way, the notion of a circular quadruple from Section \ref{subsec_circ_s} also extends to this oriented (or checkerboard) case in a straighforward fashion. 

Further generalization, with $\alpha$ taking values in a commutative semiring $R$, is possible. That case would, again, bring the theory closer to that of weighted tree automata. 


\subsection{Systems  of biadjoint pairs and decorated tree series} \label{subsec_systems} 

The two setups, circular series related to a self-adjoint functor and oriented circular series for a biadjoint pair, admit a further generalization. Consider a graph $\Gamma$ with a set of vertices $V(\Gamma)$ and a  set of  edges $E(\Gamma)$, with multiple edges and loops allowed. Some loops may be oriented. 

Such a graph can be associated to a data of categories and biadjoint pairs of functors or self-adjoint functors between them. That data consists of categories $\mcA_v$, for $v\in V(\Gamma)$ and a pair of biadjoint functors $(F_e,G_e)$  between categories $\mcA_{v_1}$, $\mcA_{v_2}$ for an edge $e$ with vertices $v_1,v_2$. For a loop $e$ at vertex $v$, one can consider two cases: 
\begin{itemize}
    \item An oriented loop $e$ corresponds to a biadjoint pair $(F_e,G_e)$ of endofunctors of $\mcA_v$. 
    \item An unoriented loop $e$ corresponds to a self-adjoint endofunctor $F_e$ in $\mcA_v$. 
\end{itemize}

To such a graph $\Gamma$ one can associate the set of $\Gamma$-decorated diagrams of planar embedded circles. We label regions of such a diagram $D$ by vertices of $\Gamma$ and circles by edges. If the two regions on the sides of a circle are 
labelled $v_1,v_2$, the circle must be decorated by an edge with endpoints $v_1,v_2$. If there is only one such edge, the decoration can be omitted or reconstructed from the labels of the regions. If both regions around the circle are labelled by the same vertex $v\in V(\Gamma)$, the circle must be labelled by a loop $e$ at $v$. If the loop $e$ is oriented (and, thus, will correspond to a biadjoint pair $(F_e,G_e)$ of endofunctors), an orientation for the circle must be chosen, to distinguish between $F_e$ and $G_e$ or, vice versa, to reconstruct the functors and categories from the evaluation data, cf. Section \ref{subsec_diag_biadj}. 
For a circle labelled by an unoriented loop no additional decoration at that circle is needed, as in Section \ref{sec-wU}.
Graphs $\Gamma$ with infinitely many vertices or edges may be allowed.

\begin{figure}[htb]
    \centering
    \begin{subfigure}[htb]{0.35\textwidth}
\begingroup%
  \makeatletter%
  \providecommand\color[2][]{%
    \errmessage{(Inkscape) Color is used for the text in Inkscape, but the package 'color.sty' is not loaded}%
    \renewcommand\color[2][]{}%
  }%
  \providecommand\transparent[1]{%
    \errmessage{(Inkscape) Transparency is used (non-zero) for the text in Inkscape, but the package 'transparent.sty' is not loaded}%
    \renewcommand\transparent[1]{}%
  }%
  \providecommand\rotatebox[2]{#2}%
  \newcommand*\fsize{\dimexpr\f@size pt\relax}%
  \newcommand*\lineheight[1]{\fontsize{\fsize}{#1\fsize}\selectfont}%
  \ifx\svgwidth\undefined%
    \setlength{\unitlength}{112.03023984bp}%
    \ifx\svgscale\undefined%
      \relax%
    \else%
      \setlength{\unitlength}{\unitlength * \real{\svgscale}}%
    \fi%
  \else%
    \setlength{\unitlength}{\svgwidth}%
  \fi%
  \global\let\svgwidth\undefined%
  \global\let\svgscale\undefined%
  \makeatother%
  \begin{picture}(1,0.32231587)%
    \lineheight{1}%
    \setlength\tabcolsep{0pt}%
    \put(0,0){\includegraphics[width=\unitlength,page=1]{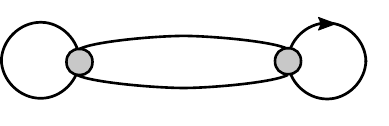}}%
    \put(0.17084677,0.02919719){\color[rgb]{0,0,0}\makebox(0,0)[lt]{\lineheight{1.25}\smash{\begin{tabular}[t]{l}$v_0$\end{tabular}}}}%
    \put(0.70575348,0.0317135){\color[rgb]{0,0,0}\makebox(0,0)[lt]{\lineheight{1.25}\smash{\begin{tabular}[t]{l}$v_1$\end{tabular}}}}%
    \put(0.06540661,0.29518873){\color[rgb]{0,0,0}\makebox(0,0)[lt]{\lineheight{1.25}\smash{\begin{tabular}[t]{l}$e_0$\end{tabular}}}}%
    \put(0.43701745,0.26513138){\color[rgb]{0,0,0}\makebox(0,0)[lt]{\lineheight{1.25}\smash{\begin{tabular}[t]{l}$e_1$\end{tabular}}}}%
    \put(0.43857203,0.00842057){\color[rgb]{0,0,0}\makebox(0,0)[lt]{\lineheight{1.25}\smash{\begin{tabular}[t]{l}$e_2$\end{tabular}}}}%
    \put(0.89499565,0.27929513){\color[rgb]{0,0,0}\makebox(0,0)[lt]{\lineheight{1.25}\smash{\begin{tabular}[t]{l}$e_3$\end{tabular}}}}%
  \end{picture}%
\endgroup%

      \centering
         \caption{The graph $\Gamma$}
         \label{fig:gamma}
    \end{subfigure}
    \begin{subfigure}[htb]{0.6\textwidth}
    \import{Graphics/}{gammacirc.pdf_tex}
      \centering
         \caption{A $\Gamma$-decorated circular diagram}
         \label{fig:gammacirc}
    \end{subfigure}
    \caption{An example of graph $\Gamma$ is shown in {\sc (a)} and an example of a $\Gamma$-circular form in {\sc (b)}. Since each vertex has a single loop, we omit labels of the corresponding circles in the picture  (these circles have the same region labels on both sides). Orientations are chosen for each circle labelled by the oriented loop $e_3$ (these are the two circles with regions labelled $1$ on both sides) corresponding to the endofunctor of $\mcA_{v_1}$ and its biadjoint. If the graph $\Gamma$ comes with corresponding categories and functors, then this diagram gives an element in $Z(\mcA_{v_0})$. }
    \label{fig:7.1.2}
\end{figure}

Given a graph $\Gamma$ as above, consider all planar $\Gamma$-decorated circular diagrams $\wU(\Gamma)_{\emptyset}$. An example of a $\Gamma$-decorated circular diagram is given in Figure~\ref{fig:7.1.2}. There is 
the 2-category $\wU(\Gamma)$ with the set of objects $V(\Gamma)$, one-morphisms given by paths in the graph $\Gamma$ and 2-morphisms being isotopy classes of $\Gamma$-decorated one-manifolds with boundary embedded in $\R\times[0,1]$. 
Decorations of these planar diagrams, when restricted to the boundaries 
$\R\times\{0\}, \R\times\{1\}$, give paths in the graph $\Gamma$, see Figure~\ref{fig:7.1.3} for an example of a 2-morphism in $\wU(\Gamma)$ for $\Gamma$ as in Figure~{\scshape \ref{fig:gamma}}.

\begin{figure}[htb]
    \centering
   \import{Graphics/}{gamma1mor.pdf_tex}
    \caption{A 2-morphism in $\wU(\Gamma)$, with $\Gamma$ as in Figure~{\scshape \ref{fig:gammacirc}}, from one-morphism $e_0e_2e_3e_3^{\ast}$ to $e_0e_1e_1e_2$. Both of these one-morphisms are paths in $\Gamma$. Notation $e_3^{\ast}$ means that we go along $e_3$ in the opposite orientation direction, as part of the corresponding path.}
    \label{fig:7.1.3}
\end{figure}

A closed diagram with an outer region labelled by $v\in V(\Gamma)$ gives an endomorphism of the 1-morphism $(v)$, the latter denoting the length zero path that starts and ends at $v$. The set of endomorphisms of $(v)$ may be denoted $\wU(\Gamma)_{(v)}^{(v)}$ or, simply, $\wU(\Gamma)_v$. More generally, the set of 2-morphisms from the path $p$ to the path $p'$ is denoted $\wU(\Gamma)^{p'}_p$ or $\Hom_{\wU(\Gamma)}(p,p')$. With the earlier notation, 
the set of $\Gamma$-decorated circular diagrams is the union 
\begin{equation}
    \wU(\Gamma)_{\emptyset} \ = \ \bigsqcup_{v\in V(\Gamma)} \wU(\Gamma)_v 
\end{equation}
of diagrams with outer label $v$, over all vertices $v$ of $\Gamma$.

\vspace{0.1in} 

A \emph{$\Gamma$-circular series} $\alpha$ is a map 
\begin{equation}
    \alpha \ \colon  \ \wU(\Gamma)_{\emptyset}  \lra \kk 
\end{equation}
that assigns an element of the ground field $\kk$ to each $\Gamma$-circular diagram. 
Such a series $\alpha$ is called \emph{spherical} if it descends to a map from the set of isotopy classes of $\Gamma$-circular diagrams on the two-sphere rather than the plane.

With a $\Gamma$-circular series $\alpha$ we can repeat the constructions in Section~\ref{subsec_circ_s} and define the  state space $A(p)$, for any closed path $p$ in $\Gamma$, as the space with a basis of $\Gamma$-diagrams in a disk with boundary $p$ modulo the kernel of its pairing with the space of corresponding diagrams in the annulus. The morphism space between two paths $p,p'$ with the same source vertices and same target vertices is  defined as $A(p^{\ast}p')$, where $p^{\ast}$ is the reverse path of $p$. This results in a 2-category $\wU(\Gamma)_{\alpha}$, analogous to the monoidal category $\wU_{\alpha}$ in Section~\ref{subsec_circ_s}. The skein 2-category $\wSU(\Gamma)_{\alpha}$ can 
 be defined by  analogy with the corresponding monoidal category $\wSU_{\alpha}$ in that section, by only imposing the relations on linear combinations of closed diagrams in a disk (no boundary points). 

The resulting 2-categories carry pairs of biadjoint  1-morphisms (or self-adjoint 1-morphisms), one for each unoriented (respectively, oriented) edge in $\Gamma$. 

\vspace{0.1in} 

A $\Gamma$-series $\alpha$ is called \emph{recognizable} if the state spaces $A(p)$ are finite-dimensional for any closed path $p$ in $\Gamma$. 

\begin{prop} The $\Gamma$-series $\alpha$ is recognizable if and only if the state spaces $A((v))$ are finite-dimensional, for any vertex $v$ of $\Gamma$. 
\end{prop}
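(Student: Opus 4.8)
The plan is to follow the template of Proposition~\ref{prop:alpha_rec}, of which this statement is the $\Gamma$-decorated, 2-categorical analogue. The forward (``only if'') implication is immediate: for each vertex $v\in V(\Gamma)$ the length-zero path $(v)$ is itself a closed path in $\Gamma$, so $A((v))$ is one of the state spaces whose finite-dimensionality is already demanded by recognizability. Thus the content lies entirely in the converse, and I would assume from now on that $A((v))$ is finite-dimensional for every vertex $v$.

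For the converse, fix a closed path $p$ in $\Gamma$; it necessarily has even length, say $2n$, since otherwise no compatible decorated matching exists and $A(p)=0$. A diagram with boundary $p$ consists of a $\Gamma$-decorated crossingless matching of the $2n$ boundary points (compatible with the edge and orientation data read off $p$), together with a choice of closed $\Gamma$-circular form floating in each of the $n+1$ regions cut out by the $n$ arcs. The arc decorations force each region $r$ to carry a fixed vertex label $w_r\in V(\Gamma)$, so the form placed in $r$ is an element of $\wU(\Gamma)_{w_r}$. For a fixed finite path $p$ there are at most $c_n$ compatible decorated matchings, and only finitely many vertices occur as region labels across them.

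The key step, which I expect to be the main technical point, is a \emph{locality} property of the pairing: the class in $A(p)$ of such a diagram depends on the form occupying each region $r$ only through its image in $A((w_r))$. Concretely, given a diagram $D$ with boundary $p$ and an annular closure $Y$, the closed $\Gamma$-circular diagram $DY$, viewed from a small disk around the form $u$ occupying a single region $r$, sees everything else --- the arcs, the forms in the other regions, and $Y$ --- as one $\Gamma$-decorated annular diagram whose inner boundary lies in a region labelled $w_r$. Hence the evaluation $\alpha(DY)$ factors, in its dependence on $u$, through the defining pairing of $A((w_r))$ between $\wU(\Gamma)_{w_r}$ and annular closures with outer label $w_r$. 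Replacing $u$ by any $u'$ with $u-u'$ in the left kernel of that pairing therefore leaves $\alpha(DY)$ unchanged for all $Y$, so $D$ and the modified diagram agree in $A(p)$. The argument is topologically transparent and identical in spirit to the self-adjoint case; the care required is in tracking the edge decorations and the orientations on circles bounded by oriented loops, so as to confirm that the complement of region $r$ is genuinely a legitimate annular closure for $A((w_r))$.

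Granting this reduction, I would choose for each relevant vertex $w$ a finite set of closed forms representing a basis of $A((w))$, of size $k_w=\dim_{\kk}A((w))<\infty$. Every diagram with boundary $p$ is then congruent, modulo the left kernel, to a linear combination of diagrams in which each region carries one of these fixed representatives. Summing over the finitely many compatible matchings gives
\[
\dim_{\kk}A(p)\ \le\ c_n\,\bigl(\max_{w} k_w\bigr)^{\,n+1}\ <\ \infty,
\]
where the maximum runs over the finitely many vertices labelling regions of some compatible matching of $p$. Since $p$ was an arbitrary closed path, all state spaces $A(p)$ are finite-dimensional, so $\alpha$ is recognizable, completing the converse.
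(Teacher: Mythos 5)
Your argument is correct and is exactly the elaboration the paper has in mind: the paper declares the proof ``immediate'' by analogy with Proposition~\ref{prop:alpha_rec}, whose region-by-region reduction (finitely many compatible matchings, forms in each region reducible modulo the kernel of the vertex-local pairing to a finite basis, yielding the bound $c_n(\max_w k_w)^{n+1}$) is precisely what you carry out, with the locality step you isolate being the same implicit step used there. No discrepancies.
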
 
\begin{proof}
Recall that $(v)$ is the length zero closed path at vertex $v$. The proof is immediate. 
\end{proof}

We leave it to the reader to write down the analogues of Propositions~\ref{prop_bij_A} and~\ref{prop_rec_pair} for this more general setup. 

\vspace{0.1in} 

For recognizable $\alpha$ one can form a diagram of 2-categories and 2-functors 
 \begin{equation}\label{diagUalpha2gamma}
    \vcenter{\hbox{\xymatrix{
    \wU(\Gamma)\ar@{^{(}->}[r]&\kk \wU(\Gamma)\ar@{->>}[r]
    &\wSU(\Gamma)_\alpha\ar@{->>}[d]\ar@{^{(}->}[r]&\wDSU(\Gamma)_\alpha\ar@{->>}[d]\\
   &&\wU(\Gamma)_\alpha\ar@{^{(}->}[r] & \wDU(\Gamma)_\alpha
    }}}
\end{equation}
analogous to the one in (\ref{diagUalpha2}). The two 2-categories on the right are Karoubi completions of the corresponding 2-categories on the side of the left vertical arrows. The square is commutative in the strong sense, as discussed earlier.


\subsection{Adding defects to lines}  

Another generalization of the circular form construction is to allow lines to carry zero-dimensional defects. On the categorical side, this generalization corresponds to adding functor endomorphisms $F\Rightarrow F$ and denoting them by dots on lines. For a simple example, given a self-adjoint functor $F$, choose a natural transformation $a\colon F\Rightarrow F$ subject to the (strong) ambidexterity condition shown in Figure~\ref{fig_ambid1}.

\begin{figure}[htb]
\begin{center}
     \centering
    \begin{subfigure}[htb]{0.6\textwidth}
\begingroup%
  \makeatletter%
  \providecommand\color[2][]{%
    \errmessage{(Inkscape) Color is used for the text in Inkscape, but the package 'color.sty' is not loaded}%
    \renewcommand\color[2][]{}%
  }%
  \providecommand\transparent[1]{%
    \errmessage{(Inkscape) Transparency is used (non-zero) for the text in Inkscape, but the package 'transparent.sty' is not loaded}%
    \renewcommand\transparent[1]{}%
  }%
  \providecommand\rotatebox[2]{#2}%
  \newcommand*\fsize{\dimexpr\f@size pt\relax}%
  \newcommand*\lineheight[1]{\fontsize{\fsize}{#1\fsize}\selectfont}%
  \ifx\svgwidth\undefined%
    \setlength{\unitlength}{185.94004157bp}%
    \ifx\svgscale\undefined%
      \relax%
    \else%
      \setlength{\unitlength}{\unitlength * \real{\svgscale}}%
    \fi%
  \else%
    \setlength{\unitlength}{\svgwidth}%
  \fi%
  \global\let\svgwidth\undefined%
  \global\let\svgscale\undefined%
  \makeatother%
  \begin{picture}(1,0.2420135)%
    \lineheight{1}%
    \setlength\tabcolsep{0pt}%
    \put(0,0){\includegraphics[width=\unitlength,page=1]{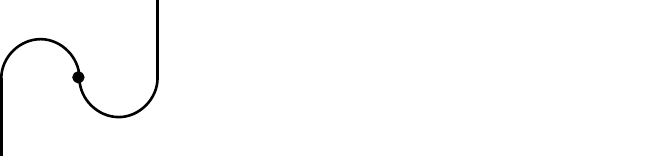}}%
    \put(0.28436902,0.10083893){\color[rgb]{0,0,0}\makebox(0,0)[lt]{\lineheight{1.25}\smash{\begin{tabular}[t]{l}$=$\end{tabular}}}}%
    \put(0,0){\includegraphics[width=\unitlength,page=2]{strongamb1.pdf}}%
    \put(0.88940277,0.10083893){\color[rgb]{0,0,0}\makebox(0,0)[lt]{\lineheight{1.25}\smash{\begin{tabular}[t]{l}$=$\end{tabular}}}}%
    \put(0,0){\includegraphics[width=\unitlength,page=3]{strongamb1.pdf}}%
    \put(0.4762513,0.09939832){\color[rgb]{0,0,0}\makebox(0,0)[lt]{\lineheight{1.25}\smash{\begin{tabular}[t]{l}$=$\end{tabular}}}}%
    \put(0,0){\includegraphics[width=\unitlength,page=4]{strongamb1.pdf}}%
  \end{picture}%
\endgroup%

      \centering
         \caption{The (strong) ambidexterity property}
    \end{subfigure}
    \begin{subfigure}[htb]{0.25\textwidth}
\begingroup%
  \makeatletter%
  \providecommand\color[2][]{%
    \errmessage{(Inkscape) Color is used for the text in Inkscape, but the package 'color.sty' is not loaded}%
    \renewcommand\color[2][]{}%
  }%
  \providecommand\transparent[1]{%
    \errmessage{(Inkscape) Transparency is used (non-zero) for the text in Inkscape, but the package 'transparent.sty' is not loaded}%
    \renewcommand\transparent[1]{}%
  }%
  \providecommand\rotatebox[2]{#2}%
  \newcommand*\fsize{\dimexpr\f@size pt\relax}%
  \newcommand*\lineheight[1]{\fontsize{\fsize}{#1\fsize}\selectfont}%
  \ifx\svgwidth\undefined%
    \setlength{\unitlength}{82.34037161bp}%
    \ifx\svgscale\undefined%
      \relax%
    \else%
      \setlength{\unitlength}{\unitlength * \real{\svgscale}}%
    \fi%
  \else%
    \setlength{\unitlength}{\svgwidth}%
  \fi%
  \global\let\svgwidth\undefined%
  \global\let\svgscale\undefined%
  \makeatother%
  \begin{picture}(1,0.73293552)%
    \lineheight{1}%
    \setlength\tabcolsep{0pt}%
    \put(0,0){\includegraphics[width=\unitlength,page=1]{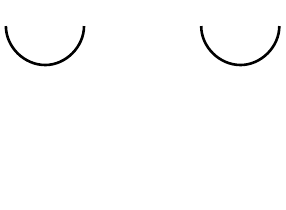}}%
    \put(0.43047602,0.59630694){\color[rgb]{0,0,0}\makebox(0,0)[lt]{\lineheight{1.25}\smash{\begin{tabular}[t]{l}$=$\end{tabular}}}}%
    \put(0,0){\includegraphics[width=\unitlength,page=2]{strongamb2.pdf}}%
    \put(0.43047602,0.09533779){\color[rgb]{0,0,0}\makebox(0,0)[lt]{\lineheight{1.25}\smash{\begin{tabular}[t]{l}$=$\end{tabular}}}}%
    \put(0,0){\includegraphics[width=\unitlength,page=3]{strongamb2.pdf}}%
    \put(0.43047602,0.09533779){\color[rgb]{0,0,0}\makebox(0,0)[lt]{\lineheight{1.25}\smash{\begin{tabular}[t]{l}$=$\end{tabular}}}}%
    \put(0,0){\includegraphics[width=\unitlength,page=4]{strongamb2.pdf}}%
  \end{picture}%
\endgroup%

      \centering
         \caption{Duality relations.}
    \end{subfigure}
\caption{The duality relations in {\scshape(b)} are implied by the property in {\scshape(a)} .}
\label{fig_ambid1}
\end{center}
\end{figure}

The analogue of a circular form in this case is  a circular form together with dots placed on its components. Dots can freely move along the component. The number of dots on each component is then an invariant of a diagram, and isotopy classes of diagrams are in a bijection  with forests as in Section~\ref{sec_trees} with vertices labelled by non-negative  integers.  We call these diagrams \emph{dotted circular forms} and draw some examples in Figure~\ref{fig_dottedcirc}.

\begin{figure}[htb]
\begin{center}
\begingroup%
  \makeatletter%
  \providecommand\color[2][]{%
    \errmessage{(Inkscape) Color is used for the text in Inkscape, but the package 'color.sty' is not loaded}%
    \renewcommand\color[2][]{}%
  }%
  \providecommand\transparent[1]{%
    \errmessage{(Inkscape) Transparency is used (non-zero) for the text in Inkscape, but the package 'transparent.sty' is not loaded}%
    \renewcommand\transparent[1]{}%
  }%
  \providecommand\rotatebox[2]{#2}%
  \newcommand*\fsize{\dimexpr\f@size pt\relax}%
  \newcommand*\lineheight[1]{\fontsize{\fsize}{#1\fsize}\selectfont}%
  \ifx\svgwidth\undefined%
    \setlength{\unitlength}{279.74998486bp}%
    \ifx\svgscale\undefined%
      \relax%
    \else%
      \setlength{\unitlength}{\unitlength * \real{\svgscale}}%
    \fi%
  \else%
    \setlength{\unitlength}{\svgwidth}%
  \fi%
  \global\let\svgwidth\undefined%
  \global\let\svgscale\undefined%
  \makeatother%
  \begin{picture}(1,0.30063957)%
    \lineheight{1}%
    \setlength\tabcolsep{0pt}%
    \put(0,0){\includegraphics[width=\unitlength,page=1]{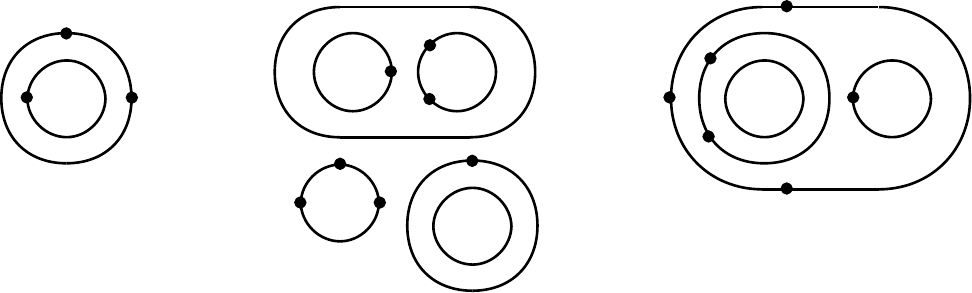}}%
    \put(0.18900801,0.17273066){\color[rgb]{0,0,0}\makebox(0,0)[lt]{\lineheight{1.25}\smash{\begin{tabular}[t]{l}$,$\end{tabular}}}}%
    \put(0.59383373,0.17541166){\color[rgb]{0,0,0}\makebox(0,0)[lt]{\lineheight{1.25}\smash{\begin{tabular}[t]{l}$,$\end{tabular}}}}%
  \end{picture}%
\endgroup%

      \centering
\caption{Examples of dotted circular forms.}
\label{fig_dottedcirc}
\end{center}
\end{figure}

In the ``dotted'' version of the category $\wU$, which we denote by $\wUd$, circles and arcs of a diagram may carry freely floating dots. 
A dotted circular series is a  map 
\begin{equation*}
     \alpha\colon  \ \wUd^0_0 \ \lra \ \kk
\end{equation*}
from the set of dotted circular forms (up to isotopy) to the ground field $\kk$. 

Given $\alpha$, one can define state spaces $A(n)$ for each $n\ge 0$ as before, by pairing dotted diagrams of $n$ arcs and any number of circles in a disk with such diagrams in an annulus, followed by taking the quotient of the disk space by the (left) kernel of the bilinear form, in full analogy with Section~\ref{subsec_circ_s}. See Figure~\ref{fig_dottedpairing} for an example of the pairing of dotted diagrams.  

\begin{figure}[htb]
\begin{center}
\begingroup%
  \makeatletter%
  \providecommand\color[2][]{%
    \errmessage{(Inkscape) Color is used for the text in Inkscape, but the package 'color.sty' is not loaded}%
    \renewcommand\color[2][]{}%
  }%
  \providecommand\transparent[1]{%
    \errmessage{(Inkscape) Transparency is used (non-zero) for the text in Inkscape, but the package 'transparent.sty' is not loaded}%
    \renewcommand\transparent[1]{}%
  }%
  \providecommand\rotatebox[2]{#2}%
  \newcommand*\fsize{\dimexpr\f@size pt\relax}%
  \newcommand*\lineheight[1]{\fontsize{\fsize}{#1\fsize}\selectfont}%
  \ifx\svgwidth\undefined%
    \setlength{\unitlength}{311.29354869bp}%
    \ifx\svgscale\undefined%
      \relax%
    \else%
      \setlength{\unitlength}{\unitlength * \real{\svgscale}}%
    \fi%
  \else%
    \setlength{\unitlength}{\svgwidth}%
  \fi%
  \global\let\svgwidth\undefined%
  \global\let\svgscale\undefined%
  \makeatother%
  \begin{picture}(1,0.26743656)%
    \lineheight{1}%
    \setlength\tabcolsep{0pt}%
    \put(0,0){\includegraphics[width=\unitlength,page=1]{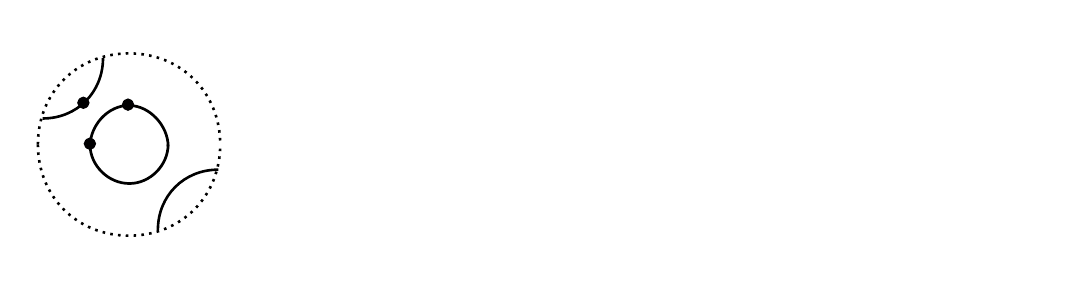}}%
    \put(0.227817,0.12167183){\color[rgb]{0,0,0}\makebox(0,0)[lt]{\lineheight{1.25}\smash{\begin{tabular}[t]{l}$,$\end{tabular}}}}%
    \put(-0.00106662,0.12167183){\color[rgb]{0,0,0}\makebox(0,0)[lt]{\lineheight{1.25}\smash{\begin{tabular}[t]{l}$\Bigg($\end{tabular}}}}%
    \put(0,0){\includegraphics[width=\unitlength,page=2]{dottedpairing.pdf}}%
    \put(0.56524145,0.12179424){\color[rgb]{0,0,0}\makebox(0,0)[lt]{\lineheight{1.25}\smash{\begin{tabular}[t]{l}$\Bigg)$\end{tabular}}}}%
    \put(0,0){\includegraphics[width=\unitlength,page=3]{dottedpairing.pdf}}%
  \end{picture}%
\endgroup%

      \centering
\caption{Pairing of a disk diagram and an outer annular diagram, both equipped with dots on lines.}
\label{fig_dottedpairing}
\end{center}
\end{figure}

Recognizable dotted circular series are defined by the requirement that state spaces $A(n)$ are finite-dimensional for all $n$, see also Definition~\ref{def_rec_circ}. The following generalizes Proposition~\ref{prop:alpha_rec}.

\begin{prop} A dotted circular series $\alpha$ is recognizable if and only if $A(0)$ and $A(1)$ are finite-dimensional. 
\end{prop}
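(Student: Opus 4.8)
The plan is to mirror the proof of Proposition~\ref{prop:alpha_rec}, upgrading it to handle the two new features of the dotted setting: dotted circular forms floating in regions, and dots sliding along arcs. The necessity direction is immediate from Definition~\ref{def_rec_circ}: if $\alpha$ is recognizable then every $A(n)$ is finite-dimensional, so in particular $A(0)$ and $A(1)$ are. For sufficiency, assume $\dim_{\kk}A(0)=k_0<\infty$ and $\dim_{\kk}A(1)=k_1<\infty$, and aim to bound $\dim_{\kk}A(n)$ uniformly by a finite spanning set.

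First I would record the localization principle that drives every such argument. If a small disk $D$ meets a diagram $w\in\wUd^{2n}_0$ in a sub-configuration with $2j$ points on $\partial D$, then for any annular closure $y\in\wUout{2n}$ the complement of $D$ inside $y\cdot w$ is an annular diagram $\tilde y\in\wUout{2j}$, and $\alpha(y\cdot w)=(\,\text{content of }D,\tilde y\,)_{j,\alpha}$. Hence any linear combination lying in $\ker_{\ell}(\alpha)$ for $A(j)$ may be substituted inside $D$ and yields an element of $\ker_{\ell}(\alpha)$ for $A(n)$; this is purely topological and is unaffected by the dots. Applying it with $j=0$ together with a finite spanning set $v_1,\dots,v_{k_0}$ of dotted circular forms supplied by $A(0)$ reduces, exactly as in Proposition~\ref{prop:alpha_rec}, the dotted circular form occupying each of the $n+1$ regions of $w$ to one of the $v_i$ (the dots inside these regions are automatically accounted for, since they are part of the elements of $\wUd^0_0$).

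The hard part will be bounding the number of dots carried by each arc, which is where $A(1)$ enters. I would introduce the operator $T$ that slides one further dot onto the arc, defined first on $\kk\wUd^2_0$. Because a dot lying on the closed curve produced by gluing may be freely attributed to either side of the gluing circle, one has $(Tx,y)_{1,\alpha}=(x,y^{+})_{1,\alpha}$ where $y^{+}\in\kk\wUout{2}$ carries the extra dot; thus $T$ preserves $\ker_{\ell}(\alpha)$ and descends to a linear endomorphism $T\colon A(1)\to A(1)$. Since $A(1)$ is finite-dimensional, Cayley--Hamilton gives a monic relation $T^{N}=\sum_{i=0}^{N-1}b_i\,T^{i}$ with $N\le k_1$. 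Writing $a_d:=T^{d}(a_0)$ for the arc with $d$ dots and empty adjacent regions, this reads $a_{N}\equiv\sum_{i<N}b_i\,a_i$ modulo $\ker_{\ell}(\alpha)$; iterating expresses every $a_d$ with $d\ge N$ in the span of $a_0,\dots,a_{N-1}$. Localizing this relation along any arc of $w$ — choosing $D$ to enclose only an arc segment and its dots, so the regions inside $D$ are empty — lets me assume each of the $n$ arcs carries at most $N-1$ dots, with no effect on the region contents.

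Combining the two reductions, a spanning set for $A(n)$ consists of the $c_n$ crossingless matchings of $2n$ points, a choice of at most $N-1$ dots on each of the $n$ arcs, and a choice of one of the $k_0$ forms $v_i$ in each of the $n+1$ regions, whence
\begin{equation*}
\dim_{\kk}A(n)\ \le\ c_n\,N^{\,n}\,k_0^{\,n+1}\ <\ \infty.
\end{equation*}
The only genuinely new ingredient beyond Proposition~\ref{prop:alpha_rec} is the dot-sliding operator $T$ together with the $j=1$ case of localization; the region-by-region reduction is the familiar one, and I expect the verification that $T$ is well-defined on $A(1)$ (equivalently, self-adjoint for the pairing) to be the one point requiring care.
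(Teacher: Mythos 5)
Your proof is correct, and it takes a genuinely different route from the paper's. The paper argues by induction on $n$: any diagram in $\wUd_0^{2n}$ is obtained by nesting a cup diagram in $\wUd_0^{2}$ into an inner region of a diagram in $\wUd_0^{2(n-1)}$, so finite spanning sets for $A(0)$ and $A(1)$ propagate inductively to a finite spanning set for $A(n)$. You instead give a direct, non-inductive reduction: region contents are normalized using a finite spanning set of $A(0)$ exactly as in Proposition~\ref{prop:alpha_rec}, while the number of dots on each arc is bounded by the degree $N$ of a monic polynomial relation for the dot-adding operator $T$ on $A(1)$. Both arguments ultimately rest on the same locality of the pairing --- that substituting an element of the left kernel of $(\,,\,)_{j,\alpha}$ inside a small disk meeting the diagram in $2j$ points yields an element of the left kernel of $(\,,\,)_{n,\alpha}$ --- which you state explicitly (and verify for $j=1$, the one point needing care, via the observation that a dot on a glued curve can be attributed to either side) and which the paper leaves implicit. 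What your route buys: an explicit bound $\dim_{\kk}A(n)\le c_n N^n k_0^{n+1}$, and it isolates the linear recurrence on dot powers, which is precisely the content of the paper's subsequent Proposition~\ref{prop_dottedseries} (your $T$ is the operator $d$ in its proof) and of the remark on the partial converse that follows it. What the paper's route buys is brevity: the single nesting step packages an innermost dotted cup together with its adjacent circular forms into one application of the $A(1)$ hypothesis, at the cost of leaving the inductive bookkeeping to the reader.
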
 
\begin{proof}
The proof uses that a diagram in ${}^\bullet\wU_0^{2n}$ is given by nesting a cup diagram in ${}^\bullet\wU_0^2$ into an inner  region of a diagram in ${}^\bullet\wU_0^{2(n-1)}$. Using induction on $n$ and assuming that both $A(0)$, $A(1)$ are finite-dimensional, one obtains a finite spanning set for $A(n)$, implying that it is finite-dimensional.
The other implication is clear.
\end{proof}

Wrapping a circle with $n$ dots around a closed diagram induces a $\Bbbk$-linear map $\omega_n\colon A(0)\lra A(0)$, see Figure~\ref{fig_omegan} for an example of such a wrapping map.

\begin{figure}[htb]
\begin{center}
     \import{Graphics/}{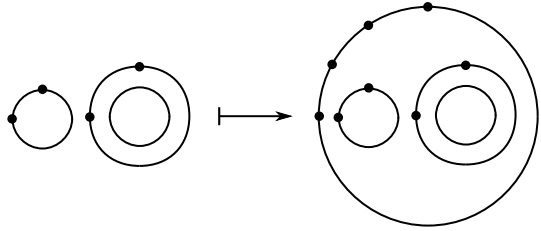}
      \centering
\caption{The map $\omega_4$ applied to the dotted diagram on the left.}
\label{fig_omegan}
\end{center}
\end{figure}

A sequence  of linear maps $\omega_{\ast}:=(\omega_0,\omega_1,\dots )$  is called \emph{recurrent} or \emph{linearly recurrent} if 
there exist a non-negative integer $N$ and $b_i\in \kk$, $1\le i \le N$, such that for any $n\ge 0$
\begin{equation}
     \omega_{n+N} = b_1 \omega_{n+N-1}+b_2 \omega_{n+N-2}+\dots + b_N \omega_n. 
\end{equation}
Notice that the last few $b_i$'s may be zero, so that the recursion does not necessarily start in the lowest possible degree. 

\begin{prop}\label{prop_dottedseries} Suppose that $A(0)$ and $A(1)$ are finite-dimensional, for a dotted circular series $\alpha$. Then the sequence $\omega_{\ast}=(\omega_0,\omega_1, \omega_2,\dots)$ of endomorphisms of $A(0)$ is linearly recurrent.
\end{prop}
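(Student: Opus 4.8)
The plan is to reduce the statement to the Cayley--Hamilton theorem applied to a single dot-adding operator on the finite-dimensional space $A(1)$, and then transport the resulting polynomial relation to the wrapping operators on $A(0)$ through a closure map. First I would introduce the linear operator $D\colon A(1)\to A(1)$ that adds one dot to the unique arc of a diagram with two boundary points. To see that $D$ descends to the quotient $A(1)=\kk\,{}^\bullet\wU^2_0/\ker_\ell$, observe that gluing a disk diagram $x$ to an annular diagram $y\in\wUout{2}$ joins their arcs into a closed one-manifold along which dots slide freely; hence $\alpha\big((Dx)\cdot y\big)=\alpha\big(x\cdot(D'y)\big)$, where $D'$ adds a dot to the arc of $y$ and again lands in $\wUout{2}$. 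Thus $(Dx,y)_\alpha=(x,D'y)_\alpha$, so $D$ preserves $\ker_\ell$. Writing $a_k:=D^k a_0\in A(1)$ for the class of the arc carrying $k$ dots (with $a_0$ the plain arc), the finite-dimensionality of $A(1)$ guaranteed by hypothesis yields, via the characteristic (or minimal) polynomial of $D$, an integer $N$ and scalars $b_1,\dots,b_N\in\kk$ with $D^N=b_1D^{N-1}+\dots+b_N\,\ide_{A(1)}$, and therefore $a_{n+N}=b_1a_{n+N-1}+\dots+b_Na_n$ in $A(1)$ for all $n\ge0$.

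Next I would build, for each $z\in A(0)$, a closure map $\beta_z\colon A(1)\to A(0)$ sending a two-endpoint diagram $x$ to the closed diagram obtained by capping its two endpoints with a small arc that encloses a copy of $z$. Geometrically $\beta_z(a_n)$ is a circle carrying $n$ dots wrapped around $z$, so $\beta_z(a_n)=\omega_n(z)$. The essential point is that $\beta_z$ is well defined on the quotient: if $x\in\ker_\ell$, then for every closed test diagram $w\in\wU^0_0$ the evaluation $\alpha\big(\beta_z(x)\cdot w\big)$ equals $\alpha(x\cdot y_{z,w})$, where $y_{z,w}\in\wUout{2}$ is the annular diagram whose outer arc joins the two inner points directly, with $z$ placed in the enclosed region and $w$ in the outer region. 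Since $x$ lies in the left kernel this vanishes, so $\beta_z(x)$ lies in the left kernel for $A(0)$ and $\beta_z$ factors through $A(1)$; linearity of $\beta_z$ in its argument is immediate.

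Finally, applying the linear map $\beta_z$ to the recurrence for the $a_k$ gives
\[
\omega_{n+N}(z)=\beta_z(a_{n+N})=\sum_{i=1}^N b_i\,\beta_z(a_{n+N-i})=\sum_{i=1}^N b_i\,\omega_{n+N-i}(z)
\]
for every $z\in A(0)$ and every $n\ge0$, which is exactly the asserted linear recurrence $\omega_{n+N}=b_1\omega_{n+N-1}+\dots+b_N\omega_n$ for the endomorphisms of $A(0)$. I expect the main obstacle to be the two well-definedness checks --- that $D$ preserves $\ker_\ell$ for $A(1)$ and that $\beta_z$ descends to $A(1)$ --- both of which hinge on recognizing the relevant closed-up evaluations as genuine pairings against annular diagrams, so that the defining relations of the state spaces may be invoked; the remaining algebra is just Cayley--Hamilton. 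Note that only the finite-dimensionality of $A(1)$ is used, the hypothesis on $A(0)$ serving merely to ensure $\omega_n$ is defined.
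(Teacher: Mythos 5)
Your proposal is correct and follows essentially the same route as the paper: the paper's proof likewise introduces the dot-adding endomorphism of the finite-dimensional space $A(1)$, invokes its minimal polynomial to get a linear recurrence on powers of the dot, and converts this to the recurrence for the $\omega_n$'s via closure. Your write-up merely makes explicit the two well-definedness checks and the closure map $\beta_z$ that the paper leaves implicit.
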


\begin{proof} Elements of $A(1)$ can be represented as planar boxes with two strands emanating out, denoting a linear combination of dotted circular forms in $\wUd^2_0$. Placing a dot on the left strand is then an endomorphism of $A(1)$, see Figure~\ref{fig_adddot}, that we can denote by $d$. Since $A(1)$ is finite-dimensional, the minimal polynomial for operator $d$ gives us a recurrence relation on powers of $d$, which converts to a recurrence relation for the $\omega_n$'s. 
\end{proof} 

\begin{remark*}
 As a partial converse to Proposition \ref{prop_dottedseries}, if $\omega_{\ast}$ is recurrent and $A(0)$ finite-dimensional then $A(1)$ is also finite-dimensional.
\end{remark*}

\begin{figure}[htb]
\begin{center}
\begingroup%
  \makeatletter%
  \providecommand\color[2][]{%
    \errmessage{(Inkscape) Color is used for the text in Inkscape, but the package 'color.sty' is not loaded}%
    \renewcommand\color[2][]{}%
  }%
  \providecommand\transparent[1]{%
    \errmessage{(Inkscape) Transparency is used (non-zero) for the text in Inkscape, but the package 'transparent.sty' is not loaded}%
    \renewcommand\transparent[1]{}%
  }%
  \providecommand\rotatebox[2]{#2}%
  \newcommand*\fsize{\dimexpr\f@size pt\relax}%
  \newcommand*\lineheight[1]{\fontsize{\fsize}{#1\fsize}\selectfont}%
  \ifx\svgwidth\undefined%
    \setlength{\unitlength}{149.24524438bp}%
    \ifx\svgscale\undefined%
      \relax%
    \else%
      \setlength{\unitlength}{\unitlength * \real{\svgscale}}%
    \fi%
  \else%
    \setlength{\unitlength}{\svgwidth}%
  \fi%
  \global\let\svgwidth\undefined%
  \global\let\svgscale\undefined%
  \makeatother%
  \begin{picture}(1,0.25634525)%
    \lineheight{1}%
    \setlength\tabcolsep{0pt}%
    \put(0,0){\includegraphics[width=\unitlength,page=1]{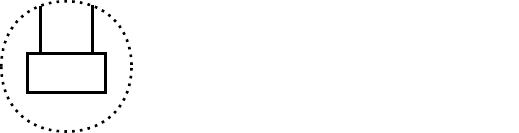}}%
    \put(0.08794242,0.1030207){\color[rgb]{0,0,0}\makebox(0,0)[lt]{\lineheight{1.25}\smash{\begin{tabular}[t]{l}$a$\end{tabular}}}}%
    \put(0,0){\includegraphics[width=\unitlength,page=2]{adddot.pdf}}%
    \put(0.83168507,0.10307171){\color[rgb]{0,0,0}\makebox(0,0)[lt]{\lineheight{1.25}\smash{\begin{tabular}[t]{l}$a$\end{tabular}}}}%
    \put(0,0){\includegraphics[width=\unitlength,page=3]{adddot.pdf}}%
    \put(0.48996531,0.1030207){\color[rgb]{0,0,0}\makebox(0,0)[lt]{\lineheight{1.25}\smash{\begin{tabular}[t]{l}$d(a)=$\end{tabular}}}}%
  \end{picture}%
\endgroup%

      \centering
\caption{The $\kk$-linear map $d\colon A(1)\to A(1)$ of adding a dot on the left strand.}
\label{fig_adddot}
\end{center}
\end{figure}
 
\begin{remark*} The map $d$ placing a dot on the left strand of a diagram is 
 an endomorphism of $A(1)$ viewed as an $A(0)$-bimodule, with the bimodule action given by placing closed dotted diagrams (which are diagrams defining a spanning set of $A(0)$) in the two regions on the two sides of the unique arc in a diagram in $A(1)$. The minimal degree integral relation on powers of the dot with coefficients in $A(0)^{\otimes 2}$ may have lower degree than that on powers of the dot with coefficients in $\kk$. The former can be thought of as  a skein relation to reduce a power of a dot to a linear combination of lower powers times  closed diagrams placed in the  two regions on the sides of the arc that carries powers of the dot.
\end{remark*}

The data  $(A(0),\omega_{\ast},\varepsilon)$  is non-degenerate in the following weak sense. For any $x\in A(0), x\not= 0$ there exists $k\ge 0$ and   sequences $x_1,\dots, x_k\in A(0)$, $i_1,\dots, i_{k-1}\in\Z_+=\{0,1,\dots\}$ such that  
\begin{equation}\label{eq_vareps_2} 
\varepsilon(x_k\,\omega_{i_{k-1}} (x_{k-1}\dots \omega_{i_2} (x_2 \,\omega_{i_1} (x_1x)))\dots )\not=0 .
\end{equation} 
We call a datum $(A,\omega_{\ast},\varepsilon)$ as above with a  $A$ finite-dimensional a \emph{commutative weakly Frobenius $\ast$-triple}. 

\begin{prop} \label{prop_bij_A2}There is a bijection between recognizable dotted circular series $\alpha$ and isomorphism classes of the following data: 
\begin{itemize} 
\item A finite-dimensional commutative $\kk$-algebra $A$ with the trace form $\varepsilon$ and a recurrent sequence of linear maps $\omega_n\colon A\lra A$, $n\ge 0$ subject to (\ref{eq_vareps_2}), that is, a commutative weakly Frobenius $\ast$-triple. 
\item Stability condition: $A$ is the only subalgebra of $A$ that contains $1$ and is closed under $\omega_n$, $n\ge 0$. 
\end{itemize}
\end{prop}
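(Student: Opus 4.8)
The plan is to follow the pattern of Proposition~\ref{prop_bij_A} (and its spherical cousin Proposition~\ref{prop_bij_spherical}), constructing explicit maps in both directions and checking that they are mutually inverse up to isomorphism. The only genuinely new ingredient compared with the undotted case is that the single map $\omega$ is replaced by the sequence $\omega_\ast=(\omega_0,\omega_1,\dots)$ together with the recurrence condition, so the bulk of the argument is formally identical and the new points concern how recurrence interacts with finite-dimensionality.

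First I would describe the map from a recognizable dotted series $\alpha$ to the data. Set $A:=A(0)$. As explained in the text, $A(0)$ is a commutative $\kk$-algebra (multiplication from disjoint union of dotted circular forms, unit the empty form), the trace $\varepsilon$ is induced by $\alpha$ on closed diagrams, and wrapping an $n$-dotted circle descends to endomorphisms $\omega_n$ of $A(0)$. Since $\alpha$ is recognizable, the unlabeled proposition preceding Proposition~\ref{prop_dottedseries} (the dotted analogue of Proposition~\ref{prop:alpha_rec}) gives that $A(0)$ and $A(1)$ are finite-dimensional, and Proposition~\ref{prop_dottedseries} then shows that $\omega_\ast$ is linearly recurrent. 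The weak non-degeneracy condition \eqref{eq_vareps_2} is precisely the statement that the left kernel of the defining pairing has been quotiented out: a class $x\in A(0)$ is nonzero if and only if some iterated closing detects it, and such closings are exactly the iterated expressions appearing in \eqref{eq_vareps_2}. The stability condition holds because every closed dotted diagram is built from the empty diagram by disjoint unions and wrappings by dotted circles, so the subalgebra of $A(0)$ containing $1$ and closed under all $\omega_n$ is already all of $A(0)$.

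For the reverse map, given data $(A,\varepsilon,\omega_\ast)$ satisfying the hypotheses, I would define an evaluation $\mcF\colon \wUd^0_0\to A$ inductively: the empty form goes to $1$, a disjoint union goes to a product, and an $n$-dotted circle wrapped around a subdiagram $v$ goes to $\omega_n(\mcF(v))$. Extending $\kk$-linearly and setting $\alpha:=\varepsilon\circ\mcF$, one checks $\alpha$ is recognizable: the state space $A(0)_\alpha$ is a quotient of $\operatorname{im}(\mcF)$, hence finite-dimensional since $A$ is; the recurrence of $\omega_\ast$ together with the remark following Proposition~\ref{prop_dottedseries} forces $A(1)$ to be finite-dimensional; recognizability then follows from the unlabeled proposition cited above. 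That the two constructions are mutually inverse reduces to the identification $\ker_\ell(\alpha)=\ker\mcF$. For $\ker\mcF\subseteq\ker_\ell(\alpha)$: any annular closing $b$ of $x$ yields a closed diagram whose value $\mcF(bx)$ is obtained from $\mcF(x)$ by multiplications and applications of the $\omega_n$, so $\mcF(x)=0$ forces $\alpha(bx)=0$. For the reverse inclusion: if $\mcF(x)=y\neq 0$, condition \eqref{eq_vareps_2} supplies $x_1,\dots,x_k\in A$ and indices with the corresponding iterated expression having nonzero $\varepsilon$; by stability each $x_j$ is realized as $\mcF$ of some dotted circular form, so that expression equals $\alpha(bx)$ for an explicit annular diagram $b$, whence $x\notin\ker_\ell(\alpha)$. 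Thus $A(0)_\alpha\cong A$ via $\mcF$, compatibly with $\varepsilon$ and the $\omega_n$, and the opposite composite returns $\alpha$ on the nose since $\mcF$ for the reconstructed data is the universal evaluation into $A(0)$.

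The step I expect to require the most care is the kernel identification $\ker_\ell(\alpha)=\ker\mcF$: the real content is the combinatorial bookkeeping that matches an arbitrary iterated expression from \eqref{eq_vareps_2} with an actual dot-decorated annular closing, using $\omega$-generation (stability) to realize each algebra element $x_j$ diagrammatically. The second, softer obstacle is verifying that the recurrence condition on $\omega_\ast$ is exactly equivalent — in the presence of finite-dimensional $A(0)$ — to finite-dimensionality of $A(1)$, which is furnished by Proposition~\ref{prop_dottedseries} and its remark. Everything else is a routine transcription of the proof of Proposition~\ref{prop_bij_A}.
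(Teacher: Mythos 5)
Your proposal is correct and is exactly the argument the paper intends: the paper's own proof is just the one-line ``straightforward,'' deferring to the pattern of Proposition~\ref{prop_bij_A}, and you supply precisely that pattern --- $A:=A(0)$ with the induced $\varepsilon$ and $\omega_\ast$ in one direction, the universal inductive evaluation $\mcF$ followed by $\varepsilon$ in the other, with the key check being $\ker_{\ell}(\alpha)=\ker\mcF$ via the match between annular closings (nested dotted circles around the hole with circular forms floating between them) and the iterated expressions of \eqref{eq_vareps_2}, using stability to realize the $x_j$ diagrammatically. No gaps.
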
 
\begin{proof}
The proof is straightforward.
\end{proof}

\emph{Spherical} dotted circular series can be defined analogously, cf. \eqref{eq_alpha_spher}. In the spherical case, the setup is more symmetric, with the bilinear pairing coming from gluing a pair of dotted diagrams in a disk, rather than dotted diagrams in a disk and an annulus, leading to possible simplifications in the computation of skein relations.

\begin{remark}\label{rem_many_dots}
Instead of a dot of a single type, one can fix a set $S$ of labels for dots and consider dotted circular diagrams with dots labelled by elements of $S$. If the structure of embeddings of circles into the plane or 2-sphere is ignored, one recovers the familiar notion of noncommutative recognizable power series, see~\cite{BR} and its tensor envelope~\cite{Kh3}. Keeping track of the embedding in $\R^2$ would add additional complexity to the theory. We do not attempt to develop it here. 
\end{remark}

Our strong ambidexterity property for an endomorphism of a self-adjoint functor, allows to work with a single type of dot on strands, so that the number of dots on each circle is the only additional information for closed diagrams. It is also well-suited for dealing with the spherical case of such diagrams, considered then as diagrams on a 2-sphere. 

By the \emph{weak} ambidexterity of an endomorphism $a$ of a self-adjoint functor $F$ we mean  the relations shown in Figure~\ref{fig_weakamb} on the right. The diagram in Figure~\ref{fig_weakamb} on the left defines the endomorphism $a^{\ast}$ of $F$. 
\begin{figure}[htb]
\begin{center}
\begingroup%
  \makeatletter%
  \providecommand\color[2][]{%
    \errmessage{(Inkscape) Color is used for the text in Inkscape, but the package 'color.sty' is not loaded}%
    \renewcommand\color[2][]{}%
  }%
  \providecommand\transparent[1]{%
    \errmessage{(Inkscape) Transparency is used (non-zero) for the text in Inkscape, but the package 'transparent.sty' is not loaded}%
    \renewcommand\transparent[1]{}%
  }%
  \providecommand\rotatebox[2]{#2}%
  \newcommand*\fsize{\dimexpr\f@size pt\relax}%
  \newcommand*\lineheight[1]{\fontsize{\fsize}{#1\fsize}\selectfont}%
  \ifx\svgwidth\undefined%
    \setlength{\unitlength}{235.05092021bp}%
    \ifx\svgscale\undefined%
      \relax%
    \else%
      \setlength{\unitlength}{\unitlength * \real{\svgscale}}%
    \fi%
  \else%
    \setlength{\unitlength}{\svgwidth}%
  \fi%
  \global\let\svgwidth\undefined%
  \global\let\svgscale\undefined%
  \makeatother%
  \begin{picture}(1,0.19344629)%
    \lineheight{1}%
    \setlength\tabcolsep{0pt}%
    \put(0.74470926,0.08763301){\color[rgb]{0,0,0}\makebox(0,0)[lt]{\lineheight{1.25}\smash{\begin{tabular}[t]{l}$=$\end{tabular}}}}%
    \put(0.07165473,0.08814992){\color[rgb]{0,0,0}\makebox(0,0)[lt]{\lineheight{1.25}\smash{\begin{tabular}[t]{l}$:=$\end{tabular}}}}%
    \put(0,0){\includegraphics[width=\unitlength,page=1]{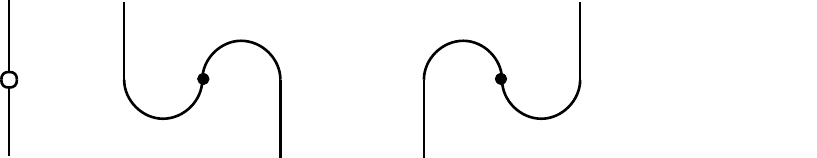}}%
    \put(0.39144257,0.09572404){\color[rgb]{0,0,0}\makebox(0,0)[lt]{\lineheight{1.25}\smash{\begin{tabular}[t]{l}$,$\end{tabular}}}}%
    \put(0,0){\includegraphics[width=\unitlength,page=2]{weakamb.pdf}}%
  \end{picture}%
\endgroup%

      \centering
\caption{The definition of $a^{\ast}$, denoted by a hollow dot, and the weak ambidexterity condition.}
\label{fig_weakamb}
\end{center}
\end{figure}

In general, the endomorphisms $a$ and $a^{\ast}$ of $F$ do not seem to come, with any monomial relations on them. This means that the order of $a$'s and $a^{\ast}$'s in the product of endomorphisms of $F$ is important, and, in a closed diagram, any circle will come with a word in $a$ and $a^{\ast}$, up to overall cyclic order. This gives a lot of freedom in creating possible diagrams and relates this setup to that of noncommutative power series and associated monoidal categories, see the remark above and~\cite{Kh3}. Unlike~\cite{Kh3}, one-manifolds with defects (dots) now lie in the plane and the nesting of these manifolds is part of the structure of such diagrams. One can think of this setup as the $(2,1,0)$-manifold case, with the ambient manifold being $\R^2$ or $\SS^2$, embedded one-manifolds (circles) being codimension one defects and dots on one-manifolds are defects on defects. This can be further extended by coloring one-manifolds as in Section~\ref{subsec_systems} as well as allowing dots (codimension two defects) to float in the regions of the plane or the 2-sphere separated by the circles. The later variation is similar to that in~\cite[Section 8]{KKO}. A further extension is to generalize from diagrams in $\R^2$ and $\SS^2$ to those in more general surfaces. An even further development is to extend from collections of (decorated) circles in the plane or a surface to embedded decorated graphs in $\R^2$ and in more general surfaces. Vaughan Jones' planar algebras~\cite{Jo7} give rise to such planar graph and network evaluations with additional strong unitarity and positivity properties.

\vspace{0.1in}

Given a $\Bbbk$-linear endofunctor $F\colon \A \to \A$, we may consider the monoidal subcategory $\A_F$ of the $\Bbbk$-linear monoidal category $\mathrm{Fun}_{\Bbbk}(\A,\A)$ that $F$ generates. If $F$ is self-adjoint, then $F$ is self-dual as an object of $\A_F$. This self-duality induces a \emph{pivotal structure} on $\A_F$ provided that left and right dualities coincide \cite[Section~1.7]{TV}. Equivalently, $F$ satisfies the \emph{weak ambidexterity property} that
\begin{equation}
    a^{\ast}=(1_F \mu)(1_F a 1_F)(\delta 1_F)=(\mu 1_F)(1_F a 1_F)(1_F \delta)={}^{\ast}a,
\end{equation}
for any endomorphism $a$ of $F$, see Figure \ref{fig_weakamb}. Examples of endofunctors satisfying weak ambidexterity can be given by tensoring with objects from a pivotal category. These are, in general, not self-dual. We refer to \cite{S} for graphical calculus associated to pivotal categories.

\begin{remark}[Reflection involution] 
Recall the reflection involution in the plane, reversing the plane's orientation. This involution fixes the isotopy class of any planar diagram of circles, see Proposition~\ref{prop_refl}. Once dots of several types are allowed on circles, as additional defects,  see Remark~\ref{rem_many_dots}, this involution no longer has to be trivial. Alternatively, if one adds labelled dots floating in the regions of the diagram, the reflection involution may permute nontrivially labels of the dots. With this reflection acting nontrivially, the bilinear form on suitable state spaces may not be symmetric and one needs slight changes in our construction. The ground field $\kk$ may come with an involution $\rho$ to match the reflection involution, with the bilinear pairing hermitian with respect to $(\kk,\rho)$. 
\end{remark} 

\smallskip


\subsection{The transfer map and its  diagrammatics} \label{sec_transfer}
In this section, we discuss diagrammatics of transfer maps in the case of a biadjoint pair of functors.  

\ssubsection{Adding  a  boundary and placing morphisms on it}
Suppose given a  biadjoint pair of functors $(F,G)$ between categories $\A,\B$  as in Section~\ref{subsec_diag_biadj}. The planar string diagrammatics for $(F,G)$, and for collections of biadjoint pairs, can be  enhanced by considering the half-plane to  the left of a vertical line $\mathbb{L}$ that carries objects and  morphisms of the categories $\A$ and  $\B$ as follows. Intervals in $\mathbb{L}$ are labelled by categories $\A$ and $\B$. A  dot  on an interval  of $\mathbb{L}$ labelled by  $\A$ denotes a morphism $f\colon a_1\to a_2$ in $\A$, with a collection  of  consecutive  dots representing the composition $a_1\xrightarrow{f_1} a_2\lra\dots\lra  a_{n-1}\xrightarrow{f_{n-1}} a_n$ of morphisms, see Figure~\ref{fig5_1}.

\begin{figure}[htb]
\begin{center}
\begin{subfigure}[htb]{0.15\textwidth}
\begingroup%
  \makeatletter%
  \providecommand\color[2][]{%
    \errmessage{(Inkscape) Color is used for the text in Inkscape, but the package 'color.sty' is not loaded}%
    \renewcommand\color[2][]{}%
  }%
  \providecommand\transparent[1]{%
    \errmessage{(Inkscape) Transparency is used (non-zero) for the text in Inkscape, but the package 'transparent.sty' is not loaded}%
    \renewcommand\transparent[1]{}%
  }%
  \providecommand\rotatebox[2]{#2}%
  \newcommand*\fsize{\dimexpr\f@size pt\relax}%
  \newcommand*\lineheight[1]{\fontsize{\fsize}{#1\fsize}\selectfont}%
  \ifx\svgwidth\undefined%
    \setlength{\unitlength}{32.41599174bp}%
    \ifx\svgscale\undefined%
      \relax%
    \else%
      \setlength{\unitlength}{\unitlength * \real{\svgscale}}%
    \fi%
  \else%
    \setlength{\unitlength}{\svgwidth}%
  \fi%
  \global\let\svgwidth\undefined%
  \global\let\svgscale\undefined%
  \makeatother%
  \begin{picture}(1,1.97379101)%
    \lineheight{1}%
    \setlength\tabcolsep{0pt}%
    \put(0.22988924,1.03017819){\color[rgb]{0,0,0}\makebox(0,0)[lt]{\smash{\begin{tabular}[t]{l}$\A$\end{tabular}}}}%
    \put(0,0){\includegraphics[width=\unitlength,page=1]{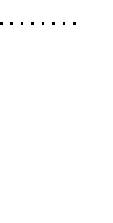}}%
    \put(0.57693683,0.02910167){\color[rgb]{0,0,0}\makebox(0,0)[lt]{\smash{\begin{tabular}[t]{l}$a_1$\end{tabular}}}}%
    \put(0,0){\includegraphics[width=\unitlength,page=2]{morph1.pdf}}%
    \put(0.58106921,1.88003906){\color[rgb]{0,0,0}\makebox(0,0)[lt]{\smash{\begin{tabular}[t]{l}$a_2$\end{tabular}}}}%
    \put(0.80978476,1.03088671){\color[rgb]{0,0,0}\makebox(0,0)[lt]{\smash{\begin{tabular}[t]{l}$f$\end{tabular}}}}%
  \end{picture}%
\endgroup%
 
      \centering
         \caption{}
\end{subfigure}
\begin{subfigure}[htb]{0.15\textwidth}
\begingroup%
  \makeatletter%
  \providecommand\color[2][]{%
    \errmessage{(Inkscape) Color is used for the text in Inkscape, but the package 'color.sty' is not loaded}%
    \renewcommand\color[2][]{}%
  }%
  \providecommand\transparent[1]{%
    \errmessage{(Inkscape) Transparency is used (non-zero) for the text in Inkscape, but the package 'transparent.sty' is not loaded}%
    \renewcommand\transparent[1]{}%
  }%
  \providecommand\rotatebox[2]{#2}%
  \newcommand*\fsize{\dimexpr\f@size pt\relax}%
  \newcommand*\lineheight[1]{\fontsize{\fsize}{#1\fsize}\selectfont}%
  \ifx\svgwidth\undefined%
    \setlength{\unitlength}{38.00974193bp}%
    \ifx\svgscale\undefined%
      \relax%
    \else%
      \setlength{\unitlength}{\unitlength * \real{\svgscale}}%
    \fi%
  \else%
    \setlength{\unitlength}{\svgwidth}%
  \fi%
  \global\let\svgwidth\undefined%
  \global\let\svgscale\undefined%
  \makeatother%
  \begin{picture}(1,1.87992643)%
    \lineheight{1}%
    \setlength\tabcolsep{0pt}%
    \put(0.1960573,0.97511232){\color[rgb]{0,0,0}\makebox(0,0)[lt]{\smash{\begin{tabular}[t]{l}$\A$\end{tabular}}}}%
    \put(0,0){\includegraphics[width=\unitlength,page=1]{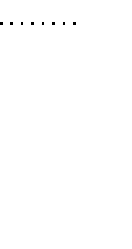}}%
    \put(0.49203121,0.02481888){\color[rgb]{0,0,0}\makebox(0,0)[lt]{\smash{\begin{tabular}[t]{l}$a_1$\end{tabular}}}}%
    \put(0,0){\includegraphics[width=\unitlength,page=2]{morph2.pdf}}%
    \put(0.49555545,1.7999716){\color[rgb]{0,0,0}\makebox(0,0)[lt]{\smash{\begin{tabular}[t]{l}$a_3$\end{tabular}}}}%
    \put(0.6906118,0.67833673){\color[rgb]{0,0,0}\makebox(0,0)[lt]{\smash{\begin{tabular}[t]{l}$f_1$\end{tabular}}}}%
    \put(0.6906118,1.27309642){\color[rgb]{0,0,0}\makebox(0,0)[lt]{\smash{\begin{tabular}[t]{l}$f_2$\end{tabular}}}}%
    \put(0.6906118,0.97571658){\color[rgb]{0,0,0}\makebox(0,0)[lt]{\smash{\begin{tabular}[t]{l}$a_2$\end{tabular}}}}%
  \end{picture}%
\endgroup%
 
      \centering
         \caption{}
\end{subfigure}
\begin{subfigure}[htb]{0.15\textwidth}
     \import{Graphics/}{morph3.pdf_tex} 
      \centering
         \caption{}
\end{subfigure}
\begin{subfigure}[htb]{0.15\textwidth}
\begingroup%
  \makeatletter%
  \providecommand\color[2][]{%
    \errmessage{(Inkscape) Color is used for the text in Inkscape, but the package 'color.sty' is not loaded}%
    \renewcommand\color[2][]{}%
  }%
  \providecommand\transparent[1]{%
    \errmessage{(Inkscape) Transparency is used (non-zero) for the text in Inkscape, but the package 'transparent.sty' is not loaded}%
    \renewcommand\transparent[1]{}%
  }%
  \providecommand\rotatebox[2]{#2}%
  \newcommand*\fsize{\dimexpr\f@size pt\relax}%
  \newcommand*\lineheight[1]{\fontsize{\fsize}{#1\fsize}\selectfont}%
  \ifx\svgwidth\undefined%
    \setlength{\unitlength}{47.41600119bp}%
    \ifx\svgscale\undefined%
      \relax%
    \else%
      \setlength{\unitlength}{\unitlength * \real{\svgscale}}%
    \fi%
  \else%
    \setlength{\unitlength}{\svgwidth}%
  \fi%
  \global\let\svgwidth\undefined%
  \global\let\svgscale\undefined%
  \makeatother%
  \begin{picture}(1,1.34938399)%
    \lineheight{1}%
    \setlength\tabcolsep{0pt}%
    \put(0.47351309,0.70428224){\color[rgb]{0,0,0}\makebox(0,0)[lt]{\smash{\begin{tabular}[t]{l}$\A$\end{tabular}}}}%
    \put(0,0){\includegraphics[width=\unitlength,page=1]{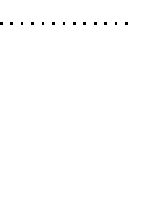}}%
    \put(0.71077248,0.01989538){\color[rgb]{0,0,0}\makebox(0,0)[lt]{\smash{\begin{tabular}[t]{l}$a_1$\end{tabular}}}}%
    \put(0,0){\includegraphics[width=\unitlength,page=2]{morph4.pdf}}%
    \put(0.71359759,1.28529039){\color[rgb]{0,0,0}\makebox(0,0)[lt]{\smash{\begin{tabular}[t]{l}$a_2$\end{tabular}}}}%
    \put(0.86995918,0.70476663){\color[rgb]{0,0,0}\makebox(0,0)[lt]{\smash{\begin{tabular}[t]{l}$f$\end{tabular}}}}%
    \put(0.2363741,1.28436358){\color[rgb]{0,0,0}\makebox(0,0)[lt]{\smash{\begin{tabular}[t]{l}$F$\end{tabular}}}}%
    \put(-0.00100981,0.70485055){\color[rgb]{0,0,0}\makebox(0,0)[lt]{\smash{\begin{tabular}[t]{l}$\B$\end{tabular}}}}%
    \put(0.23625186,0.01989538){\color[rgb]{0,0,0}\makebox(0,0)[lt]{\smash{\begin{tabular}[t]{l}$F$\end{tabular}}}}%
    \put(0,0){\includegraphics[width=\unitlength,page=3]{morph4.pdf}}%
  \end{picture}%
\endgroup%
 
      \centering
         \caption{}
\end{subfigure}
\begin{subfigure}[htb]{0.15\textwidth}
\begingroup%
  \makeatletter%
  \providecommand\color[2][]{%
    \errmessage{(Inkscape) Color is used for the text in Inkscape, but the package 'color.sty' is not loaded}%
    \renewcommand\color[2][]{}%
  }%
  \providecommand\transparent[1]{%
    \errmessage{(Inkscape) Transparency is used (non-zero) for the text in Inkscape, but the package 'transparent.sty' is not loaded}%
    \renewcommand\transparent[1]{}%
  }%
  \providecommand\rotatebox[2]{#2}%
  \newcommand*\fsize{\dimexpr\f@size pt\relax}%
  \newcommand*\lineheight[1]{\fontsize{\fsize}{#1\fsize}\selectfont}%
  \ifx\svgwidth\undefined%
    \setlength{\unitlength}{33.54880409bp}%
    \ifx\svgscale\undefined%
      \relax%
    \else%
      \setlength{\unitlength}{\unitlength * \real{\svgscale}}%
    \fi%
  \else%
    \setlength{\unitlength}{\svgwidth}%
  \fi%
  \global\let\svgwidth\undefined%
  \global\let\svgscale\undefined%
  \makeatother%
  \begin{picture}(1,1.90714378)%
    \lineheight{1}%
    \setlength\tabcolsep{0pt}%
    \put(0.22212677,0.99539309){\color[rgb]{0,0,0}\makebox(0,0)[lt]{\smash{\begin{tabular}[t]{l}$\B$\end{tabular}}}}%
    \put(0,0){\includegraphics[width=\unitlength,page=1]{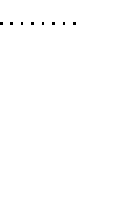}}%
    \put(0.55745592,0.02811902){\color[rgb]{0,0,0}\makebox(0,0)[lt]{\smash{\begin{tabular}[t]{l}$b_1$\end{tabular}}}}%
    \put(0,0){\includegraphics[width=\unitlength,page=2]{morph5.pdf}}%
    \put(0.56144877,1.81655747){\color[rgb]{0,0,0}\makebox(0,0)[lt]{\smash{\begin{tabular}[t]{l}$b_2$\end{tabular}}}}%
    \put(0.78244149,0.99607769){\color[rgb]{0,0,0}\makebox(0,0)[lt]{\smash{\begin{tabular}[t]{l}$g$\end{tabular}}}}%
  \end{picture}%
\endgroup%
 
      \centering
         \caption{}
\end{subfigure}
\begin{subfigure}[htb]{0.15\textwidth}
\begingroup%
  \makeatletter%
  \providecommand\color[2][]{%
    \errmessage{(Inkscape) Color is used for the text in Inkscape, but the package 'color.sty' is not loaded}%
    \renewcommand\color[2][]{}%
  }%
  \providecommand\transparent[1]{%
    \errmessage{(Inkscape) Transparency is used (non-zero) for the text in Inkscape, but the package 'transparent.sty' is not loaded}%
    \renewcommand\transparent[1]{}%
  }%
  \providecommand\rotatebox[2]{#2}%
  \newcommand*\fsize{\dimexpr\f@size pt\relax}%
  \newcommand*\lineheight[1]{\fontsize{\fsize}{#1\fsize}\selectfont}%
  \ifx\svgwidth\undefined%
    \setlength{\unitlength}{38.0956794bp}%
    \ifx\svgscale\undefined%
      \relax%
    \else%
      \setlength{\unitlength}{\unitlength * \real{\svgscale}}%
    \fi%
  \else%
    \setlength{\unitlength}{\svgwidth}%
  \fi%
  \global\let\svgwidth\undefined%
  \global\let\svgscale\undefined%
  \makeatother%
  \begin{picture}(1,1.87568563)%
    \lineheight{1}%
    \setlength\tabcolsep{0pt}%
    \put(0.19561503,0.97291263){\color[rgb]{0,0,0}\makebox(0,0)[lt]{\smash{\begin{tabular}[t]{l}$\B$\end{tabular}}}}%
    \put(0,0){\includegraphics[width=\unitlength,page=1]{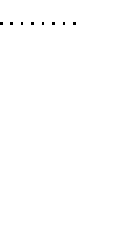}}%
    \put(0.49092127,0.0247629){\color[rgb]{0,0,0}\makebox(0,0)[lt]{\smash{\begin{tabular}[t]{l}$b_1$\end{tabular}}}}%
    \put(0,0){\includegraphics[width=\unitlength,page=2]{morph6.pdf}}%
    \put(0.49443756,1.79591116){\color[rgb]{0,0,0}\makebox(0,0)[lt]{\smash{\begin{tabular}[t]{l}$b_3$\end{tabular}}}}%
    \put(0.68905389,0.67680652){\color[rgb]{0,0,0}\makebox(0,0)[lt]{\smash{\begin{tabular}[t]{l}$g_1$\end{tabular}}}}%
    \put(0.68905389,1.27022453){\color[rgb]{0,0,0}\makebox(0,0)[lt]{\smash{\begin{tabular}[t]{l}$g_2$\end{tabular}}}}%
    \put(0.68905389,0.97351552){\color[rgb]{0,0,0}\makebox(0,0)[lt]{\smash{\begin{tabular}[t]{l}$b_2$\end{tabular}}}}%
  \end{picture}%
\endgroup%
 
      \centering
        \caption{}
\end{subfigure}
\caption{{\scshape (a)}  A  morphism $f\colon a_1\lra a_2$ in  $\A$; {\scshape (b)} a composition $f_2f_1\colon a_1\lra a_3$ of morphisms in $\A$; {\scshape (c)} a composition $f_{n-1}\dots f_1\colon a_1\lra a_n$ in $\A$;  {\scshape (d)} the functor $F$ applied to a morphism $f$ resulting  in the morphism $Ff\colon Fa_1 \lra Fa_2$ in $\B$; {\scshape (e)} a morphism $g\colon b_1\lra b_2$ in $\B$, {\scshape (f)} a composition of morphisms $g_2g_1$ in $\B$.}
\label{fig5_1}
\end{center}
\end{figure}

For a morphism $g\colon Fa\to b$ in $\B$, where $a$ is an object in $\A$, and $b$ an object in $\B$, we use specific diagrammatics of the two lines denoting $F$ and $\ide_a$ merging into a single line denoting $\ide_b$ at a dot. Similar diagrammatics are used for morphisms $f\colon a\to Gb$ in $\A$, see Figure \ref{fig5_2_0}{\scshape a--b}. Similarly, we denote morphism $h\colon Gb\to a$ in $\A$, and $k\colon b\to Fa$ in $\B$, with arrows of opposite orientation, see Figure \ref{fig5_2_0}{\scshape c--d}.

Using that $F$ is left adjoint to $G$, we can display the mutually inverse natural isomorphisms
\begin{gather}\begin{split}
    \Hom_\B(Fa,b)&\stackrel{\sim}{\longleftrightarrow}\Hom_\A(a,Gb),\\
    g\mapsto g^*=Gg(\delta_{2})_a, &\qquad f\mapsto {}^*f=(\mu_2)_bFf,
    \end{split}
\end{gather}
using these diagrammatics, see Figure \ref{fig5_2_1}. Here, $\delta_2,\mu_2$ are the unit and counit of the adjunction $(F,G)$, see Section \ref{subsec_diag_biadj}. We may use similar diagrammatics to express the natural isomorphisms 
\begin{gather}
\Hom_\A(Gb,a)\stackrel{\sim}{\longleftrightarrow}\Hom_\B(b,Fa),
\end{gather}
from $G$ being left adjoint to $F$, using the natural transformations $\delta_1,\mu_1$ from Section \ref{subsec_diag_biadj}.

\begin{figure}[htb]
\begin{center}
 \begin{subfigure}[htb]{0.2\textwidth}
\begingroup%
  \makeatletter%
  \providecommand\color[2][]{%
    \errmessage{(Inkscape) Color is used for the text in Inkscape, but the package 'color.sty' is not loaded}%
    \renewcommand\color[2][]{}%
  }%
  \providecommand\transparent[1]{%
    \errmessage{(Inkscape) Transparency is used (non-zero) for the text in Inkscape, but the package 'transparent.sty' is not loaded}%
    \renewcommand\transparent[1]{}%
  }%
  \providecommand\rotatebox[2]{#2}%
  \newcommand*\fsize{\dimexpr\f@size pt\relax}%
  \newcommand*\lineheight[1]{\fontsize{\fsize}{#1\fsize}\selectfont}%
  \ifx\svgwidth\undefined%
    \setlength{\unitlength}{41.04879464bp}%
    \ifx\svgscale\undefined%
      \relax%
    \else%
      \setlength{\unitlength}{\unitlength * \real{\svgscale}}%
    \fi%
  \else%
    \setlength{\unitlength}{\svgwidth}%
  \fi%
  \global\let\svgwidth\undefined%
  \global\let\svgscale\undefined%
  \makeatother%
  \begin{picture}(1,1.55003149)%
    \lineheight{1}%
    \setlength\tabcolsep{0pt}%
    \put(0.18154283,0.93145794){\color[rgb]{0,0,0}\makebox(0,0)[lt]{\smash{\begin{tabular}[t]{l}$\B$\end{tabular}}}}%
    \put(0,0){\includegraphics[width=\unitlength,page=1]{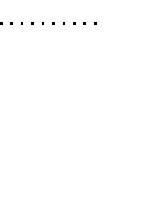}}%
    \put(0.63831277,0.01432175){\color[rgb]{0,0,0}\makebox(0,0)[lt]{\smash{\begin{tabular}[t]{l}$a$\end{tabular}}}}%
    \put(0,0){\includegraphics[width=\unitlength,page=2]{morph7.pdf}}%
    \put(0.64157609,1.47599612){\color[rgb]{0,0,0}\makebox(0,0)[lt]{\smash{\begin{tabular}[t]{l}$b$\end{tabular}}}}%
    \put(0.82219142,0.80542551){\color[rgb]{0,0,0}\makebox(0,0)[lt]{\smash{\begin{tabular}[t]{l}$g$\end{tabular}}}}%
    \put(0,0){\includegraphics[width=\unitlength,page=3]{morph7.pdf}}%
    \put(0.09018673,0.01432175){\color[rgb]{0,0,0}\makebox(0,0)[lt]{\smash{\begin{tabular}[t]{l}$F$\end{tabular}}}}%
    \put(0.36425133,0.3826807){\color[rgb]{0,0,0}\makebox(0,0)[lt]{\smash{\begin{tabular}[t]{l}$\A$\end{tabular}}}}%
  \end{picture}%
\endgroup%
 
      \centering
         \caption{}
\end{subfigure}
\begin{subfigure}[htb]{0.2\textwidth}
\begingroup%
  \makeatletter%
  \providecommand\color[2][]{%
    \errmessage{(Inkscape) Color is used for the text in Inkscape, but the package 'color.sty' is not loaded}%
    \renewcommand\color[2][]{}%
  }%
  \providecommand\transparent[1]{%
    \errmessage{(Inkscape) Transparency is used (non-zero) for the text in Inkscape, but the package 'transparent.sty' is not loaded}%
    \renewcommand\transparent[1]{}%
  }%
  \providecommand\rotatebox[2]{#2}%
  \newcommand*\fsize{\dimexpr\f@size pt\relax}%
  \newcommand*\lineheight[1]{\fontsize{\fsize}{#1\fsize}\selectfont}%
  \ifx\svgwidth\undefined%
    \setlength{\unitlength}{39.91598229bp}%
    \ifx\svgscale\undefined%
      \relax%
    \else%
      \setlength{\unitlength}{\unitlength * \real{\svgscale}}%
    \fi%
  \else%
    \setlength{\unitlength}{\svgwidth}%
  \fi%
  \global\let\svgwidth\undefined%
  \global\let\svgscale\undefined%
  \makeatother%
  \begin{picture}(1,1.59402126)%
    \lineheight{1}%
    \setlength\tabcolsep{0pt}%
    \put(0.186695,0.58210837){\color[rgb]{0,0,0}\makebox(0,0)[lt]{\smash{\begin{tabular}[t]{l}$\A$\end{tabular}}}}%
    \put(0,0){\includegraphics[width=\unitlength,page=1]{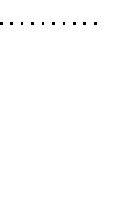}}%
    \put(0.65642804,0.0147282){\color[rgb]{0,0,0}\makebox(0,0)[lt]{\smash{\begin{tabular}[t]{l}$a$\end{tabular}}}}%
    \put(0,0){\includegraphics[width=\unitlength,page=2]{morph8.pdf}}%
    \put(0.65978397,1.51788477){\color[rgb]{0,0,0}\makebox(0,0)[lt]{\smash{\begin{tabular}[t]{l}$b$\end{tabular}}}}%
    \put(0.84552514,0.82828342){\color[rgb]{0,0,0}\makebox(0,0)[lt]{\smash{\begin{tabular}[t]{l}$f$\end{tabular}}}}%
    \put(0,0){\includegraphics[width=\unitlength,page=3]{morph8.pdf}}%
    \put(0.09274622,1.51786527){\color[rgb]{0,0,0}\makebox(0,0)[lt]{\smash{\begin{tabular}[t]{l}$G$\end{tabular}}}}%
    \put(0.37458876,1.14175484){\color[rgb]{0,0,0}\makebox(0,0)[lt]{\smash{\begin{tabular}[t]{l}$\B$\end{tabular}}}}%
  \end{picture}%
\endgroup%
 
      \centering
         \caption{}
\end{subfigure}
 \begin{subfigure}[htb]{0.2\textwidth}
\begingroup%
  \makeatletter%
  \providecommand\color[2][]{%
    \errmessage{(Inkscape) Color is used for the text in Inkscape, but the package 'color.sty' is not loaded}%
    \renewcommand\color[2][]{}%
  }%
  \providecommand\transparent[1]{%
    \errmessage{(Inkscape) Transparency is used (non-zero) for the text in Inkscape, but the package 'transparent.sty' is not loaded}%
    \renewcommand\transparent[1]{}%
  }%
  \providecommand\rotatebox[2]{#2}%
  \newcommand*\fsize{\dimexpr\f@size pt\relax}%
  \newcommand*\lineheight[1]{\fontsize{\fsize}{#1\fsize}\selectfont}%
  \ifx\svgwidth\undefined%
    \setlength{\unitlength}{41.04879464bp}%
    \ifx\svgscale\undefined%
      \relax%
    \else%
      \setlength{\unitlength}{\unitlength * \real{\svgscale}}%
    \fi%
  \else%
    \setlength{\unitlength}{\svgwidth}%
  \fi%
  \global\let\svgwidth\undefined%
  \global\let\svgscale\undefined%
  \makeatother%
  \begin{picture}(1,1.55003149)%
    \lineheight{1}%
    \setlength\tabcolsep{0pt}%
    \put(0.18154283,0.93145794){\color[rgb]{0,0,0}\makebox(0,0)[lt]{\smash{\begin{tabular}[t]{l}$\A$\end{tabular}}}}%
    \put(0,0){\includegraphics[width=\unitlength,page=1]{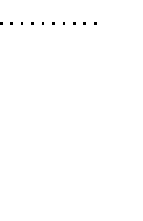}}%
    \put(0.63831277,0.01432175){\color[rgb]{0,0,0}\makebox(0,0)[lt]{\smash{\begin{tabular}[t]{l}$b$\end{tabular}}}}%
    \put(0,0){\includegraphics[width=\unitlength,page=2]{morph7a.pdf}}%
    \put(0.64157609,1.47599612){\color[rgb]{0,0,0}\makebox(0,0)[lt]{\smash{\begin{tabular}[t]{l}$a$\end{tabular}}}}%
    \put(0.82219142,0.80542551){\color[rgb]{0,0,0}\makebox(0,0)[lt]{\smash{\begin{tabular}[t]{l}$h$\end{tabular}}}}%
    \put(0,0){\includegraphics[width=\unitlength,page=3]{morph7a.pdf}}%
    \put(0.09018673,0.01432175){\color[rgb]{0,0,0}\makebox(0,0)[lt]{\smash{\begin{tabular}[t]{l}$G$\end{tabular}}}}%
    \put(0.36425133,0.3826807){\color[rgb]{0,0,0}\makebox(0,0)[lt]{\smash{\begin{tabular}[t]{l}$\B$\end{tabular}}}}%
  \end{picture}%
\endgroup%
 
      \centering
         \caption{}
\end{subfigure}
\begin{subfigure}[htb]{0.2\textwidth}
\begingroup%
  \makeatletter%
  \providecommand\color[2][]{%
    \errmessage{(Inkscape) Color is used for the text in Inkscape, but the package 'color.sty' is not loaded}%
    \renewcommand\color[2][]{}%
  }%
  \providecommand\transparent[1]{%
    \errmessage{(Inkscape) Transparency is used (non-zero) for the text in Inkscape, but the package 'transparent.sty' is not loaded}%
    \renewcommand\transparent[1]{}%
  }%
  \providecommand\rotatebox[2]{#2}%
  \newcommand*\fsize{\dimexpr\f@size pt\relax}%
  \newcommand*\lineheight[1]{\fontsize{\fsize}{#1\fsize}\selectfont}%
  \ifx\svgwidth\undefined%
    \setlength{\unitlength}{40.83004465bp}%
    \ifx\svgscale\undefined%
      \relax%
    \else%
      \setlength{\unitlength}{\unitlength * \real{\svgscale}}%
    \fi%
  \else%
    \setlength{\unitlength}{\svgwidth}%
  \fi%
  \global\let\svgwidth\undefined%
  \global\let\svgscale\undefined%
  \makeatother%
  \begin{picture}(1,1.5583359)%
    \lineheight{1}%
    \setlength\tabcolsep{0pt}%
    \put(0.18251545,0.56907671){\color[rgb]{0,0,0}\makebox(0,0)[lt]{\smash{\begin{tabular}[t]{l}$\B$\end{tabular}}}}%
    \put(0,0){\includegraphics[width=\unitlength,page=1]{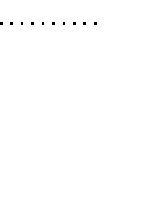}}%
    \put(0.64173258,0.01439848){\color[rgb]{0,0,0}\makebox(0,0)[lt]{\smash{\begin{tabular}[t]{l}$b$\end{tabular}}}}%
    \put(0,0){\includegraphics[width=\unitlength,page=2]{morph8a.pdf}}%
    \put(0.64501339,1.48390388){\color[rgb]{0,0,0}\makebox(0,0)[lt]{\smash{\begin{tabular}[t]{l}$a$\end{tabular}}}}%
    \put(0.82659637,0.80974064){\color[rgb]{0,0,0}\makebox(0,0)[lt]{\smash{\begin{tabular}[t]{l}$k$\end{tabular}}}}%
    \put(0,0){\includegraphics[width=\unitlength,page=3]{morph8a.pdf}}%
    \put(0.09066991,1.48388481){\color[rgb]{0,0,0}\makebox(0,0)[lt]{\smash{\begin{tabular}[t]{l}$F$\end{tabular}}}}%
    \put(0.36620284,1.11619437){\color[rgb]{0,0,0}\makebox(0,0)[lt]{\smash{\begin{tabular}[t]{l}$\A$\end{tabular}}}}%
  \end{picture}%
\endgroup%
 
      \centering
         \caption{}
\end{subfigure}
\caption{{\scshape (a)} A  morphism $g\colon Fa\lra b$ in $\B$, for objects $a\in\A,b\in \B$ and the functor $F$; {\scshape (b)} a morphism $f\colon a\lra Gb$ in $\A$; {\scshape (c)} a morphism $h \colon Gb\to a$ in $\B$; {\scshape (d)} a morphism $k\colon b\to Fa$ in $\A$. }
\label{fig5_2_0}
\end{center}
\end{figure}

\begin{figure}[htb]
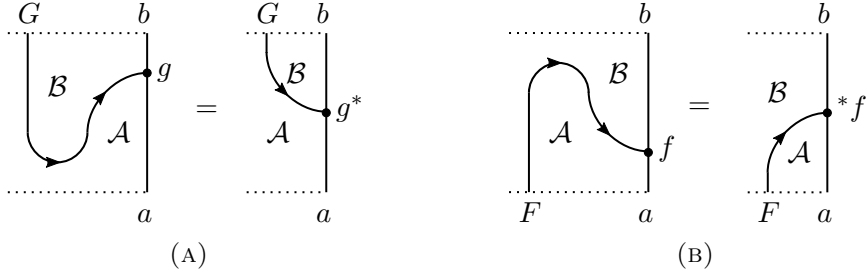

    \begin{center}
        \begin{subfigure}[htb]{0.4\textwidth}
     \import{Graphics/}{morph9.pdf_tex} 
      \centering
         \caption{}
\end{subfigure}
\begin{subfigure}[htb]{0.4\textwidth}
     \import{Graphics/}{morph10.pdf_tex} 
      \centering
         \caption{}
\end{subfigure}
        \end{center}
    \caption{{\scshape (a)} the dual $g^*$ of a morphism $g$, given by composing with the  natural transformation $\delta_2\colon \Ide_{\A}\lra GF$, see Figure~\ref{fig1_2} and Section~\ref{subsec_diag_biadj}; {\scshape (b)} the inverse operation $f\mapsto {^*}f$. }
    \label{fig5_2_1}
\end{figure}

A natural transformation $\alpha\colon F_1\Longrightarrow F_2$ between functors $F_1,F_2\colon \A\lra \B$ is natural with respect to any morphisms $f\colon a_1\to a_2$ in $\A$, via the commutative diagram
\begin{align}
    \xymatrix{F_1(a_1)\ar[rr]^{F_1(f)}\ar[d]_{\alpha_{a_1}}&& F_1(a_2)\ar[d]_{\alpha_{a_2}}\\
    F_2(a_1)\ar[rr]^{F_2(f)}&& F_2(a_2).
    }
\end{align}
Figure~\ref{fig5_3} expresses this property diagrammatically, as an isotopy condition between dots.

\begin{figure}[htb]
\begin{center}
     \import{Graphics/}{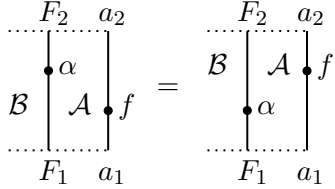} 
\caption{The  property of a natural transformation $\alpha\colon F_1\Longrightarrow F_2$ is the isotopy (commutativity) condition that the dots denoting $\alpha$  and $f$ on parallel  vertical lines can slide past each other.}
\label{fig5_3}
\end{center}
\end{figure}

\ssubsection{Transfer maps} 
The \emph{transfer} (or \emph{trace}) map~\cite{G,MMa1,MMa2} is the map 
\begin{equation}\label{eq_transfer}
\begin{split}
    \Tr_F\colon  \Hom_{\B}(Fa_1,Fa_2) &\lra \Hom_{\A}(a_1,a_2),\\
    \big(f\colon Fa_1\to F a_2\big)\,&\longmapsto\, \big( (\mu_1)_{a_2}G(f)(\delta_2)_{a_1}\colon a_1\to a_2\big),
    \end{split}
\end{equation}
built using the natural transformations $\delta_2$, $\mu_1$ from the biadjointness data, cf. Section \ref{subsec_diag_biadj}. Interchanging the roles of $F$ and $G$, we may similarly define 
\begin{equation}\label{eq_transfer2}
\begin{split}
    \Tr_G\colon  \Hom_{\A}(Gb_1,Gb_2) \lra \Hom_{\B}(b_1,b_2),
    \end{split}
\end{equation}
using $\delta_1,\mu_2$ instead. We will focus on the case of $\Tr_F$ here.
Transfer maps appeared already in \cite[Equations (57)--(58)]{M}. There is a characterization of Frobenius pairs of functors in terms of transfer maps \cite[Proposition~45]{CMZ}.

\vspace{0.1in}

A morphism $f$ in $\Hom_{\B}(Fa_1,Fa_2)$ can be denoted by the diagram in Figure~{\scshape \ref{fig:morph11}} as a vertex on the boundary line  bounding intervals for objects $a_1,a_2$ along the boundary and  with the lines, from the functor $F$ on the bottom and top edge of the region, going in and  out of the vertex. 

\begin{figure}[htb]
\begin{center}
  \begin{subfigure}[htb]{0.40\textwidth}
     \import{Graphics/}{morph11.pdf_tex} 
      \centering
         \caption{The diagrammatic representation of $f$}
        \label{fig:morph11}
\end{subfigure}
\begin{subfigure}[htb]{0.55\textwidth}
     \import{Graphics/}{trace.pdf_tex} 
      \centering
         \caption{The transfer $\Tr(f)$ of $f$}
        \label{fig:transfer}
\end{subfigure}
\caption{Transfer map diagrammatics for a morphism $f\colon Fa_1\lra Fa_2$.}
\label{fig5_4}
\end{center}
\end{figure}

In this diagrammatic language, the transfer map $\Tr_F$ is given by closing up the two ends of $F$ into a loop, see Figure~{\scshape \ref{fig:transfer}}. The result of evaluating the transfer map is a \emph{balloon} attached to a point on the boundary line, see Figure~{\scshape \ref{fig:transfer}} on the right.  

\vspace{0.1in}

The notion of the transfer map can be generalized by placing an element $z\in Z(\B)$ of the center of the category $\B$ inside the  region enveloped by the line for $F$, see Figure~\ref{fig5_5}, giving us a map  
\begin{equation}
    \Tr_F\colon  Z(\B) \times\Hom_{\B}(Fa_1,Fa_2) \lra \Hom_{\A}(a_1,a_2),
\end{equation}
which, alternatively, may be  hidden inside the original map, via the  action
\begin{equation}\label{eq_ZBaction}
     Z(\B) \times\Hom_{\B}(Fa_1,Fa_2) \lra \Hom_{\B}(Fa_1,Fa_2),
\end{equation}
of $Z(\B)$ on morphisms  $\Hom_{\B}(Fa_1,Fa_2)$ given by placing an  element of $Z(\B)$ in  the region  labelled $\B$ in Figure~{\scshape\ref{fig:morph11}}. 

\begin{figure}[htb]
\begin{center}
\begingroup%
  \makeatletter%
  \providecommand\color[2][]{%
    \errmessage{(Inkscape) Color is used for the text in Inkscape, but the package 'color.sty' is not loaded}%
    \renewcommand\color[2][]{}%
  }%
  \providecommand\transparent[1]{%
    \errmessage{(Inkscape) Transparency is used (non-zero) for the text in Inkscape, but the package 'transparent.sty' is not loaded}%
    \renewcommand\transparent[1]{}%
  }%
  \providecommand\rotatebox[2]{#2}%
  \newcommand*\fsize{\dimexpr\f@size pt\relax}%
  \newcommand*\lineheight[1]{\fontsize{\fsize}{#1\fsize}\selectfont}%
  \ifx\svgwidth\undefined%
    \setlength{\unitlength}{51.16474285bp}%
    \ifx\svgscale\undefined%
      \relax%
    \else%
      \setlength{\unitlength}{\unitlength * \real{\svgscale}}%
    \fi%
  \else%
    \setlength{\unitlength}{\svgwidth}%
  \fi%
  \global\let\svgwidth\undefined%
  \global\let\svgscale\undefined%
  \makeatother%
  \begin{picture}(1,2.13002237)%
    \lineheight{1}%
    \setlength\tabcolsep{0pt}%
    \put(0.73196362,0.01843768){\color[rgb]{0,0,0}\makebox(0,0)[lt]{\smash{\begin{tabular}[t]{l}$a_1$\end{tabular}}}}%
    \put(0,0){\includegraphics[width=\unitlength,page=1]{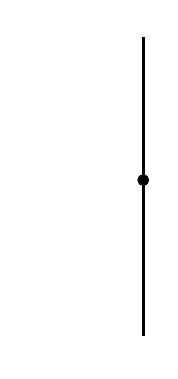}}%
    \put(0.73458174,2.07062478){\color[rgb]{0,0,0}\makebox(0,0)[lt]{\smash{\begin{tabular}[t]{l}$a_2$\end{tabular}}}}%
    \put(0.87948702,1.09287977){\color[rgb]{0,0,0}\makebox(0,0)[lt]{\smash{\begin{tabular}[t]{l}$f$\end{tabular}}}}%
    \put(0,0){\includegraphics[width=\unitlength,page=2]{trace2.pdf}}%
    \put(0.29050848,0.6045166){\color[rgb]{0,0,0}\makebox(0,0)[lt]{\smash{\begin{tabular}[t]{l}$\A$\end{tabular}}}}%
    \put(0,0){\includegraphics[width=\unitlength,page=3]{trace2.pdf}}%
    \put(0.30782339,1.07324769){\color[rgb]{0,0,0}\makebox(0,0)[lt]{\smash{\begin{tabular}[t]{l}$z$\end{tabular}}}}%
    \put(0,0){\includegraphics[width=\unitlength,page=4]{trace2.pdf}}%
  \end{picture}%
\endgroup%
 
\caption{Wrapping $F$ around a central element $z$ of $Z(\B)$, the element $z$ is placed in the region symbolized by a square.}
\label{fig5_5}
\end{center}
\end{figure}

For morphisms $g_1\colon a_1\lra a_2$, $g_2\colon  a_3\lra a_4$  in $\A$ and  $f\colon  Fa_2\lra Fa_3$ in $\B$ 
the relations  
\begin{equation}\label{eq_rel_MMa}
    \Tr_F(f \circ F(g_1)) = \Tr_F(f) \circ g_1, 
    \qquad \Tr_F( F(g_2)\circ f) = g_2\circ \Tr_F(f),
\end{equation}
hold, see~\cite[Proposition 1.8a]{MMa1}. Diagrammatically, they say that the isotopies that change  relative height of the cap and cup points of the balloon, relative to the boundary points representing $g_1,g_2$, do not change the morphism, see Figure~\ref{fig5_6}. 

\begin{figure}[htb]
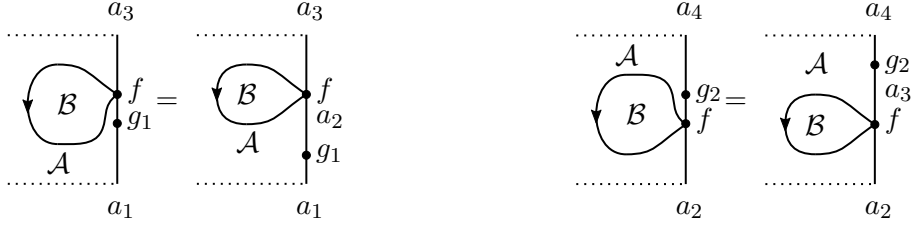

\begin{center}
 \begin{subfigure}[htb]{0.45\textwidth}
     \import{Graphics/}{trace3.pdf_tex} 
      \centering
\end{subfigure}
 \begin{subfigure}[htb]{0.45\textwidth}
     \import{Graphics/}{trace4.pdf_tex} 
      \centering
\end{subfigure}
\caption{Equations (\ref{eq_rel_MMa}) are isotopies that change relative heights.}
\label{fig5_6}
\end{center}
\end{figure}

\ssubsection{The trace morphisms}
An endomorphism $\alpha\colon F\longrightarrow F$ can be denoted by a dot on a line labelled by $F$, see Figure~{\scshape\ref{fig:alpha}}. Closing the line into a circle, using biadjointness, see Figure~{\scshape\ref{fig:traceA}}, is the diagrammatic description of the \emph{trace map} of \cite{B}:  
\begin{equation}\label{eq_trace_map}
   \tr_{\A} \ \colon \  \End(F) \lra \End(\Ide_{\A}) \cong Z(\A),
\end{equation}
The trace map can be written as the following composition
\begin{equation*}
   \tr_{\A} \ \colon \  \Ide_{\A} \stackrel{\delta_2}{\lra} GF \xrightarrow{\,(\ide_G) \alpha\,} GF \stackrel{\mu_1}{\lra} \Ide_{\A}. 
\end{equation*}
\vspace{0.1in}
The other trace, 
\begin{equation*}
   \tr_{\B} \ \colon \  \Ide_{\B} \stackrel{\delta_1}{\lra} FG \xrightarrow{\,\alpha(\ide_G)\,} FG \stackrel{\mu_2}{\lra} \Ide_{\B}, 
\end{equation*}
is given by closing the interval with $\alpha$ on the other side, into a clockwise circle in the plane, with the category $\B$ on the outside of the diagram, see Figure~{\scshape\ref{fig:traceB}}. 

\begin{figure}[htb]
\begin{center}
  \begin{subfigure}[htb]{0.28\textwidth}
\begingroup%
  \makeatletter%
  \providecommand\color[2][]{%
    \errmessage{(Inkscape) Color is used for the text in Inkscape, but the package 'color.sty' is not loaded}%
    \renewcommand\color[2][]{}%
  }%
  \providecommand\transparent[1]{%
    \errmessage{(Inkscape) Transparency is used (non-zero) for the text in Inkscape, but the package 'transparent.sty' is not loaded}%
    \renewcommand\transparent[1]{}%
  }%
  \providecommand\rotatebox[2]{#2}%
  \newcommand*\fsize{\dimexpr\f@size pt\relax}%
  \newcommand*\lineheight[1]{\fontsize{\fsize}{#1\fsize}\selectfont}%
  \ifx\svgwidth\undefined%
    \setlength{\unitlength}{45.00000283bp}%
    \ifx\svgscale\undefined%
      \relax%
    \else%
      \setlength{\unitlength}{\unitlength * \real{\svgscale}}%
    \fi%
  \else%
    \setlength{\unitlength}{\svgwidth}%
  \fi%
  \global\let\svgwidth\undefined%
  \global\let\svgscale\undefined%
  \makeatother%
  \begin{picture}(1,1.55455654)%
    \lineheight{1}%
    \setlength\tabcolsep{0pt}%
    \put(0.66262729,0.51445976){\color[rgb]{0,0,0}\makebox(0,0)[lt]{\smash{\begin{tabular}[t]{l}$\A$\end{tabular}}}}%
    \put(0.41573097,1.3519524){\color[rgb]{0,0,0}\makebox(0,0)[lt]{\smash{\begin{tabular}[t]{l}$F$\end{tabular}}}}%
    \put(0.16560267,0.51930028){\color[rgb]{0,0,0}\makebox(0,0)[lt]{\smash{\begin{tabular}[t]{l}$\B$\end{tabular}}}}%
    \put(0,0){\includegraphics[width=\unitlength,page=1]{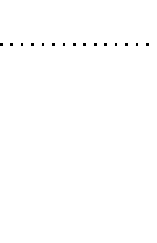}}%
    \put(0.41560265,0.01959635){\color[rgb]{0,0,0}\makebox(0,0)[lt]{\smash{\begin{tabular}[t]{l}$F$\end{tabular}}}}%
    \put(0,0){\includegraphics[width=\unitlength,page=2]{alpha.pdf}}%
    \put(0.5833335,0.82754606){\color[rgb]{0,0,0}\makebox(0,0)[lt]{\smash{\begin{tabular}[t]{l}$\alpha$\end{tabular}}}}%
    \put(0,0){\includegraphics[width=\unitlength,page=3]{alpha.pdf}}%
  \end{picture}%
\endgroup%
 
      \centering
        \caption{An endomorphism $\alpha$ of $F$}
        \label{fig:alpha}
\end{subfigure}
  \begin{subfigure}[htb]{0.33\textwidth}
\begingroup%
  \makeatletter%
  \providecommand\color[2][]{%
    \errmessage{(Inkscape) Color is used for the text in Inkscape, but the package 'color.sty' is not loaded}%
    \renewcommand\color[2][]{}%
  }%
  \providecommand\transparent[1]{%
    \errmessage{(Inkscape) Transparency is used (non-zero) for the text in Inkscape, but the package 'transparent.sty' is not loaded}%
    \renewcommand\transparent[1]{}%
  }%
  \providecommand\rotatebox[2]{#2}%
  \newcommand*\fsize{\dimexpr\f@size pt\relax}%
  \newcommand*\lineheight[1]{\fontsize{\fsize}{#1\fsize}\selectfont}%
  \ifx\svgwidth\undefined%
    \setlength{\unitlength}{62.23927594bp}%
    \ifx\svgscale\undefined%
      \relax%
    \else%
      \setlength{\unitlength}{\unitlength * \real{\svgscale}}%
    \fi%
  \else%
    \setlength{\unitlength}{\svgwidth}%
  \fi%
  \global\let\svgwidth\undefined%
  \global\let\svgscale\undefined%
  \makeatother%
  \begin{picture}(1,0.73508537)%
    \lineheight{1}%
    \setlength\tabcolsep{0pt}%
    \put(0,0){\includegraphics[width=\unitlength,page=1]{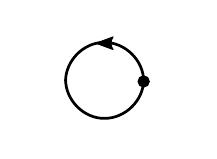}}%
    \put(0.72356557,0.3303065){\color[rgb]{0,0,0}\makebox(0,0)[lt]{\smash{\begin{tabular}[t]{l}$\alpha$\end{tabular}}}}%
    \put(0,0){\includegraphics[width=\unitlength,page=2]{traceB.pdf}}%
    \put(0.41883988,0.30686441){\color[rgb]{0,0,0}\makebox(0,0)[lt]{\smash{\begin{tabular}[t]{l}$\B$\end{tabular}}}}%
    \put(0.09003553,0.30778383){\color[rgb]{0,0,0}\makebox(0,0)[lt]{\smash{\begin{tabular}[t]{l}$\A$\end{tabular}}}}%
  \end{picture}%
\endgroup%
 
      \centering
         \caption{$\tr_{\A}(\alpha)$}
        \label{fig:traceA}
\end{subfigure}
  \begin{subfigure}[htb]{0.33\textwidth}
\begingroup%
  \makeatletter%
  \providecommand\color[2][]{%
    \errmessage{(Inkscape) Color is used for the text in Inkscape, but the package 'color.sty' is not loaded}%
    \renewcommand\color[2][]{}%
  }%
  \providecommand\transparent[1]{%
    \errmessage{(Inkscape) Transparency is used (non-zero) for the text in Inkscape, but the package 'transparent.sty' is not loaded}%
    \renewcommand\transparent[1]{}%
  }%
  \providecommand\rotatebox[2]{#2}%
  \newcommand*\fsize{\dimexpr\f@size pt\relax}%
  \newcommand*\lineheight[1]{\fontsize{\fsize}{#1\fsize}\selectfont}%
  \ifx\svgwidth\undefined%
    \setlength{\unitlength}{59.99999811bp}%
    \ifx\svgscale\undefined%
      \relax%
    \else%
      \setlength{\unitlength}{\unitlength * \real{\svgscale}}%
    \fi%
  \else%
    \setlength{\unitlength}{\svgwidth}%
  \fi%
  \global\let\svgwidth\undefined%
  \global\let\svgscale\undefined%
  \makeatother%
  \begin{picture}(1,0.76251971)%
    \lineheight{1}%
    \setlength\tabcolsep{0pt}%
    \put(0,0){\includegraphics[width=\unitlength,page=1]{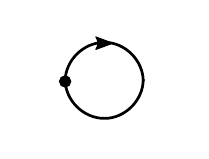}}%
    \put(0.11887267,0.34263397){\color[rgb]{0,0,0}\makebox(0,0)[lt]{\smash{\begin{tabular}[t]{l}$\alpha$\end{tabular}}}}%
    \put(0,0){\includegraphics[width=\unitlength,page=2]{traceA.pdf}}%
    \put(0.4393815,0.31831699){\color[rgb]{0,0,0}\makebox(0,0)[lt]{\smash{\begin{tabular}[t]{l}$\A$\end{tabular}}}}%
    \put(0.74696986,0.31927072){\color[rgb]{0,0,0}\makebox(0,0)[lt]{\smash{\begin{tabular}[t]{l}$\B$\end{tabular}}}}%
  \end{picture}%
\endgroup%
 
      \centering
        \caption{$\tr_{\B}(\alpha)$}
        \label{fig:traceB}
\end{subfigure}
\caption{Diagrammatics of traces of endomorphisms of $F$. Closing a dotted interval into a dotted circle gives the trace morphism $\tr_{\A}$ in {\scshape (b)}, while the other closure is the trace $\tr_{\B}$ in {\scshape (c)}.}
\label{fig5_10}
\end{center}
\end{figure}

Wrapping a central element $z\in Z(\B)$ by  an $F$-labelled counterclockwise oriented circle gives a map $Z(\B)\lra Z(\A)$, see \eqref{fig5_11}. More generally, we obtain the composite morphism 
\begin{align}
   Z(\B)\times \End(F) \xrightarrow{\eqref{eq_ZBaction}} \End(F) \xrightarrow{\tr_\A}Z(\A),
\end{align}
using the action from \eqref{eq_ZBaction}. It wraps a circle with dots labelled by endomorphisms of $F$ around an element $z$ of $Z(\B)$.

\begin{figure}[htb]
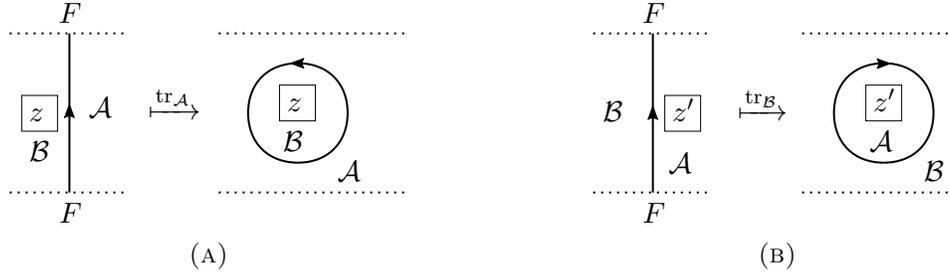

\begin{center}
\begin{subfigure}[htb]{0.45\textwidth}
     \import{Graphics/}{traceAz2.pdf_tex} 
      \centering
        \caption{}
\end{subfigure}
\begin{subfigure}[htb]{0.45\textwidth}
     \import{Graphics/}{traceBz2.pdf_tex} 
      \centering
        \caption{}
\end{subfigure}
\caption{In {\scshape (a)}, an element  $z\in Z(\B)$ gives the endomorphism $z\,\ide_F$ of $F$ and the trace $\tr_{\A}(z\,\ide_F)$ is a counterclockwise $F$-bubble wrapped around $z$. In {\scshape (b)}, an  element  $z'\in Z(\A)$ gives the  endomorphism $\ide_F z'$ of $F$ and its other trace, the $\B$-trace, is a clockwise $F$-bubble wrapped around $z'$.}
\label{fig5_11}
\end{center}
\end{figure}

Interpreting the general transfer map  in (\ref{eq_transfer}) diagrammatically requires introducing a peculiar 4-valent vertex on the vertical boundary of  the strip to denote a morphism $Fa_1\lra  Fa_2$, see Figure~\ref{fig5_4} earlier. The input natural transformation $\alpha\colon F\Longrightarrow F$ in the trace morphism (\ref{eq_trace_map}) induces morphisms $\alpha_a\colon Fa\lra Fa$ for all objects $a$ in $\A$. For this morphism $\alpha_a$, the 4-valent vertex can be reduced to placing a vertical line   with  a dot labelled $\alpha$ in  parallel  with  the  boundary labelled by the identity map of $a$, see Figure~{\scshape\ref{fig:morph12}}.

\begin{figure}[htb]
\begin{center}
\begin{subfigure}[htb]{0.45\textwidth}
\begingroup%
  \makeatletter%
  \providecommand\color[2][]{%
    \errmessage{(Inkscape) Color is used for the text in Inkscape, but the package 'color.sty' is not loaded}%
    \renewcommand\color[2][]{}%
  }%
  \providecommand\transparent[1]{%
    \errmessage{(Inkscape) Transparency is used (non-zero) for the text in Inkscape, but the package 'transparent.sty' is not loaded}%
    \renewcommand\transparent[1]{}%
  }%
  \providecommand\rotatebox[2]{#2}%
  \newcommand*\fsize{\dimexpr\f@size pt\relax}%
  \newcommand*\lineheight[1]{\fontsize{\fsize}{#1\fsize}\selectfont}%
  \ifx\svgwidth\undefined%
    \setlength{\unitlength}{48.49529935bp}%
    \ifx\svgscale\undefined%
      \relax%
    \else%
      \setlength{\unitlength}{\unitlength * \real{\svgscale}}%
    \fi%
  \else%
    \setlength{\unitlength}{\svgwidth}%
  \fi%
  \global\let\svgwidth\undefined%
  \global\let\svgscale\undefined%
  \makeatother%
  \begin{picture}(1,1.35217963)%
    \lineheight{1}%
    \setlength\tabcolsep{0pt}%
    \put(0.84850396,0.01818395){\color[rgb]{0,0,0}\makebox(0,0)[lt]{\smash{\begin{tabular}[t]{l}$a$\end{tabular}}}}%
    \put(0,0){\includegraphics[width=\unitlength,page=1]{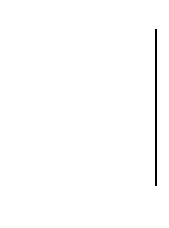}}%
    \put(0.85126619,1.25541667){\color[rgb]{0,0,0}\makebox(0,0)[lt]{\smash{\begin{tabular}[t]{l}$a$\end{tabular}}}}%
    \put(0.58393692,0.63203218){\color[rgb]{0,0,0}\makebox(0,0)[lt]{\smash{\begin{tabular}[t]{l}$\A$\end{tabular}}}}%
    \put(0.09180446,0.35815007){\color[rgb]{0,0,0}\makebox(0,0)[lt]{\smash{\begin{tabular}[t]{l}$\B$\end{tabular}}}}%
    \put(0,0){\includegraphics[width=\unitlength,page=2]{morph12.pdf}}%
    \put(0.38564788,0.01818395){\color[rgb]{0,0,0}\makebox(0,0)[lt]{\smash{\begin{tabular}[t]{l}$F$\end{tabular}}}}%
    \put(0,0){\includegraphics[width=\unitlength,page=3]{morph12.pdf}}%
    \put(0.1391901,0.66130275){\color[rgb]{0,0,0}\makebox(0,0)[lt]{\smash{\begin{tabular}[t]{l}$\alpha$\end{tabular}}}}%
    \put(0,0){\includegraphics[width=\unitlength,page=4]{morph12.pdf}}%
    \put(0.38288609,1.2581789){\color[rgb]{0,0,0}\makebox(0,0)[lt]{\smash{\begin{tabular}[t]{l}$F$\end{tabular}}}}%
  \end{picture}%
\endgroup%
 
      \centering
        \caption{}
        \label{fig:morph12}
\end{subfigure}
\begin{subfigure}[htb]{0.45\textwidth}
\begingroup%
  \makeatletter%
  \providecommand\color[2][]{%
    \errmessage{(Inkscape) Color is used for the text in Inkscape, but the package 'color.sty' is not loaded}%
    \renewcommand\color[2][]{}%
  }%
  \providecommand\transparent[1]{%
    \errmessage{(Inkscape) Transparency is used (non-zero) for the text in Inkscape, but the package 'transparent.sty' is not loaded}%
    \renewcommand\transparent[1]{}%
  }%
  \providecommand\rotatebox[2]{#2}%
  \newcommand*\fsize{\dimexpr\f@size pt\relax}%
  \newcommand*\lineheight[1]{\fontsize{\fsize}{#1\fsize}\selectfont}%
  \ifx\svgwidth\undefined%
    \setlength{\unitlength}{63.54879037bp}%
    \ifx\svgscale\undefined%
      \relax%
    \else%
      \setlength{\unitlength}{\unitlength * \real{\svgscale}}%
    \fi%
  \else%
    \setlength{\unitlength}{\svgwidth}%
  \fi%
  \global\let\svgwidth\undefined%
  \global\let\svgscale\undefined%
  \makeatother%
  \begin{picture}(1,1.26087142)%
    \lineheight{1}%
    \setlength\tabcolsep{0pt}%
    \put(0,0){\includegraphics[width=\unitlength,page=1]{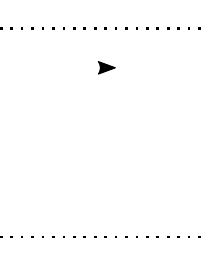}}%
    \put(0.06457629,0.26937923){\color[rgb]{0,0,0}\makebox(0,0)[lt]{\smash{\begin{tabular}[t]{l}$\B$\end{tabular}}}}%
    \put(0.41433167,0.62695472){\color[rgb]{0,0,0}\makebox(0,0)[lt]{\smash{\begin{tabular}[t]{l}$z$\end{tabular}}}}%
    \put(0,0){\includegraphics[width=\unitlength,page=2]{morph13.pdf}}%
    \put(0.88514608,0.00925101){\color[rgb]{0,0,0}\makebox(0,0)[lt]{\smash{\begin{tabular}[t]{l}$b$\end{tabular}}}}%
    \put(0.88514608,1.21304891){\color[rgb]{0,0,0}\makebox(0,0)[lt]{\smash{\begin{tabular}[t]{l}$b$\end{tabular}}}}%
    \put(0,0){\includegraphics[width=\unitlength,page=3]{morph13.pdf}}%
  \end{picture}%
\endgroup%
 
      \centering
        \caption{}
        \label{fig:morph13}
\end{subfigure}
\caption{In {\scshape (a)}, the four-valent vertex  is simplified to parallel lines when the morphism $Fa_1\lra Fa_2$ is $\alpha_a\colon Fa\lra Fa$. In {\scshape (b)}, diagrammatics for the morphism $\tr_{\B}(z) b\colon  b\lra b$.}
\label{fig5_12}
\end{center}
\end{figure}

The trace map bubbles in the presence of boundary describe suitable morphisms in categories $\A$ and $\B$. For instance, the diagram in Figure~{\scshape\ref{fig:morph13}} is the  endomorphism $\tr_{\B}(z) b\colon b \lra b$ for an object $b$ of $\B$ and $z\in  Z(\A)$.


\end{document}